\documentclass[11pt,reqno]{amsart}
\usepackage{amsmath,amsfonts,amssymb,amsthm}
\allowdisplaybreaks[4]
\numberwithin{equation}{section}
\usepackage[margin=1in]{geometry}
\usepackage{newtxtext,newtxmath}
\usepackage{microtype}
\usepackage[numbers,sort&compress]{natbib}
\usepackage[hidelinks]{hyperref}
\hypersetup{
  pdftitle={Segregated Bubbling Solutions for a Critical Schr\"odinger System of Brezis--Nirenberg Type with Sublinear Competitive Coupling},
  pdfauthor={Qing Guo and Chengxiang Zhang}
}

\newtheorem{Thm}{Theorem}[section]
\newtheorem{Lem}[Thm]{Lemma}
\newtheorem{Cor}[Thm]{Corollary}
\newtheorem{Prop}[Thm]{Proposition}
\newtheorem{Def}[Thm]{Definition}
\newtheorem{Rem}[Thm]{Remark}

\title[Segregated bubbling solutions for a critical Schr\"odinger system]
{Segregated Bubbling Solutions for a Critical Schr\"odinger System
of Brezis--Nirenberg Type with Sublinear Competitive Coupling}
\author{Qing Guo}
\thanks{Corresponding author.}
\address[]{College of Science, Minzu University of China, Beijing 100081, China}
\email{guoqing0117@163.com}
\author{Chengxiang Zhang}
\address[]{Laboratory of Mathematics and Complex Systems (Ministry of Education),
    School of Mathematical Sciences,
    Beijing Normal University, Beijing 100875, China}
\email{zcx@bnu.edu.cn}
\subjclass[2020]{35B33, 35B40, 35J20, 35J47, 35J50}
\keywords{Brezis--Nirenberg problem; critical Sobolev exponent; nonlinear Schr\"odinger systems; sublinear competitive coupling; segregated solutions; bubbling solutions; dead core}

\begin{document}
\begin{abstract}
We construct segregated bubbling solutions for a two-component critical
Schr\"odinger system of Brezis--Nirenberg type in a smooth bounded domain
$\Omega\subset\mathbb R^N$, $N\geq5$, with any fixed competitive coupling
$\beta<0$. Suppose that the Robin function has two distinct prescribed
critical points, each satisfying a local degree condition. For every
sufficiently small $\epsilon>0$, the system admits a nonnegative weak solution
with both components nontrivial. Each component has a single-bubble profile
and concentrates at one of the prescribed points. Each component also
vanishes identically in a ball centered at the other concentration point;
after rescaling by the natural bubble length, the radius of this ball tends
to infinity.

\par\smallskip
The main obstruction is that $p=N/(N-2)\in(1,2)$, so the gradient of the
interaction potential $(s,t)\mapsto |s|^p|t|^p$ is not differentiable when
one component vanishes and the other is nonzero. Hence the usual global
Lyapunov--Schmidt reduction cannot be applied directly. We first solve a
nonlinear exterior problem variationally. The resulting dead cores remove the
cross-component coupling from the inner localization regions, where a
projected critical reduction can then be carried out. The remaining scale and
center equations are solved by Brouwer degree theory.
\end{abstract}
\maketitle
\section{Introduction}
\subsection{Background and main contributions}

We study the following two-component critical nonlinear Schr\"odinger system:
\begin{equation}\label{eq0}
\begin{cases}
-\Delta u_{1} = \epsilon u_1 + \lvert u_{1}\rvert^{2^{*}-2} u_{1} + \beta \lvert u_{2}\rvert^{\frac{2^{*}}{2}} \lvert u_{1}\rvert^{\frac{2^{*}}{2}-2} u_{1}, & \text{in } \Omega, \\[3mm]
-\Delta u_{2} = \epsilon u_2 + \lvert u_{2}\rvert^{2^{*}-2} u_{2} + \beta \lvert u_{1}\rvert^{\frac{2^{*}}{2}} \lvert u_{2}\rvert^{\frac{2^{*}}{2}-2} u_{2}, & \text{in } \Omega, \\[3mm]
u_{1} = u_{2} = 0, & \text{on } \partial\Omega, \\[3mm]
u_{1}, u_{2} \geq 0, \quad u_1,u_2\not\equiv0, \quad u_1, u_2 \in H_0^1(\Omega).&
\end{cases}
\end{equation}
Here $\Omega$ is a smooth bounded domain in $\mathbb{R}^N$, $N\geq5$, and
$2^*=2N/(N-2)$. Throughout the paper we set
\[
p:=\frac{2^*}{2}=\frac{N}{N-2}\in(1,2)
\]
and use the convention $|t|^{p-2}t=0$ at $t=0$. Solutions are understood in
the weak $H_0^1(\Omega)\times H_0^1(\Omega)$ sense.

Systems of the form \eqref{eq0} arise from standing-wave reductions
\[
\phi_k(t,x)=e^{-i\lambda_k t}u_k(x)
\]
of multicomponent Gross--Pitaevskii equations. The sign of $\beta$ determines
the interaction between the components. Cooperative interactions ($\beta>0$)
occur, for example, in models from nonlinear optics, such as the propagation
of self-trapped mutually incoherent wave packets \cite{Akhmediev99}, whereas
competitive interactions ($\beta<0$) arise in models of multicomponent
Bose--Einstein condensates \cite{Timmermans98}. Mathematically,
\eqref{eq0} is a weakly coupled gradient system: the principal part is
diagonal, while the components interact through lower-order nonlinear terms.
It is also a critical Schr\"odinger system of Brezis--Nirenberg type.

\medskip
We recall the scalar Brezis--Nirenberg problem
\begin{equation}\label{eq:BN}
-\Delta u=u^{\frac{N+2}{N-2}}+\lambda u\quad\text{in }\Omega,
\qquad u=0\quad\text{on }\partial\Omega.
\end{equation}
Brezis and Nirenberg \cite{Brezis83} established the fundamental existence
result for \eqref{eq:BN}. The blow-up analyses of Han \cite{Han91} and Rey
\cite{Rey89} showed that the location of a concentrating family is governed by
the Robin function of the domain. Conversely, Rey \cite{Rey90} constructed
single-bubble solutions near nondegenerate critical points of the Robin
function when $N\geq5$, and Musso and Pistoia \cite{Musso02} subsequently
constructed multi-bubble solutions. These works initiated an extensive study
of bubbling phenomena for critical elliptic problems; see, for example,
\cite{Ayed06,Bartsch06,Castro03,Iacopetti15,Micheletti04,Pistoia13} and the
references therein.

For comparison, consider the more general weakly coupled system
\begin{equation}\label{eqgeneral}
-\Delta u_i=\mu_i|u_i|^{2p-2}u_i
+\beta |u_i|^{p-2}u_i\sum_{j\ne i}|u_j|^p+\nu_i u_i,
\qquad i=1,\ldots,m,
\end{equation}
with $2p\leq2^*$. In the Sobolev subcritical case $2p<2^*$, positive and
least-energy solutions have been widely studied, beginning with
\cite{Ambrosetti07,LinWei05,Maia06}; see also
\cite{Correia16,Mandel15,OliveiraTavares2016,Soave,ST}. In the critical case,
Chen and Zou \cite{ChenZou12} treated the four-dimensional cubic system, and
subsequent works addressed least-energy solutions, phase separation,
sign-changing solutions, and concentration phenomena; see
\cite{ChenLin14,ChenLin15,ChenZou15,GuoLiWei14}. Pistoia and Tavares
\cite{Pistoia2017} constructed, in dimension four, families whose components
concentrate at different points. Related blow-up constructions for other
critical elliptic systems analogous to the Brezis--Nirenberg problem were
obtained by Kim and Pistoia \cite{KimPistoia21}.

Following the terminology in \cite{GuoPistoiaWen}, we call a bubbling family
\emph{segregated} when the limiting concentration sets of different components
are disjoint. In the present two-component, one-bubble setting, this means
\[
\xi_{1,\epsilon}\longrightarrow\bar\xi_1,
\qquad
\xi_{2,\epsilon}\longrightarrow\bar\xi_2,
\qquad
\bar\xi_1\ne\bar\xi_2.
\]
The term refers to the concentration centers; it does not assert that the
supports of the two components are disjoint for each fixed $\epsilon$.

The main contribution of this paper is the construction, for every fixed
$\beta<0$ and every $N\geq5$, of a family of nonnegative segregated bubbling
solutions with both components nontrivial. The two components concentrate,
respectively, at two distinct prescribed critical points of the Robin function,
each satisfying the local degree condition introduced below. In
addition, we establish a quantitative dead-core estimate: on the natural
bubbling scale, each component vanishes identically in a ball centered at the
other component's concentration center, and the radius of this ball tends to
infinity as $\epsilon\to0$.

The key analytical obstruction is the lack of differentiability of the
coupling along the coordinate axes. Indeed, since $N\geq5$,
\[
p=\frac{N}{N-2}\in(1,2).
\]
The interaction potential $(s,t)\mapsto |s|^p|t|^p$ is of class $C^1$, but
its gradient is not differentiable at points $(s,0)$ with $s\ne0$ or $(0,t)$
with $t\ne0$. The segregated profiles constructed here meet exactly this
configuration: near either bubble, on the bubbling scale, one component is
active while the other vanishes on an open neighborhood. Consequently, the
coupling does not admit the bounded linearization required by a direct global
Lyapunov--Schmidt reduction around two projected bubbles. For synchronized
profiles, by contrast, the two components remain comparable and vanish at the
same rate, so the potentially singular factor is compensated in the
linearized coefficients. Related critical constructions in pierced domains
were obtained by Pistoia and Soave \cite{PistoiaCoron} in dimensions three and
four, where $p\geq2$ and this particular differentiability obstruction does
not occur.

Our strategy is an inner--outer decomposition adapted to this loss of
differentiability along the coordinate axes. Set
\[
s_\epsilon:=\epsilon^{\frac1{N-4}},
\qquad
\Omega_\epsilon=s_\epsilon^{-1}\Omega.
\]
In the rescaled domain the bubbles have unit scale, whereas the mutual distance
of the two centers and their distance from the boundary are of order
$s_\epsilon^{-1}$. Around each center we use a smaller inner ball, on which the local data
are prescribed, and a larger localization ball. The larger balls contain the
cut-off transition regions and the supports of the projected inner equations.

The exterior analysis produces, for every sufficiently small $\varrho>0$, a
dead-core ball of radius
\[
R_{\mathrm{dc},\epsilon}(\varrho)
:=\epsilon^{-\frac{1-\varrho}{N-2}}
\]
around the opposite concentration center. The intermediate radii are chosen so
that the larger localization balls lie well inside these dead-core balls,
which in turn remain much smaller than the center-separation scale
$s_\epsilon^{-1}$. Consequently, the opposite component vanishes throughout
each inner localization region, the coupling drops out of the projected inner
equations, and the remaining scale and location parameters can be selected by
a Brouwer degree argument. The precise radii, the full scale hierarchy, and
the dimension-dependent auxiliary range needed to realize this geometry are
specified at the beginning of Section~2.

An exterior minimization was also used in our previous subcritical work
\cite{GuoZhangCV}, but the critical problem is not obtained by simply replacing
subcritical profiles with critical bubbles. In the Brezis--Nirenberg regime,
the bubble tails are algebraic, the Dirichlet projection produces a boundary
contribution at the same order as the small linear term, and the concentration
points are selected by the Robin function. These features require critical
projection estimates and a finite-dimensional degree calculation in addition
to the exterior analysis.

There is also a more fundamental distinction from classical inner--outer
gluing schemes; see, for example,
\cite{delPinoKowalczykWei07,delPinoKowalczykWei11}. Such schemes begin by
linearizing the equation around a smooth global approximation and then solve
coupled linear correction problems in the inner and outer regions. For the
segregated sublinear profiles considered here, however, the coupling map has
no bounded derivative at the axis configurations that occur where one
component vanishes and the other remains nonzero. The standard linear outer
problem underlying those schemes is therefore unavailable, and the schemes
cannot be applied to \eqref{eq0} in their usual form. The exterior interaction
must first be resolved as a genuinely nonlinear variational problem. Only
after the resulting dead core removes the coupling from the inner regions does
the projected critical reduction become available.

The results below treat one bubble in each component. After the main
statements, we compare the degree hypothesis with related stability conditions
and briefly discuss multi-bubble and common-center extensions.

\medskip
\subsection{Notation and main results}

For $\lambda>0$ and $y\in\mathbb R^N$, set
\[
U_{\lambda,y}(x)
=C_N\lambda^{\frac{N-2}{2}}
\bigl(\lambda^2+|x-y|^2\bigr)^{-\frac{N-2}{2}},
\qquad
C_N=[N(N-2)]^{\frac{N-2}{4}}.
\]
Then
\[
-\Delta U_{\lambda,y}=U_{\lambda,y}^{\frac{N+2}{N-2}}
\quad\text{in }\mathbb R^N.
\]
These are the standard positive bubbles for the critical equation; see
\cite{Aubin76,CaffarelliGidasSpruck89,Talenti76}.

Let $\omega_{N-1}=|\mathbb S^{N-1}|$ and
$\gamma_N=((N-2)\omega_{N-1})^{-1}$. We use the unnormalized Newton kernel
\[
\Gamma_x(y)=|x-y|^{2-N}.
\]
For $x\in\Omega$, let $H_\Omega(x,\cdot)$ be harmonic in $\Omega$ and equal to
$\Gamma_x$ on $\partial\Omega$. The Dirichlet Green function is
\[
G_\Omega(x,y)=\Gamma_x(y)-H_\Omega(x,y),
\]
so that
\[
-\Delta_yG_\Omega(x,y)=(N-2)\omega_{N-1}\delta_x(y),
\qquad
G_\Omega(x,y)=0\quad\text{on }\partial\Omega.
\]
Accordingly, Green's representation contains the factor $\gamma_N$. When no
confusion can arise, we write $G$ and $H$ in place of $G_\Omega$ and
$H_\Omega$.

The Robin function is
\[
\varphi_\Omega(x)=H_\Omega(x,x),\qquad x\in\Omega.
\]
It is positive and of class $C^2$ in $\Omega$, and
$\varphi_\Omega(x)\to+\infty$ as
$\operatorname{dist}(x,\partial\Omega)\to0$.

Set
\[
\Omega_\epsilon=\epsilon^{-\frac1{N-4}}\Omega.
\]
The rescaled problem is
\begin{equation}\label{eq1}
\begin{cases}
-\Delta u_1
=\epsilon^{\frac{N-2}{N-4}}u_1+|u_1|^{2^*-2}u_1
+\beta|u_2|^{\frac{2^*}{2}}|u_1|^{\frac{2^*}{2}-2}u_1,
&\text{in }\Omega_\epsilon,\\[1mm]
-\Delta u_2
=\epsilon^{\frac{N-2}{N-4}}u_2+|u_2|^{2^*-2}u_2
+\beta|u_1|^{\frac{2^*}{2}}|u_2|^{\frac{2^*}{2}-2}u_2,
&\text{in }\Omega_\epsilon,\\[1mm]
u_1,u_2\geq0,\quad u_1,u_2\not\equiv0,
&\text{in }\Omega_\epsilon,\\[1mm]
u_1=u_2=0,
&\text{on }\partial\Omega_\epsilon.
\end{cases}
\end{equation}
The transformation
\[
u_i(y)=\epsilon^{\frac{N-2}{2(N-4)}}
\bar u_i\bigl(\epsilon^{\frac1{N-4}}y\bigr),
\qquad i=1,2,
\]
gives a one-to-one correspondence between solutions of \eqref{eq0} and
solutions of \eqref{eq1}. We carry out the construction in
$\Omega_\epsilon$, where the bubbles have scale $O(1)$ and the domain expands
as $\epsilon\to0$.

For $D=\Omega$ or $D=\Omega_\epsilon$ and
$U\in C^2(D)\cap C(\overline D)$, let $P_DU$ denote the solution of
\[
\begin{cases}
-\Delta(P_DU)=-\Delta U,&\text{in }D,\\
P_DU=0,&\text{on }\partial D.
\end{cases}
\]
We write $P=P_\Omega$ and $P_\epsilon=P_{\Omega_\epsilon}$. In particular,
\[
-\Delta PU_{\lambda,y}=U_{\lambda,y}^{\frac{N+2}{N-2}}
\quad\text{in }\Omega,
\qquad
PU_{\lambda,y}=0\quad\text{on }\partial\Omega,
\]
and the analogous identities hold for $P_\epsilon U_{\lambda,y}$ in
$\Omega_\epsilon$.

The concentration points are selected through the following hypothesis.

\begin{Def}\label{defstable}
Let $x_0\in\Omega$ be a critical point of $\varphi_\Omega$. We say that
$x_0$ satisfies the \emph{local degree condition} if, for every neighborhood
$\mathcal U$ of $x_0$ in $\Omega$, there exists a bounded open set $D$ such
that
\[
x_0\in D\Subset\mathcal U,
\qquad
\nabla\varphi_\Omega\neq0\quad\text{on }\partial D,
\qquad
\deg(\nabla\varphi_\Omega,D,0)\neq0.
\]
\end{Def}

This condition is the geometric hypothesis used in the final
finite-dimensional argument. We now state the main results.

\begin{Thm}\label{thm:main}
Assume that $N\geq5$, $\beta<0$, and that
$\bar\xi_1,\bar\xi_2\in\Omega$ are two distinct critical points of
$\varphi_\Omega$, each satisfying the local degree condition in
Definition~\ref{defstable}. Set
\[
\lambda_i^0
=\left(
\frac{\displaystyle\int_{\mathbb R^N}U_{1,0}^2}
{\displaystyle \frac{N-2}{2}C_N\varphi_\Omega(\bar\xi_i)
\int_{\mathbb R^N}U_{1,0}^{2^*-1}}
\right)^{\frac1{N-4}},
\qquad i=1,2.
\]
Then there exists $\epsilon_0>0$ such that, for every
$\epsilon\in(0,\epsilon_0)$, problem~\eqref{eq0} admits a nonnegative weak
solution $(\bar u_{1,\epsilon},\bar u_{2,\epsilon})$ with both components
nontrivial and
\[
\bar u_{1,\epsilon}
=PU_{\epsilon^{\frac1{N-4}}\lambda_{1,\epsilon},\xi_{1,\epsilon}}
+\bar\phi_\epsilon,
\qquad
\bar u_{2,\epsilon}
=PU_{\epsilon^{\frac1{N-4}}\lambda_{2,\epsilon},\xi_{2,\epsilon}}
+\bar\psi_\epsilon,
\]
where
\[
(\lambda_{1,\epsilon},\xi_{1,\epsilon},
\lambda_{2,\epsilon},\xi_{2,\epsilon})
\longrightarrow
(\lambda_1^0,\bar\xi_1,\lambda_2^0,\bar\xi_2)
\]
and
\[
(\bar\phi_\epsilon,\bar\psi_\epsilon)\longrightarrow(0,0)
\quad\text{in }L^\infty(\Omega)\times L^\infty(\Omega).
\]
In particular, the two components concentrate at $\bar\xi_1$ and
$\bar\xi_2$, respectively, and the family is segregated in the sense defined
above.
\end{Thm}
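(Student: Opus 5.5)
The plan follows the inner--outer strategy outlined in the introduction, carried out in the rescaled domain $\Omega_\epsilon$ for problem \eqref{eq1}. Set $s_\epsilon=\epsilon^{1/(N-4)}$. Fix parameters $(\lambda_i,\xi_i)$ with $\lambda_i$ in a compact subinterval of $(0,\infty)$ around $\lambda_i^0$ and $\xi_i\in D_i$, where $D_i$ are the small neighborhoods of $\bar\xi_i$ provided by Definition~\ref{defstable}. Let $y_i=\xi_i/s_\epsilon$.

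\textbf{Step 1: the exterior problem.} Choose radii $1\ll R_{\mathrm{in}}\ll R_{\mathrm{loc}}\ll R_{\mathrm{dc},\epsilon}(\varrho)\ll s_\epsilon^{-1}$, all powers of $\epsilon$. On $\Omega_\epsilon\setminus\bigl(B_{R_{\mathrm{in}}}(y_1)\cup B_{R_{\mathrm{in}}}(y_2)\bigr)$, minimize the energy of \eqref{eq1}. The boundary data on $\partial B_{R_{\mathrm{in}}}(y_i)$ are prescribed as follows: component $i$ equals the given inner profile, close to $P_\epsilon U_{\lambda_i,y_i}$, and component $j\ne i$ equals $0$. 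On a small neighborhood of the inner data, the critical term is controlled because the data are of size $R_{\mathrm{in}}^{2-N}$, which is small. Minimizing in the class $u_1,u_2\ge0$ therefore gives a local minimizer, and it is a weak solution, since $|s|^p|t|^p$ is $C^1$. The key point is a dead-core estimate. Near $y_1$, component $u_2$ solves an inequality of the form $-\Delta u_2\le \epsilon^{\frac{N-2}{N-4}}u_2+u_2^{2^*-1}-|\beta| u_1^{p}u_2^{p-1}$. Since $p-1\in(0,1)$, the absorption is sublinear, and $u_1^p\gtrsim |y-y_1|^{-N}$ there. A comparison with explicit barriers of the form $c(r_0^2-|y-y_1|^2)_+^{2/(2-p)}$, suitably rescaled, shows that $u_2\equiv0$ on $B_{R_{\mathrm{dc},\epsilon}(\varrho)}(y_1)$. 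This uses that $u_2$ has size at most $|y-y_2|^{2-N}\sim \epsilon^{(N-2)/(N-4)}$ near $y_1$ while the absorption has strength $r^{-N}$; balancing the two gives the radius $\epsilon^{-(1-\varrho)/(N-2)}$. The argument near $y_2$ is symmetric. I also need continuity and uniqueness of the exterior minimizer in the inner data and parameters, obtained from strict convexity near the small-data regime, or at least continuity via a fixed-point formulation.

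\textbf{Step 2: the inner reduction.} On $B_{R_{\mathrm{loc}}}(y_i)$ the opposite component vanishes, so component $i$ solves the scalar Brezis--Nirenberg equation there. Use a cut-off $\chi_i$ and write $u_i=\chi_i(P_\epsilon U_{\lambda_i,y_i}+\phi_i)+(1-\chi_i)\cdot$(exterior solution). Solve the projected equation for $\phi_i$ orthogonally to the kernel $\partial_\lambda U,\partial_{y_k}U$ by the standard nondegeneracy of the bubble and weighted $L^\infty$ norms. The error is $O(\epsilon^{\frac{N-2}{N-4}}\cdot\text{size})$, plus boundary projection terms, plus matching errors in the transition annulus. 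The matching errors are small because the exterior solution there agrees with the bubble tail up to $O(R^{2-N})$. The whole procedure is a fixed point coupling the inner $\phi_i$ with the exterior data, and it is solved by contraction.

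\textbf{Step 3: finite-dimensional problem.} Test the equations against the kernel elements, which are localized inside the dead cores, so $\beta$ does not appear. Standard expansions, as in Rey's work, give for the $\lambda$-equation
\[
\epsilon^{\frac{N-2}{N-4}}\Bigl(a\lambda_i-b\,\varphi_\Omega(\xi_i)\lambda_i^{N-3}\Bigr)+o\bigl(\epsilon^{\frac{N-2}{N-4}}\bigr)=0,
\]
whose zero is $\lambda_i^0$, and for the $\xi$-equation
\[
c\,\lambda_i^{N-2}\nabla\varphi_\Omega(\xi_i)+o(1)=0
\]
(after normalization). The $\lambda$-equation has a nondegenerate zero, and the $\xi$-equation has nonzero degree on $D_i$ by Definition~\ref{defstable}. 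The product degree is nonzero, so a homotopy to the leading terms followed by Brouwer degree gives a zero; this uses only continuity of the reduced map. Scaling back then yields the stated form, the parameter convergence (letting $D_i$ shrink), the $L^\infty$ smallness of $\bar\phi_\epsilon,\bar\psi_\epsilon$, and nonnegativity.

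\textbf{Main obstacle.} I expect Step 1 to be hardest. It requires producing a continuous family of exterior solutions with quantitative dead cores and sufficiently precise asymptotics, namely matching the bubble tail to $o(\epsilon^{\frac{N-2}{N-4}})$ in the energy test. Without differentiability, uniqueness has to come from variational or monotonicity arguments rather than from the implicit function theorem. My first attempt would be to show that the exterior functional is strictly convex on the relevant small ball after exploiting the dead cores, or otherwise to use a degree-compatible set-valued continuity argument.
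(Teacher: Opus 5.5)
Your architecture matches the paper's. It has a variational exterior problem and a dead core forced by the sublinear absorption $|\beta|u_1^pu_2^{p-1}$ at radius $\epsilon^{-(1-\varrho)/(N-2)}$; your balancing heuristic for that radius is exactly right. It then has a projected inner problem on balls where the coupling is absent, a contraction on the inner data, and Brouwer degree for the limit map $(A\lambda_i^{N-4}\varphi_\Omega(\xi_i)-B,\nabla\varphi_\Omega(\xi_i))$. Prescribing the opposite component to be zero on each inner ball is a legitimate variant of the paper's choice of leaving it free.

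The genuine gap is the step you flag and leave open: an $\epsilon$-uniform Lipschitz estimate for the exterior solution map in terms of the inner data, \emph{with decay}. Your Step~2 contraction cannot close without it. The projected linear theory loses a factor of order $R_{\mathrm{loc}}^2$ (in the paper $r_\epsilon^4$, with $R_{\mathrm{loc}}=r_\epsilon^2$). The cut-off terms only carry $|\Delta\chi|\sim R_{\mathrm{loc}}^{-2}$ and $|\nabla\chi|\sim R_{\mathrm{loc}}^{-1}$. So a change $\delta$ of the inner data must change the exterior solution and its gradient on the transition annulus by $o(\delta)$, for instance by $O(\delta(R_{\mathrm{in}}/R_{\mathrm{loc}})^{N-2})$. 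A bound of size $C\delta$ gives a Lipschitz constant of order one, or worse for the gradient term.

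Convexity does not supply this. The coupling $(s,t)\mapsto|s|^p|t|^p$ is not convex: its Hessian has negative determinant wherever $st\neq0$, and the dead cores do not change that. What is true is that the negative part of its second variation is bounded by a multiple of $|u_1|^{p-1}|u_2|^{p-1}$, which is small in $L^{N/2}$. That yields uniqueness and $D^{1,2}$-continuity of the exterior minimizer. It gives no pointwise decay, and hence no contraction constant below $1$ in the $L^\infty$ inner-data metric. Your set-valued alternative is not specified, and it would not produce the single-valued continuous reduced map your degree step uses.

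The missing ingredient is the stability estimate of Lemma~\ref{lem9}.
\begin{itemize}
\item For two exterior solutions, split the coupling difference as $|u_2|^p\bigl(\mathcal N(u_1)-\mathcal N(\bar u_1)\bigr)+\bigl(|u_2|^p-|\bar u_2|^p\bigr)\mathcal N(\bar u_1)$, with $\mathcal N(t)=|t|^{p-2}t$.
\item Since $\mathcal N$ is increasing and $\beta<0$, the first piece (the non-Lipschitz one) has a favorable sign under Kato's inequality and can be dropped.
\item The decay of Lemma~\ref{lem7} turns the second piece into a small cross source $O(\rho_1^{-2}\rho_2^{-2}|v|)$.
\item Comparison through Lemma~\ref{lem0} with the two-center barrier $C_1(1+a|y-\tilde\xi_1|)^{2-N}+C_2\rho_1^{-(2-\theta_{\mathrm{st}})}\rho_2^{-(2-\theta_{\mathrm{st}})}$ gives an $L^\infty$ bound decaying like $\|w_0\|(r_\epsilon/|y-\tilde\xi_1|)^{N-2}$. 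Local elliptic estimates then add the gradient bound on the annuli.
\end{itemize}
This is what makes $\kappa_\epsilon=o(1)$. Uniqueness of the exterior minimizer is then a corollary. The parameter continuity needed for the degree argument follows from the uniform contraction (Proposition~\ref{prop:parameter-continuity}).

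Three smaller points:
\begin{itemize}
\item Your barrier $c(r_0^2-|y-y_1|^2)_+^{2/(2-p)}$ vanishes \emph{outside} $B_{r_0}(y_1)$, so it cannot force $u_2\equiv0$ near $y_1$. You need the reversed profile, e.g.\ the radial solution of Lemma~\ref{lemdead}, which vanishes in $B_1$.
\item The lower bound $u_1\gtrsim|y-y_1|^{2-N}$ on the whole dead-core ball requires $u_1-V_1=o(U_1)$ there, which is the content of Lemma~\ref{lem8}.
\item Your local minimization needs an a priori bound placing the minimizer strictly inside the neighborhood. The paper instead truncates the nonlinearities and proves the decay of Lemma~\ref{lem7}, which shows the truncation is inactive.
\end{itemize}
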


Theorem~\ref{thm:main} follows by rescaling from the next result.

\begin{Thm}\label{thm:rescaled-main}
Under the hypotheses of Theorem~\ref{thm:main}, there exists $\epsilon_0>0$ such
that, for every $\epsilon\in(0,\epsilon_0)$, problem~\eqref{eq1} admits a
nonnegative weak solution $(u_{1,\epsilon},u_{2,\epsilon})$ with both
components nontrivial and
\[
u_{1,\epsilon}
=P_\epsilon U_{\lambda_{1,\epsilon},\tilde\xi_{1,\epsilon}}
+\phi_\epsilon,
\qquad
u_{2,\epsilon}
=P_\epsilon U_{\lambda_{2,\epsilon},\tilde\xi_{2,\epsilon}}
+\psi_\epsilon,
\]
where
\[
\tilde\xi_{i,\epsilon}
=\epsilon^{-\frac1{N-4}}\xi_{i,\epsilon},
\qquad i=1,2,
\]
\[
(\lambda_{1,\epsilon},\xi_{1,\epsilon},
\lambda_{2,\epsilon},\xi_{2,\epsilon})
\longrightarrow
(\lambda_1^0,\bar\xi_1,\lambda_2^0,\bar\xi_2),
\]
and there exist constants $C>0$ and $\sigma>0$, independent of
$\epsilon$, such that
\begin{equation}\label{eq:strong-correction}
\|\phi_\epsilon\|_{L^\infty(\Omega_\epsilon)}
+\|\psi_\epsilon\|_{L^\infty(\Omega_\epsilon)}
\leq C\epsilon^{\frac{N-2}{2(N-4)}+\sigma}.
\end{equation}
\end{Thm}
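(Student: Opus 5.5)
The proof follows the inner--outer scheme sketched in the introduction, carried out in $\Omega_\epsilon$. Fix parameters $(\lambda_i,\tilde\xi_i)$ with $\lambda_i$ in a compact subset of $(0,\infty)$ and $\xi_i=s_\epsilon\tilde\xi_i$ in a small neighborhood $D_i$ of $\bar\xi_i$ given by Definition~\ref{defstable}. Choose radii $1\ll r_{\mathrm{in}}\ll r_{\mathrm{loc}}\ll R_{\mathrm{dc},\epsilon}(\varrho)s_\epsilon^{\,?}$, always measured on the rescaled scale and kept much smaller than $s_\epsilon^{-1}$. Let $B_i=B(\tilde\xi_i,r_{\mathrm{loc}})$.

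\emph{Step 1: exterior problem.} For prescribed inner data $(w_1,w_2)$ on $B(\tilde\xi_1,r_{\mathrm{in}})\cup B(\tilde\xi_2,r_{\mathrm{in}})$, close to $(P_\epsilon U_{\lambda_1,\tilde\xi_1},0)$ near $\tilde\xi_1$ and to $(0,P_\epsilon U_{\lambda_2,\tilde\xi_2})$ near $\tilde\xi_2$, minimize the energy
\[
\tfrac12\int(|\nabla u_1|^2+|\nabla u_2|^2)-\tfrac{\epsilon^{\frac{N-2}{N-4}}}{2}\int(u_1^2+u_2^2)-\tfrac1{2^*}\int(u_1^{2^*}+u_2^{2^*})-\tfrac{\beta}{p}\int u_1^pu_2^p
\]
on the exterior region, with these Dirichlet data and nonnegative competitors. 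The term $-\beta/p>0$ makes the coupling coercive, and on the exterior region the bubble tails are small, so the functional is locally convex near the data. This yields a unique minimizer close to the bubble tails, depending continuously on $(w_1,w_2)$ and on the parameters.

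\emph{Step 2: dead core.} Near $\tilde\xi_2$ one has $u_2\approx U_{\lambda_2}$ of order $|x-\tilde\xi_2|^{2-N}$, while $u_1$ is small. The equation for $u_1$ then reads $-\Delta u_1\le -c|\beta|u_2^p u_1^{p-1}+\dots$, which is a sublinear absorption with $p-1<1$. A comparison with the explicit radial supersolution for $-\Delta v=-a\,v^{p-1}$, which has compact support, shows $u_1\equiv0$ on $B(\tilde\xi_2,R_{\mathrm{dc},\epsilon}(\varrho))$. The radius is found by balancing the size of $u_1$ there, of order $s_\epsilon^{N-2}$ from the tail of the first bubble, against $u_2^p\sim r^{-N}$. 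This step is the source of the exponent in $R_{\mathrm{dc}}$. The symmetric argument handles $u_2$ near $\tilde\xi_1$.

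\emph{Step 3: inner reduction.} On each $B_i$ the other component vanishes, so the system decouples into the scalar equation $-\Delta u=\epsilon^{\frac{N-2}{N-4}}u+u^{2^*-1}$. Write $u_i=P_\epsilon U_{\lambda_i,\tilde\xi_i}+\phi$ with $\phi$ orthogonal, in a cut-off sense, to the $N+1$ kernel elements. Solve the projected equation by the standard linear theory in weighted $L^\infty$ norms. The error here is of order $\epsilon^{\frac{N-2}{2(N-4)}}$ times a positive power, coming from the projection term $H$ and the linear term, and this gives \eqref{eq:strong-correction}. Glue the inner and outer pieces by a fixed point on the matching data, which is contractive because of the scale separation.

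\emph{Step 4: finite-dimensional problem.} The Lagrange multipliers $c_{i,j}$ vanish iff
\[
\partial_{\lambda_i}\bigl[\,\tfrac{A}{2}\varphi_\Omega(\xi_i)\lambda_i^{N-2}-\tfrac{B}{2}\lambda_i^{2}\,\bigr]\epsilon^{\frac{N-2}{N-4}}+o(\epsilon^{\frac{N-2}{N-4}})=0
\]
and $\nabla\varphi_\Omega(\xi_i)+o(1)=0$, with constants $A,B$ given by the integrals in $\lambda_i^0$. The $\lambda$-equation has the nondegenerate zero $\lambda_i^0$. Together with $\deg(\nabla\varphi_\Omega,D_i,0)\ne0$, a homotopy argument gives a nonzero Brouwer degree on the product set, hence a zero of the reduced map. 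Nonnegativity follows from the construction and the maximum principle, and nontriviality from the bubble profile.

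\emph{Main obstacle.} I expect the hardest part to be Step 2 together with the continuity of the exterior minimizer in the gluing. One must show that the dead core is robust under perturbation of the data, so that the inner equations stay uncoupled on all of $B_i$ uniformly in the parameters. Otherwise the non-differentiable coupling re-enters the reduction. To handle this, I would first try an explicit barrier $v=a(R-|x|)_+^{2/(2-p)}$ combined with the uniform smallness of $u_1$ obtained from the minimization.
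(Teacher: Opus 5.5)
Your outline has the same architecture as the paper: an exterior variational problem, a dead core from the sublinear absorption, a localized projected reduction, a fixed point on the inner data, and Brouwer degree. Your reduced map in Step~4 and your dead-core radius heuristic in Step~2 also match the paper. The gap is Step~1, on which everything else rests.

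\textbf{Step~1 does not work as stated.} The untruncated energy cannot be minimized over the exterior affine class. For any nonnegative $\eta\in C_c^\infty$ supported in the exterior region, the critical term drives $J(u_1+t\eta,u_2)\to-\infty$ as $t\to\infty$, while the coupling grows only like $t^p$ with $p<2^*$. A local minimization would need two things you do not supply: a neighborhood on which $J$ is strictly convex, and a proof that the minimizer is interior. ``Locally convex'' is not automatic. The map $(s,t)\mapsto|s|^p|t|^p$ has Hessian determinant $p^2(1-2p)|s|^{2p-2}|t|^{2p-2}<0$, and it is not $C^2$ on the axes, which is exactly where the dead cores lie.

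More importantly, the gluing step needs uniqueness of the exterior solution. It also needs Lipschitz dependence of that solution, and of its gradient on the cut-off annuli, on inner data measured in $L^\infty$. An $H^1$ argument alone does not give this, because sup-norm control of the data does not control an $H^1$ lift.

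The paper resolves all of this as follows. It truncates the nonlinearities outside $P_1,Q_1$ by $\zeta(s/\Gamma_i)$, which makes the functional coercive and weakly lower semicontinuous (Lemma~\ref{lem3}) and gives a pointwise bound on the potential. Kato's inequality and the comparison Lemma~\ref{lem0} then give $|u_i|\le C(1+|y-\tilde\xi_i|)^{2-N}\le\Gamma_i$, so the truncation is inactive (Lemma~\ref{lem7}). Uniqueness and $L^\infty$ stability come from Lemma~\ref{lem9}. That lemma uses only the monotonicity of $t\mapsto|t|^{p-2}t$, plus a two-center barrier for the cross term $(|u_2|^p-|\bar u_2|^p)|\bar u_1|^{p-2}\bar u_1$. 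This is what makes $T_\epsilon$ in Lemma~\ref{lem13} a contraction, rather than ``scale separation'' alone. The accompanying exponent bookkeeping is what forces $\tau\in\mathcal I_N$; your radius hierarchy still contains an unresolved exponent.

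\textbf{Further points.} First, the dead-core comparison in Step~2 needs the lower bound $u_2\ge cU_2$ on all of $B(\tilde\xi_2,\tfrac32R_{\mathrm{dc},\epsilon}(\varrho))$. There $U_2$ is only of size $\epsilon^{1-\varrho}$, so ``close to the bubble tails'' does not provide it. The paper gets it from the weighted two-center estimate in Lemma~\ref{lem8}, which is also where the upper bound on $\tau$ for $N\ge7$ enters.

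Second, your barrier has the wrong orientation. $a(R-|x|)_+^{2/(2-p)}$ vanishes outside $B_R$ and describes compact support. To force $u_1=0$ near $\tilde\xi_2$ you need $a(|x|-R)_+^{2/(2-p)}$. With $a$ chosen small in terms of the absorption constant $c$, this is a supersolution of $-\Delta v+cv^{p-1}\ge0$, and it dominates $|u_1|=O(\epsilon^{(N-2)/(N-4)})$ on the outer sphere; compare Lemmas~\ref{lemdead} and~\ref{deadout}.

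Third, \eqref{eq:strong-correction} is a bound on all of $\Omega_\epsilon$. The inner error controls $\phi_\epsilon$ only near $\tilde\xi_1$. Elsewhere you need Lemma~\ref{lem8} outside the dead core, and $\phi_\epsilon=-V_1=O(\epsilon^{(N-2)/(N-4)})$ inside it.

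Finally, a fixed degree domain $D_i$ only yields $\xi_{i,\epsilon}\in D_i$. Convergence to $(\lambda_i^0,\bar\xi_i)$ requires degree domains shrinking to $\bar\xi_i$, scale intervals shrinking to $\lambda_i^0$, and a diagonal choice of $\epsilon_k$.
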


\medskip

\begin{Thm}\label{thm:dead-core}
There exists $\varrho_0>0$ such that, for every fixed
$\varrho\in(0,\varrho_0)$, there exists
$\epsilon_0=\epsilon_0(\varrho)>0$ such that the solutions
$(u_{1,\epsilon},u_{2,\epsilon})$ constructed in
Theorem~\ref{thm:rescaled-main} satisfy, for every
$\epsilon\in(0,\epsilon_0)$,
\[
u_{1,\epsilon}=0
\quad\text{in }
B_{\epsilon^{-\frac{1-\varrho}{N-2}}}(\tilde\xi_{2,\epsilon}),
\qquad
u_{2,\epsilon}=0
\quad\text{in }
B_{\epsilon^{-\frac{1-\varrho}{N-2}}}(\tilde\xi_{1,\epsilon}).
\]
\end{Thm}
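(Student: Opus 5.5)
We would prove Theorem~\ref{thm:dead-core} by a comparison argument for the component that must vanish near the opposite center. We treat $u_1$ near $\tilde\xi_2$; the other case is symmetric.

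\textbf{Step 1: a differential inequality for $u_1$.} Since $\beta<0$ and $u_1\ge0$, the function $u_1$ satisfies, in the weak sense on $B_{R}(\tilde\xi_2)$ with $R=s_\epsilon^{-1}\delta$ for a small fixed $\delta$,
\[
-\Delta u_1\le \epsilon^{\frac{N-2}{N-4}}u_1+u_1^{2^*-1}-|\beta|\,u_2^{p}\,u_1^{p-1}.
\]
The sublinear absorption term $u_1^{p-1}$, with $p-1\in(0,1)$, is what makes dead cores possible.

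\textbf{Step 2: size of the data.} From the expansion in Theorem~\ref{thm:rescaled-main} and the bound \eqref{eq:strong-correction}, we first get an upper bound for $u_1$ on $B_R(\tilde\xi_2)$. There $u_1$ equals the bubble tail $P_\epsilon U_{\lambda_1,\tilde\xi_1}$, which is of order $|\tilde\xi_1-\tilde\xi_2|^{2-N}\sim \epsilon^{\frac{N-2}{N-4}}$, plus the correction, which is of order $\epsilon^{\frac{N-2}{2(N-4)}+\sigma}$. So $u_1\le m_\epsilon:=C\epsilon^{\frac{N-2}{2(N-4)}+\sigma}$ there.

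We also need a lower bound for $u_2$. On $B_r(\tilde\xi_2)$ we have $u_2\ge c\,|x-\tilde\xi_2|^{2-N}-m_\epsilon$. Hence $u_2\ge c r^{2-N}/2$ for all $r$ up to the radius $\rho_\epsilon$ at which $\rho_\epsilon^{2-N}\sim m_\epsilon$, that is, $\rho_\epsilon\sim\epsilon^{-\frac{1}{2(N-4)}-\frac{\sigma}{N-2}}$.

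The claimed radius $\epsilon^{-\frac{1-\varrho}{N-2}}$ is smaller than $\rho_\epsilon$ when $\varrho$ is small. This holds because $\frac{1}{N-2}<\frac{1}{2(N-4)}$ for $N<6$, and for larger $N$ one uses the $\sigma$ gain. This compatibility of exponents should be checked carefully, and it is one source of the restriction $\varrho<\varrho_0$.

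\textbf{Step 3: local dead-core comparison.} Set $w=u_1$ and fix a point $x_0\in B_{R_{\mathrm{dc},\epsilon}(\varrho)}(\tilde\xi_2)$. On the ball $B_\ell(x_0)$ with $\ell\sim|x_0-\tilde\xi_2|$ we have $u_2^p\ge c\ell^{-N}$, and $u_1\le m_\epsilon$ is tiny. So the absorption dominates the linear and critical terms:
\[
-\Delta w\le -a\,w^{p-1},\qquad a\ge c\,\ell^{-N}.
\]
Now use the standard radial supersolution
\[
W(x)=K a^{\frac{1}{2-(p-1)}}|x-x_0|^{\frac{2}{3-p}},
\]
which satisfies $-\Delta W\ge -aW^{p-1}$ for a suitable $K$. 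If $W\ge m_\epsilon$ on $\partial B_\ell(x_0)$, the weak maximum principle for the monotone operator $-\Delta w+a w^{p-1}$ gives $w\le W$, and in particular $w(x_0)=0$.

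The condition $W\ge m_\epsilon$ on $\partial B_\ell(x_0)$ reads
\[
a^{\frac1{3-p}}\ell^{\frac{2}{3-p}}\gtrsim m_\epsilon,
\quad\text{i.e.}\quad
\ell^{\frac{2-N}{3-p}}\gtrsim m_\epsilon .
\]
This amounts to $\ell\lesssim m_\epsilon^{-\frac{3-p}{N-2}}$. Since $3-p>1$, the admissible range of $\ell$ extends well beyond $\epsilon^{-\frac{1-\varrho}{N-2}}$ once $\varrho$ is small. For the points with $\ell\le1$, the center itself, one replaces $\ell$ by $1$ and uses $u_2\ge c$ on $B_1(\tilde\xi_2)$.

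\textbf{Main obstacle.} The hard part is Step 3's rigorous weak comparison with a non-Lipschitz absorption, together with tracking the exponents uniformly in $\epsilon$ and $\varrho$. We would handle the comparison via the test function $(w-W)^+$, using monotonicity of $t\mapsto t^{p-1}$ on $t\ge0$.

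The construction of Theorem~\ref{thm:rescaled-main} likely already builds $u_1$ from an exterior minimization whose dead core is exactly this radius. If so, we would instead invoke that exterior estimate directly and only check that the localization balls sit inside it.
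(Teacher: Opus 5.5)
There is a genuine gap in Steps~2--3. The upper bound you feed into the comparison is far too weak for the stated radius, and Step~3 only appears to close because of an exponent error.

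\emph{The exponent.} For $-\Delta W+aW^{p-1}\ge0$ the homogeneous profile is $W=Ka^{\frac1{2-p}}|x-x_0|^{\frac2{2-p}}$, because $1-(p-1)=2-p=\frac{N-4}{N-2}$. This is the exponent $m_{\mathrm{dc}}=\frac{2(N-2)}{N-4}$ in the proof of Lemma~\ref{lemdead}. Your exponent $\frac2{3-p}$ is smaller, so $-\Delta W\sim -r^{\frac2{3-p}-2}$ dominates $aW^{p-1}$ as $r\to0$, and $W$ is not a supersolution near $x_0$.

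\emph{What the correct profile requires.} With $a\sim\ell^{-N}$, the boundary condition becomes $m_\epsilon\lesssim\ell^{-\frac{(N-2)^2}{N-4}}$. At $\ell=\epsilon^{-\frac{1-\varrho}{N-2}}$ this reads $m_\epsilon\lesssim\epsilon^{\frac{N-2}{N-4}(1-\varrho)}$. So $u_1$ must be essentially as small as the bubble tail $\epsilon^{\frac{N-2}{N-4}}$ around $\tilde\xi_2$. Your $m_\epsilon=C\epsilon^{\frac{N-2}{2(N-4)}+\sigma}$ would need $\sigma\ge\frac{N-2}{N-4}\bigl(\frac12-\varrho\bigr)$. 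Theorem~\ref{thm:rescaled-main} gives only some unspecified $\sigma>0$, and the construction's $\sigma$, which is below $\kappa_0-\frac{N-2}{2(N-4)}$, misses this threshold for small $\varrho$. With the global correction bound, your comparison yields a dead core of radius only about $\epsilon^{-\frac1{2(N-2)}}$. The lower bound for $u_2$ has the same defect when $N\ge7$: $\rho_\epsilon$ exceeds $\epsilon^{-\frac{1-\varrho}{N-2}}$ only if $\sigma>\frac{N-6}{2(N-4)}-\varrho$, so ``use the $\sigma$ gain'' is not justified.

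\emph{What is missing} is pointwise decay information, not a global $L^\infty$ bound. You need $|u_1(y)|\le C(1+|y-\tilde\xi_1|)^{2-N}$ (Lemma~\ref{lem7}), which gives $u_1\le C\epsilon^{\frac{N-2}{N-4}}$ around $\tilde\xi_2$. You also need $|u_2-V_2|=o(U_2)$ on $B_{\frac32 R_{\mathrm{dc},\epsilon}(\varrho)}(\tilde\xi_2)$, which the paper gets from the weighted estimate of Lemma~\ref{lem8} with the margins \eqref{eq:all-deadcore-margins}.

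Your last paragraph guesses the paper's route correctly. The constructed solution is the exterior minimizer $S(b)$ for its fixed-point inner trace, so the theorem is exactly Lemma~\ref{deadout}. There the two estimates give $-\Delta|u_1|+b_0\epsilon^{N\alpha_\varrho}|u_1|^{p-1}\le0$ on $B_{\frac32\epsilon^{-\alpha_\varrho}}(\tilde\xi_2)$, with $\alpha_\varrho=\frac{1-\varrho}{N-2}$. A single comparison with the rescaled radial solution of Lemma~\ref{lemdead} finishes the proof: its boundary value $c_{b_0}\epsilon^{\frac{N-2}{N-4}(1-\varrho)}$ beats $C\epsilon^{\frac{N-2}{N-4}}$ by exactly the $\varrho$-margin.

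If you prefer to work only from the conclusions of Theorem~\ref{thm:rescaled-main}, you must first prove the decay yourself:
\begin{itemize}
\item Since $u_1\ge0$ and $\beta<0$, you have $-\Delta u_1\le(\epsilon^{\frac{N-2}{N-4}}+u_1^{2^*-2})u_1$.
\item Take the barrier $(1+|y-\tilde\xi_1|)^{-(N-2-\eta)}$ with $0<\eta<(N-2)\varrho$. Outside a large fixed ball around $\tilde\xi_1$ it is a positive supersolution of this operator. Its $-\Delta$ is at least $c\eta(1+r)^{-2}\ge c\eta\epsilon^{\frac2{N-4}}$ times itself, while the global bound makes the potential $O\bigl((1+r)^{-4}+\epsilon^{\frac2{N-4}+\frac{4\sigma}{N-2}}\bigr)$.
\item This gives $u_1\le C\epsilon^{\frac{N-2-\eta}{N-4}}$ near $\tilde\xi_2$.
\item A comparison from below for $u_2$ on an annulus around $\tilde\xi_2$, using this bound on $u_1$, then supplies $u_2\ge c|y-\tilde\xi_2|^{2-N}$.
\end{itemize}
As written, Steps~2--3 do not reach the radius $\epsilon^{-\frac{1-\varrho}{N-2}}$.
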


Here $\varrho$ is fixed before $\epsilon\to0$, and the threshold
$\epsilon_0$ may depend on $\varrho$. After returning to the original
variables, the corresponding dead-core radius is
\[
\epsilon^{\frac1{N-4}-\frac{1-\varrho}{N-2}}\longrightarrow0,
\]
whereas its ratio to the natural bubble scale $\epsilon^{1/(N-4)}$ tends to
infinity. Thus Theorem~\ref{thm:dead-core} gives a genuinely local vanishing
property around the opposite concentration point. This conclusion reflects the
sublinear nature of the competitive coupling and is a distinctive feature of
the present construction compared with classical scalar Brezis--Nirenberg
bubbling.

We conclude this subsection with several comments on the degree hypothesis and
on possible extensions of the concentration pattern.

\begin{Rem}\label{rem:degree-comparison}
Definition~\ref{defstable} is a standard local degree assumption. It implies
the $C^1$-stability of the singleton $\{x_0\}$ in the sense of
Li~\cite{Li97}; if $x_0$ is isolated, it is equivalent to the nonvanishing of
the local Brouwer degree of $\nabla\varphi_\Omega$ at $x_0$. In particular,
it holds at every nondegenerate critical point and at every isolated strict
local extremum. It is closely related to the condition in
\cite[Definition~2.4]{Musso02}, where nonzero degree is required on one fixed
neighborhood together with an additional critical-level assumption. That
assumption keeps the scale root fixed in their degree homotopy, whereas here
the scale root is allowed to depend on the center. The use of degree domains
contained in every neighborhood serves to select concentration centers
converging to the prescribed point; when $x_0$ is isolated, one sufficiently
small isolating neighborhood is enough.
\end{Rem}

\begin{Rem}\label{rem:multiple-concentration-points}
The present results concern one bubble in each component. A natural extension
is to seek solutions with several distinct concentration points, of the form
\[
u_{1,\epsilon}\sim
\sum_{a=1}^{k_1}P_\epsilon
U_{\lambda_{1,a,\epsilon},\tilde\xi_{1,a,\epsilon}},
\qquad
u_{2,\epsilon}\sim
\sum_{b=1}^{k_2}P_\epsilon
U_{\lambda_{2,b,\epsilon},\tilde\xi_{2,b,\epsilon}}.
\]
When the two limiting concentration sets are disjoint, the bubbles belonging
to the same component are expected to interact, at leading order, through the
usual Robin--Green reduced function of Kirchhoff--Routh type: its
self-interaction terms involve $\varphi_\Omega$, whereas its pairwise
interaction terms involve $G_\Omega$; see \cite{Musso02,Pistoia13}.
Accordingly, the hypothesis on the concentration points should be replaced by
a nonzero local degree condition at a prescribed zero of the full limiting
reduced map in all scale and center variables, or equivalently at a prescribed
critical configuration of the associated reduced functional. The exterior construction would also
have to produce dead cores near all centers of the opposite component, so that
the cross-component coupling vanishes in every inner region. Carrying this out
requires uniform multi-center estimates and a multi-bubble version of the
projected reduction, and is beyond the scope of the present paper.
\end{Rem}

\begin{Rem}\label{rem:mixed-patterns}
A different extension concerns components concentrating at a common point.
For the present two-component system, a natural synchronized limiting profile
in $\mathbb R^N$ is
\[
\bigl(a_1U_{\lambda,\xi},a_2U_{\lambda,\xi}\bigr),
\qquad a_1,a_2>0,
\]
and substitution into the limiting system gives
\[
\begin{cases}
1=a_1^{2p-2}+\beta a_1^{p-2}a_2^p,\\
1=a_2^{2p-2}+\beta a_2^{p-2}a_1^p.
\end{cases}
\]
Thus such a branch can be considered only when this algebraic system has a
positive solution. Its construction also requires a suitable nondegeneracy
result for the corresponding vector bubble, after the common translation and
scaling modes are taken into account.

More generally, for an $m$-component system one may choose a partition into
$m_0$ nonempty groups, where $1\le m_0\le m$:
\[
\{1,\ldots,m\}=G_1\cup\cdots\cup G_{m_0},
\qquad
G_h\cap G_\ell=\varnothing\quad\text{if }h\ne\ell,
\]
and seek grouped profiles of the form
\[
u_{i,\epsilon}\sim
a_i^{(h)}P_\epsilon
U_{\lambda_{h,\epsilon},\tilde\xi_{h,\epsilon}},
\qquad i\in G_h,\quad h=1,\ldots,m_0,
\]
with distinct limiting centers for different groups. The positive amplitude
vector in each group must satisfy the corresponding algebraic system. The
components are then synchronized within each group and segregated between
different groups; related patterns were studied in \cite{DovettaPistoia}.
In the present Brezis--Nirenberg setting, such a construction would combine
the amplitude systems and vector-bubble linear theory within the groups with
dead-core estimates between different groups. The finite-dimensional
reduction in the group scales and centers would contain Robin terms whose
coefficients depend on the amplitude vectors. If one also allows several
bubbles in a component, the Robin--Green interactions described in
Remark~\ref{rem:multiple-concentration-points} must be included as well.
Different positive amplitude vectors or different partitions may lead to
different branches of bubbling solutions. We do not pursue these extensions
here.
\end{Rem}

\medskip

\subsection{Structure of the paper}

The paper is organized as follows. Section~2 formalizes the hierarchy of
scales and nested regions used in the inner--outer decomposition, and develops
the projected linear theory. Section~3 constructs the nonlinear exterior
solution and proves the decay, stability, and dead-core estimates. Section~4
uses the inclusion of $P_2,Q_2$ in the dead-core regions to solve the localized
projected problem near the two bubbles. Section~5 analyzes the reduced
finite-dimensional equations and completes the proof by Brouwer degree. Appendix~A collects auxiliary estimates and the simultaneous
choice of the auxiliary exponents, while Appendix~B records the
parameter-dependence results needed in the construction.

\medskip
\section{Preliminaries}

\subsection{Scales, regions, and projections}

We first formalize the geometric decomposition described in the introduction.
Recall the notation
\[
s_\epsilon=\epsilon^{\frac1{N-4}},
\qquad
\Omega_\epsilon=s_\epsilon^{-1}\Omega,
\qquad
R_\Omega:=\operatorname{diam}(\Omega).
\]
For $0<\delta<1$, define the open configuration set
\[
O_\delta:=
\left\{(\lambda_1,\xi_1,\lambda_2,\xi_2)
\in(\mathbb R_+\times\Omega)^2:
\begin{aligned}
&\operatorname{dist}(\xi_i,\partial\Omega)>\delta,
\quad \delta<\lambda_i<\delta^{-1},\quad i=1,2,\\
&|\xi_1-\xi_2|>\delta
\end{aligned}
\right\}.
\]
Fix a configuration vector
$\mathbf t=(\lambda_1,\xi_1,\lambda_2,\xi_2)\in O_\delta$ and put
\[
\tilde\xi_i=s_\epsilon^{-1}\xi_i,
\qquad i=1,2.
\]
Then
\[
|\tilde\xi_1-\tilde\xi_2|>\delta s_\epsilon^{-1},
\qquad
\operatorname{dist}(\tilde\xi_i,\partial\Omega_\epsilon)
>\delta s_\epsilon^{-1}.
\]
Thus the mutual separation of the centers and their distance from the boundary
are both measured on the scale $s_\epsilon^{-1}$.

Throughout the paper we fix $\tau$ in the dimension-dependent range
\begin{equation}\label{tau-range}
\tau\in\mathcal I_N:=
\begin{cases}
(4,+\infty), & N=5,\\[1mm]
(2,+\infty), & N=6,\\[1mm]
\displaystyle\left(\frac{4}{N-4},\frac{N+8}{N-6}\right), & N\ge7,
\end{cases}
\end{equation}
and set
\[
B_\tau:=(2+\tau)(N-4).
\]
We define the intermediate radius by
\begin{equation}\label{repsilon}
r_\epsilon:=\epsilon^{-1/B_\tau}.
\end{equation}
The lower bound in \eqref{tau-range} allows us to choose
$\varrho_\tau>0$ such that, for every $\varrho\in(0,\varrho_\tau)$,
\begin{equation}\label{deadcore-fit}
\frac{2}{(2+\tau)(N-4)}
<\frac{1-\varrho}{N-2}.
\end{equation}
With
\[
R_{\mathrm{dc},\epsilon}(\varrho)
:=\epsilon^{-\frac{1-\varrho}{N-2}},
\]
this gives the scale hierarchy
\begin{equation}\label{scale-hierarchy}
1\ll r_\epsilon\ll r_\epsilon^2
\ll R_{\mathrm{dc},\epsilon}(\varrho)
\ll s_\epsilon^{-1}
\qquad (0<\varrho<\varrho_\tau),
\end{equation}
where $a_\epsilon\ll b_\epsilon$ means
$a_\epsilon/b_\epsilon\to0$ as $\epsilon\to0$. For $N\ge7$, the upper
bound in \eqref{tau-range} is used in the mixed-barrier estimate of
Lemma~\ref{lem8}. The interval $\mathcal I_N$ is nonempty for every $N\ge5$.

Unless stated otherwise, constants denoted by $C$ are independent of
$\epsilon$ and of $\mathbf t\in O_\delta$, and every smallness threshold
$\epsilon_0$ is uniform for $\mathbf t\in O_\delta$. These constants and
thresholds may depend on the fixed data $N,\Omega,\beta,\delta,\tau$ and
on the auxiliary cut-off functions and exponents once these have been
chosen. In statements in which $\varrho>0$ is fixed, the threshold may
also depend on $\varrho$.

We now introduce the two nested regions around each center:
\begin{equation}\label{PQ}
\begin{aligned}
P_1&:=B_{r_\epsilon}(\tilde\xi_1),
&\qquad P_2&:=B_{r_\epsilon^2}(\tilde\xi_1),\\
Q_1&:=B_{r_\epsilon}(\tilde\xi_2),
& Q_2&:=B_{r_\epsilon^2}(\tilde\xi_2).
\end{aligned}
\end{equation}
By \eqref{scale-hierarchy} and the definition of $O_\delta$, for all
sufficiently small $\epsilon$,
\[
\overline{P_2}\cup\overline{Q_2}\Subset\Omega_\epsilon,
\qquad
P_2\cap Q_2=\varnothing.
\]
The smaller balls $P_1,Q_1$ are the trace regions for the exterior variational
problem. The larger balls $P_2,Q_2$ are the localization regions for the
projected linear problems and contain the transition annuli used in Section~4.
The relation $r_\epsilon^2\ll R_{\mathrm{dc},\epsilon}(\varrho)$ is chosen so
that the dead-core estimate proved in Section~3 eliminates the opposite
component on all of $P_2$ and $Q_2$; the resulting inclusion is recorded in
Remark~\ref{remdead}.

Set
\begin{equation}\label{UV}
U_i:=U_{\lambda_i,\tilde\xi_i},
\qquad
V_i:=P_\epsilon U_i,
\qquad i=1,2.
\end{equation}
For $j=1,\ldots,N$ and $i=1,2$, let
\begin{equation}\label{Y}
Y_i^0:=\frac{\partial U_{\lambda_i,\tilde\xi_i}}{\partial\lambda_i},
\qquad
Y_i^j:=\frac{\partial U_{\lambda_i,\tilde\xi_i}}
{\partial\tilde\xi_i^j}
=-\partial_{y_j}U_i.
\end{equation}
We denote by $Z_i^0$ and $Z_i^j$ their $H_0^1(\Omega_\epsilon)$-projections,
that is, the unique solutions of
\[
\Delta Z_i^0=\Delta Y_i^0,
\qquad
\Delta Z_i^j=\Delta Y_i^j
\quad\text{in }\Omega_\epsilon,
\qquad
Z_i^0=Z_i^j=0
\quad\text{on }\partial\Omega_\epsilon.
\]
Equivalently,
\begin{equation}\label{Z}
Z_i^0=P_\epsilon Y_i^0,
\qquad
Z_i^j=P_\epsilon Y_i^j,
\qquad i=1,2,\quad j=1,\ldots,N.
\end{equation}

The projections in $\Omega$ and $\Omega_\epsilon$ are related by
\begin{align}\label{PP}
P U_{s_\epsilon\lambda,s_\epsilon y}(x)
=s_\epsilon^{-\frac{N-2}{2}}
P_\epsilon U_{\lambda,y}\left(\frac{x}{s_\epsilon}\right),
\qquad x\in\Omega.
\end{align}
If $\tilde\xi_i=s_\epsilon^{-1}\xi_i$, then
\[
P_\epsilon Y^0_{\lambda_i,\tilde\xi_i}(y)
=s_\epsilon^{\frac N2}P Y^0_{s_\epsilon\lambda_i,\xi_i}(s_\epsilon y),
\qquad
P_\epsilon Y^j_{\lambda_i,\tilde\xi_i}(y)
=s_\epsilon^{\frac N2}P Y^j_{s_\epsilon\lambda_i,\xi_i}(s_\epsilon y).
\]
The Green function and its regular part satisfy
\[
G_{\Omega_\epsilon}(y,z)
=s_\epsilon^{N-2}G_\Omega(s_\epsilon y,s_\epsilon z),
\qquad
H_{\Omega_\epsilon}(y,z)
=s_\epsilon^{N-2}H_\Omega(s_\epsilon y,s_\epsilon z),
\]
and hence
\[
\varphi_{\Omega_\epsilon}(y)
=s_\epsilon^{N-2}\varphi_\Omega(s_\epsilon y).
\]
Appendix~A records the projection estimates in the original variables; the
corresponding estimates in $\Omega_\epsilon$ follow from these scaling
identities.

\medskip

Fix a sufficiently large $R_0>0$ and choose
$\chi_0\in C_c^\infty(\mathbb R^N)$ such that $\chi_0=1$ in $B_{R_0}(0)$
and $\chi_0=0$ outside $B_{2R_0}(0)$. Set
\[
Y_{i,0}=\chi_0(\cdot-\tilde\xi_i)Y_i^0,
\qquad
Y_{i,j}=\chi_0(\cdot-\tilde\xi_i)Y_i^j,
\quad i=1,2,\quad j=1,\ldots,N.
\]
Since $r_\epsilon\to\infty$, after reducing $\epsilon_0$ we have
\[
\operatorname{supp}Y_{1,j}\Subset P_1,
\qquad
\operatorname{supp}Y_{2,j}\Subset Q_1,
\qquad j=0,\ldots,N.
\]

For \(\epsilon>0\), \(i=1,2\), \(\lambda_i>0\), and
\(\tilde\xi_i\in\Omega_\epsilon\), define the linear moment functionals
\[
\ell_{i,j}(u)
:=
\int_{\Omega_\epsilon}
U_{\lambda_i,\tilde\xi_i}^{2^*-2}Y_{i,j}u,
\qquad j=0,1,\ldots,N.
\]
We set
\[
E_i:=E_\epsilon^{\lambda_i,\tilde\xi_i}
=
\bigcap_{j=0}^N\ker \ell_{i,j}
\subset H_0^1(\Omega_\epsilon).
\]
The space \(E_i\) is a closed finite-codimensional subspace of
\(H_0^1(\Omega_\epsilon)\), defined by the above moment conditions. These
conditions are not meant as orthogonality with respect to the \(H_0^1\)-inner
product. In what follows, projected problem always refers to this moment
projection.

Set
\begin{equation*}
\mathbb E=E_{1}\times E_{2}.
\end{equation*}
We use the notation
\[
C_0(\Omega_\epsilon)
:=\{v\in C(\overline{\Omega_\epsilon}):v=0
\text{ on }\partial\Omega_\epsilon\}.
\]

\medskip
\subsection{Linear problems}
We consider the following two linear problems.
\begin{equation} \label{eq:linear-first}
\begin{split}
\begin{cases}
-\Delta\varphi-\epsilon^{\frac{N-2}{N-4}}\varphi-(2^{*}-1)V_1^{2^{*}-2}\varphi=\mathbf1_{P_2}h_{1}+\sum_{j=0}^{N}c_{j} U^{2^{*}-2}_{1}Y_{1,j}\ &\text{in } \Omega_\epsilon\\[3mm]
\varphi=0\ \ &\text{on } \partial\Omega_\epsilon\\[3mm]
\int_{\Omega_\epsilon} U^{2^{*}-2}_{1}Y_{1,l} \varphi =0,\ l=0,1,\ldots, N,
\end{cases}
\end{split}
\end{equation}
\begin{equation} \label{eq:linear-second}
\begin{split}
\begin{cases}
-\Delta\psi-\epsilon^{\frac{N-2}{N-4}}\psi-(2^{*}-1)V_2^{2^{*}-2}\psi=\mathbf1_{Q_2}h_{2}+\sum_{j=0}^{N}d_{j} U^{2^{*}-2}_{2}Y_{2,j}\ &\text{in } \Omega_\epsilon\\[3mm]
\psi=0\ \ &\text{on } \partial\Omega_\epsilon\\[3mm]
\int_{\Omega_\epsilon} U^{2^{*}-2}_{2}Y_{2,l} \psi=0,\ l=0,1,\ldots, N,
\end{cases}
\end{split}
\end{equation}
for some $c_{j},d_{j}\in\mathbb R$.

\begin{Lem}\label{lem1}
Let \(0<\delta<1\). Suppose that \(\varphi=\varphi_\epsilon\) solves
\eqref{eq:linear-first} with \(h_1\in L^\infty(\Omega_\epsilon)\). If
\(r_\epsilon^4\|h_1\|_{L^\infty(P_2)}\to0\) as \(\epsilon\to0\), then
\(\|\varphi_\epsilon\|_{L^\infty(\Omega_\epsilon)}\to0\), uniformly for
configurations in \(O_\delta\). The analogous statement holds for
\eqref{eq:linear-second} with \(P_2,h_1\) replaced by \(Q_2,h_2\).
\end{Lem}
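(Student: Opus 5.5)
This is an a priori estimate, and I would prove it by contradiction through a blow-up argument. Suppose there are $\epsilon_n\to0$, configurations $\mathbf t_n\in O_\delta$ and solutions $\varphi_n$ of \eqref{eq:linear-first} with $r_{\epsilon_n}^4\|h_n\|_{L^\infty(P_2)}\to0$, but $\|\varphi_n\|_{L^\infty(\Omega_{\epsilon_n})}\ge c>0$. Since the statement is linear and we only need $\|\varphi_n\|_\infty\to0$, I would first treat the case in which $\|\varphi_n\|_\infty$ is bounded, and reduce to it as follows. If $\|\varphi_n\|_\infty\to\infty$, normalize by $\|\varphi_n\|_\infty$; the rescaled right-hand sides satisfy the same hypothesis with an even better bound. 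So it suffices to show that $\|\varphi\|_\infty\le C r_\epsilon^4\|h_1\|_{L^\infty(P_2)}+o(1)$. Concretely, I would normalize $\|\varphi_n\|_\infty=1$ and assume $r^4\|h_n\|\to0$, and then derive a contradiction.

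\emph{Step 1: the multipliers.} Test the equation against $Z_1^l$ (or against $Y_{1,l}$ plus a correction). Use $-\Delta Z_1^l=(2^*-1)U_1^{2^*-2}Y_1^l$, together with the projection estimates of Appendix~A: $V_1-U_1=O(s_\epsilon^{N-2})$ near $\tilde\xi_1$, and the term $\epsilon^{\frac{N-2}{N-4}}=s_\epsilon^{N-2}$ is small. Since $\int U_1^{2^*-2}Y_{1,j}Y_1^l$ is a nondegenerate (nearly diagonal) matrix, this gives $|c_j|\le C\bigl(o(1)\|\varphi\|_\infty+\int_{P_2}|h_1||Z_1^l|\bigr)$. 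Now $|Z_1^l|\lesssim(1+|y-\tilde\xi_1|)^{1-N}$, or $(1+|y|)^{2-N}$ for $l=0$, so the integral over $P_2$ is bounded by $C r_\epsilon^{2}\|h_1\|_{L^\infty(P_2)}$ (the radius is $r_\epsilon^2$ and the weight is integrable up to a growth of $(r^2)^2$). Hence $c_j\to0$.

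\emph{Step 2: a barrier bound away from the bubble.} The right-hand side $f:=\mathbf 1_{P_2}h_1+\sum c_jU_1^{2^*-2}Y_{1,j}$ is supported in $P_2$. Use the Green representation in $\Omega_\epsilon$ for the operator $-\Delta-\epsilon^{\frac{N-2}{N-4}}$; it is positive, with Green function comparable to $|x-y|^{2-N}$, because $\epsilon^{\frac{N-2}{N-4}}$ is much smaller than the first eigenvalue $\sim s_\epsilon^{2}$ of $\Omega_\epsilon$ (indeed $s_\epsilon^{N-2}\ll s_\epsilon^2$). Writing $|\varphi|\le \int G(x,y)\bigl(|f|+(2^*-1)V_1^{2^*-2}|\varphi|\bigr)$ and bounding the two pieces separately:
\[
\int_{P_2}|x-y|^{2-N}|h_1|\lesssim r_\epsilon^4\|h_1\|_{L^\infty(P_2)}\to0,
\qquad
\int |x-y|^{2-N}V_1^{2^*-2}\lesssim(1+|x-\tilde\xi_1|)^{-2}.
\]
This shows that the factor $r_\epsilon^4$ is exactly the Newtonian potential of a ball of radius $r_\epsilon^2$. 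The conclusion is $|\varphi(x)|\le o(1)+C(1+|x-\tilde\xi_1|)^{-2}$, so a maximum point $x_n$ with $|\varphi_n(x_n)|\ge c$ must stay at bounded distance from $\tilde\xi_1$.

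\emph{Step 3: the limit problem.} Translate to $\tilde\xi_1$. By elliptic estimates, $\varphi_n\to\varphi_\infty$ in $C^1_{loc}$, with $\lambda_n\to\lambda_\infty\in[\delta,\delta^{-1}]$. The limit is a bounded solution of $-\Delta\varphi_\infty=(2^*-1)U_{\lambda_\infty,0}^{2^*-2}\varphi_\infty$ in $\mathbb R^N$. It decays, by the same Green estimate, and satisfies the orthogonality conditions $\int U^{2^*-2}\chi_0Y^j\varphi_\infty=0$ for all $j$. By the nondegeneracy of the bubble (Bianchi--Egnell), $\varphi_\infty$ is a combination of the $Y^j$. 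The moment conditions define a nondegenerate pairing, since $\int U^{2^*-2}\chi_0Y^jY^l$ is invertible for $R_0$ large, and therefore $\varphi_\infty=0$. This contradicts $|\varphi_\infty(\lim(x_n-\tilde\xi_1))|\ge c$.

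\emph{Main obstacle.} I expect the main obstacle to be Step 2, namely getting the decay bound uniformly. The potential term $V_1^{2^*-2}|\varphi|$ only gives $(1+|x|)^{-2}$ decay, not smallness. Smallness far from the bubble comes from the positivity of $-\Delta-\epsilon^{\frac{N-2}{N-4}}$ on $\Omega_\epsilon$, and I would check this carefully using the comparison of eigenvalues above. The uniformity over $O_\delta$ follows because all constants depend only on $\lambda_i\in(\delta,\delta^{-1})$ and on $\operatorname{dist}(\tilde\xi_1,\partial\Omega_\epsilon)\ge\delta s_\epsilon^{-1}$. The statement for \eqref{eq:linear-second} is proved identically.
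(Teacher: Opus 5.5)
Your proposal is correct and follows essentially the same route as the paper: contradiction with $\|\varphi\|_\infty=1$, multipliers controlled by testing against $Z_1^l$, a Green-representation bound $|\varphi(y)|\le C(1+|y-\tilde\xi_1|)^{-2}+o(1)$ that keeps the maximum at bounded distance from $\tilde\xi_1$, and a blow-up limit that vanishes by nondegeneracy of the bubble and the moment conditions. Two small remarks: for $l=0$ the multiplier bound is $\int_{P_2}|h_1||Z_1^0|\lesssim r_\epsilon^4\|h_1\|_{L^\infty(P_2)}$ rather than $r_\epsilon^2$, which is harmless under the hypothesis; and you do not need the Green function of the shifted operator, because with the Laplacian's Green function the mass term is bounded directly by $\epsilon^{\frac{N-2}{N-4}}\|\varphi\|_\infty\int_{\Omega_\epsilon}|y-z|^{2-N}\,dz\le C\epsilon\|\varphi\|_\infty$, which is what the paper does.
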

\begin{proof}
Arguing by contradiction, suppose that there exist a sequence
$\epsilon_m\to0$, functions $h_{1,m}$, configuration vectors
$\mathbf t_m=(\lambda_{1,m},\xi_{1,m},\lambda_{2,m},\xi_{2,m})\in O_\delta$, and
solutions $\varphi_m$ of \eqref{eq:linear-first} such that
$r_{\epsilon_m}^4\|h_{1,m}\|_{L^\infty(P_2)}\to0$ but
$\|\varphi_m\|_{L^\infty(\Omega_{\epsilon_m})}\ge c'>0$. After normalization,
we may assume that $\|\varphi_m\|_{L^\infty(\Omega_{\epsilon_m})}=1$. To
simplify notation, we suppress the index $m$ and write
$\tilde\xi_{i,\epsilon}=s_\epsilon^{-1}\xi_{i,\epsilon}$. Then
\eqref{eq:linear-first} has the representation
\begin{equation}\label{eq8}
\begin{split}
\varphi(y)=&\gamma_N\int_{\Omega_{\epsilon}} G_{\Omega_{\epsilon}}(z,y)
\left((2^{*}-1)V^{2^{*}-2}_{1}(z)+\epsilon^{\frac{N-2}{N-4}}\right)\varphi(z)\,dz \\
&+\gamma_N\int_{\Omega_{\epsilon}} G_{\Omega_{\epsilon}}(z,y)
\Big(\mathbf1_{P_2}(z)h_{1}(z)+\sum_{j=0}^{N}c_{j} U^{2^{*}-2}_{1}Y_{1,j}(z)\Big)\,dz.
\end{split}
\end{equation}

From Lemma~\ref{lemB2}, we have
\begin{equation*}
\begin{split}
&\Big\lvert \int_{\Omega_{\epsilon}} G_{\Omega_{\epsilon}}(z,y)(2^{*}-1)V^{2^{*}-2}_{1}(z)\varphi(z) dz\Big\rvert\\
\leq &C \lVert \varphi\rVert_{L^{\infty}(\Omega_\epsilon)} \int_{\Omega_\epsilon} \frac{1}{\lvert z-y\rvert^{N-2}} \Big( \frac{1}{(1+\lvert z-\tilde\xi_1\rvert)^{N-2}}\Big)^{\frac{4}{N-2}}dz
\leq C \lVert \varphi\rVert_{L^{\infty}(\Omega_\epsilon)} \frac{1}{(1+\lvert y-\tilde\xi_1\rvert)^{2}},
\end{split}
\end{equation*}
 \begin{equation*}
\begin{split}
&\Big \lvert\int_{\Omega_{\epsilon}} G_{\Omega_{\epsilon}}(z,y)\epsilon^{\frac{2}{N-4}+1}\varphi(z)\,dz\Big\rvert
\leq C \epsilon^{\frac{2}{N-4}+1}\lVert \varphi\rVert_{L^{\infty}(\Omega_\epsilon)} \int_{\Omega_\epsilon} \frac{1}{\lvert z-y\rvert^{N-2}} dz
\leq C  \epsilon  \lVert \varphi\rVert_{L^{\infty}(\Omega_\epsilon)},
\end{split}
\end{equation*}

\begin{equation*}
\Big\lvert \int_{\Omega_{\epsilon}}G_{\Omega_{\epsilon}}(z,y)
\mathbf1_{P_2}(z)h_1(z)\,dz\Big\rvert
\le C\int_{P_2}\frac{|h_1(z)|}{|z-y|^{N-2}}\,dz
\le Cr_\epsilon^4\|h_1\|_{L^\infty(P_2)}.
\end{equation*}
and
\begin{equation*}
\begin{split}
\Big\lvert \int_{\Omega_{\epsilon}} G_{\Omega_{\epsilon}}(z,y)U^{2^{*}-2}_{1}(z)Y_{1,j}(z) dz\Big\rvert
\leq &C \int_{\Omega_\epsilon} \frac{1}{\lvert z-y\rvert^{N-2}} \frac{1}{(1+\lvert z-\tilde\xi_1\rvert)^{N+2}}dz
\leq   \frac{C}{(1+\lvert y-\tilde\xi_1\rvert)^{N-2}}.
\end{split}
\end{equation*}

We next estimate $c_l$, $l=0,1,\ldots,N$. Multiplying \eqref{eq:linear-first} by $Z_1^l$ and integrating, we obtain the linear system
\begin{equation}\label{eq11}
\sum_{j=0}^{N}\left(\int_{\Omega_{\epsilon}}  U^{2^{*}-2}_{1}Y_{1,j}Z_{1}^l\right)c_{j}=\int_{\Omega_{\epsilon}}-\Delta \varphi Z_{1}^l -\epsilon^{\frac{N-2}{N-4}}\varphi Z_{1}^l -(2^{*}-1)V^{2^{*}-2}_1\varphi Z_{1}^l - \mathbf1_{P_2}h_{1}Z_{1}^l.
\end{equation}
Lemma~\ref{lemB1} gives
\begin{equation*}
\begin{split}
\left\lvert \int_{\Omega_{\epsilon}}\mathbf1_{P_2}h_{1} Z_{1}^l\right \rvert
\leq&  C \lVert h_{1} \rVert_{L^{\infty}(P_{2})} \int_{B_{r_\epsilon^2}(\tilde\xi_1)} \frac{1}{(1+\lvert z-\tilde\xi_1\rvert)^{N-2}}dz
\leq C r^{4}_{\epsilon}\lVert h_{1} \rVert_{L^{\infty}(P_{2})}.
\end{split}
\end{equation*}
Moreover, we estimate
 \begin{equation*}
\begin{split}
&\int_{\Omega_{\epsilon}}
\Big(-\Delta \varphi-\epsilon^{\frac{N-2}{N-4}}\varphi
-(2^{*}-1)V^{2^{*}-2}_1\varphi\Big)Z_{1}^l \\
={}&\int_{\Omega_{\epsilon}}
\Big(-\Delta Z_{1}^l-(2^{*}-1)U^{2^{*}-2}_1Y_{1}^l\Big)\varphi \\
&+(2^{*}-1)\int_{\Omega_\epsilon}
\Big[U_1^{2^*-2}(Y_1^l-Z_1^l)
+\big(U_1^{2^*-2}-V_1^{2^*-2}\big)Z_1^l\Big]\varphi \\
&-\epsilon^{\frac{N-2}{N-4}}\int_{\Omega_\epsilon}\varphi Z_1^l
=O(\epsilon)\|\varphi\|_{L^\infty(\Omega_\epsilon)}.
\end{split}
\end{equation*}
The last estimate follows from Lemmas~\ref{lemRj0} and~\ref{lemD}, together with
the scaling relation
\(\epsilon^{(N-2)/(N-4)}|\Omega_\epsilon|^{2/N}=O(\epsilon)\).

After translation to the bubble center, the coefficient matrix
\[
\left(\int_{\Omega_\epsilon}U_1^{2^*-2}Y_{1,j}Z_1^l\right)_{0\le j,l\le N}
\]
converges to
 \[
\left(\int_{\mathbb R^N}\chi_0U_{\lambda_1,0}^{2^*-2}Y_jY_l\right)_{0\le j,l\le N}.
\]
The weight is positive on a ball and the functions $Y_0,\ldots,Y_N$ are
linearly independent; hence this Gram matrix is positive definite. The
linear system is therefore invertible for small $\epsilon$, and
\[
c_l=O\Big(\epsilon\|\varphi\|_{L^\infty(\Omega_\epsilon)}
+r_\epsilon^4\|h_1\|_{L^\infty(P_2)}\Big).
\]
Consequently,
\begin{equation}\label{eq12}
|\varphi(y)|\le
\frac{C\|\varphi\|_{L^\infty(\Omega_\epsilon)}}{(1+|y-\tilde\xi_1|)^2}
+C\epsilon\|\varphi\|_{L^\infty(\Omega_\epsilon)}
+Cr_\epsilon^4\|h_1\|_{L^\infty(P_2)}.
\end{equation}
Since $\|\varphi\|_\infty=1$, there are $R,c_0>0$ such that
\begin{equation}\label{eq13}
\|\varphi\|_{L^\infty(B_R(\tilde\xi_1))}\ge c_0.
\end{equation}
After passing to a subsequence,
$\widehat\varphi_\epsilon(z):=\varphi_\epsilon(z+\tilde\xi_{1,\epsilon})$
converges in $C^1_{\mathrm{loc}}(\mathbb R^N)$ to a bounded solution $u$ of
\begin{equation}\label{eq14}
-\Delta u-(2^*-1)U_{\lambda,0}^{2^*-2}u=0
\quad\text{in }\mathbb R^N
\end{equation}
 for some $\lambda>0$. Because the limit is bounded, the bounded-solution
nondegeneracy theorem for the linearized bubble equation
\cite{BartschWethWillem03}, after scaling, applies and gives
 \[
u=\sum_{j=0}^N a_jY_j.
\]
Passing to the limit in the moment conditions gives
\[
\sum_{j=0}^Na_j\int_{\mathbb R^N}\chi_0U_{\lambda,0}^{2^*-2}Y_jY_l=0,
\qquad l=0,\ldots,N.
\]
The matrix in this system is positive definite, since $\chi_0\ge0$ is
positive on a ball and $Y_0,\ldots,Y_N$ are linearly independent. Hence
$a_0=\cdots=a_N=0$, so $u=0$, contradicting \eqref{eq13}.

\end{proof}

Lemma~\ref{lem1}, together with the Fredholm argument below, yields the
following solvability result. The estimates are uniform for configurations in
$O_\delta$.

\begin{Prop}\label{prop:linear-first}
There exist constants $\epsilon_0,C>0$, independent of the configuration
in $O_\delta$, such that, for every $\epsilon\in(0,\epsilon_0)$ and
$h_1\in L^\infty(\Omega_\epsilon)$, problem \eqref{eq:linear-first} has a
unique solution $\varphi=L_1h_1\in E_1\cap C_0(\Omega_\epsilon)$. Moreover,
\[
\|L_1h_1\|_{L^\infty(\Omega_\epsilon)}
+\max_{0\le j\le N}|c_j|
\le Cr_\epsilon^4\|h_1\|_{L^\infty(P_2)}.
\]
\end{Prop}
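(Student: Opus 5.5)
The proof has three parts: an a priori bound obtained from Lemma~\ref{lem1} by a contradiction argument, existence from the Fredholm alternative on $E_1$, and then the bound on the multipliers $c_j$ together with continuity of $L_1h_1$.

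\textbf{A priori bound.} We claim that for $\epsilon$ small, uniformly in $\mathbf t\in O_\delta$, every solution of \eqref{eq:linear-first} satisfies
\[
\|\varphi\|_{L^\infty(\Omega_\epsilon)}\le Cr_\epsilon^4\|h_1\|_{L^\infty(P_2)}.
\]
Suppose not. Then there are sequences $\epsilon_m\to0$, $\mathbf t_m\in O_\delta$ and solutions $(\varphi_m,h_{1,m})$ with
\[
\|\varphi_m\|_\infty=1,\qquad r_{\epsilon_m}^4\|h_{1,m}\|_{L^\infty(P_2)}\to0.
\]
This is exactly the situation excluded by Lemma~\ref{lem1}, which forces $\|\varphi_m\|_\infty\to0$. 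So the claim holds, and in particular the solution is unique: taking $h_1=0$ gives $\varphi=0$.

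For the coefficients, we reuse the linear system \eqref{eq11}. Its matrix is uniformly invertible for small $\epsilon$, and its right-hand side is
\[
O\bigl(\epsilon\|\varphi\|_\infty+r_\epsilon^4\|h_1\|_{L^\infty(P_2)}\bigr).
\]
Combined with the bound on $\varphi$, this gives $|c_j|\le Cr_\epsilon^4\|h_1\|_{L^\infty(P_2)}$.

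\textbf{Existence.} We work in $E_1$ with the $H_0^1$ inner product $\langle u,v\rangle=\int\nabla u\cdot\nabla v$. Let $\Pi$ be the $H_0^1$-orthogonal projection onto $E_1$. Since $E_1$ is the annihilator of the functionals $\ell_{1,j}$, its orthogonal complement $E_1^\perp$ is spanned by the Riesz representatives $w_j$ of $\ell_{1,j}$. These satisfy
\[
-\Delta w_j=U_1^{2^*-2}Y_{1,j},
\]
so $w_j=(-\Delta)^{-1}\bigl(U_1^{2^*-2}Y_{1,j}\bigr)$.

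In weak form, problem \eqref{eq:linear-first} becomes: find $\varphi\in E_1$ with
\[
\varphi-\Pi(-\Delta)^{-1}\Bigl[\bigl(\epsilon^{\frac{N-2}{N-4}}+(2^*-1)V_1^{2^*-2}\bigr)\varphi\Bigr]=\Pi(-\Delta)^{-1}\bigl(\mathbf 1_{P_2}h_1\bigr).
\]
The operator $K\varphi:=\Pi(-\Delta)^{-1}\bigl[(\epsilon^{\frac{N-2}{N-4}}+(2^*-1)V_1^{2^*-2})\varphi\bigr]$ is compact on $E_1$. Indeed, $\Omega_\epsilon$ is bounded for each fixed $\epsilon$ and $V_1^{2^*-2}\in L^\infty$, so multiplication by this weight composed with the compact embedding $H_0^1\hookrightarrow L^2$ is compact. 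A solution of the projected weak equation gives $(-\Delta-\text{weight})\varphi-\mathbf 1_{P_2}h_1\in \operatorname{span}\{U_1^{2^*-2}Y_{1,j}\}$, which recovers the multipliers $c_j$.

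By the Fredholm alternative it suffices to show that $I-K$ is injective on $E_1$. An element of the kernel is a weak solution with $h_1=0$. Elliptic regularity makes it bounded: the right-hand side is a bounded weight times $\varphi$ plus smooth compactly supported terms, so bootstrapping on the bounded domain $\Omega_\epsilon$ gives $\varphi\in L^\infty$. The a priori bound then gives $\varphi=0$.

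\textbf{Regularity and continuity of $L_1h_1$.} For $h_1\in L^\infty$, the right-hand side of the equation is in $L^\infty$. Hence $\varphi\in W^{2,q}$ for all $q<\infty$, so $\varphi\in C(\overline{\Omega_\epsilon})$ with zero boundary values since $\Omega_\epsilon$ is smooth. Thus $\varphi=L_1h_1\in E_1\cap C_0(\Omega_\epsilon)$.

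The main obstacle is uniformity in $\mathbf t$. We have to check that the normalization step in Lemma~\ref{lem1} applies to any bounded solution, including the kernel elements used in the Fredholm step. We also need the invertibility of the coefficient matrix in \eqref{eq11} to hold uniformly on $O_\delta$. This follows from $\lambda_1\in(\delta,\delta^{-1})$ and the compactness of the limiting Gram matrices over that range.
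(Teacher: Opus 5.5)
Your proposal is correct and follows essentially the paper's route. The paper likewise treats the problem on $E_1$ as a compact perturbation of the Dirichlet isomorphism, via the bilinear form $\mathcal Q_1$ rather than your explicit projection $\Pi$. It gets kernel triviality and the $L^\infty$ bound from Lemma~\ref{lem1} by normalization, recovers the $c_j$ from the annihilator of $E_1$, and bounds them through the uniformly invertible system \eqref{eq11}. The only difference is ordering: you prove the a priori bound first and deduce uniqueness and injectivity from it, whereas the paper runs a separate contradiction argument for kernel triviality before deriving the estimate.
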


The second linear problem has the analogous solvability property.

\begin{Prop}\label{prop:linear-second}
There exist constants $\epsilon_0,C>0$, independent of the configuration
in $O_\delta$, such that, for every $\epsilon\in(0,\epsilon_0)$ and
$h_2\in L^\infty(\Omega_\epsilon)$, problem \eqref{eq:linear-second} has a
unique solution $\psi=L_2h_2\in E_2\cap C_0(\Omega_\epsilon)$. Moreover,
\[
\|L_2h_2\|_{L^\infty(\Omega_\epsilon)}
+\max_{0\le j\le N}|d_j|
\le Cr_\epsilon^4\|h_2\|_{L^\infty(Q_2)}.
\]
\end{Prop}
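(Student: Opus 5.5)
Prop~\ref{prop:linear-second} is the statement for the second component, but it is the exact analogue of Prop~\ref{prop:linear-first}. Both follow from Lemma~\ref{lem1} combined with a Fredholm argument on the moment subspace. I describe the argument for $L_2$; for $L_1$ one only exchanges $(V_2,U_2,Y_{2,j},Q_2,E_2)$ with $(V_1,U_1,Y_{1,j},P_2,E_1)$.

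\textbf{A priori estimate.} First I would upgrade the qualitative Lemma~\ref{lem1} to the quantitative bound
\[
\|\psi\|_{L^\infty}\le Cr_\epsilon^4\|h_2\|_{L^\infty(Q_2)},
\]
uniformly on $O_\delta$. Suppose it fails. Then there are sequences with $\|\psi_m\|_\infty=1$ and $r_{\epsilon_m}^4\|h_{2,m}\|_{L^\infty(Q_2)}\to0$. Since the problem is linear, $(\psi_m,h_{2,m},d_{j,m})$ again solves \eqref{eq:linear-second}, and Lemma~\ref{lem1} gives $\|\psi_m\|_\infty\to0$, which is a contradiction. The coefficient bound then comes from the linear system obtained by testing with $Z_2^l$, exactly as in \eqref{eq11}:
\[
|d_l|\le C\bigl(\epsilon\|\psi\|_\infty+r_\epsilon^4\|h_2\|_{L^\infty(Q_2)}\bigr),
\]
because the Gram matrix is uniformly invertible. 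Combining the two bounds gives the stated estimate. Uniqueness follows immediately by taking $h_2=0$.

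\textbf{Existence.} I would recast the problem in the Hilbert space $E_2$ with the $H_0^1$ inner product. Let $\Pi$ denote the projection of $H_0^1(\Omega_\epsilon)$ onto $E_2$ along the finite-dimensional span of $(-\Delta)^{-1}(U_2^{2^*-2}Y_{2,j})$, $j=0,\dots,N$. This splitting is well defined because $\ell_{2,l}\bigl((-\Delta)^{-1}U_2^{2^*-2}Y_{2,j}\bigr)$ is an invertible matrix for small $\epsilon$; it is close to the Gram matrix from Lemma~\ref{lem1}. The problem becomes
\[
\psi-\Pi(-\Delta)^{-1}\bigl[(\epsilon^{\frac{N-2}{N-4}}+(2^*-1)V_2^{2^*-2})\psi\bigr]
=\Pi(-\Delta)^{-1}(\mathbf 1_{Q_2}h_2),\qquad \psi\in E_2 .
\]
The operator $\psi\mapsto(-\Delta)^{-1}[\cdots\psi]$ is compact on $H_0^1(\Omega_\epsilon)$, because $\Omega_\epsilon$ is bounded for fixed $\epsilon$ and $V_2^{2^*-2}\in L^\infty$. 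The left-hand side is therefore of the form identity plus compact. By Fredholm's alternative, existence follows from uniqueness. For uniqueness, a solution of the homogeneous problem lies in $L^\infty$ by elliptic regularity, so the $L^\infty$ a priori bound applies and gives $\psi=0$. The right-hand side lies in $H_0^1$ because $h_2\in L^\infty$.

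\textbf{Regularity and the main obstacle.} The solution is continuous up to the boundary with zero trace, since the equation has a bounded right-hand side and $\partial\Omega_\epsilon$ is smooth; this places $\psi$ in $E_2\cap C_0(\Omega_\epsilon)$. The only delicate point is that the constants must be uniform in the configuration $\mathbf t\in O_\delta$. This is already built into Lemma~\ref{lem1}, because its contradiction sequence is allowed to vary $\mathbf t_m$. The Fredholm step needs no uniformity, since it is carried out at each fixed $\epsilon$.
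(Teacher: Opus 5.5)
Your proposal is correct and follows essentially the paper's route: the uniform $L^\infty$ bound by contradiction from Lemma~\ref{lem1}, the multiplier bound from the tested system \eqref{eq11}, and existence by a Fredholm alternative on the moment subspace, with injectivity supplied by that bound. Your projection $\Pi$ is the $H_0^1$-orthogonal projection onto $E_2$, since $\ell_{2,j}(u)=\langle(-\Delta)^{-1}(U_2^{2^*-2}Y_{2,j}),u\rangle_{H_0^1}$, so your operator equation is the Riesz form of the paper's bilinear-form formulation; the splitting matrix is invertible simply because it is the $H_0^1$ Gram matrix of the linearly independent functions $(-\Delta)^{-1}(U_2^{2^*-2}Y_{2,j})$, and no comparison with the matrix of Lemma~\ref{lem1} is needed.
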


\begin{proof}[Proof of Propositions~\ref{prop:linear-first} and~\ref{prop:linear-second}]
We prove the first statement; the second is identical. Set
\[
W_{1,j}:=U_1^{2^*-2}Y_{1,j},
\qquad j=0,\ldots,N.
\]
The matrix
\[
\left(
\int_{\Omega_\epsilon}W_{1,j}Z_1^l
\right)_{0\le j,l\le N}
\]
converges uniformly with respect to the admissible parameters to the family of
Gram matrices appearing in the proof of Lemma~\ref{lem1}. This family depends
continuously on $\lambda_1\in[\delta,\delta^{-1}]$ and is positive definite;
hence its least eigenvalue has a positive uniform lower bound. It follows that
the moment functionals $\ell_{1,0},\ldots,\ell_{1,N}$ are linearly independent
for all sufficiently small $\epsilon$.

On $E_1$ consider the bilinear form
\[
\mathcal Q_1(u,v)
=
\int_{\Omega_\epsilon}\nabla u\cdot\nabla v
-\epsilon^{\frac{N-2}{N-4}}\int_{\Omega_\epsilon}uv
-(2^*-1)\int_{\Omega_\epsilon}V_1^{2^*-2}uv.
\]
For each fixed $\epsilon$, the operator on $E_1$ associated with this
bilinear form is a compact perturbation of the Dirichlet isomorphism and
is therefore Fredholm of index zero. Its kernel is trivial for all
sufficiently small $\epsilon$. Otherwise, there would exist
$\epsilon_m\to0$, configuration vectors $\mathbf t_m\in O_\delta$, and nonzero
functions $\varphi_m$ in the corresponding moment spaces $E_{1,m}$ such that
\[
\mathcal Q_{1,m}(\varphi_m,v)=0
\qquad\text{for every }v\in E_{1,m},
\]
where $\mathcal Q_{1,m}$ denotes the preceding bilinear form with the $m$-th
parameters.
Since the annihilator of $E_{1,m}$ is generated by the corresponding
moment functionals, there exist coefficients $c_{j,m}$ such that
$(\varphi_m,(c_{j,m})_{j=0}^N)$ solves \eqref{eq:linear-first} with $h_1=0$.
Elliptic regularity gives $\varphi_m\in L^\infty(\Omega_{\epsilon_m})$.
After normalization by its nonzero $L^\infty$ norm, Lemma~\ref{lem1} gives a
contradiction. Thus the associated operator is invertible for all sufficiently small
$\epsilon$.

Given $h_1\in L^\infty(\Omega_\epsilon)$, there is a unique
$\varphi\in E_1$ such that
\[
\mathcal Q_1(\varphi,v)
=
\int_{P_2}h_1v
\qquad\text{for every }v\in E_1.
\]
The residual
\[
-\Delta\varphi
-\epsilon^{\frac{N-2}{N-4}}\varphi
-(2^*-1)V_1^{2^*-2}\varphi
-\mathbf1_{P_2}h_1
\]
annihilates $E_1$ and hence has a unique representation
$\sum_{j=0}^Nc_jW_{1,j}$. This proves existence and uniqueness for
\eqref{eq:linear-first}.

If the asserted $L^\infty$ estimate failed, a contradicting sequence could be
divided by the $L^\infty$ norm of its solution. The normalized solutions
would have unit norm, whereas the normalized right-hand sides would satisfy
$r_\epsilon^4\|h_1\|_{L^\infty(P_2)}\to0$, contradicting
Lemma~\ref{lem1}. Therefore
\[
\|\varphi\|_{L^\infty(\Omega_\epsilon)}
\le Cr_\epsilon^4\|h_1\|_{L^\infty(P_2)}.
\]
Finally, the uniformly invertible system \eqref{eq11} and the estimates used
in its derivation give
\[
\max_{0\le j\le N}|c_j|
\le C\left(
\epsilon\|\varphi\|_{L^\infty(\Omega_\epsilon)}
+r_\epsilon^4\|h_1\|_{L^\infty(P_2)}
\right),
\]
and hence the required multiplier bound. The proof of
Proposition~\ref{prop:linear-second} is the same, with
$P_2,h_1,c_j,\varphi$ replaced by $Q_2,h_2,d_j,\psi$.
\end{proof}

For later use, we also record the fixed-$\epsilon$ regularity behind the
preceding continuity assertion. For every $1<q<\infty$,
\[
L_i h\in E_i\cap W^{2,q}(\Omega_\epsilon)\cap W_0^{1,q}(\Omega_\epsilon),
\qquad i=1,2,
\]
whenever $h\in L^\infty(\Omega_\epsilon)$. This follows from the equations and
standard Dirichlet regularity. No $W^{2,q}$ bound uniform as
$\epsilon\to0$ is asserted or needed.

\medskip
\subsection{Comparison principles}
Let $\mathbf C_s>0$ denote the optimal Sobolev constant, so that
\[
\mathbf C_s\|u\|_{L^{2^*}(\mathbb R^N)}^2
\le \|\nabla u\|_{L^2(\mathbb R^N)}^2
\qquad\text{for every }u\in D^{1,2}(\mathbb R^N).
\]
We use the following comparison principle; see, for example,
\cite{Pucciserrin}.

\begin{Lem}\label{lem0}
Let $D\subset\mathbb R^N$ be an open set with $C^1$ boundary. Suppose that
$\Psi\in L^{N/2}(D)$ and
$\|\Psi\|_{L^{N/2}(D)}<\mathbf C_s$. Let
$v_1,v_2\in D^{1,2}_{\mathrm{loc}}(D)\cap C(\overline D)$ satisfy
$(v_2-v_1)_+\in D_0^{1,2}(D)$. If
\[
-\Delta v_1-\Psi v_1\ge -\Delta v_2-\Psi v_2
\quad\text{weakly in }D,
\]
and $v_1\ge v_2$ on $\partial D$, then $v_1\ge v_2$ in $D$.
\end{Lem}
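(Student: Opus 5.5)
The proof is the standard energy argument applied to $w:=(v_2-v_1)_+$, which by hypothesis lies in $D_0^{1,2}(D)$ and is therefore an admissible test function. Since $v_1\ge v_2$ on $\partial D$ and both functions are continuous up to the boundary, $w$ vanishes on $\partial D$. Testing the weak inequality with $w\ge0$ will give
\[
\int_D \nabla(v_1-v_2)\cdot\nabla w-\int_D\Psi(v_1-v_2)w\ \ge\ 0 .
\]
On the set $\{w>0\}$ we have $v_1-v_2=-w$, and $\nabla w=\nabla(v_2-v_1)$ a.e. there, while $\nabla w=0$ a.e. elsewhere (Stampacchia). The inequality therefore becomes
\[
\int_D|\nabla w|^2\le\int_D\Psi w^2 .
\]

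Next we bound the right-hand side using Hölder's inequality with exponents $N/2$ and $N/(N-2)$:
\[
\int_D\Psi w^2\le\|\Psi\|_{L^{N/2}(D)}\|w\|_{L^{2^*}(D)}^2 .
\]
Extending $w$ by zero outside $D$ gives an element of $D^{1,2}(\mathbb R^N)$, because $w\in D_0^{1,2}(D)$. The Sobolev inequality with optimal constant $\mathbf C_s$ then yields
\[
\mathbf C_s\|w\|_{L^{2^*}}^2\le\|\nabla w\|_{L^2}^2 .
\]
Combining these estimates,
\[
\|\nabla w\|_{L^2}^2\le \mathbf C_s^{-1}\|\Psi\|_{L^{N/2}(D)}\|\nabla w\|_{L^2}^2 .
\]
Since $\|\Psi\|_{L^{N/2}(D)}<\mathbf C_s$, this forces $\nabla w=0$. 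Hence $w$ is constant, and a function in $D_0^{1,2}$ that is constant must be zero. So $w=0$, i.e. $v_1\ge v_2$ in $D$.

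The only delicate point is justifying the test with $w$ when $D$ may be unbounded, in which case $v_i$ lie only in $D^{1,2}_{\mathrm{loc}}$. We would read the weak inequality as holding for nonnegative test functions in $D_0^{1,2}(D)$; if it is given only for $C_c^\infty$ test functions, we would approximate $w$ in $D_0^{1,2}$ by nonnegative compactly supported smooth functions. To pass to the limit, note that the term $\int\Psi(v_1-v_2)\,\cdot$ converges because the product is restricted to the support, where $\Psi w\in L^{2N/(N+2)}$ by Hölder. The gradient term converges by local integrability on compact supports together with the truncation identity. We expect this to be routine, and we do not anticipate any genuine obstacle.
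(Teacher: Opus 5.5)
Your proposal is correct and follows essentially the same route as the paper. You test with $w=(v_2-v_1)_+$ to get $\int_D|\nabla w|^2\le\int_D\Psi w^2$, then combine H\"older with the Sobolev inequality and use $\|\Psi\|_{L^{N/2}(D)}<\mathbf C_s$ to force $w=0$. Your added discussion of how to justify using $w$ as a test function on unbounded $D$ goes slightly beyond the paper, which takes that step for granted.
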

\begin{proof}
Set $\omega=v_2-v_1$. Testing
$-\Delta\omega\le\Psi\omega$ with $\omega_+$ and extending $\omega_+$ by
zero outside $D$, we obtain
\[
\begin{aligned}
\mathbf C_s\|\omega_+\|_{L^{2^*}(D)}^2
&\le \int_D|\nabla\omega_+|^2
 \le \int_D\Psi\omega_+^2\\
&\le \|\Psi\|_{L^{N/2}(D)}
      \|\omega_+\|_{L^{2^*}(D)}^2.
\end{aligned}
\]
The strict inequality for $\|\Psi\|_{L^{N/2}(D)}$ forces $\omega_+=0$.
\end{proof}

\section{The nonlinear exterior problem}\label{sec:3}

\subsection{Truncated variational formulation}

We now implement the nonlinear exterior part of the decomposition. For the
first component, the inner trace is prescribed on $P_1$ and the equation is
solved in $\Omega_\epsilon\setminus P_1$; for the second component the
corresponding regions are $Q_1$ and $\Omega_\epsilon\setminus Q_1$. The larger
balls $P_2,Q_2$ are not boundary regions for this minimization. They provide
the overlap on which the dead-core estimate will eliminate the opposite
component and on which the inner projected problem will later be localized.

Fix $\delta_0\in\bigl(0,(N-2)/4\bigr)$ and define
\[
\Gamma_i(y):=\frac{1}{(1+|y-\tilde\xi_i|)^{N-2-\delta_0}},
\qquad i=1,2.
\]
Since $r_\epsilon\to\infty$,
\[
\|\Gamma_1\|_{L^{2^*}(\mathbb R^N\setminus P_1)}
+\|\Gamma_2\|_{L^{2^*}(\mathbb R^N\setminus Q_1)}
\longrightarrow0
\qquad\text{as }\epsilon\to0.
\]
Let $\zeta\in C^1(\mathbb R)$ be even, satisfy $0\le\zeta\le1$, and obey
\[
\zeta(t)=1\quad\text{for }|t|\le1,
\qquad
\zeta(t)=0\quad\text{for }|t|\ge2.
\]
With $\mathbf1_D$ denoting the indicator of $D$, set
\[
g_1(y,s):=\mathbf1_{P_1}(y)s
+\big(1-\mathbf1_{P_1}(y)\big)
\int_0^s\zeta\left(\frac{t}{\Gamma_1(y)}\right)dt
\]
and
\[
g_2(y,s):=\mathbf1_{Q_1}(y)s
+\big(1-\mathbf1_{Q_1}(y)\big)
\int_0^s\zeta\left(\frac{t}{\Gamma_2(y)}\right)dt.
\]

In the outer regions we also use the notation
\[
g_{1,s}(y,s):=\frac{\partial g_1}{\partial s}(y,s)=\zeta\left(\frac{s}{\Gamma_1(y)}\right),\qquad
y\in\Omega_\epsilon\setminus P_1,
\]
and
\[
g_{2,s}(y,s):=\frac{\partial g_2}{\partial s}(y,s)=\zeta\left(\frac{s}{\Gamma_2(y)}\right),\qquad
y\in\Omega_\epsilon\setminus Q_1.
\]
Thus \(0\le g_{i,s}\le1\). Once the estimate \(|u_i|\le\Gamma_i\) is proved in
the corresponding outer region, one has \(g_i(y,u_i)=u_i\) and
\(g_{i,s}(y,u_i)=1\) there. The truncation is used only to obtain a variationally stable outer problem: it leaves the original nonlinearities unchanged under the desired a priori bounds, while providing fixed \(L^{2^*}\)-dominating functions in the exterior regions.

In the corresponding outer region, we have
\[
\int_{0}^{|u_{i}|} \zeta\Big(\frac{s}{\Gamma_{i}}\Big)ds
=\Gamma_i\int_{0}^{\frac{|u_{i}|}{\Gamma_i} } \zeta(t)dt
\leq\min\{2\Gamma_i,|u_i|\}.
\]
For notational convenience, set
\[
\mathcal C_1:=P_1,
\qquad
\mathcal C_2:=Q_1.
\]
Thus, for \(y\in\Omega_\epsilon\setminus\mathcal C_i\),
\[
\operatorname{sign}(u_i)g_i(y,u_i)=|g_i(y,u_i)|\leq\min\{2\Gamma_i,|u_i|\}.
\]

The perturbation radius below is matched to the two-center barriers used in
Lemma~\ref{lem8}. After $N$ and $\tau$ have been fixed, choose first
\[
0<\vartheta<\min\left\{1,\frac N2-2\right\}
\]
sufficiently small. Then choose
\[
0<\theta'<\vartheta,
\qquad
0<\tilde\theta<\frac2{N-4},
\qquad
0<\bar\theta<N-2
\]
sufficiently small and, for $i=1,2$, set
\begin{align}\label{eq:auxiliary-exponents}
\alpha_i&:=\frac{N-2}{2(N-4)}+\theta',
&\qquad
\tilde\alpha_i&:=\frac2{N-4}-\tilde\theta,\nonumber\\
\theta_i&:=\frac N2-\vartheta,
&
\tilde\theta_i&:=N-2-\bar\theta.
\end{align}
Define
\[
\kappa_0:=\alpha_1+\frac{\theta_1}{B_\tau},
\qquad
\gamma_1:=\frac{N-2}{N-4}.
\]
Finally, choose $\delta_*>0$ sufficiently small and set
\[
\gamma_0:=\kappa_0+\delta_*.
\]
These exponents depend only on $(N,\tau)$ and are fixed throughout the
argument. They can be chosen simultaneously so that
\begin{equation}\label{gamma-range}
\gamma_0<\gamma_1<2\gamma_0,
\end{equation}
and so that the further strict inequalities used below hold. Each such
inequality is stated where it first enters the proof. Their simultaneous
compatibility, including the fixed-point margins, is verified in
Lemma~\ref{lem:exponent-choice} in Appendix~A. Since
$r_\epsilon=\epsilon^{-1/B_\tau}$, we have
\[
\epsilon^{\gamma_0}
=\epsilon^{\alpha_1+\delta_*}r_\epsilon^{-\theta_1}.
\]

Set
\[
\begin{split}
\Lambda_\epsilon:=\big\{(\varphi,\psi)\in C_0(\Omega_\epsilon)^2:\;&
\ell_{1,j}(\varphi)=0,\quad \ell_{2,j}(\psi)=0,
\quad j=0,\ldots,N,\\
&\|\varphi\|_{L^\infty(\Omega_\epsilon)}\le\epsilon^{\gamma_0},\quad
\|\psi\|_{L^\infty(\Omega_\epsilon)}\le\epsilon^{\gamma_0}\big\}.
\end{split}
\]
Thus no energy condition is included in the definition of
$\Lambda_\epsilon$. Since the moment functionals are continuous for the
uniform norm, $\Lambda_\epsilon$ is a closed subset of
$C_0(\Omega_\epsilon)^2$ and is therefore complete in the product uniform
norm. The radius $\epsilon^{\gamma_0}$ is matched to the decay at
$\partial P_1$ and $\partial Q_1$ through the identity above, while
\eqref{gamma-range} provides the strict margins needed in the nonlinear
fixed-point estimates.

The exterior minimization problem is first solved for the energy-compatible
subclass $\Lambda_\epsilon\cap H_0^1(\Omega_\epsilon)^2$. After the exterior
estimates have been established, the solution operator is extended by uniform
approximation to all of $\Lambda_\epsilon$.

\medskip

Let $(\varphi_0,\psi_0)\in
\Lambda_\epsilon\cap H_0^1(\Omega_\epsilon)^2$ and set
\[
u_{1,0}:=V_1+\varphi_0,
\qquad
u_{2,0}:=V_2+\psi_0.
\]
Thus the moment conditions are imposed on the perturbation
$(u_{1,0},u_{2,0})-(V_1,V_2)=(\varphi_0,\psi_0)$ rather than on the full
inner data. We seek a solution of the associated exterior problem below.

\begin{equation}\label{eqout}
\begin{cases}
-\Delta u_{1} -\epsilon^{\frac{N-2}{N-4}} u_1= \lvert u_{1}\rvert^{2^{*}-2}u_{1}+\beta \lvert u_{2}\rvert^{\frac{2^{*}}{2}}\lvert u_{1}\rvert^{\frac{2^{*}}{2}-2}u_{1},  &~y\in \Omega_\epsilon\setminus P_1,\\[3mm]
-\Delta u_{2} -\epsilon^{\frac{N-2}{N-4}} u_2=\lvert u_{2}\rvert^{2^{*}-2}u_{2}+\beta \lvert u_{1}\rvert^{\frac{2^{*}}{2}}\lvert u_{2}\rvert^{\frac{2^{*}}{2}-2}u_{2},  &~y\in\Omega_\epsilon\setminus Q_{1},\\[3mm]
u_{1}=u_{1,0}\quad\text{in }P_{1},\qquad u_{2}=u_{2,0}\quad\text{in }Q_{1}.
\end{cases}
\end{equation}

For prescribed perturbative inner data \((\varphi_0,\psi_0)\in\Lambda_\epsilon\cap H_0^1(\Omega_\epsilon)^2\), we consider the minimization problem in
\[
\begin{split}
\mathcal M(u_{1,0},u_{2,0}) :=
\{(u_1,u_2)&\in H_0^1(\Omega_\epsilon)\times H_0^1(\Omega_\epsilon):
 u_1=u_{1,0}\ \text{in }P_1,\ u_2=u_{2,0}\ \text{in }Q_1,\vphantom{\int}\\
&\qquad\qquad (u_1-V_1,u_2-V_2)\in\mathbb E\}.
\end{split}
\]
Equivalently,
\[
\mathcal M(u_{1,0},u_{2,0})=(V_1,V_2)+(\varphi_0,\psi_0)
+\Big\{(\phi_1,\phi_2):\phi_1=0\text{ in }P_1,\ \phi_2=0\text{ in }Q_1,\ (\phi_1,\phi_2)\in\mathbb E\Big\}.
\]
This convention is used throughout the paper: the finite-dimensional moment
conditions are imposed on the perturbations around the projected bubbles. Since
the moment functions are supported in the corresponding inner regions, outer
variations preserve these constraints.

Define $I:\mathcal M(u_{1,0},u_{2,0})\to\mathbb R$ by
\begin{equation*}
\begin{split}
I(u_{1},u_{2}):=I_\epsilon(u_{1},u_{2})=&\frac{1}{2}\int_{\Omega_\epsilon} \big(\lvert\nabla u_{1}\rvert^{2}+\lvert\nabla u_{2}\rvert^{2}\big)
-\frac{1}{2}\epsilon^{\frac{N-2}{N-4}}\int_{\Omega_\epsilon} \big(\lvert g_{1}(y,u_{1})\rvert^{2}+\lvert g_{2}(y,u_{2})\rvert^{2}\big)\\
&-\frac{1}{2^{*}}\int_{\Omega_\epsilon}\Big(\lvert g_{1}(y,u_{1})\rvert^{2^{*}}+\lvert g_{2}(y,u_{2})\rvert^{2^{*}}+2\beta\lvert g_{1}(y,u_{1})\rvert^{\frac{2^{*}}{2}}\lvert g_{2}(y,u_{2})\rvert^{\frac{2^{*}}{2}}\Big).
\end{split}
\end{equation*}

We shall solve the minimization problem
\begin{equation}\label{min}
\min\{I(u_1,u_2):(u_1,u_2)\in \mathcal M(u_{1,0},u_{2,0})\}.
\end{equation}
\medskip

\begin{Rem}
A critical point of the truncated functional satisfies the Euler--Lagrange
system \eqref{eqg} below. If the a priori estimate \(|u_i|\le\Gamma_i\) holds
in the corresponding outer region, then \(g_i(y,u_i)=u_i\) and
\(g_{i,s}(y,u_i)=1\), so \eqref{eqg} reduces to the original outer problem
\eqref{eqout}.
\end{Rem}
\medskip

The next lemma gives existence for the auxiliary minimization problem.

\begin{Lem}\label{lem3}
Fix \(\epsilon > 0\) sufficiently small. Then the functional \(I\) is weakly
lower semicontinuous and coercive on \( \mathcal{M}(u_{1,0}, u_{2,0}) \).
Consequently, the minimization problem \eqref{min} admits a minimizer.
\end{Lem}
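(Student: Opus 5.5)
The plan is to apply the direct method on the affine constraint set. Write a general element of $\mathcal M(u_{1,0},u_{2,0})$ as $(u_{1,0}+\phi_1,u_{2,0}+\phi_2)$, where $(\phi_1,\phi_2)$ lies in
\[
\mathcal K:=\{(\phi_1,\phi_2)\in\mathbb E:\ \phi_1=0\text{ in }P_1,\ \phi_2=0\text{ in }Q_1\}.
\]
Both the vanishing conditions and the moment conditions are closed linear constraints in $H_0^1(\Omega_\epsilon)^2$, since each $\ell_{i,j}$ is continuous on $H_0^1$. Hence $\mathcal K$ is a weakly closed subspace, and $\mathcal M$ is a nonempty weakly closed affine set: it contains $(u_{1,0},u_{2,0})$, because $(\varphi_0,\psi_0)\in H_0^1$ and $V_i\in H_0^1$. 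It therefore suffices to show two things: $I$ is coercive on $\mathcal M$, and $I$ is weakly sequentially lower semicontinuous there. A minimizing sequence is then bounded, has a weakly convergent subsequence with limit in $\mathcal M$, and that limit is a minimizer.

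The key structural point is that all nonlinear terms split into an inner part and an outer part. On $P_1$ (respectively $Q_1$), $u_1=u_{1,0}$ (respectively $u_2=u_{2,0}$) is fixed, so the corresponding integrals over $P_1$ and $Q_1$ are constants independent of $(\phi_1,\phi_2)$. For the cross term, $|g_1|^p|g_2|^p$ on $P_1$ involves the fixed $u_{1,0}$ times $|g_2(y,u_2)|^p$. On $P_1\subset\Omega_\epsilon\setminus Q_1$ the latter is bounded by $(2\Gamma_2)^p$, and in any case the cross term carries $\beta<0$, so it enters $I$ with a favorable sign, $-\frac{2\beta}{2^*}\int|g_1|^p|g_2|^p\ge0$. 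Outside $\mathcal C_i$ we have $|g_i(y,u_i)|\le 2\Gamma_i$. Hence
\[
\int_{\Omega_\epsilon\setminus\mathcal C_i}|g_i|^{2^*}\le 2^{2^*}\|\Gamma_i\|_{L^{2^*}(\mathbb R^N\setminus \mathcal C_i)}^{2^*},
\]
which is a fixed finite constant. Likewise
\[
\epsilon^{\frac{N-2}{N-4}}\int|g_i|^2\le \epsilon^{\frac{N-2}{N-4}}\int|u_i|^2\le \epsilon^{\frac{N-2}{N-4}}\Lambda_1(\Omega_\epsilon)^{-1}\int|\nabla u_i|^2 .
\]
Since $\Lambda_1(\Omega_\epsilon)=\epsilon^{\frac{2}{N-4}}\Lambda_1(\Omega)$, this coefficient equals $\epsilon\,\Lambda_1(\Omega)^{-1}$, which is less than $\frac14$ for small $\epsilon$. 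Combining these bounds gives
\[
I(u_1,u_2)\ge \tfrac14\big(\|\nabla u_1\|_2^2+\|\nabla u_2\|_2^2\big)-C(\epsilon,u_{1,0},u_{2,0}),
\]
which is coercivity. A tighter variant is to use $|g_i|\le\min\{2\Gamma_i,|u_i|\}$ and Hölder's inequality to bound the $L^2$ term outside $\mathcal C_i$ by a constant as well; either route works.

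For weak lower semicontinuity, the Dirichlet term is convex and continuous, hence weakly lsc. For the remaining terms, take $u^{(m)}\rightharpoonup u$ in $\mathcal M$. By Rellich on the bounded domain $\Omega_\epsilon$ (with $\epsilon$ fixed), we may pass to a subsequence with $u^{(m)}\to u$ in $L^2$ and a.e. Inside $\mathcal C_i$ the functions are fixed, so those contributions do not move. Outside, $s\mapsto g_i(y,s)$ is continuous, so $g_i(y,u_i^{(m)})\to g_i(y,u_i)$ a.e., with the domination $|g_i|^{2^*}\le (2\Gamma_i)^{2^*}\in L^1$ and $|g_i|^2\le|u_i^{(m)}|^2$, which converges in $L^1$. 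The cross term is dominated by products of $(2\Gamma_1)^p$ or $|u_{1,0}|^p$ with $(2\Gamma_2)^p$ or $|u_{2,0}|^p$. Each such product is in $L^1$ by Hölder with exponents $2,2$: note $p\cdot2=2^*$, and $u_{i,0}\in L^{2^*}$. The generalized dominated convergence theorem then shows that every lower-order term converges, and $I$ is weakly lsc; in fact the non-gradient part is weakly continuous.

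The main obstacle is the critical term inside the trace regions. Because $u_i$ is frozen there, that term is constant, so there is no loss of compactness in $L^{2^*}$; the truncation by $\Gamma_i$ removes it outside. One must simply check that no region remains where the untruncated critical power acts on a free variable. This holds precisely because $g_i(y,s)=s$ only on $\mathcal C_i$, where $u_i$ is prescribed.
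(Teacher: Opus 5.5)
Your proposal is correct and follows the paper's proof essentially step by step. The shared steps are weak closedness of the affine class $\mathcal M(u_{1,0},u_{2,0})$; a.e.\ plus dominated convergence with the dominating functions $|u_{i,0}|\mathbf 1_{\mathcal C_i}+2\Gamma_i\mathbf 1_{\Omega_\epsilon\setminus\mathcal C_i}$, whose $p$-th power product controls the coupling term; weak lower semicontinuity of the Dirichlet term; and coercivity from the favorable sign of the $\beta$-term together with the uniform bound on the truncated critical term. The only minor difference is that your primary bound on the mass term uses the scaled Poincar\'e inequality on $\Omega_\epsilon$ (coefficient $\epsilon$ times the inverse first Dirichlet eigenvalue of $\Omega$), whereas the paper bounds $\int_{\Omega_\epsilon}|g_i(y,u_i)|^2$ by a constant as in your ``tighter variant''; both work.
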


\begin{proof}
We first record the weak closedness of the admissible class. The spaces \(E_i\) are finite intersections of kernels of continuous linear functionals;
hence \(\mathbb E=E_1\times E_2\) is a closed linear subspace of
\(H_0^1(\Omega_\epsilon)\times H_0^1(\Omega_\epsilon)\). Moreover, the maps
\[
H_0^1(\Omega_\epsilon)\longrightarrow L^2(P_1),\qquad
H_0^1(\Omega_\epsilon)\longrightarrow L^2(Q_1),
\]
obtained by restricting a Sobolev function to the indicated subdomain are
bounded. Therefore the conditions
\(u_1=u_{1,0}\) in \(P_1\), \(u_2=u_{2,0}\) in \(Q_1\), and
\((u_1-V_1,u_2-V_2)\in\mathbb E\) define closed affine constraints. Thus
\(\mathcal M(u_{1,0},u_{2,0})\) is closed and convex. Since closed convex subsets of a Banach space are weakly closed, \(\mathcal M(u_{1,0},u_{2,0})\) is weakly closed.

We now prove weak lower semicontinuity. Let
\((u_{1,m},u_{2,m})\subset\mathcal M(u_{1,0},u_{2,0})\) and suppose that
\[
(u_{1,m},u_{2,m})\rightharpoonup(u_1,u_2)
\quad\text{in }H_0^1(\Omega_\epsilon)\times H_0^1(\Omega_\epsilon).
\]
If the liminf is finite, we pass to a subsequence, not relabeled, along which
\(I(u_{1,m},u_{2,m})\) converges to the liminf. Since \(\epsilon>0\) is fixed, \(\Omega_\epsilon\) is bounded,
and the compact embedding
\[
H_0^1(\Omega_\epsilon)\hookrightarrow L^q(\Omega_\epsilon),
\qquad 1\le q<2^*,
\]
holds, we may also assume that
\[
u_{i,m}\to u_i\quad\text{a.e.\ in }\Omega_\epsilon
\quad\text{and strongly in }L^q(\Omega_\epsilon),\quad 1\le q<2^*,
\]
for \(i=1,2\). By the weak closedness just proved,
\((u_1,u_2)\in\mathcal M(u_{1,0},u_{2,0})\).

For almost every \(y\), the map
\(s\mapsto g_i(y,s)\) is continuous, and hence
\[
g_i(y,u_{i,m}(y))\to g_i(y,u_i(y))\qquad\text{for a.e. }y\in\Omega_\epsilon.
\]
Furthermore,
\[
|g_i(y,u_{i,m})|\le |u_{i,0}|\mathbf 1_{\mathcal C_i}+2\Gamma_i\mathbf 1_{\Omega_\epsilon\setminus\mathcal C_i}
=:G_i(y),
\]
where \(G_i\in L^{2^*}(\Omega_\epsilon)\cap L^2(\Omega_\epsilon)\). The dominated
convergence theorem gives
\[
\int_{\Omega_\epsilon}|g_i(y,u_{i,m})|^2\to
\int_{\Omega_\epsilon}|g_i(y,u_i)|^2,
\qquad
\int_{\Omega_\epsilon}|g_i(y,u_{i,m})|^{2^*}\to
\int_{\Omega_\epsilon}|g_i(y,u_i)|^{2^*}.
\]
Since $p=2^*/2$ and \(G_1^pG_2^p\in L^1(\Omega_\epsilon)\), the same argument
also yields
\[
\int_{\Omega_\epsilon}|g_1(y,u_{1,m})|^p|g_2(y,u_{2,m})|^p
\to
\int_{\Omega_\epsilon}|g_1(y,u_1)|^p|g_2(y,u_2)|^p.
\]
Thus all lower-order and truncated critical terms are weakly continuous along
this subsequence. The Dirichlet term is weakly lower semicontinuous, and
therefore
\[
I(u_1,u_2)\le \liminf_{m\to\infty}I(u_{1,m},u_{2,m}).
\]
This proves the weak lower semicontinuity of \(I\) on \(\mathcal M(u_{1,0},u_{2,0})\).

It remains to prove coercivity. Since \(\beta<0\), the coupling term contributes
nonnegatively to the lower bound for \(I\). Using this estimate in \(\Omega_\epsilon\setminus\mathcal C_i\), and recalling that
\(g_i(y,u_i)=u_{i,0}\) in \(\mathcal C_i\), we obtain
\[
\int_{\Omega_\epsilon}\big(|g_i(y,u_i)|^2+|g_i(y,u_i)|^{2^*}\big)\le C_\epsilon,
\]
where \(C_\epsilon<\infty\) depends on the prescribed inner data and on the
cut-off functions \(\Gamma_i\), but not on \((u_1,u_2)\in\mathcal M(u_{1,0},u_{2,0})\).
Consequently,
\[
I(u_1,u_2)
\ge
\frac{1}{2}\sum_{i=1}^2\int_{\Omega_\epsilon}|\nabla u_i|^2
-C_\epsilon.
\]
Hence \(I(u_1,u_2)\to+\infty\) whenever
\(\|(u_1,u_2)\|_{H_0^1\times H_0^1}\to+\infty\), proving coercivity.

Finally, \(\mathcal M(u_{1,0},u_{2,0})\) is nonempty, for instance it contains
\((V_1+\varphi_0,V_2+\psi_0)\). Let \((u_{1,m},u_{2,m})\) be a minimizing
sequence. By coercivity it is bounded in the Hilbert space
\(H_0^1(\Omega_\epsilon)\times H_0^1(\Omega_\epsilon)\); hence, after passing to
a subsequence, it converges weakly to some element of \(\mathcal M(u_{1,0},u_{2,0})\).
The weak lower semicontinuity proved above shows that this weak limit attains
\eqref{min}.
\end{proof}

By Lemma~\ref{lem3}, the problem \eqref{min} admits a minimizer. Variations supported in \(\Omega_\epsilon\setminus P_1\) for the first component and in \(\Omega_\epsilon\setminus Q_1\) for the second component do not change the prescribed inner data. They also do not change the moment constraints, because the functions \(Y_{1,j}\) and \(Y_{2,j}\) are supported in \(P_1\) and \(Q_1\), respectively. Hence no additional Lagrange multiplier appears in the outer Euler--Lagrange equations, and the minimizer satisfies
\begin{equation} \label{eqg}
\left\{
\begin{aligned}
-\Delta u_{1}
&= g_{1,s}(y,u_1)\Big[\epsilon^{\frac{N-2}{N-4}} g_{1}(y,u_{1})
+\lvert g_{1}(y,u_{1})\rvert^{2^{*}-2} g_{1}(y,u_{1}) \\
&\hspace{27mm}
+\beta \lvert g_{2}(y,u_{2})\rvert^{\frac{2^{*}}{2}}
\lvert g_{1}(y,u_{1})\rvert^{\frac{2^{*}}{2}-2}g_{1}(y,u_{1})\Big],
&& y\in \Omega_\epsilon\setminus P_1,\\[2mm]
-\Delta u_{2}
&= g_{2,s}(y,u_2)\Big[\epsilon^{\frac{N-2}{N-4}} g_{2}(y,u_{2})
+\lvert g_{2}(y,u_{2})\rvert^{2^{*}-2}g_{2}(y,u_{2}) \\
&\hspace{27mm}
+\beta \lvert g_{1}(y,u_{1})\rvert^{\frac{2^{*}}{2}}
\lvert g_{2}(y,u_{2})\rvert^{\frac{2^{*}}{2}-2}g_{2}(y,u_{2})\Big],
&& y\in\Omega_\epsilon\setminus Q_{1},\\[2mm]
u_{1}&=u_{1,0} \quad \text{in }P_{1},
& u_{2}&=u_{2,0} \quad \text{in }Q_{1}.
\end{aligned}
\right.
\end{equation}

\medskip

The prescribed inner data belong to $H^1\cap C$ on the corresponding
balls. After subtracting their harmonic liftings to the exterior domains, the
remaining functions satisfy zero Dirichlet conditions and equations with
bounded right-hand sides for each fixed $\epsilon$. Standard Dirichlet
regularity therefore gives continuous representatives of the minimizers and,
for every finite $q>1$,
\[
u_1\in W^{2,q}_{\mathrm{loc}}(\Omega_\epsilon\setminus\overline{P_1}),
\qquad
u_2\in W^{2,q}_{\mathrm{loc}}(\Omega_\epsilon\setminus\overline{Q_1}).
\]
The exterior representatives agree continuously with the prescribed data on
$\partial P_1$ and $\partial Q_1$. We henceforth use these representatives;
in particular,
\[
(u_1,u_2)-(V_1,V_2)\in C_0(\Omega_\epsilon)^2.
\]

\medskip

\subsection{Estimates for the exterior minimizer}

We now establish estimates for the minimizer $(u_1,u_2)$ of \eqref{min}
that satisfies \eqref{eqg}.

\begin{Lem}\label{lem7}
There exists a constant $C>0$, independent of $\epsilon$ and of the
admissible inner data, such that
\begin{equation}\label{decay}
|u_i(y)|\le\frac{C}{(1+|y-\tilde\xi_i|)^{N-2}}
\qquad\text{for }y\in\Omega_\epsilon,\quad i=1,2.
\end{equation}

\end{Lem}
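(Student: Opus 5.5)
We prove the decay bound for $u_1$; the argument for $u_2$ is symmetric. We split $\Omega_\epsilon$ into the inner ball $P_1$ and the exterior $\Omega_\epsilon\setminus P_1$.

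\emph{Inner ball.} In $P_1$ we have $u_1=V_1+\varphi_0$. Since $0\le V_1=P_\epsilon U_1\le U_1\le C(1+|y-\tilde\xi_1|)^{2-N}$, the bubble part already has the claimed decay. For the perturbation, $|\varphi_0|\le\epsilon^{\gamma_0}=\epsilon^{\alpha_1+\delta_*}r_\epsilon^{-\theta_1}$. Since $\alpha_1>0$ and $\theta_1=\frac N2-\vartheta$, this is much smaller than $r_\epsilon^{-(N-2)}\le C(1+|y-\tilde\xi_1|)^{2-N}$ on $P_1$, provided $\alpha_1B_\tau\ge N-2-\theta_1$. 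That inequality is one of the exponent compatibilities in Lemma~\ref{lem:exponent-choice}; if it is not available, one instead uses $\epsilon^{\gamma_0}r_\epsilon^{N-2}\to0$, which holds for the chosen $\tau$. In either case $|u_1|\le C(1+|y-\tilde\xi_1|)^{2-N}$ in $P_1$, and in particular $|u_1|\le Cr_\epsilon^{2-N}$ on $\partial P_1$.

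\emph{Exterior.} In $\Omega_\epsilon\setminus P_1$ we use \eqref{eqg} and write it as $-\Delta u_1=\Psi u_1$, where
\[
\Psi:=g_{1,s}(y,u_1)\,\frac{g_1(y,u_1)}{u_1}\Big[\epsilon^{\frac{N-2}{N-4}}+|g_1|^{2^*-2}+\beta|g_2|^{p}|g_1|^{p-2}\Big]
\]
(set $\Psi=0$ where $u_1=0$). The coupling term has a sign because $\beta<0$. Applying Kato's inequality, or comparing $u_1^\pm$ separately, gives
\[
-\Delta|u_1|\le\Big(\epsilon^{\frac{N-2}{N-4}}+|g_1(y,u_1)|^{2^*-2}\Big)|u_1|=:\Psi_+|u_1|.
\]
Here we use $0\le g_{1,s}\le1$ and $0\le g_1/u_1\le1$. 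The bound $|g_1|\le2\Gamma_1$ gives
\[
\|\,|g_1|^{2^*-2}\|_{L^{N/2}(\Omega_\epsilon\setminus P_1)}\le C\|\Gamma_1\|^{2^*-2}_{L^{2^*}(\mathbb R^N\setminus P_1)}\to0.
\]
Also $\epsilon^{\frac{N-2}{N-4}}|\Omega_\epsilon|^{2/N}=O(\epsilon)$. Hence $\|\Psi_+\|_{L^{N/2}}<\mathbf C_s$ for small $\epsilon$, uniformly in the data, and this is the key point: the truncation makes the potential uniformly small.

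\emph{Barrier and comparison.} We take $v_2=|u_1|$ and the barrier
\[
v_1=A\,(1+|y-\tilde\xi_1|)^{2-N+\eta}\qquad\text{or}\qquad v_1=A\,|y-\tilde\xi_1|^{2-N}\ \text{with a small correction}.
\]
The plain $A|y-\tilde\xi_1|^{2-N}$ is harmonic, so $-\Delta v_1-\Psi_+v_1\ge0$ fails by $\Psi_+v_1$. To fix this we solve $-\Delta w-\Psi_+w=\Psi_+A|y-\tilde\xi_1|^{2-N}$ with zero boundary data, or we directly show $|u_1|\le AG$-type bounds via the Green representation. The cleanest route is the representation. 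Writing $\Gamma_{\partial}$ for the harmonic extension of the boundary values on $\partial P_1$,
\[
|u_1(y)|\le C\,r_\epsilon^{2-N}\Big(\tfrac{r_\epsilon}{|y-\tilde\xi_1|}\Big)^{N-2}+\gamma_N\int_{\Omega_\epsilon\setminus P_1}\frac{\Psi_+(z)|u_1(z)|}{|z-y|^{N-2}}\,dz .
\]
The first term equals $C|y-\tilde\xi_1|^{2-N}$. For the integral we set $M:=\sup|u_1|\,(1+|y-\tilde\xi_1|)^{N-2}$ over the exterior, which is finite for fixed $\epsilon$ by continuity and boundedness. The pointwise bound $\Psi_+\le\epsilon^{\frac{N-2}{N-4}}+C\Gamma_1^{4/(N-2)}\le \epsilon^{\frac{N-2}{N-4}}+C(1+|z-\tilde\xi_1|)^{-4+\frac{4\delta_0}{N-2}}$ gives
\[
\int\frac{\Gamma_1^{4/(N-2)}(1+|z-\tilde\xi_1|)^{2-N}}{|z-y|^{N-2}}\,dz\le o(1)\,(1+|y-\tilde\xi_1|)^{2-N},
\]
because the weight decays faster than $|z|^{-2}$ and the domain is exterior to $P_1$. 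The term with $\epsilon^{\frac{N-2}{N-4}}$ is harder: a constant potential convolved against the kernel is not small at distance $s_\epsilon^{-1}$. Here we use instead that the $L^{N/2}$ smallness already yields, via Lemma~\ref{lem0}, comparison with $v_1=A\,G$-like supersolutions of $-\Delta-\epsilon^{\frac{N-2}{N-4}}$. The first Dirichlet eigenvalue of $\Omega_\epsilon$ is $\sim\epsilon^{2/(N-4)}\gg\epsilon^{\frac{N-2}{N-4}}$, so $A|y-\tilde\xi_1|^{2-N}(1+\epsilon\,\mathrm{dist}\text{-correction})$ works. Absorbing the $o(1)M$ term then gives $M\le C$.

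\emph{Main obstacle.} The delicate step is ensuring the constant $C$ is uniform in $\epsilon$ and in the inner data. This requires the boundary matching on $\partial P_1$ (the exponent relation $\epsilon^{\gamma_0}\ll r_\epsilon^{2-N}$) and the handling of the $\epsilon^{\frac{N-2}{N-4}}$ term on the large domain. I would handle the latter by noting $\epsilon^{\frac{N-2}{N-4}}|y|^2\le\epsilon^{\frac{N-2}{N-4}}s_\epsilon^{-2}R_\Omega^2=\epsilon R_\Omega^2$. This shows that $A|y-\tilde\xi_1|^{2-N}+A'\epsilon|y-\tilde\xi_1|^{4-N}$ is a supersolution, with an error that is relatively $O(\epsilon s_\epsilon^{-2})$ and hence small.
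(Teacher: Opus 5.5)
Your inner-ball estimate and your Kato step agree with the paper; the bound $\epsilon^{\gamma_0}\le r_\epsilon^{2-N}$ is the condition $\gamma_0>(N-2)/B_\tau$ of Lemma~\ref{lem:exponent-choice}. In the exterior you take a different route: a Green representation of the subsolution $|u_1|$ on $\Omega_\epsilon\setminus\overline{P_1}$, followed by absorption of $M$. This route does close, but not for the reason you give. The mass term is not an obstruction inside the representation. By Lemma~\ref{lemB2} with $\sigma=N-4$, and since $|y-\tilde\xi_1|\le R_\Omega s_\epsilon^{-1}$ in $\Omega_\epsilon$,
\[
\epsilon^{\frac{N-2}{N-4}}M\int_{\Omega_\epsilon}\frac{(1+|z-\tilde\xi_1|)^{2-N}}{|z-y|^{N-2}}\,dz
\le C\epsilon^{\frac{N-2}{N-4}}M(1+|y-\tilde\xi_1|)^{4-N}
\le C\epsilon R_\Omega^2\,M(1+|y-\tilde\xi_1|)^{2-N}.
\]
This is exactly your own observation $\epsilon^{\frac{N-2}{N-4}}s_\epsilon^{-2}=\epsilon$. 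Hence $M\le C+(o(1)+C\epsilon)M$, and the proof is complete; no eigenvalue argument is needed. You should drop the patch you actually propose, for two reasons. First, with coefficient $\epsilon$ the correction $A'\epsilon|y-\tilde\xi_1|^{4-N}$ has relative size $\epsilon s_\epsilon^{-2}=\epsilon^{(N-6)/(N-4)}$, which is not small for $N=5,6$; the coefficient would have to be $\epsilon^{\frac{N-2}{N-4}}$. Second, even with that coefficient, its Laplacian is only of order $\epsilon^{\frac{N-2}{N-4}}|y-\tilde\xi_1|^{2-N}$. So it cannot dominate the $\Gamma_1^{4/(N-2)}$ part of $\Psi_+$, and Lemma~\ref{lem0} could not be applied with it to the full potential.

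The paper uses neither a representation formula nor absorption. It applies Lemma~\ref{lem0} once, with the fixed potential $\epsilon^{\frac{N-2}{N-4}}+(2\Gamma_1)^{2^*-2}$ and the barrier $\varpi=(1+a_\Omega|y-\tilde\xi_1|)^{2-N}$, where $a_\Omega=R_\Omega^{-3}$. This resolves your dilemma between a harmonic pure power and the lossy exponent $2-N+\eta$: the paper dilates instead of changing the exponent. For $r=|y-\tilde\xi_1|\ge r_\epsilon$ and $\epsilon$ small this gives
\[
\frac{-\Delta\varpi}{\varpi}=\frac{(N-2)(N-1)a_\Omega}{r(1+a_\Omega r)^2}\ge\frac{(N-2)(N-1)}{4a_\Omega r^3}.
\]
This beats $(2\Gamma_1)^{2^*-2}$ because $(2^*-2)(N-2-\delta_0)>3$, which is where $\delta_0<(N-2)/4$ is used. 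It also beats the mass term, because $r\le R_\Omega\epsilon^{-1/(N-4)}$ yields the lower bound $\frac{(N-2)(N-1)}{4}\epsilon^{3/(N-4)}$. For $N=5$ the powers coincide, and the constant $3>2$ is what matters. The paper's route obtains the sharp decay in one comparison step and relies on the $L^{N/2}$ smallness of the potential. Your route, once corrected, needs only the pointwise decay of $\Gamma_1$ and the finiteness of $M$ at fixed $\epsilon$, at the cost of two convolution estimates.
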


\begin{proof}
Since \(\beta<0\) and \(0\le g_{i,s}\le1\), Kato's inequality applied to
\eqref{eqg} gives
\begin{equation}\label{ineqg}
\begin{cases}
\displaystyle -\Delta |u_{1}| \leq \epsilon^{\frac{N-2}{N-4}} |g_{1}(y,u_{1})|+\lvert g_{1}(y,u_{1})\rvert^{2^{*}-1}, & y\in \Omega_\epsilon\setminus P_1,\\[3mm]
\displaystyle -\Delta |u_{2}| \leq \epsilon^{\frac{N-2}{N-4}} | g_{2}(y,u_{2})|+\lvert g_{2}(y,u_{2})\rvert^{2^{*}-1}, & y\in\Omega_\epsilon\setminus Q_{1}.
\end{cases}
\end{equation}
First, since $|g_i(y,u_i)|\leq\min\{2\Gamma_i, |u_i|\}$ for $i=1,2$, the first inequality in \eqref{ineqg} gives, in $\Omega_\epsilon\setminus P_1$,
\begin{align}\label{eqclaim-u1}
-\Delta|u_1|-\Psi(y)|u_1|\leq 0,
\end{align}
where
\begin{align*}
&\Psi(y)=\epsilon^{\frac{N-2}{N-4}}+(2\Gamma_1)^{2^*-2}
\end{align*}
satisfies \[\int_{\Omega_\epsilon\setminus P_1}\Psi^{\frac{N}{2}}\rightarrow0\quad\text{as }\epsilon\rightarrow0.\]
To apply Lemma~\ref{lem0}, let
\[
\varpi(y) = \frac{1}{(1+a_\Omega\lvert y-\tilde\xi_1\rvert)^{N-2}},
\]
where $a_\Omega:=R_\Omega^{-3}$.
We claim that, in $\Omega_\epsilon\setminus P_{1}$,
\begin{align}\label{eqclaim-varpi}-\Delta\varpi-\Psi(y)\varpi\geq0.
\end{align}
Indeed, a direct computation gives
\[
-\Delta \varpi = \varpi \frac{(N-2)(N-1)a_\Omega}{\lvert y-\tilde\xi_1\rvert(1+a_\Omega\lvert y-\tilde\xi_1\rvert)^{2}}.
\]
Let $r = \lvert y-\tilde\xi_1\rvert \ge r_\epsilon$. We need to show that
\[\frac{(N-2)(N-1)a_\Omega}{r(1+a_\Omega r)^2}>\epsilon^{\frac{N-2}{N-4}}+(2\Gamma_1)^{2^*-2}.\]

First, since $\delta_0<(N-2)/4$, we have
\[
(2^*-2)(N-2-\delta_0)
=4-\frac{4\delta_0}{N-2}>3.
\]
Therefore, as $r\geq r_\epsilon\to +\infty$,
\[
\frac{(N-2)(N-1)a_\Omega}{r(1+a_\Omega r)^2} > 2(2\Gamma_1)^{2^*-2}.
\]
To absorb the lower-order linear term, we note that in $\Omega_\epsilon$, the distance $r$ is bounded by $R_\Omega \epsilon^{-\frac{1}{N-4}}$. For $\epsilon$ sufficiently small, we have $a_\Omega r \ge a_\Omega r_\epsilon \to \infty$, which implies $1+a_\Omega r \le 2a_\Omega r$. Thus,
\[ \frac{(N-2)(N-1)a_\Omega}{r(1+a_\Omega r)^2}\geq
\frac{(N-2)(N-1)}{4 a_\Omega r^3} \ge \frac{(N-2)(N-1)}{4} \epsilon^{\frac{3}{N-4}}
>2 \epsilon^{\frac{N-2}{N-4}}.
\]
 For $N>5$ the last inequality follows from the strict difference of the
powers. When $N=5$ the two powers are both $\epsilon^3$, and the retained
coefficient is essential: $(N-2)(N-1)/4=3>2$. Thus
\eqref{eqclaim-varpi} holds in every dimension $N\ge5$.

On $\partial P_1$, where $r=r_\epsilon\to\infty$, there exists
$C_0>0$ such that
\[
|u_1(y)|=|u_{1,0}(y)|\le U_1(y)+\epsilon^{\gamma_0}
\le C_0\varpi(y).
\]
Moreover, $|u_1|=0\le C_0\varpi$ on $\partial\Omega_\epsilon$, and the
positive part required in Lemma~\ref{lem0} belongs to the corresponding
zero-trace energy space. Applying that lemma with $v_1=C_0\varpi$ and
$v_2=|u_1|$ proves the first estimate in \eqref{decay}. The second follows in
the same way.
\end{proof}
 \medskip

By Lemma~\ref{lem7}, $\lvert u_{1}\rvert\leq\Gamma_{1}$ and $\lvert u_{2}\rvert\leq\Gamma_{2}$. Hence the truncations are inactive in the corresponding outer regions, and
the minimizer $(u_{1},u_{2})$ is a solution of the original outer problem \eqref{eqout}.

\begin{Cor}\label{correal}
For sufficiently small \( \epsilon \), the minimum in \eqref{min} is attained at a solution \( (u_{1}, u_{2}) \) of \eqref{eqout}, provided that the perturbative inner data satisfy \( (u_{1,0},u_{2,0})-(V_1,V_2)\in\Lambda_\epsilon\cap H_0^1(\Omega_\epsilon)^2 \).
\end{Cor}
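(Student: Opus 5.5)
The corollary combines Lemma~\ref{lem3}, the Euler--Lagrange system \eqref{eqg}, and the decay estimate of Lemma~\ref{lem7}. We first fix $\epsilon$ small enough that Lemma~\ref{lem3} applies. For admissible inner data, with $(u_{1,0},u_{2,0})-(V_1,V_2)\in\Lambda_\epsilon\cap H_0^1(\Omega_\epsilon)^2$, it yields a minimizer $(u_1,u_2)$ of \eqref{min}.

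Next we justify that the minimizer solves \eqref{eqg}. Take variations $\eta_1\in C_c^\infty(\Omega_\epsilon\setminus\overline{P_1})$ and $\eta_2\in C_c^\infty(\Omega_\epsilon\setminus\overline{Q_1})$. These preserve the inner data. They also preserve the moment constraints, since $Y_{1,j}$ and $Y_{2,j}$ are supported in $P_1$ and $Q_1$. The functional $I$ is $C^1$ along such directions: $g_i(y,\cdot)$ is $C^1$ with $0\le g_{i,s}\le1$, and $|g_i|\le 2\Gamma_i$ outside $\mathcal C_i$. The coupling $s\mapsto|s|^p$ with $p>1$ is $C^1$, and its derivative is controlled by the dominating function $G_1^{p}G_2^{p-1}$ (and symmetrically), so differentiation under the integral is justified by dominated convergence. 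Hence the first variation vanishes, which gives \eqref{eqg} weakly in the exterior regions. By the regularity remark after \eqref{eqg}, we may take continuous representatives that match the inner data on $\partial P_1$ and $\partial Q_1$.

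Then we invoke Lemma~\ref{lem7}. Its constant $C$ does not depend on $\epsilon$ or on the admissible data, so $|u_i(y)|\le C(1+|y-\tilde\xi_i|)^{-(N-2)}$. On the exterior region we have $|y-\tilde\xi_i|\ge r_\epsilon\to\infty$, and $\delta_0>0$, so
\[
C(1+|y-\tilde\xi_i|)^{-(N-2)}\le C r_\epsilon^{-\delta_0}\Gamma_i(y)\le \Gamma_i(y)
\]
once $\epsilon$ is small, uniformly in the data. Since $\zeta=1$ on $[-1,1]$, this gives $g_i(y,u_i)=u_i$ and $g_{i,s}(y,u_i)=1$ on $\Omega_\epsilon\setminus\mathcal C_i$. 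Substituting into \eqref{eqg} yields \eqref{eqout}. Finally, the minimum value of \eqref{min} is attained at this same $(u_1,u_2)$, so it is a solution of \eqref{eqout}.

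The main point to check is that the smallness threshold for $\epsilon$ is uniform, because it is needed simultaneously in Lemma~\ref{lem3}, in the comparison argument of Lemma~\ref{lem7}, and in the absorption $Cr_\epsilon^{-\delta_0}\le1$. This holds because Lemma~\ref{lem7} bounds the inner data on $\partial P_1$ by $U_1+\epsilon^{\gamma_0}$, independently of the particular element of $\Lambda_\epsilon$. The other point needing care is the differentiability of the coupling term near the zero set of $g_i$. It is handled by $p>1$: the derivative $p|s|^{p-2}s$ is continuous and vanishes at $s=0$, so $I$ is $C^1$ there, even though it is not $C^2$.
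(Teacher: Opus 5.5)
Your proposal is correct and follows essentially the same route as the paper. You obtain the minimizer from Lemma~\ref{lem3} and derive the truncated Euler--Lagrange system \eqref{eqg} from variations supported off $P_1$ and $Q_1$, which preserve the inner data and the moment constraints; then the uniform decay of Lemma~\ref{lem7}, together with the $\delta_0$-margin in $\Gamma_i$, gives $|u_i|\le\Gamma_i$, so the truncation is inactive and \eqref{eqg} reduces to \eqref{eqout}.
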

\medskip

\medskip

We also need estimates for the perturbations.
\begin{Lem}\label{lem8}
There exists a constant $C>0$, independent of $\epsilon$ and of the
energy-compatible inner data satisfying
$(u_{1,0},u_{2,0})-(V_1,V_2)\in\Lambda_\epsilon\cap H_0^1(\Omega_\epsilon)^2$, such that
\begin{align*}
\lvert u_{1}(y)-V_{1}(y)\rvert
&\leq \frac{ C\epsilon^{\alpha_1}}{(1+\lvert y-\tilde\xi_1\rvert)^{ \theta_1}}
+\frac{ C\epsilon^{\tilde\alpha_1}}{(1+\lvert y-\tilde\xi_2\rvert)^{\tilde\theta_1}}, \\
\lvert u_{2}(y)-V_{2}(y)\rvert
&\leq \frac{ C\epsilon^{\alpha_2}}{(1+\lvert y-\tilde\xi_2\rvert)^{ \theta_2}}
+\frac{ C\epsilon^{\tilde\alpha_2}}{(1+\lvert y-\tilde\xi_1\rvert)^{\tilde\theta_2}},
\end{align*}
for all $y \in \Omega_\epsilon$, and
\begin{align*}
\lvert\nabla(u_{1}(y)-V_{1}(y))\rvert
&\leq \frac{C\epsilon^{\alpha_1}}{(1+\lvert y-\tilde\xi_1\rvert)^{ \theta_1}}
+\frac{C\epsilon^{\tilde\alpha_1}}{(1+\lvert y-\tilde\xi_2\rvert)^{\tilde\theta_1}}, \quad y \in \Omega_\epsilon\setminus B_{r_\epsilon+1}(\tilde\xi_1), \\
\lvert\nabla(u_{2}(y)-V_{2}(y))\rvert
&\leq \frac{C\epsilon^{\alpha_2}}{(1+\lvert y-\tilde\xi_2\rvert)^{ \theta_2}}
+\frac{C\epsilon^{\tilde\alpha_2}}{(1+\lvert y-\tilde\xi_1\rvert)^{\tilde\theta_2}}, \quad y \in \Omega_\epsilon\setminus B_{r_\epsilon+1}(\tilde\xi_2),
\end{align*}
where the exponents are those fixed in \eqref{eq:auxiliary-exponents}; in particular,
$\theta_i,\tilde\theta_i\in(0,N-2)$ for $i=1,2$.
 \end{Lem}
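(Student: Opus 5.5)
We estimate $w_1:=u_1-V_1$ (and symmetrically $w_2:=u_2-V_2$) by a comparison argument in the exterior region $D:=\Omega_\epsilon\setminus\overline{P_1}$, using Lemma~\ref{lem0} with a two-center barrier. In $P_1$ the bound is immediate. There $w_1=\varphi_0$, so $|w_1|\le\epsilon^{\gamma_0}=\epsilon^{\alpha_1+\delta_*}r_\epsilon^{-\theta_1}$. This is at most $C\epsilon^{\alpha_1}(1+|y-\tilde\xi_1|)^{-\theta_1}$ for $|y-\tilde\xi_1|\le r_\epsilon$, and the same identity controls the boundary values on $\partial P_1$. On $\partial\Omega_\epsilon$ we have $w_1=0$.

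In $D$, since $-\Delta V_1=U_1^{2^*-1}$, the function $w_1$ satisfies
\[
-\Delta w_1-\epsilon^{\frac{N-2}{N-4}}w_1-\Psi_1 w_1
=\epsilon^{\frac{N-2}{N-4}}V_1+\bigl(V_1^{2^*-1}-U_1^{2^*-1}\bigr)+\beta|u_2|^{p}|u_1|^{p-2}u_1 ,
\]
where $\Psi_1:=\bigl(|u_1|^{2^*-2}u_1-V_1^{2^*-1}\bigr)/w_1$. By Lemma~\ref{lem7}, $|\Psi_1|\le C(1+|y-\tilde\xi_1|)^{-4}$, so $\|\Psi_1+\epsilon^{\frac{N-2}{N-4}}\|_{L^{N/2}(D)}\to0$ and Lemma~\ref{lem0} applies. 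We bound the source terms one at a time.
\begin{itemize}
\item[(a)] The linear term satisfies $\epsilon^{\frac{N-2}{N-4}}V_1\le C\epsilon^{\frac{N-2}{N-4}}(1+|y-\tilde\xi_1|)^{2-N}$.
\item[(b)] Using $0\le U_1-V_1\le C\epsilon^{\frac{N-2}{N-4}}$ (projection estimates of Appendix~A, rescaled), the projection defect is at most $C\epsilon^{\frac{N-2}{N-4}}U_1^{2^*-2}\le C\epsilon^{\frac{N-2}{N-4}}(1+|y-\tilde\xi_1|)^{-4}$.
\item[(c)] By Lemma~\ref{lem7}, the coupling term is bounded by $|\beta|\,|u_2|^p|u_1|^{p-1}\le C(1+|y-\tilde\xi_2|)^{-N}(1+|y-\tilde\xi_1|)^{-2}$. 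Near $\tilde\xi_2$ the second factor is $\le C\epsilon^{\frac2{N-4}}$, since $|\tilde\xi_1-\tilde\xi_2|\ge\delta s_\epsilon^{-1}$. Hence this term is dominated by $C\epsilon^{\frac{2}{N-4}}(1+|y-\tilde\xi_2|)^{-N}$ plus a piece decaying from $\tilde\xi_1$. Its Newtonian potential gives the $\epsilon^{\tilde\alpha_1}(1+|y-\tilde\xi_2|)^{-\tilde\theta_1}$ term; the loss $\tilde\theta,\bar\theta>0$ is what lets a pure-power supersolution work.
\end{itemize}

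As the barrier we take
\[
\Phi:=A\epsilon^{\alpha_1}(1+|y-\tilde\xi_1|)^{-\theta_1}+A\epsilon^{\tilde\alpha_1}(1+|y-\tilde\xi_2|)^{-\tilde\theta_1}.
\]
For $0<\theta<N-2$ one computes $-\Delta(1+r)^{-\theta}\ge c_\theta(1+r)^{-\theta-2}$ for $r\ge1$. So $-\Delta\Phi-(\Psi_1+\epsilon^{\frac{N-2}{N-4}})\Phi\ge\frac{c}{2}\bigl[A\epsilon^{\alpha_1}(1+|y-\tilde\xi_1|)^{-\theta_1-2}+A\epsilon^{\tilde\alpha_1}(1+|y-\tilde\xi_2|)^{-\tilde\theta_1-2}\bigr]$ on $D$, once two things are checked. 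The potential $\Psi_1\lesssim(1+|y-\tilde\xi_1|)^{-4}$ is small relative to $(1+|y-\tilde\xi_1|)^{-2}$ when $|y-\tilde\xi_1|\ge r_\epsilon$. The linear term $\epsilon^{\frac{N-2}{N-4}}$ is absorbed by $(1+r)^{-2}\ge c\epsilon^{\frac2{N-4}}$ on $\Omega_\epsilon$. Around $\tilde\xi_2$ this absorption of $\Psi_1$ requires the $\tilde\xi_2$-term to beat $\Psi_1\cdot\Phi$. There $\Psi_1\le C\epsilon^{\frac4{N-4}}$, which suffices.

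We then compare with the sources.
\begin{itemize}
\item For (a) and (b) we need $\epsilon^{\frac{N-2}{N-4}}(1+r)^{-4}\lesssim\epsilon^{\alpha_1}(1+r)^{-\theta_1-2}$ for $r\le R_\Omega s_\epsilon^{-1}$. Since $\theta_1+2-4=\frac N2-2-\vartheta>0$, the worst case is $r\sim s_\epsilon^{-1}$, where this reads $\epsilon^{\frac{N-2}{N-4}-\frac{N/2-2-\vartheta}{N-4}}=\epsilon^{\frac{N/2+\vartheta}{N-4}}\le\epsilon^{\alpha_1}$. That holds because $\alpha_1=\frac{N-2}{2(N-4)}+\theta'$ and $\theta'<\vartheta$. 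This is exactly the purpose of the exponent choices.
\item For (c), near $\tilde\xi_2$ we need $\epsilon^{\frac2{N-4}}(1+\rho)^{-N}\lesssim\epsilon^{\tilde\alpha_1}(1+\rho)^{-\tilde\theta_1-2}$, which is true since $\tilde\theta_1+2<N$ and $\tilde\alpha_1<\frac2{N-4}$.
\item The mixed regions, far from both centers, are where I expect the main obstacle. There the product of the two decay factors must be split between the two barrier pieces. This is where the upper bound on $\tau$ (for $N\ge7$) and the inequalities of Lemma~\ref{lem:exponent-choice} should enter, and I would verify them region by region: $|y-\tilde\xi_1|\le\frac12|\tilde\xi_1-\tilde\xi_2|$, the symmetric region around $\tilde\xi_2$, and the remaining region.
\end{itemize}
Lemma~\ref{lem0} then gives $\pm w_1\le\Phi$ on $D$. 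We also need $(\pm w_1-\Phi)_+\in D^{1,2}_0$, which holds from the boundary comparison.

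The gradient bounds on $\Omega_\epsilon\setminus B_{r_\epsilon+1}(\tilde\xi_1)$ follow from interior scaled $C^{1}$ estimates on unit balls $B_1(y)\subset D$. On such a ball $w_1$ is bounded by $C\Phi(y)$, because $\Phi$ is comparable at all points of a unit ball, and the right-hand side is bounded by $C\Phi(y)$ by the source bounds above. Near $\partial\Omega_\epsilon$ we use boundary estimates instead, using the smoothness of the rescaled boundary (curvature $O(s_\epsilon)$). The statement for $u_2-V_2$ is symmetric.
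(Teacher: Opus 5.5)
\textbf{Verdict: genuine but easily repaired gap.} Your architecture is the paper's. You use comparison on $\Omega_\epsilon\setminus\overline{P_1}$ through Lemma~\ref{lem0}, the same two-center barrier, the same three sources, and the same unit-ball gradient argument. Your quotient potential $\Psi_1$ is nonnegative because $s\mapsto|s|^{2^*-2}s$ is increasing, so it is a legitimate substitute for the paper's Kato inequality. The gap is that the cross-center absorptions are never carried out, and you misidentify where they are delicate.

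Write $\rho_i=1+|y-\tilde\xi_i|$ and $d_\epsilon=|\tilde\xi_1-\tilde\xi_2|$. Your ``two things'' control $\Psi_1$ times the $\tilde\xi_1$-piece of $\Phi$ and the mass term, and you treat $\Psi_1$ times the $\tilde\xi_2$-piece only near $\tilde\xi_2$. Near $\tilde\xi_1$, say $\rho_1\sim r_\epsilon$, the term $\Psi_1\cdot A\epsilon^{\tilde\alpha_1}\rho_2^{-\tilde\theta_1}\sim\rho_1^{-4}\epsilon^{\tilde\alpha_1}\rho_2^{-\tilde\theta_1}$ is much larger than the $\tilde\xi_2$-part $A\epsilon^{\tilde\alpha_1}\rho_2^{-\tilde\theta_1-2}$ of $-\Delta\Phi$, because there $\rho_1^{-4}\gg\rho_2^{-2}\sim\epsilon^{2/(N-4)}$. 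It therefore has to be absorbed by the $\tilde\xi_1$-piece, which needs an exponent inequality you never check. This is the paper's region $\mathcal R_B=\{|y-\tilde\xi_1|^2<K_{\mathrm b}|y-\tilde\xi_2|\}$, which is a neighbourhood of $\tilde\xi_1$, not a region ``far from both centers''. Likewise, in (c) you compare only the $\tilde\xi_2$-side of the coupling source with $\Phi$. The piece $\rho_1^{-2}\rho_2^{-N}$ on $\{\rho_1\le d_\epsilon/2\}$ is announced but never matched. Deferring these (``I would verify them region by region'') leaves the argument incomplete at exactly the step the paper treats as nontrivial.

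\textbf{The repair.} Split once at $\rho_1=d_\epsilon/2$.

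On $\{\rho_1\le d_\epsilon/2\}$ one has $\rho_2\ge d_\epsilon/2\ge c\epsilon^{-1/(N-4)}$. So the cross potential term is at most $C\epsilon^{\tilde\alpha_1+\tilde\theta_1/(N-4)}\rho_1^{-4}\le C\epsilon^{\gamma_1}\rho_1^{-4}$, using $(N-4)\tilde\theta+\bar\theta<2$. The coupling source is at most $C\epsilon^{N/(N-4)}\rho_1^{-2}$. Both are absorbed by $A\epsilon^{\alpha_1}\rho_1^{-\theta_1-2}$ once $\alpha_1<(N-\theta_1)/(N-4)$, the same inequality that governs your source (b).

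On $\{\rho_1>d_\epsilon/2\}$ one has $\rho_1^{-1}\le C\epsilon^{1/(N-4)}$ and $\rho_2^{-2}\ge c\epsilon^{2/(N-4)}$. So $\Psi_1\le C\epsilon^{2/(N-4)}\rho_2^{-2}$ and the coupling source is at most $C\epsilon^{2/(N-4)}\rho_2^{-N}$. Both are absorbed by the $\tilde\xi_2$-piece, since $\tilde\alpha_1<2/(N-4)$ and $\tilde\theta_1+2<N$.

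This route also avoids the paper's condition $\alpha_1-\tilde\alpha_1<(2\tilde\theta_1+2-\theta_1)/B_\tau$. In $\mathcal R_B$ the paper bounds $\rho_2$ below only by $\rho_1^2/K_{\mathrm b}$ and takes the worst case $\rho_1=r_\epsilon$; that is where $\tau<(N+8)/(N-6)$ enters. Since $\mathcal R_B\subset\{\rho_1<d_\epsilon/2\}$ for small $\epsilon$, the sharper bound $\rho_2\ge d_\epsilon/2$ is available there. So for this lemma the upper bound on $\tau$ is not actually needed, contrary to your expectation.

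\textbf{Smaller slips.}
\begin{itemize}
\item For $N=5$, $\epsilon^{\gamma_1}V_1$ is of size $\epsilon^3\rho_1^{-3}$, which is not dominated by $\epsilon^{\gamma_1}\rho_1^{-4}$. Checked directly at $\rho_1\sim\epsilon^{-1}$, it requires exactly $\theta'<\vartheta$.
\item For general $N$, the inequality you actually wrote reduces to $\theta'<(1+\vartheta)/(N-4)$. That follows from the smallness of the losses, not from $\theta'<\vartheta$.
\item The lower bound on $-\Delta(1+r)^{-\theta}$ is needed for all $r>0$, not only $r\ge1$, because $\tilde\xi_2$ lies in the comparison domain. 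It holds there by the identity $-\Delta(1+r)^{-\theta}=\theta(1+r)^{-\theta-2}\bigl((N-2-\theta)+(N-1)/r\bigr)$.
\end{itemize}
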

 \begin{proof}

Let $(\varphi,\psi)=(u_{1}, u_{2})-(V_1,V_2)$. By \eqref{eqout}, we have
\begin{equation}\label{eq17}
\begin{cases}
-\Delta \varphi =\epsilon^{\frac{N-2}{N-4}}\varphi+\epsilon^{\frac{N-2}{N-4}}V_1+\lvert u_{1}\rvert^{2^{*}-2}u_{1}-U^{2^*-1}_{1}+\beta \lvert u_{2}\rvert^{\frac{2^{*}}{2}}\lvert u_{1}\rvert^{\frac{2^{*}}{2}-2}u_{1},  &~\text{in}~\Omega_\epsilon\setminus P_{1},\\[3mm]
-\Delta \psi =\epsilon^{\frac{N-2}{N-4}}\psi+ \epsilon^{\frac{N-2}{N-4}}V_2+\lvert u_{2}\rvert^{2^{*}-2}u_{2}-U^{2^*-1}_{2}+\beta \lvert u_{1}\rvert^{\frac{2^{*}}{2}}\lvert u_{2}\rvert^{\frac{2^{*}}{2}-2}u_{2},  &~\text{in}~\Omega_\epsilon\setminus Q_{1},\\[3mm]
\varphi=\varphi_{0}\quad\text{in }P_{1},\qquad \psi=\psi_{0}\quad\text{in }Q_{1}.
\end{cases}
\end{equation}

To estimate $|u_1-V_1|$, we apply Lemma~\ref{lem0} to the following inequality in $\Omega_\epsilon\setminus P_{1}$:
\begin{align}\label{eqf}
-\Delta |\varphi|-\Psi(y)|\varphi|\leq f_1+f_2+f_3,
\end{align}
where for some $C>0$,
 \begin{align*}
&\Psi(y)=\epsilon^{\frac{N-2}{N-4}}+C(|u_1|^{2^*-2}+|V_1|^{2^*-2})\\
&f_1=\epsilon^{\frac{N-2}{N-4}}V_1\\
&f_2=\big|V_1^{2^*-1}-U_1^{2^*-1}\big|\\
&f_3=|\beta| \lvert u_{2}\rvert^{\frac{2^{*}}{2}}\lvert u_{1}\rvert^{\frac{2^{*}}{2}-1}.
\end{align*}

Take
\[
\omega_1(y)=
 A_{\mathrm b,1}\frac{\epsilon^{\alpha_1}}{(1+|y-\tilde\xi_1|)^{\theta_1}}
+A_{\mathrm b,2}\frac{\epsilon^{\tilde\alpha_1}}{(1+|y-\tilde\xi_2|)^{\tilde\theta_1}},
\]
where \(A_{\mathrm b,1},A_{\mathrm b,2}>0\) will be chosen below. A direct computation gives
\begin{align}\label{eqomega1}
-\Delta\omega_1(y)
=&A_{\mathrm b,1}\frac{\theta_1\epsilon^{\alpha_1}}{(1+|y-\tilde\xi_1|)^{\theta_1+2}}
\left((N-2-\theta_1)+\frac{N-1}{|y-\tilde\xi_1|}\right)\nonumber\\
&+A_{\mathrm b,2}\frac{\tilde\theta_1\epsilon^{\tilde\alpha_1}}{(1+|y-\tilde\xi_2|)^{\tilde\theta_1+2}}
\left((N-2-\tilde\theta_1)+\frac{N-1}{|y-\tilde\xi_2|}\right).
\end{align}

We now prove that
\begin{align}\label{in1}
-\Delta\omega_1(y) - \Psi(y)\omega_1 \geq f_1 + f_2 + f_3,
\end{align}
by showing that the right-hand side of \eqref{eqomega1} dominates each of the four terms \(4\Psi(y)\omega_1\), \(4f_1\), \(4f_2\), and \(4f_3\).

First, for \(i=1,2\), since \(|y-\tilde\xi_i|\le C\epsilon^{-1/(N-4)}\) and
\(N>4\), we have
\[
(1+|y-\tilde\xi_i|)^{-2}\ge C\epsilon^{2/(N-4)}
\gg \epsilon^{(N-2)/(N-4)}.
\]
 For sufficiently small \(\epsilon\), this gives
\begin{align}\label{eq:barrier-linear-term}
-\Delta\omega_1
\ge C\epsilon^{\frac{N-2}{N-4}}\omega_1.
\end{align}
Moreover, using the admissible range \(\tau\in\mathcal I_N\) in
\eqref{tau-range} and the definitions of
\(\alpha_1,\tilde\alpha_1,\theta_1,\tilde\theta_1\), one obtains
\begin{align}\label{eq:barrier-bubble-potential}
-\Delta\omega_1
\ge
\frac{C}{(1+|y-\tilde\xi_1|)^4}\omega_1.
\end{align}
Indeed, the first part of \(\omega_1\) is immediate from
\((1+|y-\tilde\xi_1|)^{-2}\). For the mixed part, we fix a large constant $K_{\mathrm b}>0$ depending on $\bar\theta$ and split
\(\Omega_\epsilon\setminus P_1\) into the two complementary regions:
\[
\mathcal{R}_A = \{y : |y-\tilde\xi_1|^2 \ge K_{\mathrm b}|y-\tilde\xi_2|\} \quad \text{and} \quad \mathcal{R}_B = \{y : |y-\tilde\xi_1|^2 < K_{\mathrm b}|y-\tilde\xi_2|\}.
\]
In $\mathcal{R}_A$, the mixed term is absorbed by the second part of $-\Delta\omega_1$ purely through algebraic decay. The region $\mathcal{R}_B$ is the only one where the parameter restrictions enter, as the mixed term must be absorbed by the first part of $-\Delta\omega_1$ using the smallness of $\epsilon$. Lemma~\ref{lem:exponent-choice} guarantees that the losses can be fixed so that
\begin{equation}\label{eq:mixed-barrier-condition}
\alpha_1-\tilde\alpha_1
<\frac{2\tilde\theta_1+2-\theta_1}{B_\tau}.
\end{equation}
After substituting
\(\theta_1=N/2-\vartheta\), \(\tilde\theta_1=N-2-\bar\theta\),
\(\alpha_1=(N-2)/(2(N-4))+\theta'\), and
\(\tilde\alpha_1=2/(N-4)-\tilde\theta\), this condition, in the limit
\(\tilde\theta,\theta',\bar\theta\to0^+\), becomes
\[
\frac{N-6}{2(N-4)}<
\frac{\frac{3N}{2}-2+\vartheta}{(2+\tau)(N-4)}.
\]
For \(N=5,6\), the limiting left-hand side is nonpositive, and the condition is
therefore automatic after the losses have been chosen sufficiently small.
For \(N\ge7\), it is sufficient to impose
\[
\tau<\frac{N+8}{N-6},
\]
as in \eqref{tau-range}. Thus the mixed-barrier estimate is valid for all
\(\tau\in\mathcal I_N\). This is the only place where the upper restriction on
\(\tau\) enters the outer-region estimate. Combining
\eqref{eq:barrier-linear-term} and \eqref{eq:barrier-bubble-potential}, and using
\(|u_1|^{2^*-2}+|V_1|^{2^*-2}\le C(1+|y-\tilde\xi_1|)^{-4}\), we obtain
\begin{align}\label{Psiomega1}
-\Delta\omega_1\ge 4\Psi\omega_1,
\end{align}
 for sufficiently small \(\epsilon\).

For \(f_1\), the choice \(\theta_1=N/2-\vartheta\) with
\(0<\theta'<\vartheta\) gives the uniform estimate
\[
\frac{A_{\mathrm b,1}\epsilon^{\alpha_1}}{(1+|y-\tilde\xi_1|)^{\theta_1+2}}
\ge
C\frac{\epsilon^{\frac{N-2}{N-4}}}{(1+|y-\tilde\xi_1|)^{N-2}},
\qquad y\in\Omega_\epsilon\setminus P_1.
\]
The small loss \(\vartheta\) in \(\theta_1\) creates a strict margin in the
borderline case \(N=5\). Therefore
\begin{align}\label{f1}
-\Delta\omega_1\ge4f_1.
\end{align}

For \(f_2\), the projection expansion gives, uniformly in the admissible parameters,
\[
V_1(y)=U_1(y)-C_N\lambda_1^{\frac{N-2}{2}}
\epsilon^{\frac{N-2}{N-4}}
H_\Omega\big(\epsilon^{\frac{1}{N-4}}y,\xi_1\big)
+O\left(\epsilon^{\frac{N}{N-4}}\right).
\]
Together with the mean-value theorem, this yields
\[
|V_1^{2^*-1}-U_1^{2^*-1}|
\le
C U_1^{2^*-2}\epsilon^{\frac{N-2}{N-4}}
H_\Omega\big(\epsilon^{\frac{1}{N-4}}y,\xi_1\big)
+C\epsilon^{\frac{N}{N-4}}U_1^{2^*-2}.
\]
The choice made in Lemma~\ref{lem:exponent-choice} gives
\begin{equation}\label{eq:own-center-barrier-condition}
\alpha_1<\frac{N-\theta_1}{N-4}.
\end{equation}
Indeed,
\[
\frac{N-\theta_1}{N-4}
-\frac{N-2}{2(N-4)}
=\frac{1+\vartheta}{N-4}>0.
\]
Using \eqref{eq:own-center-barrier-condition}, the first part of
$-\Delta\omega_1$ dominates the right-hand side above. Consequently,
\begin{align}\label{f2}
-\Delta\omega_1\ge4f_2.
\end{align}
Next, we estimate the coupling term. Set
\[
\rho_i(y):=1+|y-\tilde\xi_i|,\qquad
d_\epsilon=|\tilde\xi_1-\tilde\xi_2|.
\]
 By Lemma~\ref{lem7},
\[
f_3\le C \rho_1^{-2}\rho_2^{-N}.
\]
Moreover, from the definition of \(O_\delta\),
\[
\delta\,\epsilon^{-1/(N-4)}
\le d_\epsilon
\le R_\Omega\epsilon^{-1/(N-4)}.
\]
We split the outer region into \(\rho_1\le d_\epsilon/2\) and
\(\rho_1>d_\epsilon/2\).  In the first region, \(\rho_2\ge c d_\epsilon\), and the
first term on the right-hand side of \eqref{eqomega1} gives
\begin{align*}
A_{\mathrm b,1}\epsilon^{\alpha_1}\rho_1^{-\theta_1-2}
&\ge C d_\epsilon^{-N}\rho_1^{-2}
\ge C \rho_1^{-2}\rho_2^{-N}.
\end{align*}
Here we used
\[
\alpha_1<\frac{N-\theta_1}{N-4},
\]
which is \eqref{eq:own-center-barrier-condition}.  In the second
region, \(\rho_1^{-2}\le C d_\epsilon^{-2}\), and the second term in
\eqref{eqomega1} gives
\begin{align*}
A_{\mathrm b,2}\epsilon^{\tilde\alpha_1}\rho_2^{-\tilde\theta_1-2}
&=A_{\mathrm b,2}\epsilon^{\tilde\alpha_1}\rho_2^{-N+\bar\theta}
\ge C d_\epsilon^{-2}\rho_2^{-N}
\ge C \rho_1^{-2}\rho_2^{-N},
\end{align*}
because \(\tilde\alpha_1<2/(N-4)\)  and \(\rho_2^{\bar\theta}\ge1\).  Taking
\(A_{\mathrm b,1},A_{\mathrm b,2}\) sufficiently large, we therefore obtain
\begin{align}\label{f3}
-\Delta\omega_1\ge4f_3.
\end{align}

From \eqref{Psiomega1}, \eqref{f1}, \eqref{f2} and \eqref{f3}, we obtain \eqref{in1}.

Combining \eqref{eqf} and \eqref{in1} yields
\[
-\Delta|\varphi|-\Psi|\varphi|
\le -\Delta\omega_1-\Psi\omega_1
\quad\text{in }\mathcal O_{1,\epsilon}:=\Omega_\epsilon\setminus\overline{P_1}.
\]
Moreover, Lemma~\ref{lem7} and the bubble-tail estimate imply
\[
\begin{aligned}
\|\Psi\|_{L^{N/2}(\mathcal O_{1,\epsilon})}
&\le C\epsilon^{\frac{N-2}{N-4}}|\Omega_\epsilon|^{2/N}
 +C\|u_1\|_{L^{2^*}(\mathcal O_{1,\epsilon})}^{2^*-2}
 +C\|V_1\|_{L^{2^*}(\mathcal O_{1,\epsilon})}^{2^*-2}\\
&=O(\epsilon)+o(1)<\mathbf C_s
\end{aligned}
\]
for sufficiently small $\epsilon$, uniformly in the admissible parameters.
On $\partial P_1$, the definition of $\Lambda_\epsilon$ gives
\[
|\varphi|\le\epsilon^{\gamma_0}
=\epsilon^{\alpha_1+\delta_*}r_\epsilon^{-\theta_1}
\le \frac{\epsilon^{\alpha_1}}
{(1+|y-\tilde\xi_1|)^{\theta_1}}
+\frac{\epsilon^{\tilde\alpha_1}}
{(1+|y-\tilde\xi_2|)^{\tilde\theta_1}}
\le\omega_1.
\]
On $\partial\Omega_\epsilon$, one has
$|\varphi|=0\le\omega_1$, and
$(|\varphi|-\omega_1)_+\in D_0^{1,2}(\mathcal O_{1,\epsilon})$. Lemma~\ref{lem0} therefore
gives $|\varphi|\le\omega_1$ in $\mathcal O_{1,\epsilon}$. The same argument applies to
$\psi$.

\medskip

We next consider the first equation in \eqref{eq17}. For
\(y\in\Omega_\epsilon\setminus B_{r_\epsilon+1}(\tilde\xi_1)\), set
\(\mathcal B_y:=B_1(y)\cap\Omega_\epsilon\). If \(B_1(y)\subset\Omega_\epsilon\),
we use the interior gradient estimate; otherwise we use the corresponding
boundary estimate for the homogeneous Dirichlet problem after flattening
\(\partial\Omega_\epsilon\). Since \(\Omega\) is smooth, the dilated domains
\(\Omega_\epsilon\) admit uniformly controlled boundary charts, and hence the
constants are independent of \(\epsilon\). Standard elliptic estimates
\cite[Chapters 3 and 8]{GT01} give

 \begin{equation*}
\begin{split}
\displaystyle \lvert \nabla(u_{1}(y)-V_1(y))\rvert\leq& C\Big(\sup_{\mathcal B_y}\lvert u_{1}(x)-V_1(x)\rvert+\epsilon^{\frac{N-2}{N-4}}\sup_{\mathcal B_y}\lvert u_{1}(x)-V_1(x)\rvert\\& \displaystyle +\epsilon^{\frac{N-2}{N-4}}\sup_{\mathcal B_y}|V_1(x)| +\sup_{\mathcal B_y} \Big\lvert\lvert u_{1}\rvert^{2^{*}-2}u_{1}-U^{2^*-1}_{1}+\beta \lvert u_{2}\rvert^{\frac{2^{*}}{2}}\lvert u_{1}\rvert^{\frac{2^{*}}{2}-2}u_{1} \Big\rvert\Big).
\end{split}
\end{equation*}

For \(x\in \mathcal B_y\) we have \(|x-\tilde\xi_1|\ge c r_\epsilon\). Moreover, either \(|x-\tilde\xi_2|\le \frac{1}{2}|\tilde\xi_1-\tilde\xi_2|\), in which case \(|x-\tilde\xi_1|\ge c\epsilon^{-1/(N-4)}\), or \(|x-\tilde\xi_2|>\frac{1}{2}|\tilde\xi_1-\tilde\xi_2|\), in which case \(|x-\tilde\xi_2|\ge c\epsilon^{-1/(N-4)}\). Using Lemma~\ref{lemB1} and the exponent inequalities in \eqref{eq:auxiliary-exponents}, this gives
\begin{equation*}
\begin{split}
&\sup_{\mathcal B_y} \Big\lvert\beta \lvert u_{2}\rvert^{\frac{2^{*}}{2}}\lvert u_{1}\rvert^{\frac{2^{*}}{2}-2}u_{1} \Big\rvert\\
&\qquad\leq \sup_{\mathcal B_y}C \frac{1}{(1+\lvert x-\tilde\xi_1\rvert)^{2}}
\frac{1}{(1+\lvert x-\tilde\xi_2\rvert)^{N}} \\
&\qquad\leq \sup_{\mathcal B_y}C \left(\frac{\epsilon^{\alpha_1}}{(1+\lvert x-\tilde\xi_1\rvert)^{\theta_1}}
+\frac{\epsilon^{\tilde\alpha_1}}{(1+\lvert x-\tilde\xi_2\rvert)^{\tilde\theta_1}}\right).
\end{split}
\end{equation*}
Hence we obtain
 \begin{equation*}
\begin{split}
&\sup_{\mathcal B_y} \Big\lvert \lvert u_{1}\rvert^{2^{*}-2}u_{1}-U^{2^*-1}_{1}+\beta \lvert u_{2}\rvert^{\frac{2^{*}}{2}}\lvert u_{1}\rvert^{\frac{2^{*}}{2}-2}u_{1} \Big\rvert\\
\leq&\sup_{\mathcal B_y} \Big\lvert \lvert u_{1}\rvert^{2^{*}-2}u_{1}- U^{2^*-1}_{1}\Big\rvert+\sup_{\mathcal B_y} \Big\lvert\beta \lvert u_{2}\rvert^{\frac{2^{*}}{2}}\lvert u_{1}\rvert^{\frac{2^{*}}{2}-2}u_{1} \Big\rvert\\
\leq &\sup_{\mathcal B_y}C \Big(\lvert u_{1}\rvert^{2^{*}-2}+U^{2^{*}-2}_{1}\Big)\lvert u_{1}(x)-U_{1}(x)\rvert+\sup_{\mathcal B_y}\Big(\frac{C\epsilon^{\alpha_1}}{(1+\lvert x-\tilde\xi_1\rvert)^{ \theta_1}}
+\frac{C\epsilon^{\tilde\alpha_1}}{(1+\lvert x-\tilde\xi_2\rvert)^{\tilde\theta_1}}\Big).
\end{split}
\end{equation*}
Therefore,
\[
|\nabla(u_1(y)-V_1(y))|
\le \frac{C\epsilon^{\alpha_1}}{(1+|y-\tilde\xi_1|)^{\theta_1}}
+\frac{C\epsilon^{\tilde\alpha_1}}{(1+|y-\tilde\xi_2|)^{\tilde\theta_1}}
\quad\text{for }y\in\Omega_\epsilon\setminus
B_{r_\epsilon+1}(\tilde\xi_1).
\]

The same argument applies to $|\nabla(u_2-V_2)|$.

\end{proof}

\medskip

\subsection{Dead-core formation}
The solutions of \eqref{eqout} obtained in Corollary~\ref{correal} exhibit
dead cores. We use the radial dead-core comparison principle stated in
Lemma~\ref{lemdead}. Its proof is deferred to
Appendix~A. The argument below is written for the
absolute values and therefore does not rely on the nonnegativity of the final
solution.
\begin{Lem}\label{lemdead}
For every $b>0$, there exists $c_b>0$ such that
\[
\begin{cases}
-\Delta w+bw^{\frac{2^*}{2}-1}=0&\text{in }B_{3/2}(0),\\
w=c_b&\text{on }\partial B_{3/2}(0)
\end{cases}
\]
has a unique nonnegative weak solution in the class
\[
w\in H^1(B_{3/2}(0)),\qquad
w-c_b\in H_0^1(B_{3/2}(0)),\qquad w\ge0\ \text{a.e.}
\]
This solution is radial, belongs to $C^1(B_{3/2}(0))$, vanishes identically in
$B_1(0)$, and satisfies $w'(r)>0$ whenever $w(r)>0$.
\end{Lem}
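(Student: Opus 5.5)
The plan is to construct the solution explicitly as a radial profile that vanishes on $B_1(0)$, to choose $c_b$ as its value at $r=3/2$, and then to identify it with the unique nonnegative weak solution by a monotonicity argument. Throughout, set $q:=\frac{2^*}{2}-1=\frac{2}{N-2}\in(0,1)$ and $k:=\frac{2}{1-q}>2$. The equation is $-\Delta w+bw^q=0$, and the exponent $k$ satisfies $k-2=qk$.

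\emph{Uniqueness (for any boundary value $c>0$).} Let $w_1,w_2\ge0$ be two weak solutions with $w_i-c\in H_0^1(B_{3/2}(0))$. Test the difference of the two equations with $w_1-w_2\in H_0^1$; this test is admissible because $w_i^q\in L^{2/q}_{\rm loc}$ and hence $w_i^q\in H^{-1}$ by Sobolev. We obtain
\[
\int|\nabla(w_1-w_2)|^2+b\int(w_1^q-w_2^q)(w_1-w_2)=0 .
\]
Both terms are nonnegative, because $s\mapsto s^q$ is increasing on $[0,\infty)$. Hence $w_1=w_2$. Radial symmetry then follows from uniqueness, since any rotation of a solution is again a solution with the same boundary data. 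Existence could alternatively be obtained by minimizing the strictly convex functional $\frac12\int|\nabla w|^2+\frac{b}{q+1}\int w_+^{q+1}$, which is coercive on $c+H_0^1$. However, the explicit construction below already gives existence and, by uniqueness, the stated properties.

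\emph{Construction of the dead-core profile.} On $[1,3/2]$, look for a radial solution with $w(1)=w'(1)=0$, written as a fixed point of
\[
T w(r)=\int_1^r s^{1-N}\int_1^s b\,t^{N-1}w(t)^q\,dt\,ds .
\]
Work on the closed convex set
\[
K=\bigl\{w\in C([1,3/2]):\ A_1(r-1)^k\le w(r)\le A_2(r-1)^k\bigr\}.
\]
Since $t^{N-1}\in[1,(3/2)^{N-1}]$ on this interval and $qk+2=k$, a direct computation gives, for $w\in K$,
\[
c_1\,bA_1^q(r-1)^k\le Tw(r)\le c_2\,bA_2^q(r-1)^k ,
\]
with constants $c_1,c_2>0$ depending only on $N$ and $k$. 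The requirements $c_2bA_2^q\le A_2$ and $c_1bA_1^q\ge A_1$ can be met simultaneously: take $A_2$ large and $A_1$ small, which is possible precisely because $q<1$. The map $T$ is compact, since it gains two derivatives. Schauder's theorem therefore yields a fixed point $w\in K$, and this fixed point is positive on $(1,3/2]$.

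Extend $w$ by $0$ on $[0,1]$ and set $c_b:=w(3/2)>0$. Then $w\in C^1$, because $w'(1)=0$ from both sides, and $w$ is $C^2$ off $\{r=1\}$. Consequently $w$ is a weak solution on $B_{3/2}(0)$: the interface $\{r=1\}$ carries no singular Laplacian, since $w$ and $w'$ are continuous across it. Differentiating the fixed-point identity gives
\[
w'(r)=r^{1-N}\int_1^r b\,t^{N-1}w^q\,dt>0\qquad\text{for }r>1,
\]
which is exactly the region where $w>0$. By the uniqueness step, this $w$ is the unique nonnegative weak solution with boundary value $c_b$, and it has all the claimed properties.

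The main obstacle is the degenerate starting point at $r=1$. Because $s\mapsto s^q$ is not Lipschitz at $0$, standard ODE theory cannot separate the trivial solution from a nontrivial one. This is why I would use Schauder's theorem on the weighted cone $K$ rather than a contraction argument. The key points are the scaling identity $k=qk+2$ and the sublinearity $q<1$, which together allow the two-sided bounds defining $K$ to be preserved by $T$. A secondary point to check is that the $C^1$ gluing across $r=1$ really gives a weak solution, which requires verifying that the test-function integration by parts produces no boundary term on $\{r=1\}$.
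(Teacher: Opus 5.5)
Your proposal is correct and follows the paper's proof essentially step for step. It uses the same double-integral operator on $[1,3/2]$, the same barriers $A(r-1)^{k}$ with $k=2/(1-q)$ exploiting $qk+2=k$ and $q<1$, the same $C^1$ gluing across $r=1$ with $c_b:=w(3/2)$, and the same uniqueness argument (testing with $w_1-w_2$) followed by radial symmetry via uniqueness. The only difference is how the fixed point is produced: you use Schauder's theorem on the closed convex set between the barriers, which needs continuity of $T$ and an Arzel\`a--Ascoli compactness check, whereas the paper uses monotone iteration $w_{\ell+1}=\mathcal K w_\ell$ from the subsolution $\underline w$, relying only on order preservation and monotone convergence.
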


\begin{Lem}\label{deadout}
There exists $\varrho_0>0$ such that, for every fixed
$\varrho\in(0,\varrho_0)$, there exists
$\epsilon_0=\epsilon_0(\varrho)>0$ such that, for every
$\epsilon\in(0,\epsilon_0)$, the solution \((u_1,u_2)\) of
\eqref{eqout} obtained in Corollary~\ref{correal} satisfies
\[
u_{1}=0\quad\text{in }B_{\epsilon^{-\frac{1-\varrho}{N-2}}}(\tilde\xi_2),
\qquad
u_{2}=0\quad\text{in }B_{\epsilon^{-\frac{1-\varrho}{N-2}}}(\tilde\xi_1).
\]
\end{Lem}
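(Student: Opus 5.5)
We prove the statement for $u_1$ near $\tilde\xi_2$; the statement for $u_2$ near $\tilde\xi_1$ is symmetric. The plan is to compare $|u_1|$ with a rescaled copy of the dead-core profile from Lemma~\ref{lemdead}, placed on a small ball centered at $\tilde\xi_2$. This ball lies far from $P_1$, so there $u_1$ solves the first equation of \eqref{eqout}.

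\emph{Step 1: absorption coefficient.} Write $R:=\epsilon^{-\frac{1-\varrho}{N-2}}$. Since $p-2<0$, Kato's inequality gives, in $B_{3R/2}(\tilde\xi_2)$,
\[
-\Delta|u_1|\le \epsilon^{\frac{N-2}{N-4}}|u_1|+|u_1|^{2^*-1}-|\beta|\,|u_2|^{p}|u_1|^{p-1}.
\]
Two facts will make the last term dominate. First, by the hierarchy \eqref{scale-hierarchy} and $|\tilde\xi_1-\tilde\xi_2|>\delta s_\epsilon^{-1}$, Lemma~\ref{lem7} gives $|u_1|\le C s_\epsilon^{N-2}=C\epsilon^{\frac{N-2}{N-4}}$ on this ball. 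Second, $u_2\ge c>0$ there. To see this, write $u_2=V_2+\psi$. Lemma~\ref{lem8} (with $\psi=\psi_0$ of size $\le\epsilon^{\gamma_0}$ inside $Q_1$) gives $|\psi|\le C\epsilon^{\alpha_2}(1+|y-\tilde\xi_2|)^{-\theta_2}+C\epsilon^{\tilde\alpha_2}$. The projection expansion gives $V_2\ge cU_2-C\epsilon^{\frac{N-2}{N-4}}$, and $U_2\ge c|y-\tilde\xi_2|^{2-N}$. Hence, for $|y-\tilde\xi_2|\le 3R/2$, we get $u_2\ge cR^{2-N}=c\,\epsilon^{1-\varrho}$. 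This requires $\epsilon^{\tilde\alpha_2}\ll\epsilon^{1-\varrho}$, i.e.\ $\tilde\alpha_2>1-\varrho$, and likewise for the other error terms; I expect these to follow from the exponent choices of Lemma~\ref{lem:exponent-choice} when $\varrho_0$ is small, though this needs checking. For $N\ge6$ we have $2/(N-4)\le1$, so the condition fails, and the lower bound must instead rely on $U_2$ dominating the error $C\epsilon^{\alpha_2}|y|^{-\theta_2}$ on the ball. The clean way to handle this is to compare with the harmonic-type tail $R^{2-N}$ directly: the $\tilde\alpha_2$ term originates only near $\tilde\xi_1$ and is really bounded by the $\tilde\xi_1$-tail, which at $\tilde\xi_2$ is tiny.

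\emph{Step 2: dominance of the sublinear absorption.} Using the two facts, $\epsilon^{\frac{N-2}{N-4}}|u_1|+|u_1|^{2^*-1}\le(\epsilon^{\frac{N-2}{N-4}}+C\epsilon^{\frac{4}{N-4}\cdot\ldots})|u_1|^{p-1}\cdot|u_1|^{2-p}$, and $|u_1|^{2-p}\le C\epsilon^{(2-p)\frac{N-2}{N-4}}$. So this term is $o(\epsilon^{p(1-\varrho)})|u_1|^{p-1}$ provided the exponents satisfy $(2-p)\frac{N-2}{N-4}>p(1-\varrho)$. Since $(2-p)(N-2)=N-4$, this becomes $1>p(1-\varrho)$. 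That is false, and it is the main obstacle: the lower bound $u_2\gtrsim R^{2-N}$ is too weak.

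Revisit: I will use $u_1$ small and compare via scaling instead. Set $w_R(y)=K\,w((y-\tilde\xi_2)/R)$, where $w$ is the profile with $b=|\beta|c^p\epsilon^{p(1-\varrho)}R^2K^{p-2}$ after scaling. Choosing $K=C\epsilon^{\frac{N-2}{N-4}}\ge\sup|u_1|$ on $\partial B_{3R/2}$ gives $b\sim\epsilon^{p(1-\varrho)-\frac{2(1-\varrho)}{N-2}-(2-p)\frac{N-2}{N-4}}$. The exponent equals $(1-\varrho)-1=-\varrho<0$, so $b\to\infty$. Since dead cores persist for larger $b$ (monotonicity of the comparison), the effective absorption beats the superlinear and linear terms, which carry positive powers of $\epsilon$. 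Thus $-\Delta w_R+\tfrac12|\beta|u_2^p w_R^{p-1}\ge0\ge -\Delta|u_1|+\tfrac12|\beta|u_2^p|u_1|^{p-1}$. A comparison principle for the monotone operator $v\mapsto-\Delta v+a\,v^{p-1}$ (test with $(|u_1|-w_R)_+$; monotonicity of $s\mapsto s^{p-1}$ handles the nonlinearity) then gives $|u_1|\le w_R=0$ on $B_R(\tilde\xi_2)$.

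The main obstacle is the uniform lower bound on $u_2$ in Step 1, where the $\epsilon^{\tilde\alpha_2}$ barrier term must be controlled. The fallback is to rerun the barrier argument of Lemma~\ref{lem8} on $\Omega_\epsilon\setminus B_{s_\epsilon^{-1}\delta/2}(\tilde\xi_1)$ to get a sharper local bound on $\psi$ near $\tilde\xi_2$.
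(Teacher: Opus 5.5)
Your architecture is the paper's: Kato's inequality on $B_{3R/2}(\tilde\xi_2)$, $|u_1|\le C\epsilon^{\gamma_1}$ there by Lemma~\ref{lem7}, a lower bound $|u_2|\ge c\epsilon^{1-\varrho}$, and comparison with a rescaled profile from Lemma~\ref{lemdead}. But as written, the proposal never establishes the inequality $-\Delta|u_1|+\tfrac12|\beta|\,|u_2|^p|u_1|^{p-1}\le0$ on which the comparison rests, and your Step~2 even concludes that it fails. That conclusion is an arithmetic slip. You factored the mass and superlinear terms as $(\epsilon^{\gamma_1}+|u_1|^{2^*-2})|u_1|^{2-p}|u_1|^{p-1}$, then dropped the prefactor and compared only $|u_1|^{2-p}\le C\epsilon^{(2-p)\gamma_1}=C\epsilon$ with $\epsilon^{p(1-\varrho)}$. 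Keeping the prefactor, as the paper does, gives
\[
\frac{|u_1|^{2^*-1}}{|u_2|^p|u_1|^{p-1}}\le C\epsilon^{p(\gamma_1-1+\varrho)},
\qquad
\frac{\epsilon^{\gamma_1}|u_1|}{|u_2|^p|u_1|^{p-1}}\le C\epsilon^{\gamma_1(3-p)-p(1-\varrho)}.
\]
Both exponents are positive because $\gamma_1>1$ and $\gamma_1(3-p)-p=\frac{N^2-6N+12}{(N-4)(N-2)}>0$. This yields $-\Delta|u_1|+b_0\epsilon^{p(1-\varrho)}|u_1|^{p-1}\le0$ with $b_0$ independent of $\epsilon$.

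Your ``revisit'' cannot replace this step, because the size of $b$ in the comparison profile has no bearing on whether $|u_1|$ is a subsolution. Write $w_b$ for the profile of Lemma~\ref{lemdead} with parameter $b$. By scaling one may take $c_b=c_1b^{(N-2)/(N-4)}$. Then, with $a=b_0\epsilon^{p(1-\varrho)}$, your $Kw_b((\cdot-\tilde\xi_2)/R)$ equals $(aR^2)^{(N-2)/(N-4)}w_1((\cdot-\tilde\xi_2)/R)$, independent of $K$. Its boundary value is $\simeq\epsilon^{(1-\varrho)\gamma_1}$, not $K$, and it is exactly the paper's $w_\epsilon(\cdot-\tilde\xi_2)$. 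It dominates $C\epsilon^{\gamma_1}$ on the sphere because $\varrho>0$.

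Step~1 is likewise not an obstacle, and no new barrier is needed: you discarded the factor $(1+|y-\tilde\xi_1|)^{-\tilde\theta_2}$ that Lemma~\ref{lem8} already contains. Since $|y-\tilde\xi_1|\ge c\epsilon^{-1/(N-4)}$ on the ball, that term is at most $C\epsilon^{\tilde\alpha_2+\tilde\theta_2/(N-4)}$. Relative to $U_2\ge c\epsilon^{1-\varrho}$, the two error terms of Lemma~\ref{lem8} are bounded by $C\epsilon^{\alpha_2-\frac{1-\varrho}{N-2}(N-2-\theta_2)}$ and $C\epsilon^{\tilde\alpha_2+\frac{\tilde\theta_2}{N-4}-(1-\varrho)}$. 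Both are $o(1)$ by the margins \eqref{eq:all-deadcore-margins}, whose zero-loss values are $\frac{2(N-3)}{(N-4)(N-2)}$ and $\frac4{N-4}$. On $Q_1$ one uses the trace bound $\epsilon^{\gamma_0}$ together with $\gamma_0>(N-2)/B_\tau$. Combined with $V_2=U_2+O(\epsilon^{\gamma_1})$ and $\epsilon^{\gamma_1}=o(\epsilon^{1-\varrho})$, this gives $u_2\ge c\epsilon^{1-\varrho}$ on all of $B_{3R/2}(\tilde\xi_2)$. With these two repairs your argument becomes the paper's proof.
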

\begin{proof}
The fixed losses can be chosen so that, for $i=1,2$,
\begin{equation}\label{eq:deadcore-exponent-margins}
\alpha_i-\frac{N-2-\theta_i}{N-2}>0,
\qquad
\tilde\alpha_i+\frac{\tilde\theta_i}{N-4}>1.
\end{equation}
Indeed, when the losses vanish, the two margins are respectively
\[
\frac{2(N-3)}{(N-4)(N-2)}>0,
\qquad
\frac4{N-4}>0.
\]
Choose $\varrho_0\in(0,1)$ so that, with
$\alpha_\varrho=(1-\varrho)/(N-2)$,
\[
\alpha_2-\alpha_\varrho(N-2-\theta_2)>0,
\qquad
\tilde\alpha_2+\frac{\tilde\theta_2}{N-4}
-\alpha_\varrho(N-2)>0
\]
for every $\varrho\in(0,\varrho_0)$. Fix such a $\varrho$.
We prove the assertion for \(u_1\); the proof for \(u_2\) is identical.
All estimates below are taken in
$B_{\frac{3}{2}\epsilon^{-\alpha_\varrho}}(\tilde\xi_2)$. In this ball,
$|y-\tilde\xi_1|\ge c\epsilon^{-1/(N-4)}$. Hence the bubble-tail estimate gives
\[
U_1(y)+V_1(y)\le C\epsilon^{\frac{N-2}{N-4}},
\]
and, more importantly, Lemma~\ref{lem7} and the separation of the centers give
\begin{equation}\label{eq:deadcore-opposite-upper}
|u_1(y)|\le C\epsilon^{\frac{N-2}{N-4}}.
\end{equation}
Moreover,
\[
U_2(y)\ge c\epsilon^{\alpha_\varrho(N-2)}.
\]
On $Q_1$, the trace estimate and
$\gamma_0>(N-2)/B_\tau$ give
$|u_2-V_2|=o(U_2)$. On the remainder of the ball, Lemma~\ref{lem8} and the
choice of $\varrho_0$ give the same conclusion, because the two ratios are
bounded respectively by
\[
C\epsilon^{\alpha_2-\alpha_\varrho(N-2-\theta_2)}
\quad\text{and}\quad
C\epsilon^{\tilde\alpha_2+\frac{\tilde\theta_2}{N-4}
-\alpha_\varrho(N-2)}.
\]
The projection expansion also gives $V_2=U_2(1+o(1))$ uniformly in this
ball. Therefore, using \eqref{eq:deadcore-opposite-upper},
\[
|u_2(y)|\ge c\epsilon^{\alpha_\varrho(N-2)},
\qquad
|u_1(y)|\le C\epsilon^{\frac{N-2}{N-4}}
=o\big(\epsilon^{\alpha_\varrho(N-2)}\big).
\]
Recall that $p=N/(N-2)$. Kato's inequality gives
\[
-\Delta|u_1|\le
\epsilon^{\frac{N-2}{N-4}}|u_1|+|u_1|^{2^*-1}
-c|u_2|^p|u_1|^{p-1}.
\]
The self-interaction is absorbed because
$|u_1|^p=o(|u_2|^p)$. The mass term is also lower order. Indeed,
\[
\frac{\epsilon^{\frac{N-2}{N-4}}|u_1|}{|u_2|^p|u_1|^{p-1}}
\le C\epsilon^{\gamma_1(3-p)-p(1-\varrho)}=o(1),
\]
since
\[
\gamma_1(3-p)-p
=\frac{N^2-6N+12}{(N-4)(N-2)}>0.
\]
Thus there exists a constant $b_0>0$, independent of $\epsilon$, such
that
\begin{equation}\label{deadcore-diff-ineq}
-\Delta |u_1|+b_0\epsilon^{\alpha_\varrho N}|u_1|^{p-1}\le0
\quad\text{in }B_{\frac{3}{2}\epsilon^{-\alpha_\varrho}}(\tilde\xi_2).
\end{equation}
Let $w$ be the radial solution in Lemma~\ref{lemdead} with $b=b_0$, and define
\[
w_\epsilon(x)=\epsilon^{\frac{(N-2)^2}{N-4}\alpha_\varrho}
 w(\epsilon^{\alpha_\varrho}x).
\]
A direct scaling gives
\[
-\Delta w_\epsilon+b_0\epsilon^{N\alpha_\varrho}w_\epsilon^{\frac{2^*}{2}-1}=0
\quad\text{in }B_{\frac{3}{2}\epsilon^{-\alpha_\varrho}}(0).
\]
On the outer boundary of the comparison ball,
\[
w_\epsilon(\cdot-\tilde\xi_2)
=c_{b_0}\epsilon^{\frac{(N-2)^2}{N-4}\alpha_\varrho}.
\]
Since
\[
\frac{N-2}{N-4}
-\frac{(N-2)^2}{N-4}\alpha_\varrho
=\frac{N-2}{N-4}\varrho>0,
\]
the estimate \(|u_1|\le C\epsilon^{(N-2)/(N-4)}\) implies, for the fixed
$\varrho$ and all sufficiently small $\epsilon$,
\[
|u_1|\le w_\epsilon(\cdot-\tilde\xi_2)
\quad\text{on }\partial B_{\frac{3}{2}\epsilon^{-\alpha_\varrho}}(\tilde\xi_2).
\]
Testing the positive part of
$|u_1|-w_\epsilon(\cdot-\tilde\xi_2)$ and using the monotonicity of
$t\mapsto t^{p-1}$ gives
\[
|u_1|\le w_\epsilon(\cdot-\tilde\xi_2)
\quad\text{in }B_{\frac{3}{2}\epsilon^{-\alpha_\varrho}}(\tilde\xi_2).
\]
Since \(w=0\) in \(B_1(0)\), the scaled function vanishes in
\(B_{\epsilon^{-\alpha_\varrho}}(\tilde\xi_2)\). This proves
\(u_1=0\) in \(B_{\epsilon^{-\alpha_\varrho}}(\tilde\xi_2)\), which is the
stated radius. The argument for \(u_2\) is identical.
\end{proof}
\begin{Rem}\label{remdead}
Let $\varrho_0$ be as in Lemma~\ref{deadout}. Fix
$\tau\in\mathcal I_N$ and choose
$\varrho\in(0,\min\{\varrho_\tau,\varrho_0\})$. Then
\eqref{deadcore-fit}, Lemma~\ref{deadout}, and
\eqref{scale-hierarchy} give, for all sufficiently small $\epsilon$,
\[
P_2\subset
B_{R_{\mathrm{dc},\epsilon}(\varrho)}(\tilde\xi_1),
\qquad
Q_2\subset
B_{R_{\mathrm{dc},\epsilon}(\varrho)}(\tilde\xi_2),
\]
and hence
\[
u_2=0\quad\text{in }P_2,
\qquad
u_1=0\quad\text{in }Q_2.
\]
Moreover,
$r_\epsilon^2+1=o(R_{\mathrm{dc},\epsilon}(\varrho))$, so the unit
neighborhoods of $P_2$ and $Q_2$ that occur in the local gradient estimates
also remain inside the corresponding dead-core balls. This is the overlap
property needed in the reduction of Section~4.
\end{Rem}

\medskip

Let $(\bar u_{1,0},\bar u_{2,0})$ be another pair of energy-compatible inner data satisfying $(\bar u_{1,0},\bar u_{2,0})-(V_1,V_2)\in \Lambda_\epsilon\cap H_0^1(\Omega_\epsilon)^2$, and let $(\bar u_{1},\bar u_{2})$ be the corresponding minimizer of $I$ on $\mathcal M(\bar u_{1,0},\bar u_{2,0})$. Set
\[
(w,v) := (u_{1},u_{2})-(\bar u_{1},\bar u_{2}),\qquad
(w_{0},v_{0}) := (u_{1,0},u_{2,0})-(\bar u_{1,0},\bar u_{2,0}).
\]
We next estimate $(w,v)$.
Fix once and for all
\[
\max\left\{0,\frac{6-N}{2}\right\}<\theta_{\mathrm{st}}<2.
\]
\begin{Lem}\label{lem9}
For all sufficiently small $\epsilon>0$, there exists a constant $C>0$,
independent of $\epsilon$ and of the admissible inner data, such that
\begin{align*}
&\lVert w\rVert_{L^{\infty}(\Omega_\epsilon)}
+
\lVert v\rVert_{L^{\infty}(\Omega_\epsilon)}
\leq
C\big(\lVert w_{0}\rVert_{L^{\infty}(P_{1})}
+
\lVert v_{0}\rVert_{L^{\infty}(Q_{1})}\big),\\
&\lVert \nabla w\rVert_{L^{\infty}(P_2 \setminus B_{r_\epsilon+1}(\tilde\xi_1))}
+
\lVert \nabla v\rVert_{L^{\infty}(Q_2\setminus B_{r_\epsilon+1}(\tilde\xi_2))}
\leq
C\big(\lVert w_{0}\rVert_{L^{\infty}(P_{1})}
+
\lVert v_{0}\rVert_{L^{\infty}(Q_{1})}\big).
\end{align*}
\end{Lem}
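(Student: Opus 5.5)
The plan is to derive a coupled system of linear differential inequalities for $(|w|,|v|)$ in the exterior regions and close it with an explicit barrier and the comparison principle of Lemma~\ref{lem0}. The gradient bound then follows from local elliptic estimates, using the dead cores of Remark~\ref{remdead}.

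\emph{Step 1: the difference inequalities.} In $\Omega_\epsilon\setminus P_1$, subtract the equations \eqref{eqout} satisfied by $u_1$ and $\bar u_1$; the same is done for the second components in $\Omega_\epsilon\setminus Q_1$. The critical term is handled by the mean value theorem, which gives a coefficient $C(|u_1|^{2^*-2}+|\bar u_1|^{2^*-2})\le C\rho_1^{-4}$ by Lemma~\ref{lem7}, where $\rho_i=1+|y-\tilde\xi_i|$. The coupling difference is where the non-Lipschitz exponent enters, so I would split it as
\[
\beta|u_2|^p\bigl(|u_1|^{p-2}u_1-|\bar u_1|^{p-2}\bar u_1\bigr)
+\beta|\bar u_1|^{p-2}\bar u_1\bigl(|u_2|^p-|\bar u_2|^p\bigr).
\]
Since $t\mapsto|t|^{p-2}t$ is nondecreasing and $\beta<0$, the first piece has the sign opposite to $w$. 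It therefore drops out after Kato's inequality, and no derivative of the singular factor is needed. The second piece involves only the $C^1$ map $t\mapsto|t|^p$, so it is bounded by $K|v|$ with
\[
K:=C|\bar u_1|^{p-1}\bigl(|u_2|^{p-1}+|\bar u_2|^{p-1}\bigr)\le C\rho_1^{-2}\rho_2^{-2},
\]
again by Lemma~\ref{lem7}. This yields
\[
-\Delta|w|\le\bigl(\epsilon^{\frac{N-2}{N-4}}+C\rho_1^{-4}\bigr)|w|+C\rho_1^{-2}\rho_2^{-2}|v|,
\]
together with the symmetric inequality for $|v|$. On $P_1$ one has $|w|=|w_0|$, and on $Q_1$ one has $|v|=|v_0|$.

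\emph{Step 2: the barrier.} Set $M=\|w_0\|_{L^\infty(P_1)}+\|v_0\|_{L^\infty(Q_1)}$ and use the same barrier for both components:
\[
\omega=M\Bigl(1+\bigl(r_\epsilon/\rho_1\bigr)^{\theta_{\mathrm{st}}}+\bigl(r_\epsilon/\rho_2\bigr)^{\theta_{\mathrm{st}}}\Bigr).
\]
Since $0<\theta_{\mathrm{st}}<N-2$,
\[
-\Delta\omega\ge cMr_\epsilon^{\theta_{\mathrm{st}}}\bigl(\rho_1^{-\theta_{\mathrm{st}}-2}+\rho_2^{-\theta_{\mathrm{st}}-2}\bigr),
\]
and $\omega\ge M$ dominates the boundary data on $\partial P_1$ and $\partial Q_1$. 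I would then check $-\Delta\omega\ge 2(\Psi+K)\omega$ region by region.
\begin{itemize}
\item The potential term $\rho_1^{-4}\omega$ is absorbed because $\rho_1\ge r_\epsilon$ and $\theta_{\mathrm{st}}<2$, so $r_\epsilon^{\theta_{\mathrm{st}}}\rho_1^{2-\theta_{\mathrm{st}}}\ge 1$. This is exactly where the upper bound on $\theta_{\mathrm{st}}$ is used.
\item The mass term is absorbed because $\rho_i\le C\epsilon^{-1/(N-4)}$.
\item The cross term $\rho_1^{-2}\rho_2^{-2}\omega$ is absorbed by the $\rho_2$ part of $-\Delta\omega$ where $\rho_1$ is large, and by the $\rho_1$ part otherwise.
\end{itemize}
I expect the bookkeeping near the opposite center to be where the lower bound $\theta_{\mathrm{st}}>(6-N)/2$ enters, through the mass term over the whole dilated domain in low dimensions.

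\emph{Step 3: the system comparison.} This is the main obstacle: Lemma~\ref{lem0} is scalar, while the system is cooperative, with nonnegative off-diagonal coupling. I would test the two inequalities with $(|w|-\omega)_+$ and $(|v|-\omega)_+$ respectively, add them, and bound the mixed terms by Young's inequality. The resulting total potential $\Psi+K$ has $L^{N/2}$ norm $O(\epsilon)+o(1)$ on the outer regions, because $r_\epsilon\to\infty$ and $\rho_1^{-2}\rho_2^{-2}\in L^{N/2}$ with small norm there. Sobolev's inequality then forces both positive parts to vanish, as in the proof of Lemma~\ref{lem0}. Hence $|w|,|v|\le\omega\le 3M$, which is the $L^\infty$ bound. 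One point needs care here: the $H^1$ differences are defined only for energy-compatible data, which is precisely the setting of the lemma.

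\emph{Step 4: the gradient bound.} For $y\in P_2\setminus B_{r_\epsilon+1}(\tilde\xi_1)$, Remark~\ref{remdead} gives $u_2=\bar u_2=0$ on $B_1(y)$. So $w$ satisfies there the linear equation
\[
-\Delta w=\bigl(\epsilon^{\frac{N-2}{N-4}}+a(y)\bigr)w,
\qquad |a(y)|\le C,
\]
with no coupling at all. The interior gradient estimate \cite{GT01} together with Step~3 then gives $|\nabla w(y)|\le C\sup_{B_1(y)}|w|\le CM$. The argument for $v$ on $Q_2$ is the same.
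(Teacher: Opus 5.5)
Your Step 3 does not deliver the stated bound, because its last claim, $\omega\le 3M$, is false on the part of each comparison domain that contains the opposite center. For $w$ the comparison is run on $\Omega_\epsilon\setminus P_1$, which contains $Q_1$. There $\rho_2\le 1+r_\epsilon$, and at $y=\tilde\xi_2$ one has $\omega\ge Mr_\epsilon^{\theta_{\mathrm{st}}}$. Symmetrically, the barrier for $v$ has size $Mr_\epsilon^{\theta_{\mathrm{st}}}$ on $P_1$. So the comparison only gives $|w|\lesssim r_\epsilon^{\theta_{\mathrm{st}}}M$ near $\tilde\xi_2$, and $r_\epsilon^{\theta_{\mathrm{st}}}\to\infty$.

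This is not a cosmetic slip. In your inequality the cross coefficient is bounded by $C\rho_1^{-2}\rho_2^{-2}\approx C\epsilon^{2/(N-4)}\rho_2^{-2}$ near $\tilde\xi_2$. The $\rho_1$-piece of $\omega$ contributes only about $Mr_\epsilon^{\theta_{\mathrm{st}}}\epsilon^{(\theta_{\mathrm{st}}+2)/(N-4)}$ to $-\Delta\omega$ there. That is smaller than $\epsilon^{2/(N-4)}M$ by the factor $\epsilon^{\theta_{\mathrm{st}}/(N-4)-\theta_{\mathrm{st}}/B_\tau}\to0$. So the barrier for $w$ does need some $\rho_2$-piece, but with the coefficient $Mr_\epsilon^{\theta_{\mathrm{st}}}$ you gave it, that piece destroys the uniform bound.

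The simplest repair uses something you already invoke in Step 4. Lemma~\ref{deadout} applies to both minimizers, since both have data in $\Lambda_\epsilon\cap H_0^1(\Omega_\epsilon)^2$. Hence $w\equiv0$ on $Q_2\supset Q_1$ and $v\equiv0$ on $P_2\supset P_1$. Off $P_1\cup Q_1$ both ratios $r_\epsilon/\rho_i$ are below $1$, so $\omega\le 3M$ holds exactly where you need it. Alternatively, use two separate barriers whose opposite-center pieces carry an $\epsilon$-small coefficient; this is in effect what the paper's constant $C_2$ achieves.

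With that repair, your route is a legitimate alternative to the paper's. You build one common supersolution for the cooperative system and close with a summed energy inequality. The paper instead freezes $\|v\|_{L^\infty}$ in the source and applies the scalar Lemma~\ref{lem0} to each component separately, with the barrier $C_1b_1^{-(N-2)}+C_2\rho_1^{-(2-\theta_{\mathrm{st}})}\rho_2^{-(2-\theta_{\mathrm{st}})}$. It then closes using the small factor $\epsilon^{\theta_{\mathrm{st}}/B_\tau}$ in the cross estimates.

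Two smaller corrections:

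\emph{The mass term for $N=5$.} The constant part $M$ of $\omega$ is harmonic, so for $N=5$ the mass term $\epsilon^{3}M$ must be absorbed by the decaying pieces. At points at distance $\sim\epsilon^{-1}$ from both centers these give only $-\Delta\omega\sim M\epsilon^{\theta+2-\theta/(2+\tau)}$. This fails once $\theta>(2+\tau)/(1+\tau)$, hence for every admissible $\theta_{\mathrm{st}}\ge 6/5$. Since the lemma does not involve $\theta_{\mathrm{st}}$, simply take your exponent in $(0,1)$.

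\emph{The lower bound on $\theta_{\mathrm{st}}$.} The condition $\theta_{\mathrm{st}}>(6-N)/2$ plays no role for a sum of radial barriers. In the paper it is exactly the condition $N-6+2\theta_{\mathrm{st}}>0$ that makes the product $\rho_1^{-(2-\theta_{\mathrm{st}})}\rho_2^{-(2-\theta_{\mathrm{st}})}$ superharmonic.
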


\begin{proof}
By \eqref{eqout}, we have
\begin{equation}\label{eq19}
\left\{
\begin{aligned}
-\Delta w={}&\epsilon^{\frac{N-2}{N-4}}w
+\big(\lvert u_{1}\rvert^{2^{*}-2}u_{1}
      -\lvert \bar u_{1}\rvert^{2^{*}-2}\bar u_{1}\big) \\
&+\beta\big(\lvert u_{2}\rvert^{\frac{2^{*}}{2}}
       \lvert u_{1}\rvert^{\frac{2^{*}}{2}-2}u_{1}
      -\lvert \bar u_{2}\rvert^{\frac{2^{*}}{2}}
       \lvert \bar u_{1}\rvert^{\frac{2^{*}}{2}-2}\bar u_{1}\big),
&&\text{in }\Omega_\epsilon\setminus P_{1},\\[1mm]
-\Delta v={}&\epsilon^{\frac{N-2}{N-4}}v
+\big(\lvert u_{2}\rvert^{2^{*}-2}u_{2}
      -\lvert \bar u_{2}\rvert^{2^{*}-2}\bar u_{2}\big) \\
&+\beta\big(\lvert u_{1}\rvert^{\frac{2^{*}}{2}}
       \lvert u_{2}\rvert^{\frac{2^{*}}{2}-2}u_{2}
      -\lvert \bar u_{1}\rvert^{\frac{2^{*}}{2}}
       \lvert \bar u_{2}\rvert^{\frac{2^{*}}{2}-2}\bar u_{2}\big),
&&\text{in }\Omega_\epsilon\setminus Q_{1},\\[1mm]
w={}&w_{0}\quad\text{in }P_{1},
&v={}&v_{0}\quad\text{in }Q_{1}.
\end{aligned}
\right.
\end{equation}

Recall that $p=2^*/2$ and set $\mathcal N_p(t):=|t|^{p-2}t$. For
$y\in\Omega_\epsilon\setminus P_1$,
\begin{equation}\label{eq:self-difference}
\big||u_1|^{2^*-2}u_1-|\bar u_1|^{2^*-2}\bar u_1\big|
\le C\big(|u_1|^{2^*-2}+|\bar u_1|^{2^*-2}\big)|w|.
\end{equation}
For the coupling term we use the identity
\[
|u_2|^p\mathcal N_p(u_1)-|\bar u_2|^p\mathcal N_p(\bar u_1)
=|u_2|^p\big(\mathcal N_p(u_1)-\mathcal N_p(\bar u_1)\big)
+\big(|u_2|^p-|\bar u_2|^p\big)\mathcal N_p(\bar u_1).
\]
Since $\mathcal N_p$ is increasing,
\[
\beta |u_2|^p\big(\mathcal N_p(u_1)-\mathcal N_p(\bar u_1)\big)\operatorname{sign}w\le0.
\]
For the remaining term,
\[
\big||u_2|^p-|\bar u_2|^p\big|
\le C\big(|u_2|^{p-1}+|\bar u_2|^{p-1}\big)|v|.
\]
Using Lemma~\ref{lem7}, we obtain
\begin{equation}\label{eq:coupling-difference}
\left|\big(|u_2|^p-|\bar u_2|^p\big)\mathcal N_p(\bar u_1)\right|
\le \frac{C|v|}{(1+|y-\tilde\xi_1|)^2(1+|y-\tilde\xi_2|)^2}.
\end{equation}
This argument uses only monotonicity and the elementary power estimate above;
it does not differentiate $h$ at zero.

Combining \eqref{eq:self-difference} and \eqref{eq:coupling-difference}, the first equation in \eqref{eq19} shows that $|w|$ satisfies
\begin{align*}
-\Delta |w|
&\le c\bigl(\epsilon^{\frac{N-2}{N-4}}+|u_1|^{2^*-2}
+|\bar u_1|^{2^*-2}\bigr)|w|
+\frac{c|v|}{(1+|y-\tilde\xi_1|)^2(1+|y-\tilde\xi_2|)^2}\\
&=: \Psi_1|w|+f_1|v|
\quad\text{in }\Omega_\epsilon\setminus P_1.
\end{align*}
Similarly,
\begin{align*}
-\Delta |v|
&\le c\bigl(\epsilon^{\frac{N-2}{N-4}}+|u_2|^{2^*-2}
+|\bar u_2|^{2^*-2}\bigr)|v|
+\frac{c|w|}{(1+|y-\tilde\xi_1|)^2(1+|y-\tilde\xi_2|)^2}\\
&=: \Psi_2|v|+f_2|w|
\quad\text{in }\Omega_\epsilon\setminus Q_1.
\end{align*}

Recall
 \[
\rho_i(y)=1+|y-\tilde\xi_i|,\qquad i=1,2.
\]
Choose a fixed constant $a_{\mathrm b}>0$ sufficiently small and put
$b_1(y)=1+a_{\mathrm b}|y-\tilde\xi_1|$. Define
\[
C_1=r_\epsilon^{N-2}\|w_0\|_{L^\infty(P_1)},\qquad
C_2=\epsilon^{\frac{\theta_{\mathrm{st}}}{(2+\tau)(N-4)}-\frac{2-\theta_{\mathrm{st}}}{N-4}}
\|v\|_{L^\infty(\Omega_\epsilon)},
\]
and
\[
\widetilde w=C_1b_1^{-(N-2)}
+C_2\rho_1^{-(2-\theta_{\mathrm{st}})}\rho_2^{-(2-\theta_{\mathrm{st}})}.
\]
A direct computation gives
 \[
-\Delta b_1^{-(N-2)}
=\frac{(N-2)(N-1)a_{\mathrm b}}{|y-\tilde\xi_1|b_1^N}.
\]
The mixed term has the same positive lower bound as in Lemma~\ref{lem8}:
\[
\begin{split}
-\Delta\big(\rho_1^{-(2-\theta_{\mathrm{st}})}\rho_2^{-(2-\theta_{\mathrm{st}})}\big)
\ge{}&(2-\theta_{\mathrm{st}})(N-6+2\theta_{\mathrm{st}})
\rho_1^{-(2-\theta_{\mathrm{st}})}\rho_2^{-(2-\theta_{\mathrm{st}})}\\
&\times\big(\rho_1^{-2}+\rho_2^{-2}\big).
\end{split}
\]
We record the only dimension-sensitive point. Since
$|y-\tilde\xi_1|\le R_\Omega\epsilon^{-1/(N-4)}$ in $\Omega_\epsilon$, the relative
coefficient of the first term is bounded below by
\[
\frac{(N-2)(N-1)a_{\mathrm b}}{|y-\tilde\xi_1|(1+a_{\mathrm b}|y-\tilde\xi_1|)^2}
\ge \frac{c}{a_{\mathrm b}}\epsilon^{\frac{3}{N-4}}
\]
for all sufficiently small $\epsilon$, where $c>0$ is independent of $a_{\mathrm b}$ and
$\epsilon$. For $N>5$ this dominates $\epsilon^{(N-2)/(N-4)}$ by the powers.
For $N=5$ the powers are equal, and we first choose the fixed constant $a_{\mathrm b}>0$
sufficiently small so that $c/a_{\mathrm b}$ dominates the coefficient of the mass term
in $\Psi_1$. The remaining terms in
$\Psi_1$ are absorbed by the decay from Lemma~\ref{lem7}. Thus the first
barrier is valid uniformly for every $N\ge5$.

As in Lemma~\ref{lem8}, we show that there is a constant $C>0$ such that
\begin{align}\label{eq:stability-supersolution}
-\Delta(C\widetilde w)&\geq \Psi_1|C\widetilde w|+f_1|v|
\quad\text{in }\Omega_\epsilon\setminus P_1.
\end{align}

First, we show
\begin{align}\label{eq:stability-mixed-source}
-\Delta(C\widetilde w)&\geq 2f_1|v|
\quad\text{in }\Omega_\epsilon\setminus P_1.
\end{align}
It suffices to show that there exists $C>0$ such that
\[
\frac{CC_2}{(1+|y-\tilde\xi_1|)^{2-\theta_{\mathrm{st}}}
(1+|y-\tilde\xi_2|)^{4-\theta_{\mathrm{st}}}}
\ge
\frac{|v|}{(1+|y-\tilde\xi_1|)^2(1+|y-\tilde\xi_2|)^2}
\quad\text{in }\Omega_\epsilon\setminus P_1.
\]
Equivalently, it is enough to prove
\[
\frac{(1+|y-\tilde\xi_2|)^{2-\theta_{\mathrm{st}}}}
{(1+|y-\tilde\xi_1|)^{\theta_{\mathrm{st}}}}|v|
\le CC_2
\quad\text{in }\Omega_\epsilon\setminus P_1,
\]
which follows from the definition of \(C_2\), from
\(\rho_1\ge c r_\epsilon\) in \(\Omega_\epsilon\setminus P_1\), from
\(\rho_2\le C\epsilon^{-1/(N-4)}\), and from
\(r_\epsilon=\epsilon^{-1/((2+\tau)(N-4))}\).
Hence \eqref{eq:stability-mixed-source} holds.

Next, we show
\begin{align}\label{eq:stability-potential}
-\Delta\widetilde w&\geq\Psi_1|\widetilde w|
\quad\text{in }\Omega_\epsilon\setminus P_1.
\end{align}
It is enough to verify the following two inequalities:
 \begin{align*}
\frac{C_1(N-2)(N-1)a_{\mathrm b}}{|y-\tilde\xi_1|b_1^{N}}\geq c\big(\epsilon^{\frac{N-2}{N-4}}+|u_{1}|^{2^{*}-2}+|\bar u_{1}|^{2^{*}-2}\big)C_1b_1^{-(N-2)}
\end{align*}
and
\begin{align*}
&(2-\theta_{\mathrm{st}})(N-6+2\theta_{\mathrm{st}})C_2
\rho_1^{-(2-\theta_{\mathrm{st}})}\rho_2^{-(2-\theta_{\mathrm{st}})}
\big(\rho_1^{-2}+\rho_2^{-2}\big)\\
&\qquad\ge
c\bigl(\epsilon^{\frac{N-2}{N-4}}+|u_1|^{2^*-2}
+|\bar u_1|^{2^*-2}\bigr)
C_2\rho_1^{-(2-\theta_{\mathrm{st}})}\rho_2^{-(2-\theta_{\mathrm{st}})}.
\end{align*}

Thus, \eqref{eq:stability-mixed-source} and \eqref{eq:stability-potential} imply \eqref{eq:stability-supersolution}.

By the definition of $C_1$ and the boundary condition $w=w_0$ on $P_1$,
there exists $C>0$ such that $|w|\le C\widetilde w$ on $\partial P_1$.
On the exterior domain
$\mathcal O_{1,\epsilon}=\Omega_\epsilon\setminus\overline{P_1}$,
Lemma~\ref{lem7} gives
\[
\begin{aligned}
\|\Psi_1\|_{L^{N/2}(\mathcal O_{1,\epsilon})}
&\le C\epsilon^{\frac{N-2}{N-4}}|\Omega_\epsilon|^{2/N}
 +C\|u_1\|_{L^{2^*}(\mathcal O_{1,\epsilon})}^{2^*-2}
 +C\|\bar u_1\|_{L^{2^*}(\mathcal O_{1,\epsilon})}^{2^*-2}\\
&=O(\epsilon)+o(1)<\mathbf C_s
\end{aligned}
\]
for sufficiently small $\epsilon$. Here the $o(1)$ is uniform because
$r_\epsilon\to\infty$ and the decay in Lemma~\ref{lem7} is uniform for both
outer minimizers. In addition, $w=0$ on $\partial\Omega_\epsilon$ and
$(|w|-C\widetilde w)_+\in D_0^{1,2}(\mathcal O_{1,\epsilon})$. Lemma~\ref{lem0} therefore applies
and yields
\begin{equation}\label{eq22}
\begin{aligned}
|w(y)|
&\le \frac{Cr_\epsilon^{N-2}\|w_0\|_{L^\infty(P_1)}}
{(1+a_{\mathrm b}|y-\tilde\xi_1|)^{N-2}}\\
&\quad+
\frac{C\epsilon^{\frac{\theta_{\mathrm{st}}}{(2+\tau)(N-4)}-\frac{2-\theta_{\mathrm{st}}}{N-4}}
\|v\|_{L^\infty(\Omega_\epsilon)}}
{(1+|y-\tilde\xi_1|)^{2-\theta_{\mathrm{st}}}(1+|y-\tilde\xi_2|)^{2-\theta_{\mathrm{st}}}}
&&\text{in }\Omega_\epsilon\setminus P_1,\\
|v(y)|
&\le \frac{Cr_\epsilon^{N-2}\|v_0\|_{L^\infty(Q_1)}}
{(1+a_{\mathrm b}|y-\tilde\xi_2|)^{N-2}}\\
&\quad+
\frac{C\epsilon^{\frac{\theta_{\mathrm{st}}}{(2+\tau)(N-4)}-\frac{2-\theta_{\mathrm{st}}}{N-4}}
\|w\|_{L^\infty(\Omega_\epsilon)}}
{(1+|y-\tilde\xi_1|)^{2-\theta_{\mathrm{st}}}(1+|y-\tilde\xi_2|)^{2-\theta_{\mathrm{st}}}}
&&\text{in }\Omega_\epsilon\setminus Q_1.
\end{aligned}
\end{equation}
By dividing the domain $\Omega_\epsilon$ into the regions
\[
\{ x \in \Omega_\epsilon : |x - \tilde\xi_2| < \tfrac{1}{2}|\tilde\xi_1 - \tilde\xi_2| \} \quad \text{and} \quad \{ x \in \Omega_\epsilon : |x - \tilde\xi_2| \geq \tfrac{1}{2}|\tilde\xi_1 - \tilde\xi_2| \},
\]
we obtain the following estimate.
\begin{equation*}
\begin{split}
&\displaystyle \lVert w\rVert_{L^{\infty}(\Omega_\epsilon\setminus P_{1})}\leq C(\lVert w_{0}\rVert_{L^{\infty}(P_{1})}+\epsilon^{\frac{\theta_{\mathrm{st}}}{(2+\tau)(N-4)}}\lVert v_{0}\rVert_{L^{\infty}(Q_{1})}+\epsilon^{\frac{\theta_{\mathrm{st}}}{(2+\tau)(N-4)}}\lVert v\rVert_{L^{\infty}(\Omega_\epsilon \setminus Q_{1})}),\\
&\displaystyle \lVert v\rVert_{L^{\infty}(\Omega_\epsilon\setminus Q_{1})}\leq C(\epsilon^{\frac{\theta_{\mathrm{st}}}{(2+\tau)(N-4)}}\lVert w_{0}\rVert_{L^{\infty}(P_{1})}+\epsilon^{\frac{\theta_{\mathrm{st}}}{(2+\tau)(N-4)}}\lVert w\rVert_{L^{\infty}(\Omega_\epsilon \setminus P_{1})}+\lVert v_{0}\rVert_{L^{\infty}(Q_{1})}).
\end{split}
\end{equation*}

Finally,
\begin{equation}\label{eq:outer-stability-sup}
\begin{split}
\lVert w\rVert_{L^{\infty}(\Omega_\epsilon\setminus P_{1})}+\lVert v\rVert_{L^{\infty}(\Omega_\epsilon\setminus Q_{1})} \leq C(\lVert w_{0}\rVert_{L^{\infty}(P_{1})}+\lVert v_{0}\rVert_{L^{\infty}(Q_{1})}),
\end{split}
\end{equation}
concluding the proof for $w$ and $v$.

\medskip
For the first equation in \eqref{eq19}, if $y\in P_{2}\setminus B_{r_{\epsilon}+1}(\tilde\xi_1)$, then \cite[Theorem 3.9]{GT01} gives
\begin{equation*}
\begin{aligned}
\lvert \nabla w(y)\rvert\leq& C\Big(\sup_{B_{1}(y)}\lvert w(x)\rvert+\sup_{B_{1}(y)}\epsilon^{\frac{N-2}{N-4}}\lvert w(x)\rvert+\sup_{B_{1}(y)} \Big\lvert \big(\lvert u_{1}\rvert^{2^{*}-2}u_{1}-\lvert \bar u_{1}\rvert^{2^{*}-2}\bar u_{1}\big)\\
&+\beta\big(\lvert u_{2}\rvert^{\frac{2^{*}}{2}}\lvert u_{1}\rvert^{\frac{2^{*}}{2}-2}u_{1}-\lvert \bar u_{2}\rvert^{\frac{2^{*}}{2}}\lvert \bar u_{1}\rvert^{\frac{2^{*}}{2}-2}\bar u_{1}\big) \Big\rvert\Big).
\end{aligned}
\end{equation*}
By Lemma~\ref{deadout} and Remark~\ref{remdead}, the choice of
\(\tau\in\mathcal I_N\) gives
\(P_2\subset B_{\epsilon^{-\frac{1-\varrho}{N-2}}}(\tilde\xi_1)\) for a
suitable small \(\varrho>0\). Hence
\(u_2=\bar u_2=0\) in \(P_2\).

Because the inequality in Remark~\ref{remdead} is strict,
\[
r_\epsilon^2+1=o\!\left(
\epsilon^{-\frac{1-\varrho}{N-2}}
\right).
\]
Thus, for every $y\in P_2$ and all sufficiently small $\epsilon$,
\[
B_1(y)\subset B_{r_\epsilon^2+1}(\tilde\xi_1)
\subset B_{\epsilon^{-\frac{1-\varrho}{N-2}}}(\tilde\xi_1).
\]
Consequently $u_2=\bar u_2=0$ throughout the whole ball $B_1(y)$, not merely
at its center. The analogous inclusion holds for every $z\in Q_2$, with the
roles of the two components interchanged.

As in \eqref{eq:self-difference}, we obtain
\begin{equation*}
\begin{aligned}
&\sup_{B_{1}(y)} \Big\lvert  \big(\lvert u_{1}\rvert^{2^{*}-2}u_{1}-\lvert \bar u_{1}\rvert^{2^{*}-2}\bar u_{1}\big) \Big\rvert
\leq &C\sup_{B_{1}(y)}\big(\lvert u_{1}\rvert^{2^{*}-2}+\lvert\bar u_{1}\rvert^{2^{*}-2}\big)\lvert w\rvert.
\end{aligned}
\end{equation*}
Thus,
\begin{equation*}
\begin{aligned}
\lvert \nabla w(y)\rvert\leq C\sup_{B_{1}(y)}\lvert w(x)\rvert.
\end{aligned}
\end{equation*}

The second equation of \eqref{eq19} is handled similarly in $Q_{2}\setminus B_{r_{\epsilon}+1}(\tilde\xi_2)$ and gives
\begin{equation*}
\lvert\nabla v(z)\rvert\le C\Big(
\sup_{B_1(z)}|v(s)|+\epsilon^{\frac{N-2}{N-4}}\sup_{B_1(z)}|v(s)|
+\sup_{B_1(z)}\big||u_2|^{2^*-2}u_2-|\bar u_2|^{2^*-2}\bar u_2\big|\Big).
\end{equation*}
Here, again by Lemma~\ref{deadout} and Remark~\ref{remdead},
\(Q_2\subset B_{\epsilon^{-\frac{1-\varrho}{N-2}}}(\tilde\xi_2)\) and hence
\(u_1=\bar u_1=0\) in \(Q_2\).
Hence
 \begin{equation*}
\begin{aligned}
\lvert \nabla v(z)\rvert\leq C\sup_{B_{1}(z)}\lvert v(s)\rvert.
\end{aligned}
\end{equation*}
The conclusion follows from the preceding estimate for
$\|w\|_{L^{\infty}(\Omega_\epsilon)}+\|v\|_{L^{\infty}(\Omega_\epsilon)}$.
\end{proof}

\medskip

The following uniqueness statement is an immediate consequence of the
preceding stability estimate and will be used repeatedly.

\begin{Lem}\label{lem:outer-unique}
For all sufficiently small $\epsilon>0$ and every energy-compatible admissible pair of inner data \((u_{1,0},u_{2,0})\) with
\((u_{1,0},u_{2,0})-(V_1,V_2)\in\Lambda_\epsilon\cap H_0^1(\Omega_\epsilon)^2\), the minimization problem
\eqref{min} has at most one minimizer.
\end{Lem}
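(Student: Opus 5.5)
The plan is to apply Lemma~\ref{lem9} with identical inner data. Let $(u_1,u_2)$ and $(\bar u_1,\bar u_2)$ both be minimizers of \eqref{min} on the same class $\mathcal M(u_{1,0},u_{2,0})$. We need to check that the hypotheses of Lemma~\ref{lem9} hold for this pair.

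Both are minimizers, so each satisfies the truncated Euler--Lagrange system \eqref{eqg}. Lemma~\ref{lem7} applies to each of them with the same uniform constant. Hence each satisfies $|u_i|\le\Gamma_i$ in the corresponding outer region, and, as in Corollary~\ref{correal}, each solves the untruncated exterior problem \eqref{eqout}. Lemma~\ref{lem8} and Lemma~\ref{deadout} likewise apply to both, because their proofs used only that the pair is a minimizer with inner data in $\Lambda_\epsilon\cap H_0^1(\Omega_\epsilon)^2$. In particular the dead-core inclusions of Remark~\ref{remdead} hold for both.

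The proof of Lemma~\ref{lem9} was written for two minimizers attached to two admissible data pairs, with no other assumption. We may therefore take $(\bar u_{1,0},\bar u_{2,0})=(u_{1,0},u_{2,0})$. Then $w_0=0$ in $P_1$ and $v_0=0$ in $Q_1$, and Lemma~\ref{lem9} gives
\[
\|u_1-\bar u_1\|_{L^\infty(\Omega_\epsilon)}+\|u_2-\bar u_2\|_{L^\infty(\Omega_\epsilon)}\le C\cdot 0=0 .
\]
So the two minimizers coincide, first a.e. and then as the continuous representatives fixed after \eqref{eqg}.

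The one point to check is a potential circularity in Lemma~\ref{lem9}: the constant $C_2$ in the barrier is defined through $\|v\|_{L^\infty(\Omega_\epsilon)}$ itself. Its finiteness is clear, since both minimizers are bounded by Lemma~\ref{lem7}. The final absorption step then uses only the smallness of $\epsilon^{\theta_{\mathrm{st}}/((2+\tau)(N-4))}$ to move the terms $\|v\|$ and $\|w\|$ to the left-hand side. With zero data this gives $\|w\|+\|v\|\le o(1)(\|w\|+\|v\|)$, which forces $w=v=0$ once $\epsilon$ is small. The smallness threshold is uniform in the data, so the uniqueness statement holds for all sufficiently small $\epsilon$ and every admissible pair.
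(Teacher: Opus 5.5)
Your proposal is correct and is the paper's argument: put the two minimizers with the same inner data into Lemma~\ref{lem9}, so that $w_0=0$ in $P_1$ and $v_0=0$ in $Q_1$, and the stability estimate forces $u_1=\bar u_1$ and $u_2=\bar u_2$. Your extra checks are exactly what the paper leaves implicit: that the proof of Lemma~\ref{lem9} uses only the minimizing property together with Lemma~\ref{lem7}, and that the $\|v\|_{L^\infty}$-dependent barrier constant is absorbed by the small factor $\epsilon^{\theta_{\mathrm{st}}/((2+\tau)(N-4))}$.
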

\begin{proof}
Let \((u_1,u_2)\) and \((\bar u_1,\bar u_2)\) be two minimizers associated with the same prescribed inner data. In the notation of Lemma~\ref{lem9}, one has \(w_0=0\) in \(P_1\) and \(v_0=0\) in \(Q_1\). Applying Lemma~\ref{lem9} gives
\[
\|u_1-\bar u_1\|_{L^\infty(\Omega_\epsilon)}+
\|u_2-\bar u_2\|_{L^\infty(\Omega_\epsilon)}=0.
\]
Thus the minimizer is unique.
\end{proof}

We now define the exterior solution operator directly on the admissible inner
data, without introducing a separate restriction operator. The subset
\[
\Lambda_\epsilon\cap H_0^1(\Omega_\epsilon)^2
\]
is dense in $\Lambda_\epsilon$ for the uniform norm. To see this, let
$g=(g_1,g_2)\in\Lambda_\epsilon$ and let $m\ge2$ be an integer. First set
\[
\widetilde g^{(m)}:=(1-m^{-1})g.
\]
This scaling creates a margin of size $\epsilon^{\gamma_0}/m$ below the
boundary of the uniform ball. Approximate $\widetilde g^{(m)}$ uniformly by
a pair in $C_c^\infty(\Omega_\epsilon)^2$, and then subtract linear
combinations of $Y_{i,0},\ldots,Y_{i,N}$ to restore the moment conditions.
The coefficient matrices are
\[
\left(\int_{\Omega_\epsilon}
U_i^{2^*-2}Y_{i,j}Y_{i,\ell}\right)_{0\le j,\ell\le N},
\qquad i=1,2,
\]
and are positive definite. Hence the correction coefficients tend to zero
with the uniform approximation error. Choosing that error sufficiently small
compared with $\epsilon^{\gamma_0}/m$, the corrected pair remains in
$\Lambda_\epsilon$. This gives a sequence in
$\Lambda_\epsilon\cap H_0^1(\Omega_\epsilon)^2$ converging uniformly to $g$.
The preliminary factor $1-m^{-1}$ is used only to leave room for the
approximation and the finite-dimensional moment correction.

For later use, set
\[
\mathcal A_{i,\epsilon}:=
\left\{y\in\Omega_\epsilon:
\frac14r_\epsilon^2\le |y-\tilde\xi_i|
\le\frac34r_\epsilon^2\right\},
\qquad i=1,2.
\]

\begin{Def}\label{defS}
For $g=(g_1,g_2)\in
\Lambda_\epsilon\cap H_0^1(\Omega_\epsilon)^2$, denote by
$(u_1^g,u_2^g)$ the unique minimizer of $I$ in
$\mathcal M(V_1+g_1,V_2+g_2)$ and set
\[
S(g):=(u_1^g,u_2^g)-(V_1,V_2).
\]
\end{Def}

If $g,\bar g\in\Lambda_\epsilon\cap H_0^1(\Omega_\epsilon)^2$ have
$g_1=\bar g_1$ in $P_1$ and $g_2=\bar g_2$ in $Q_1$, then their admissible
minimization classes coincide; uniqueness therefore gives $S(g)=S(\bar g)$.
Moreover, Lemma~\ref{lem9} gives a constant $C>0$ such that
\[
\|S(g)-S(\bar g)\|_{L^\infty(\Omega_\epsilon)^2}
\le C\left(
\|g_1-\bar g_1\|_{L^\infty(P_1)}
+\|g_2-\bar g_2\|_{L^\infty(Q_1)}\right)
\]
for all energy-compatible data. The density just proved therefore yields a
unique extension, still denoted by
\[
S:\Lambda_\epsilon\longrightarrow C_0(\Omega_\epsilon)^2.
\]
For all $g,\bar g\in\Lambda_\epsilon$, this extension satisfies
\begin{equation}\label{eq:S-Lipschitz}
\|S(g)-S(\bar g)\|_{L^\infty(\Omega_\epsilon)^2}
\le C\left(
\|g_1-\bar g_1\|_{L^\infty(P_1)}
+\|g_2-\bar g_2\|_{L^\infty(Q_1)}\right).
\end{equation}
Writing $S(g)=(\varphi,\psi)$, one also has
\begin{equation}\label{eq:S-preserves-inner-data}
\varphi=g_1\quad\text{in }P_1,
\qquad
\psi=g_2\quad\text{in }Q_1.
\end{equation}
Indeed, these identities hold for the minimizers and pass to the uniform
limit.

We shall also use the following regularity consequence of the same
approximation. If $g^{(m)},g\in\Lambda_\epsilon$ and
\[
\|g^{(m)}_1-g_1\|_{L^\infty(P_1)}
+\|g^{(m)}_2-g_2\|_{L^\infty(Q_1)}\longrightarrow0,
\]
then $S(g^{(m)})\to S(g)$ uniformly in $\Omega_\epsilon$. The gradient
estimates in Lemma~\ref{lem9} give uniform convergence of the corresponding
gradients on $\mathcal A_{1,\epsilon}$ and $\mathcal A_{2,\epsilon}$.
More generally, the first components converge locally in $C^1$ on
$\Omega_\epsilon\setminus\overline{P_1}$ and the second components converge
locally in $C^1$ on
$\Omega_\epsilon\setminus\overline{Q_1}$, by fixed-$\epsilon$ local
$W^{2,q}$ estimates with $q>N$. The pointwise, dead-core, and stability
estimates proved above pass to the extension by the same approximation.

Set
\begin{equation}\label{eq:X-space}
\mathcal X_\epsilon:=S(\Lambda_\epsilon)
\end{equation}
and equip this space with
\begin{equation}\label{eq:dX}
\begin{aligned}
d_{\mathcal X}\big((\varphi,\psi),(\bar\varphi,\bar\psi)\big)
:=\;&\|\varphi-\bar\varphi\|_{L^\infty(P_1)}\\
&+\|\psi-\bar\psi\|_{L^\infty(Q_1)}.
\end{aligned}
\end{equation}
This is a metric: estimate \eqref{eq:S-Lipschitz} shows that two elements of
$\mathcal X_\epsilon$ with the same prescribed inner data coincide globally.
Moreover, $(\mathcal X_\epsilon,d_{\mathcal X})$ is complete. To see this,
let $(\varphi_m,\psi_m)=S(g^{(m)})$ be a $d_{\mathcal X}$-Cauchy sequence.
By \eqref{eq:S-Lipschitz}, it is Cauchy in the global uniform norm and hence
converges to some $(\varphi,\psi)\in C_0(\Omega_\epsilon)^2$. Its first
components on $P_1$ and second components on $Q_1$ converge uniformly to
continuous functions $h_1$ and $h_2$. Because the moment functions are
supported in $P_1$ and $Q_1$, respectively, these limits satisfy the required
moment conditions and have norm at most $\epsilon^{\gamma_0}$. By the Tietze extension theorem, $h_1$ and $h_2$ extend continuously
to $\overline{\Omega_\epsilon}$, with value zero on
$\partial\Omega_\epsilon$ and without increasing their uniform norms.
The resulting pair $g$ belongs to $\Lambda_\epsilon$. Then \eqref{eq:S-Lipschitz} shows that
$S(g^{(m)})\to S(g)$ uniformly, so $(\varphi,\psi)=S(g)$ belongs to
$\mathcal X_\epsilon$.

\medskip

\section{Localized projected reduction}\label{sec:4}

The nonlinear exterior problem is now encoded by the operator $S$. We close
the construction through a fixed point on the inner data. The localization
takes place on the larger balls $P_2,Q_2$: the dead-core inclusion eliminates
the opposite component there, while cut-off functions at the scale
$r_\epsilon^2$ connect the inner projected corrections to the global exterior
solution. We therefore seek a solution of the following projected problem:
\begin{equation}\label{eqpert}
\begin{cases}
\displaystyle -\Delta \varphi-\epsilon^{\frac{N-2}{N-4}}\varphi-(2^{*}-1)V^{2^{*}-2}_1\varphi=l_{1}+R_{1}(\varphi)+\beta \lvert u_{2}\rvert^{\frac{2^{*}}{2}}\lvert u_{1}\rvert^{\frac{2^{*}}{2}-2}u_{1}+\sum_{j=0}^{N}c_{j}U^{2^{*}-2}_{1}Y_{1,j},\\[3mm]
\displaystyle -\Delta \psi-\epsilon^{\frac{N-2}{N-4}}\psi-(2^{*}-1)V^{2^{*}-2}_{2}\psi=l_{2}+R_{2}(\psi)+\beta \lvert u_{1}\rvert^{\frac{2^{*}}{2}}\lvert u_{2}\rvert^{\frac{2^{*}}{2}-2}u_{2}+\sum_{j=0}^{N}d_{j}U^{2^{*}-2}_{2}Y_{2,j},\\[3mm]
\displaystyle \int_{\Omega_\epsilon}U_1^{2^*-2}Y_{1,j}\varphi=0,
\quad
\int_{\Omega_\epsilon}U_2^{2^*-2}Y_{2,j}\psi=0,
\quad j=0,\ldots,N,
\end{cases}
\end{equation}
where
\begin{equation*}
\begin{split}
(l_{1},l_{2}):=
\displaystyle \Big(\begin{matrix}
\epsilon^{\frac{N-2}{N-4}}V_1+V^{2^{*}-1}_{1}-U^{2^{*}-1}_{1}\\
\displaystyle \epsilon^{\frac{N-2}{N-4}}V_2+V^{2^{*}-1}_{2}- U^{2^{*}-1}_{2}
\end{matrix}\Big)^{\top}
\end{split}
\end{equation*}
and
\begin{equation*}
\begin{split}
(R_{1}(\varphi),R_{2}(\psi)):=
\Big(\begin{matrix}
\displaystyle  \lvert V_1+\varphi \rvert^{2^{*}-2}(V_1+\varphi)-V_1^{2^{*}-1}-(2^{*}-1)V_1^{2^{*}-2} \varphi\\
\displaystyle \lvert V_2+\psi \rvert^{2^{*}-2}(V_2+\psi)-V^{2^{*}-1}_2-(2^{*}-1)V^{2^{*}-2}_2\psi
\end{matrix}\Big)^{\top}.
\end{split}
\end{equation*}

We work on the complete metric space $\mathcal X_\epsilon$ defined in
\eqref{eq:X-space}. Its elements are the exterior corrections, while the
metric \eqref{eq:dX} records only the first component on $P_1$ and the
second component on $Q_1$. This is exactly the inner-data distance appearing
in the stability estimate of Lemma~\ref{lem9}.

\medskip

Choose $\eta\in C^\infty([0,+\infty))$ such that
$0\le\eta\le1$, $\eta=0$ on $[0,1/4]$, and $\eta=1$ on
$[3/4,+\infty)$. For $i=1,2$, set
\[
\chi_i(y):=\eta\left(\frac{|y-\tilde\xi_i|}{r_\epsilon^2}\right).
\]
Then
\begin{equation}\label{jd}
\left\{
\begin{aligned}
&\chi_i=1
&&\text{in }\Omega_\epsilon\setminus
B_{\frac{3}{4}r_\epsilon^2}(\tilde\xi_i),\\
&\chi_i=0
&&\text{in }B_{\frac{1}{4}r_\epsilon^2}(\tilde\xi_i),\\
&|\nabla\chi_i|\le Cr_\epsilon^{-2},\qquad
 |\Delta\chi_i|\le Cr_\epsilon^{-4},
&& i=1,2,
\end{aligned}
\right.
\end{equation}
where $C>0$ is independent of $\epsilon$ and of the admissible
configuration. In particular,
$\operatorname{supp}\nabla\chi_i\cup\operatorname{supp}\Delta\chi_i
\subset\mathcal A_{i,\epsilon}$.

\smallskip

For $(\varphi,\psi)=S(g)$ with
$g\in\Lambda_\epsilon\cap H_0^1(\Omega_\epsilon)^2$, the pair
$(u_1,u_2)=(\varphi,\psi)+(V_1,V_2)$ solves \eqref{eqout}, so the following
identities hold weakly in the full exterior regions. For a general
$(\varphi,\psi)\in\mathcal X_\epsilon$, they hold distributionally on compact
subsets away from the artificial inner boundaries by the local
$C^1$ convergence in the extension construction above. Before the fixed
point is known to come from energy-compatible data, we use them only on the
cut-off annuli. Thus
\begin{align*}
-
\Delta \varphi-
\epsilon^{\frac{N-2}{N-4}}\varphi
-(2^{*}-1)V^{2^{*}-2}_{1}\varphi
&=l_{1}+R_{1}(\varphi) \\
&\quad+
\beta \lvert V_2+\psi\rvert^{\frac{2^{*}}{2}}
\lvert V_1+\varphi\rvert^{\frac{2^{*}}{2}-2}(V_1+\varphi),
&&\text{in }\Omega_\epsilon\setminus P_1,\\[1mm]
-
\Delta \psi-
\epsilon^{\frac{N-2}{N-4}}\psi
-(2^{*}-1)V^{2^{*}-2}_{2}\psi
&=l_{2}+R_{2}(\psi) \\
&\quad+
\beta \lvert V_1+\varphi\rvert^{\frac{2^{*}}{2}}
\lvert V_2+\psi\rvert^{\frac{2^{*}}{2}-2}(V_2+\psi),
&&\text{in }\Omega_\epsilon\setminus Q_1.
\end{align*}
\smallskip

A direct computation gives
\begin{equation}\label{eq:cutoff-first}
\begin{split}
&-\Delta (\chi_{1}\varphi)-\epsilon^{\frac{N-2}{N-4}}\chi_1\varphi-(2^{*}-1)V^{2^{*}-2}_{1} \chi_{1}\varphi\\
=&\displaystyle \chi_{1}\big(l_{1}+R_{1}( \varphi)+\beta \lvert  V_2+\psi\rvert^{\frac{2^{*}}{2}}\lvert V_1+\varphi\rvert^{\frac{2^{*}}{2}-2}(V_1+\varphi)\big)-\Delta\chi_{1} \varphi-2\nabla\chi_{1}\cdot\nabla\varphi
\end{split}
\end{equation}
and
\begin{equation}\label{eq:cutoff-second}
\begin{split}
&\displaystyle -\Delta (\chi_{2}\psi)-\epsilon^{\frac{N-2}{N-4}}\chi_2\psi-(2^{*}-1) V^{2^{*}-2}_{2}\chi_{2}\psi\\
=&\displaystyle \chi_{2}\Big(l_{2}+R_{2}(\psi)+\beta \lvert V_1+\varphi\rvert^{\frac{2^{*}}{2}}\lvert  V_2+\psi\rvert^{\frac{2^{*}}{2}-2}( V_2+\psi)\Big)-\Delta\chi_{2}\psi-2\nabla\chi_{2}\cdot\nabla\psi.
\end{split}
\end{equation}
\smallskip

For a solution of \eqref{eqpert}, subtracting \eqref{eq:cutoff-first} and \eqref{eq:cutoff-second}
from the corresponding projected equations gives
\begin{equation*}
\begin{split}
&\displaystyle -\Delta (\varphi-\chi_{1}\varphi)-\epsilon^{\frac{N-2}{N-4}}(\varphi-\chi_{1}\varphi)-(2^{*}-1)V^{2^{*}-2}_{1} (\varphi-\chi_{1} \varphi)\\
\displaystyle =&(1-\chi_{1})\Big(l_{1}+R_{1}(\varphi)+\beta \lvert  V_2+\psi\rvert^{\frac{2^{*}}{2}}\lvert  V_1+\varphi\rvert^{\frac{2^{*}}{2}-2}( V_1+\varphi)\Big)+\Delta\chi_{1} \varphi +2\nabla\chi_{1}\cdot\nabla\varphi +\sum_{j=0}^{N}c_{j}U^{2^{*}-2}_{1}Y_{1,j}
\end{split}
\end{equation*}
and
\begin{equation*}
\begin{split}
&\displaystyle -\Delta (\psi-\chi_{2} \psi) -\epsilon^{\frac{N-2}{N-4}}(\psi-\chi_{2} \psi )-(2^{*}-1) V^{2^{*}-2}_{2}(\psi -\chi_{2} \psi )\\
\displaystyle =&(1-\chi_{2})\Big(l_{2}+R_{2}( \psi )+\beta \lvert  V_1+\varphi\rvert^{\frac{2^{*}}{2}}\lvert  V_2+\psi\rvert^{\frac{2^{*}}{2}-2}( V_2+\psi)\Big)+\Delta\chi_{2} \psi +2\nabla\chi_{2}\cdot\nabla\psi +\sum_{j=0}^{N}d_{j}U^{2^{*}-2}_{2}Y_{2,j}.
\end{split}
\end{equation*}
\smallskip

\medskip
The preceding inner-data formulation is used because the standard global fixed point
map is not a contraction in the full perturbation space. The outer operator
\(S\) first resolves the sublinear competitive terms away from the bubble cores, and
the remaining fixed point argument is performed only on the inner data.

By Lemma~\ref{deadout}, Remark~\ref{remdead}, and passage to the
uniform limit in the construction of $S$, every
$(\varphi,\psi)\in\mathcal X_\epsilon$ satisfies
\[
V_2+\psi=0\quad\text{in }P_2,
\qquad
V_1+\varphi=0\quad\text{in }Q_2.
\]
Since $\operatorname{supp}(1-\chi_1)\subset P_2$ and
$\operatorname{supp}(1-\chi_2)\subset Q_2$, the coupling terms vanish on the
inner parts where $1-\chi_i\ne0$. This is why the operator below contains only
$l_i+R_i$ in the $(1-\chi_i)$-terms.

We now define the localized correction map. For
$(\varphi,\psi)\in\mathcal X_\epsilon$, set
\begin{equation}\label{eq:B-map}
\mathcal B_\epsilon(\varphi,\psi):=
\left(\begin{matrix}
\displaystyle
L_{1}\big(\varphi \Delta\chi_{1}+2\nabla\chi_{1}\cdot\nabla \varphi
 +(1-\chi_{1})\big(l_{1}+R_{1}(\varphi)\big)\big)\\[2mm]
\displaystyle
L_{2}\big(\psi \Delta\chi_{2}+2\nabla\chi_{2}\cdot\nabla \psi
 +(1-\chi_{2})\big(l_{2}+R_{2}(\psi)\big)\big)
\end{matrix}\right)^{\!\top}.
\end{equation}
The gradients are used only on the cut-off annuli, where they are defined by
the local $C^1$ property of $S$ stated above. The linear regularity following
Propositions~\ref{prop:linear-first} and~\ref{prop:linear-second} gives
$\mathcal B_\epsilon(\varphi,\psi)\in
\mathbb E\cap C_0(\Omega_\epsilon)^2$. Lemma~\ref{lem13} proves that this pair
belongs to $\Lambda_\epsilon$ for small $\epsilon$; we can then define
\begin{equation}\label{eq:T-map}
T_\epsilon(\varphi,\psi):=
S\big(\mathcal B_\epsilon(\varphi,\psi)\big).
\end{equation}

\medskip

To estimate the inner-data map induced by $\mathcal B_\epsilon$, we first establish the following
a priori estimates.
\begin{Lem}\label{lem10}
We have
\begin{equation*}
\begin{split}
\lVert l_{1}\rVert_{L^{\infty}(P_{2})},~\lVert l_{2}\rVert_{L^{\infty}(Q_{2})}\leq C\epsilon^{\gamma_1},
\end{split}
\end{equation*}
where $\gamma_1=(N-2)/(N-4)$.
\end{Lem}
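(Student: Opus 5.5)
The plan is to split $l_1$ into its two pieces and bound each pointwise on $P_2$:
\[
l_1=\epsilon^{\frac{N-2}{N-4}}V_1+\bigl(V_1^{2^*-1}-U_1^{2^*-1}\bigr).
\]
The bound for $l_2$ on $Q_2$ is identical, with $\tilde\xi_2,\lambda_2$ in place of $\tilde\xi_1,\lambda_1$. Throughout, the configuration lies in $O_\delta$, so $\lambda_1\in(\delta,\delta^{-1})$ and $\operatorname{dist}(\xi_1,\partial\Omega)>\delta$. All constants will therefore be uniform.

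For the linear piece I only need $0\le V_1\le U_1$. This holds by the maximum principle, since $U_1-V_1$ is harmonic in $\Omega_\epsilon$ and nonnegative on $\partial\Omega_\epsilon$. In addition, $U_1\le C_N\lambda_1^{-(N-2)/2}\le C$ uniformly. Hence
\[
\bigl|\epsilon^{\frac{N-2}{N-4}}V_1\bigr|\le C\epsilon^{\gamma_1}
\]
on all of $\Omega_\epsilon$, and in particular on $P_2$.

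For the nonlinear piece I will use the projection expansion already invoked in the proof of Lemma~\ref{lem8}:
\[
V_1(y)=U_1(y)-C_N\lambda_1^{\frac{N-2}{2}}\epsilon^{\frac{N-2}{N-4}}H_\Omega\bigl(\epsilon^{\frac1{N-4}}y,\xi_1\bigr)+O\bigl(\epsilon^{\frac N{N-4}}\bigr).
\]
This is the scaled version of the Appendix~A estimate, via \eqref{PP} and the scaling of $H$. On $P_2$ the point $\epsilon^{1/(N-4)}y$ stays within distance $s_\epsilon r_\epsilon^2\to0$ of $\xi_1$, hence at distance at least $\delta/2$ from $\partial\Omega$. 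So $H_\Omega(\epsilon^{1/(N-4)}y,\xi_1)$ is bounded there, and $0\le U_1-V_1\le C\epsilon^{\gamma_1}$ on $P_2$. Since $2^*-1>1$, the mean value theorem gives
\[
\bigl|V_1^{2^*-1}-U_1^{2^*-1}\bigr|\le(2^*-1)U_1^{2^*-2}(U_1-V_1)\le C\epsilon^{\gamma_1}.
\]
This uses only that $U_1^{2^*-2}$ is bounded. (One even gets the improved decay $\epsilon^{\gamma_1}(1+|y-\tilde\xi_1|)^{-4}$, which is not needed here.)

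I expect no real obstacle. The only point needing care is the uniformity of the boundary-term expansion on the whole of $P_2$ rather than on a fixed ball. This is guaranteed by $r_\epsilon^2\ll s_\epsilon^{-1}$ from \eqref{scale-hierarchy}, which keeps the rescaled region compactly inside $\Omega$, where $H_\Omega$ is smooth and bounded. If one wants to avoid the expansion entirely, the bound $0\le U_1-V_1\le\max_{\partial\Omega_\epsilon}U_1\le C(\delta s_\epsilon^{-1})^{2-N}=C\epsilon^{\gamma_1}$ follows directly from the maximum principle and already suffices.
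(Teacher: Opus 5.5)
Your proof is correct and follows essentially the paper's argument. You split $l_1$, use the projection expansion on $P_2$ to get $0\le U_1-V_1\le C\epsilon^{\gamma_1}$, and conclude with the mean value theorem, since $U_1$ is uniformly bounded. Your fallback via the maximum principle, $0\le U_1-V_1\le\max_{\partial\Omega_\epsilon}U_1\le C\epsilon^{\gamma_1}$, is a slightly cleaner route: it avoids relying on uniformity of the expansion near the bubble center, which matters because Lemma~\ref{lemRj0} is stated only for $x$ away from $\xi$.
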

\begin{proof}
It suffices to estimate $\lVert l_{1}\rVert_{L^{\infty}(P_{2})}$, as the bound for $\lVert l_{2}\rVert_{L^{\infty}(Q_{2})}$ follows analogously.

In fact, for \(y\in P_2\), the projection estimate gives
\[
V_1(y)=U_1(y)-C_N\lambda_1^{\frac{N-2}{2}}
\epsilon^{\frac{N-2}{N-4}}
H_\Omega\big(\epsilon^{\frac{1}{N-4}}y,\xi_1\big)
+O\left(\epsilon^{\frac{N}{N-4}}\right),
\]
where \(H_\Omega\) is the regular part of the Dirichlet Green function in \(\Omega\). Hence
\begin{equation*}
\begin{split}
|l_{1}(y)|
&=\left|\epsilon^{\frac{N-2}{N-4}}V_1(y)
+\big(V^{2^{*}-1}_{1}(y)-U^{2^{*}-1}_{1}(y)\big)\right|\\
&\le C\epsilon^{\frac{N-2}{N-4}}U_1(y)
+C U_1^{2^{*}-2}(y)\epsilon^{\frac{N-2}{N-4}}
H_\Omega\big(\epsilon^{\frac{1}{N-4}}y,\xi_1\big)
+C\epsilon^{\frac{N}{N-4}}U_1^{2^*-2}(y)\\
&\le C\epsilon^{\frac{N-2}{N-4}},
\qquad y\in P_2.
\end{split}
\end{equation*}

\end{proof}

Set
\[
m_N:=\min\{2,2^*-1\}.
\]
\begin{Lem}\label{lem11}
Let $(\varphi,\psi)\in\mathcal X_\epsilon$. Then
\begin{align}\label{eq:S-inner-P2}
\|\varphi\|_{L^\infty(P_2)}+
\|\psi\|_{L^\infty(Q_2)}
&\le C\epsilon^{\kappa_0},\\
\|R_1(\varphi)\|_{L^\infty(P_2)}+
\|R_2(\psi)\|_{L^\infty(Q_2)}
&\le C\epsilon^{m_N\kappa_0}.
\end{align}
Moreover, on the annuli supporting $\nabla\chi_i$ and $\Delta\chi_i$,
\begin{align}\label{eq:S-cutoff-annuli}
|\varphi|+|\nabla\varphi|
&\le C\left(
\epsilon^{\alpha_1+\frac{2\theta_1}{B_\tau}}
+\epsilon^{\tilde\alpha_1+\frac{\tilde\theta_1}{N-4}}
\right),\\
|\psi|+|\nabla\psi|
&\le C\left(
\epsilon^{\alpha_2+\frac{2\theta_2}{B_\tau}}
+\epsilon^{\tilde\alpha_2+\frac{\tilde\theta_2}{N-4}}
\right).
\end{align}
\end{Lem}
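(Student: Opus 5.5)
The plan is to write $(\varphi,\psi)=S(g)$ with $g\in\Lambda_\epsilon$ and to read all three bounds off estimates already proved for the exterior minimizer. I will first prove them for energy-compatible data $g\in\Lambda_\epsilon\cap H_0^1(\Omega_\epsilon)^2$. They then pass to general $g\in\Lambda_\epsilon$ by the density argument and the Lipschitz bound \eqref{eq:S-Lipschitz}. For the gradient bounds I will use the local $C^1$ convergence of $S(g^{(m)})$ on $\mathcal A_{i,\epsilon}$ recorded after Definition~\ref{defS}. All constants come from Lemmas~\ref{lem7} and~\ref{lem8}, so they are uniform in the data. I argue only for $\varphi$; the bound for $\psi$ is symmetric.

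For \eqref{eq:S-inner-P2}, I split $P_2$ into $P_1$ and $P_2\setminus P_1$. On $P_1$, \eqref{eq:S-preserves-inner-data} gives $\varphi=g_1$, hence $|\varphi|\le\epsilon^{\gamma_0}\le\epsilon^{\kappa_0}$. On $P_2\setminus P_1$, Lemma~\ref{lem8} applies. Since $1+|y-\tilde\xi_1|\ge r_\epsilon$ there, the first term is at most $C\epsilon^{\alpha_1}r_\epsilon^{-\theta_1}=C\epsilon^{\kappa_0}$. Since $P_2$ lies at distance $\ge c\epsilon^{-1/(N-4)}$ from $\tilde\xi_2$, the second term is at most $C\epsilon^{\tilde\alpha_1+\tilde\theta_1/(N-4)}$. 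It therefore remains to check the exponent inequality
\[
\tilde\alpha_1+\frac{\tilde\theta_1}{N-4}\ge\kappa_0 .
\]
As the losses tend to zero, the left side tends to $N/(N-4)$, while $\kappa_0<\frac{N-2}{2(N-4)}+\frac{N}{4(N-4)}$ because $\tau>0$. Since $(N-2)/2+N/4<N$, the inequality holds with a margin once the losses are small; this is one of the conditions collected in Lemma~\ref{lem:exponent-choice}.

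For the bound on $R_1$, I use the elementary Taylor estimates for $F(t)=|t|^{2^*-2}t$. When $2^*-1\ge2$, i.e.\ $N\le6$, one has $|R_1(\varphi)|\le C(V_1^{2^*-3}|\varphi|^2+|\varphi|^{2^*-1})$. When $2^*-1<2$, i.e.\ $N>6$, the Hölder continuity of $F'$ gives $|R_1(\varphi)|\le C|\varphi|^{2^*-1}$. Since $V_1\le U_1$ is uniformly bounded and $\epsilon^{\kappa_0}\le1$, combining these with \eqref{eq:S-inner-P2} yields $\|R_1(\varphi)\|_{L^\infty(P_2)}\le C\epsilon^{m_N\kappa_0}$.

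For \eqref{eq:S-cutoff-annuli}, the annulus $\mathcal A_{1,\epsilon}$ has $|y-\tilde\xi_1|\sim r_\epsilon^2$. It lies outside $B_{r_\epsilon+1}(\tilde\xi_1)$ and at distance $\sim\epsilon^{-1/(N-4)}$ from $\tilde\xi_2$. Both the pointwise and the gradient bounds of Lemma~\ref{lem8} therefore apply there. Inserting $r_\epsilon^{2}=\epsilon^{-2/B_\tau}$ into the first terms gives $\epsilon^{\alpha_1+2\theta_1/B_\tau}$, and the second terms give $\epsilon^{\tilde\alpha_1+\tilde\theta_1/(N-4)}$, which is exactly \eqref{eq:S-cutoff-annuli}.

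I expect the main obstacle to be the passage from energy-compatible to general data for the gradient bound. Uniform convergence alone does not control $\nabla\varphi$, so I will rely on the fixed-$\epsilon$ local $C^1$ convergence on $\mathcal A_{1,\epsilon}$, which is valid because $\mathcal A_{1,\epsilon}$ stays away from $\overline{P_1}$. The constants survive this passage because they are independent of $g$.
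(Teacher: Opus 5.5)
Your proposal is correct and follows the paper's proof essentially step by step. You treat energy-compatible data first, split $P_2$ into $P_1$ and $P_2\setminus P_1$ and apply Lemma~\ref{lem8}, use the Taylor/H\"older bounds for $R_1$, apply Lemma~\ref{lem8} and its gradient estimate on the annuli, and pass to general data via the uniform and local $C^1$ convergence of $S$. The only cosmetic difference is how you check $\tilde\alpha_1+\tilde\theta_1/(N-4)>\kappa_0$: you verify it directly from the zero-loss values, whereas the paper derives it from $(N-4)\tilde\theta+\bar\theta<2$ (that is, the exponent exceeds $\gamma_1$) together with $\gamma_1>\kappa_0$.
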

\begin{proof}
We first take $(\varphi,\psi)=S(g)$ with
$g\in\Lambda_\epsilon\cap H_0^1(\Omega_\epsilon)^2$, so that the pair
is the perturbation of an actual exterior minimizer. We prove the
estimates for $\varphi$. On $P_1$, the prescribed inner data satisfy
$|\varphi|\le\epsilon^{\gamma_0}\le\epsilon^{\kappa_0}$. On
$P_2\setminus P_1$, Lemma~\ref{lem8} gives
\[
|\varphi(y)|
\le C\epsilon^{\alpha_1}r_\epsilon^{-\theta_1}
+C\epsilon^{\tilde\alpha_1}
(1+|y-\tilde\xi_2|)^{-\tilde\theta_1}.
\]
Since $r_\epsilon^2=o(|\tilde\xi_1-\tilde\xi_2|)$, one has
$|y-\tilde\xi_2|\ge c\epsilon^{-1/(N-4)}$ throughout $P_2$. Hence
\[
|\varphi(y)|
\le C\epsilon^{\kappa_0}
+C\epsilon^{\tilde\alpha_1+\frac{\tilde\theta_1}{N-4}}
\le C\epsilon^{\kappa_0}.
\]
The last inequality follows from
\[
(N-4)\tilde\theta+\bar\theta<2,
\qquad\text{equivalently}\qquad
\tilde\alpha_1+\frac{\tilde\theta_1}{N-4}>\gamma_1,
\]
together with $\gamma_1>\kappa_0$, which follows from
\eqref{gamma-range}. This proves the first estimate in
\eqref{eq:S-inner-P2}. On the annulus
$\frac14r_\epsilon^2\le|y-\tilde\xi_1|\le\frac34r_\epsilon^2$,
Lemma~\ref{lem8} and its gradient estimate give
\eqref{eq:S-cutoff-annuli}. The argument for $\psi$ is the same.

For $N=5$, Taylor's formula gives
$|R_1(\varphi)|\le C(V_1^{1/3}|\varphi|^2+|\varphi|^{7/3})
\le C|\varphi|^2$. For $N\ge6$,
$|R_1(\varphi)|\le C|\varphi|^{2^*-1}$. The second estimate in
\eqref{eq:S-inner-P2} follows, and the proof for $R_2$ is identical.
The general case $(\varphi,\psi)\in\mathcal X_\epsilon$ follows by
approximating its prescribed inner data by energy-compatible elements of
$\Lambda_\epsilon$ and using the uniform and local $C^1$ convergence in the
construction of $S$.
\end{proof}

\medskip

\medskip

\begin{Lem}\label{lem13}
For $\epsilon>0$ sufficiently small, the map $T_\epsilon$ in
\eqref{eq:T-map} is well defined and is a contraction on
$(\mathcal X_\epsilon,d_{\mathcal X})$. More precisely, if
$\mathcal B_\epsilon(\varphi,\psi)=(b_1,b_2)$, then
\begin{equation}\label{eq:strict-self-map}
\|b_1\|_{L^\infty(\Omega_\epsilon)},\
\|b_2\|_{L^\infty(\Omega_\epsilon)}
\le\frac12\epsilon^{\gamma_0}
\qquad\text{for every }(\varphi,\psi)\in\mathcal X_\epsilon.
\end{equation}
Consequently, $T_\epsilon$ has a unique fixed point
$(\varphi_\epsilon,\psi_\epsilon)\in\mathcal X_\epsilon$. This fixed point
belongs to $S(\Lambda_\epsilon\cap H_0^1(\Omega_\epsilon)^2)$ and hence is
the perturbation of an actual exterior minimizer.
\end{Lem}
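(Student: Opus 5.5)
We split the proof into three parts: the self-map bound \eqref{eq:strict-self-map}, the contraction estimate in $d_{\mathcal X}$, and the identification of the fixed point as energy-compatible.

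\textbf{Step 1: the self-map bound.} Fix $(\varphi,\psi)\in\mathcal X_\epsilon$. Then $\mathcal B_\epsilon(\varphi,\psi)=(L_1h_1,L_2h_2)$, where
\[
h_1=\varphi\Delta\chi_1+2\nabla\chi_1\cdot\nabla\varphi+(1-\chi_1)\bigl(l_1+R_1(\varphi)\bigr).
\]
By Proposition~\ref{prop:linear-first},
\[
\|b_1\|_{L^\infty}\le Cr_\epsilon^4\|h_1\|_{L^\infty(P_2)}.
\]
We estimate $h_1$ term by term.
\begin{itemize}
\item On $\operatorname{supp}(1-\chi_1)\subset P_2$, Lemmas~\ref{lem10} and~\ref{lem11} give $|l_1|+|R_1(\varphi)|\le C(\epsilon^{\gamma_1}+\epsilon^{m_N\kappa_0})$.
\item The cut-off terms live on $\mathcal A_{1,\epsilon}$. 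By \eqref{jd} and \eqref{eq:S-cutoff-annuli} they are bounded by
\[
Cr_\epsilon^{-2}\Bigl(\epsilon^{\alpha_1+2\theta_1/B_\tau}+\epsilon^{\tilde\alpha_1+\tilde\theta_1/(N-4)}\Bigr).
\]
\end{itemize}
Since $r_\epsilon^4=\epsilon^{-4/B_\tau}$, the bound $\|b_1\|\le\frac12\epsilon^{\gamma_0}$ for small $\epsilon$ reduces to strict inequalities among the fixed exponents:
\begin{enumerate}
\item $\gamma_1-4/B_\tau>\gamma_0$;
\item $m_N\kappa_0-4/B_\tau>\gamma_0$, which follows from $\gamma_0<\gamma_1<2\gamma_0$ in \eqref{gamma-range} when $m_N=2$, with the analogous check when $m_N=2^*-1$;
\item $\alpha_1+2\theta_1/B_\tau-2/B_\tau>\alpha_1+\theta_1/B_\tau+\delta_*$, i.e.\ $\theta_1>2+\delta_*B_\tau$, which holds since $\theta_1=N/2-\vartheta>2$;
\item $\tilde\alpha_1+\tilde\theta_1/(N-4)-2/B_\tau>\gamma_0$, which holds because the left side exceeds $\gamma_1$ up to small losses.
\end{enumerate}
These are precisely the ``fixed-point margins'' that Lemma~\ref{lem:exponent-choice} asserts can be chosen simultaneously; we invoke it. The moment conditions hold by construction of $L_i$. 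Hence $\mathcal B_\epsilon(\varphi,\psi)\in\Lambda_\epsilon$ and $T_\epsilon$ is well defined with values in $\mathcal X_\epsilon$.

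\textbf{Step 2: contraction.} Take two elements $(\varphi,\psi),(\bar\varphi,\bar\psi)$ of $\mathcal X_\epsilon$ and let $d=d_{\mathcal X}$ be their distance. By \eqref{eq:S-Lipschitz} the global uniform distance is at most $Cd$. Lemma~\ref{lem9} applies via the extension and the dead core of Remark~\ref{remdead}, which kills the coupling on $P_2$. It gives, on $\mathcal A_{1,\epsilon}\subset P_2\setminus B_{r_\epsilon+1}$,
\[
|\nabla(\varphi-\bar\varphi)|\le Cd.
\]
Hence the cut-off contributions to $h_1-\bar h_1$ are $O(r_\epsilon^{-2}d)$. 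After the factor $r_\epsilon^4$ this gives $O(r_\epsilon^{2}d)$, which is \emph{not} small. This is the main obstacle.

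To overcome it, we sharpen the difference estimate on the annulus. The barrier \eqref{eq22} in the proof of Lemma~\ref{lem9} gives decay from $\partial P_1$:
\[
|w|\le Cr_\epsilon^{N-2}\,d\,(1+a_{\mathrm b}|y-\tilde\xi_1|)^{-(N-2)}+\text{(small)}\cdot d.
\]
On $\mathcal A_{1,\epsilon}$, where $|y-\tilde\xi_1|\sim r_\epsilon^2$, this is $O(r_\epsilon^{-(N-2)}d)$. The local gradient estimate upgrades this to the gradient. The cut-off terms then contribute $r_\epsilon^{4-2-(N-2)}d=r_\epsilon^{4-N}d$, which is $o(d)$ since $N\ge5$.

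The mixed tail term in \eqref{eq22} carries an extra factor; we must check that it gives $o(d)$ after multiplication by $r_\epsilon^2$. If this fails, we would redo the barrier with $\theta_{\mathrm{st}}$ closer to $2$.

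The $R_1$ difference on $P_2$ is bounded by $C(V_1^{2^*-3}|\varphi|+|\varphi|^{2^*-2})|\varphi-\bar\varphi|$, with the appropriate form when $N\ge6$. By \eqref{eq:S-inner-P2} this is at most $C\epsilon^{\min(1,2^*-2)\kappa_0}d$. After the factor $r_\epsilon^4$ it is still $o(d)$ by the exponent choices. The term $l_1$ cancels in the difference.

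Finally, Proposition~\ref{prop:linear-first} and \eqref{eq:S-Lipschitz} give $d_{\mathcal X}(T_\epsilon(\cdot),T_\epsilon(\cdot))\le\frac12 d$, since the $S$-Lipschitz constant multiplies a quantity that is $o(1)\cdot d$.

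\textbf{Step 3: the fixed point.} By Banach's theorem on the complete space $(\mathcal X_\epsilon,d_{\mathcal X})$, $T_\epsilon$ has a unique fixed point $(\varphi_\epsilon,\psi_\epsilon)=S(b)$ with $b=\mathcal B_\epsilon(\varphi_\epsilon,\psi_\epsilon)$. The regularity remark after Propositions~\ref{prop:linear-first}--\ref{prop:linear-second} gives $b_i=L_ih_i\in W_0^{1,q}\cap E_i$, so $b\in H_0^1(\Omega_\epsilon)^2$. Together with Step~1, $b\in\Lambda_\epsilon\cap H_0^1(\Omega_\epsilon)^2$. Therefore $S(b)$ is the perturbation of an actual exterior minimizer, as required.
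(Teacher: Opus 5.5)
Your proposal follows the paper's proof essentially step for step: the self-map bound comes from Proposition~\ref{prop:linear-first}, Lemmas~\ref{lem10}--\ref{lem11} and the margins \eqref{eq:self-map-exponent-margins}. For the contraction you correctly see that the bare statement of Lemma~\ref{lem9} only yields $O(r_\epsilon^{2}d)$, so the decay in \eqref{eq22} must be used on $\mathcal A_{1,\epsilon}$; this is exactly what the paper's estimate $|w|+|\nabla w|\le C(r_\epsilon^{-(N-2)}+r_\epsilon^{-(4-\theta_{\mathrm{st}})})d$ encodes.

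The mixed term you left unchecked is fine without changing $\theta_{\mathrm{st}}$. On $\mathcal A_{1,\epsilon}$ the factor $\epsilon^{-(2-\theta_{\mathrm{st}})/(N-4)}$ in $C_2$ cancels against $\rho_2^{-(2-\theta_{\mathrm{st}})}$, and $\epsilon^{\theta_{\mathrm{st}}/B_\tau}\rho_1^{-(2-\theta_{\mathrm{st}})}\le Cr_\epsilon^{-\theta_{\mathrm{st}}}r_\epsilon^{-2(2-\theta_{\mathrm{st}})}$. With $\|v\|_\infty\le Cd$ this gives $O(r_\epsilon^{-(4-\theta_{\mathrm{st}})}d)$, hence $O(r_\epsilon^{-(2-\theta_{\mathrm{st}})}d)=o(d)$ after the factor $r_\epsilon^{2}$. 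These are precisely the corresponding terms in the paper's $\kappa_\epsilon$. Note that your fallback of pushing $\theta_{\mathrm{st}}$ toward $2$ would weaken this bound rather than strengthen it.
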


\begin{proof}
(\romannumeral1) The map is well defined. Let
$(\varphi,\psi)\in\mathcal X_\epsilon$. By
Proposition~\ref{prop:linear-first} and Lemmas~\ref{lem10} and~\ref{lem11},
\begin{align*}
&\left\|L_1\left(
\varphi\Delta\chi_1+2\nabla\chi_1\cdot\nabla\varphi
+(1-\chi_1)(l_1+R_1(\varphi))
\right)\right\|_{L^\infty(\Omega_\epsilon)}\\
&\quad\le C\Big(
\epsilon^{\alpha_1+\frac{2\theta_1}{B_\tau}-\frac{2}{B_\tau}}
+\epsilon^{\tilde\alpha_1+\frac{\tilde\theta_1}{N-4}-\frac{2}{B_\tau}}
+\epsilon^{\gamma_1-\frac{4}{B_\tau}}
+\epsilon^{m_N\kappa_0-\frac{4}{B_\tau}}
\Big).
\end{align*}
The first two terms come from \eqref{eq:S-cutoff-annuli}, using
$|\nabla\chi_1|=O(r_\epsilon^{-2})$,
$|\Delta\chi_1|=O(r_\epsilon^{-4})$, and the factor $r_\epsilon^4$ in
Proposition~\ref{prop:linear-first}; the last two terms follow from
Lemmas~\ref{lem10} and~\ref{lem11}. Lemma~\ref{lem:exponent-choice} gives
$\sigma_0>0$ such that the right-hand side is bounded by
$C\epsilon^{\gamma_0+\sigma_0}$. After reducing $\epsilon_0$, this is at most
$\frac12\epsilon^{\gamma_0}$. The second component is identical. Since the
linear solutions belong to $\mathbb E\cap C_0(\Omega_\epsilon)^2$,
\eqref{eq:strict-self-map} gives
$\mathcal B_\epsilon(\varphi,\psi)\in\Lambda_\epsilon$. Thus
$T_\epsilon(\varphi,\psi)$ is defined and belongs to $\mathcal X_\epsilon$.

\medskip

(\romannumeral2) The map is a contraction. Take
$(\varphi,\psi),(\bar\varphi,\bar\psi)\in\mathcal X_\epsilon$, set
\[
(w,v)=(\varphi,\psi)-(\bar\varphi,\bar\psi),
\qquad
(W,V)=\mathcal B_\epsilon(\varphi,\psi)
-\mathcal B_\epsilon(\bar\varphi,\bar\psi),
\]
and use the distance \eqref{eq:dX}. Since $S$ preserves the prescribed
inner data by \eqref{eq:S-preserves-inner-data},
\begin{align*}
d_{\mathcal X}\big(T_\epsilon(\varphi,\psi),
T_\epsilon(\bar\varphi,\bar\psi)\big)
&=\|W\|_{L^\infty(P_1)}+\|V\|_{L^\infty(Q_1)}.
\end{align*}
When $N=5$,
\[
\|R_1(\varphi)-R_1(\bar\varphi)\|_{L^\infty(P_2)}
\le C\epsilon^{\kappa_0}
 d_{\mathcal X}\big((\varphi,\psi),(\bar\varphi,\bar\psi)\big),
\]
whereas for $N\ge6$,
\[
\|R_1(\varphi)-R_1(\bar\varphi)\|_{L^\infty(P_2)}
\le C\epsilon^{\kappa_0(2^*-2)}
 d_{\mathcal X}\big((\varphi,\psi),(\bar\varphi,\bar\psi)\big).
\]
The corresponding estimates hold for $R_2$. On the supports of
$\nabla\chi_i$ and $\Delta\chi_i$, Lemma~\ref{lem9} gives
\[
|w|+|\nabla w|+|v|+|\nabla v|
\le C\big(r_\epsilon^{-(N-2)}+r_\epsilon^{-(4-\theta_{\mathrm{st}})}\big)
 d_{\mathcal X}\big((\varphi,\psi),(\bar\varphi,\bar\psi)\big).
\]
After applying Propositions~\ref{prop:linear-first} and
\ref{prop:linear-second}, we obtain
\begin{align*}
&d_{\mathcal X}\big(T_\epsilon(\varphi,\psi),
T_\epsilon(\bar\varphi,\bar\psi)\big)\\
&\quad\le \kappa_\epsilon
 d_{\mathcal X}\big((\varphi,\psi),(\bar\varphi,\bar\psi)\big),
\end{align*}
where
\[
\kappa_\epsilon\le C\Big(
 r_\epsilon^{-(N-2)}+r_\epsilon^{-(4-\theta_{\mathrm{st}})}
 +r_\epsilon^{-(N-4)}+r_\epsilon^{-(2-\theta_{\mathrm{st}})}
 +\epsilon^{\kappa_0-4/B_\tau}
 +\epsilon^{(2^*-2)\kappa_0-4/B_\tau}
\Big)=o(1).
\]
The two positive $\epsilon$-exponents are verified in
Lemma~\ref{lem:exponent-choice}. Hence $\kappa_\epsilon\le1/2$ for small
$\epsilon$, and $T_\epsilon$ is a contraction. Since
$\mathcal X_\epsilon$ is complete, it has a unique fixed point. Finally, a
fixed point satisfies
\[
(\varphi_\epsilon,\psi_\epsilon)
=T_\epsilon(\varphi_\epsilon,\psi_\epsilon)
=S\big(\mathcal B_\epsilon(\varphi_\epsilon,\psi_\epsilon)\big),
\]
with $\mathcal B_\epsilon(\varphi_\epsilon,\psi_\epsilon)\in
\Lambda_\epsilon\cap H_0^1(\Omega_\epsilon)^2$; therefore it is an actual
exterior minimizer.
\end{proof}

\medskip

\begin{Lem}\label{lem12}
Let $(\varphi,\psi)$ be the fixed point obtained in Lemma~\ref{lem13}. Then
$(\varphi,\psi)$ solves the projected system \eqref{eqpert}.
\end{Lem}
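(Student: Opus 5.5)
The plan is to glue the inner projected equation, which the fixed point solves on $P_2$, to the exterior equation, which $S$ provides outside $P_1$, using the partition $1=\chi_1+(1-\chi_1)$. Set $(\tilde b_1,\tilde b_2):=\mathcal B_\epsilon(\varphi,\psi)$. By Lemma~\ref{lem13} this pair lies in $\Lambda_\epsilon\cap H_0^1(\Omega_\epsilon)^2$, and $(\varphi,\psi)=S(\tilde b_1,\tilde b_2)$. Hence $(u_1,u_2)=(V_1+\varphi,V_2+\psi)$ is an actual exterior minimizer. It solves \eqref{eqout} weakly in $\Omega_\epsilon\setminus P_1$ and $\Omega_\epsilon\setminus Q_1$, and by \eqref{eq:S-preserves-inner-data} we have $\varphi=\tilde b_1$ in $P_1$ and $\psi=\tilde b_2$ in $Q_1$. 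I treat only the first component; the second is symmetric.

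\emph{Step 1: compare $\varphi$ with $\tilde b_1$ inside.} By Proposition~\ref{prop:linear-first}, $\tilde b_1$ solves
\[
-\Delta\tilde b_1-\epsilon^{\frac{N-2}{N-4}}\tilde b_1-(2^*-1)V_1^{2^*-2}\tilde b_1
=\mathbf 1_{P_2}F_1+\sum_j c_jU_1^{2^*-2}Y_{1,j},
\]
where $F_1=\varphi\Delta\chi_1+2\nabla\chi_1\cdot\nabla\varphi+(1-\chi_1)(l_1+R_1(\varphi))$ is supported in $P_2$. Set $\phi:=(1-\chi_1)\varphi$. Subtracting the cut-off identity \eqref{eq:cutoff-first} from the exterior equation for $\varphi$ gives an equation for $\phi$ outside $P_1$. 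By Remark~\ref{remdead}, the coupling term vanishes on $\operatorname{supp}(1-\chi_1)\subset P_2$, and the right-hand side there is exactly $F_1$. Therefore $\omega:=\tilde b_1-\phi$ satisfies the homogeneous linearized equation $-\Delta\omega-\epsilon^{\frac{N-2}{N-4}}\omega-(2^*-1)V_1^{2^*-2}\omega=0$ in $\Omega_\epsilon\setminus\overline{P_1}$. Here the multiplier terms drop because they are supported in $P_1$. Inside $P_1$ we have $\chi_1=0$, so $\omega=\tilde b_1-\varphi=0$.

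\emph{Step 2: show $\omega\equiv0$ everywhere.} The function $\omega$ vanishes in $P_1$ and on $\partial\Omega_\epsilon$, lies in $H_0^1$ of the exterior domain, and solves the linear equation there. I would test with $\omega$ itself and use the smallness of $\|\epsilon^{\frac{N-2}{N-4}}+(2^*-1)V_1^{2^*-2}\|_{L^{N/2}(\Omega_\epsilon\setminus P_1)}<\mathbf C_s$, exactly as in Lemma~\ref{lem0}. This gives $\omega=0$, hence $\tilde b_1=(1-\chi_1)\varphi$ in all of $\Omega_\epsilon$. The main obstacle is in this step. One must check that the equation for $\omega$ holds weakly across $\partial P_1$ with no interface term. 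I would argue as follows: $\omega\in H^1_0$ vanishes identically in $P_1$, and testing is done only with functions supported in $\Omega_\epsilon\setminus\overline{P_1}$ closure-approximated, so the energy identity $\int|\nabla\omega|^2=\int W\omega^2$ holds with $W$ the potential. If this is delicate, I would instead use the comparison principle Lemma~\ref{lem0} applied to $\pm\omega$ with $v_2=0$.

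\emph{Step 3: conclude.} From $\tilde b_1=(1-\chi_1)\varphi$, adding the equation for $\tilde b_1$ to \eqref{eq:cutoff-first} on the region where $\chi_1\neq0$ gives the full equation for $\varphi$ in $\Omega_\epsilon$. Concretely,
\[
-\Delta\varphi-\epsilon^{\frac{N-2}{N-4}}\varphi-(2^*-1)V_1^{2^*-2}\varphi
=l_1+R_1(\varphi)+\beta|u_2|^{p}|u_1|^{p-2}u_1+\sum_jc_jU_1^{2^*-2}Y_{1,j},
\]
where the coupling term on $\{\chi_1<1\}$ is zero, consistently with the dead core. The moment conditions for $\varphi$ follow because $\varphi=\tilde b_1\in E_1$ on $P_1\supset\operatorname{supp}Y_{1,j}$. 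This is \eqref{eqpert}.
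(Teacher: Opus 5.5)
Your proposal is correct and is essentially the paper's argument. Your $\omega=\tilde b_1-(1-\chi_1)\varphi$ is exactly the paper's $e_1=\varphi_T-\varphi$, where $\varphi_T=b_1+\chi_1\varphi$; it is killed by the same argument (it vanishes on $P_1$, so it lies in $H_0^1(\Omega_\epsilon\setminus\overline{P_1})$, and testing with it plus Sobolev smallness forces it to vanish, so the interface issue you worried about does not arise), after which adding the equation for $\tilde b_1$ to the cut-off identity yields \eqref{eqpert}; the only difference is that you prove $e_1=0$ before assembling the global equation, whereas the paper assembles the equation for $\varphi_T$ first.
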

\begin{proof}
Write
\[
\mathcal B_\epsilon(\varphi,\psi)=(b_1,b_2).
\]
By \eqref{eq:strict-self-map},
$(b_1,b_2)\in\Lambda_\epsilon\cap H_0^1(\Omega_\epsilon)^2$, and the
fixed-point identity gives
\[
(\varphi,\psi)=S(b_1,b_2).
\]
By \eqref{eq:S-preserves-inner-data},
\[
b_1=\varphi\quad\text{in }P_1,
\qquad
b_2=\psi\quad\text{in }Q_1.
\]
Define
\[
\varphi_T=b_1+\chi_1\varphi,
\qquad
\psi_T=b_2+\chi_2\psi.
\]
Both functions belong to $H_0^1(\Omega_\epsilon)$. The supports of
$Y_{1,j}$ and $Y_{2,j}$ lie in $P_1$ and $Q_1$, whereas $\chi_1=0$ in $P_1$
and $\chi_2=0$ in $Q_1$ for small $\epsilon$. Hence
\[
\ell_{1,j}(\chi_1\varphi)=0,
\qquad
\ell_{2,j}(\chi_2\psi)=0,
\qquad j=0,\ldots,N.
\]
Since $(b_1,b_2)\in\mathbb E$, it follows that
$(\varphi_T,\psi_T)\in\mathbb E$. The inner-data identities also give
\[
\varphi_T=\varphi\quad\text{in }P_1,
\qquad
\psi_T=\psi\quad\text{in }Q_1.
\]
Using Propositions~\ref{prop:linear-first} and~\ref{prop:linear-second}, the
cut-off identities \eqref{eq:cutoff-first}--\eqref{eq:cutoff-second}, and the
dead-core vanishing of the coupling terms on
$\operatorname{supp}(1-\chi_i)$, we obtain
\begin{align*}
-\Delta \varphi_T-
\epsilon^{\frac{N-2}{N-4}}\varphi_T
-(2^*-1)V_1^{2^*-2}\varphi_T
&=l_1+R_1(\varphi)
+\beta|\psi+V_2|^{\frac{2^*}{2}}
|\varphi+V_1|^{\frac{2^*}{2}-2}(\varphi+V_1)\\
&\quad+\sum_{j=0}^Nc_jU_1^{2^*-2}Y_{1,j},\\
-\Delta \psi_T-
\epsilon^{\frac{N-2}{N-4}}\psi_T
-(2^*-1)V_2^{2^*-2}\psi_T
&=l_2+R_2(\psi)
+\beta|\varphi+V_1|^{\frac{2^*}{2}}
|\psi+V_2|^{\frac{2^*}{2}-2}(\psi+V_2)\\
&\quad+\sum_{j=0}^Nd_jU_2^{2^*-2}Y_{2,j}.
\end{align*}
Set $(e_1,e_2)=(\varphi_T,\psi_T)-(\varphi,\psi)$. The two pairs have the
same inner data, and all four functions belong to the energy space. Therefore,
after zero extension,
\[
(e_1,e_2)\in
D_0^{1,2}(\mathbb R^N\setminus P_1)\times
D_0^{1,2}(\mathbb R^N\setminus Q_1).
\]
Subtracting the exterior equations gives
\[
-\Delta e_i-\epsilon^{\frac{N-2}{N-4}}e_i
-(2^*-1)V_i^{2^*-2}e_i=0
\]
in the corresponding exterior domain. Testing with $e_i$, using Sobolev's
inequality, and recalling
\[
\|V_1\|_{L^{2^*}(\Omega_\epsilon\setminus P_1)}+
\|V_2\|_{L^{2^*}(\Omega_\epsilon\setminus Q_1)}=o(1),
\qquad
\epsilon^{\frac{N-2}{N-4}}|\Omega_\epsilon|^{2/N}=O(\epsilon),
\]
yields $e_1=e_2=0$ for small $\epsilon$. The displayed equations for
$(\varphi_T,\psi_T)$ therefore become exactly \eqref{eqpert} for
$(\varphi,\psi)$.
\end{proof}

For each fixed sufficiently small $\epsilon>0$, the right-hand sides in the
projected system \eqref{eqpert} are bounded. Standard Dirichlet regularity
therefore yields
\[
\varphi,\psi\in W^{2,q}(\Omega_\epsilon)\cap W_0^{1,q}(\Omega_\epsilon)
\qquad\text{for every }1<q<\infty.
\]
In particular, for $q>N$ the correction is in $C^{1,\alpha}$ for some
$\alpha\in(0,1)$. This fixed-$\epsilon$ regularity justifies the integrations
by parts and the local Pohozaev identities in Section~\ref{sec:5}; no estimate
of the $W^{2,q}$ norm uniform as $\epsilon\to0$ is used.

\medskip

The fixed point also depends continuously on the geometric parameters. The
two lemmas in Appendix~B have separate roles. Lemma~\ref{lem:linear-parameter-dependence}
puts the projected linear problems, including their moving moment conditions,
on fixed Banach spaces. Lemma~\ref{lem:outer-parameter-dependence} transports
admissible inner data between the moving balls and proves continuity of the
exterior minimizer. The proof below then compares an exact fixed point with an
approximate fixed point obtained by transporting its inner data. Here
$C^1(\overline{\Omega_\epsilon})$ always refers to regularity in the spatial
variable; no differentiability of the fixed point with respect to the
geometric parameters is asserted or needed.

\begin{Prop}\label{prop:parameter-continuity}
For each fixed sufficiently small $\epsilon>0$, let
$(\varphi(\mathbf t),\psi(\mathbf t))$ be the fixed point constructed in
Lemma~\ref{lem13}, and let $(c_j(\mathbf t))_{j=0}^N$ and
$(d_j(\mathbf t))_{j=0}^N$ be the corresponding multiplier vectors in
\eqref{eqpert}. Then
\[
\mathbf t\longmapsto
\bigl(\varphi(\mathbf t),\psi(\mathbf t)\bigr)
\]
is continuous from $O_\delta$ into
$C^1(\overline{\Omega_\epsilon})^2$, and the multiplier vectors depend
continuously on $\mathbf t$.
\end{Prop}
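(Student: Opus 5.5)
The plan follows the description just given: compare the exact fixed point at a nearby parameter with a transported copy of the fixed point at $\mathbf t$, and close by the uniform contraction of Lemma~\ref{lem13}. Fix $\epsilon$ and $\mathbf t\in O_\delta$, and let $\mathbf t_m\to\mathbf t$. Let $(\varphi,\psi)=(\varphi(\mathbf t),\psi(\mathbf t))$ be the fixed point, with inner data $b=\mathcal B_\epsilon^{\mathbf t}(\varphi,\psi)$. The contraction constant $\kappa_\epsilon\le\frac12$ and the self-map margin \eqref{eq:strict-self-map} are uniform over $O_\delta$. Hence for the exact fixed point $x_m$ of $T_\epsilon^{\mathbf t_m}$ and any $z_m\in\mathcal X_\epsilon^{\mathbf t_m}$ we have
\[
d_{\mathcal X}(x_m,z_m)\le 2\,d_{\mathcal X}\bigl(T^{\mathbf t_m}_\epsilon z_m,z_m\bigr).
\]
It therefore suffices to build approximate fixed points $z_m$ whose defect tends to zero and which converge to $(\varphi,\psi)$.

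To build $z_m$, transport $b$ to the moving balls $P_1^{(m)},Q_1^{(m)}$ using Lemma~\ref{lem:outer-parameter-dependence}. Concretely, translate by $\tilde\xi_{i,m}-\tilde\xi_i$, correct the moment conditions with small multiples of $Y_{i,j}^{(m)}$ (the Gram matrices are uniformly invertible), and rescale slightly so that the datum stays in $\Lambda^{\mathbf t_m}_\epsilon$. The margin in \eqref{eq:strict-self-map} leaves room for this. Call the result $b_m$ and set $z_m=S^{\mathbf t_m}(b_m)$. By Lemma~\ref{lem:outer-parameter-dependence}, $S^{\mathbf t_m}(b_m)\to S^{\mathbf t}(b)=(\varphi,\psi)$ uniformly on $\Omega_\epsilon$ and in $C^1$ on the cut-off annuli. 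Here we use continuity of the exterior minimizer together with uniqueness from Lemma~\ref{lem:outer-unique}, and the bound \eqref{eq:S-Lipschitz} with its uniform constant.

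Next, estimate the defect. Since $S$ preserves inner data \eqref{eq:S-preserves-inner-data}, the defect equals
\[
d_{\mathcal X}\bigl(T^{\mathbf t_m}_\epsilon z_m,z_m\bigr)=\bigl\|\mathcal B^{\mathbf t_m}_\epsilon(z_m)-b_m\bigr\|_{L^\infty(P_1^{(m)}\cup Q_1^{(m)})}.
\]
The arguments of $L_i$, namely $\varphi\Delta\chi_i+2\nabla\chi_i\cdot\nabla\varphi+(1-\chi_i)(l_i+R_i)$, converge uniformly after translation, because $V_i,U_i,\chi_i$ depend smoothly on $\mathbf t$ and $z_m$ converges in $C^1$ on the annuli. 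Lemma~\ref{lem:linear-parameter-dependence} then gives continuity of $L_i^{\mathbf t}$, with its moving moment conditions, on fixed Banach spaces: the solution operators pulled back to a fixed frame converge in operator norm, or at least strongly. The defect therefore tends to $0$, and $x_m\to(\varphi,\psi)$ in $d_{\mathcal X}$, hence uniformly on $\Omega_\epsilon$ by \eqref{eq:S-Lipschitz}.

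Finally, upgrade to $C^1(\overline{\Omega_\epsilon})$. Each fixed point solves \eqref{eqpert} by Lemma~\ref{lem12}, with right-hand sides bounded and converging in $L^q$. The multipliers are obtained from the uniformly invertible system \eqref{eq11}, whose entries are continuous in $\mathbf t$ and in the solution, so they converge. For fixed $\epsilon$, $W^{2,q}$ estimates with $q>N$ then give $C^1$ convergence of the differences. The main obstacle is the construction and admissibility of the transported data $b_m$, together with the continuity of $S$ across moving inner balls. Since the exterior problem is nonlinear and non-differentiable, this must come from the minimization structure: $\Gamma$-convergence of the truncated functionals plus uniqueness. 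That is exactly the content I would defer to Lemma~\ref{lem:outer-parameter-dependence}.
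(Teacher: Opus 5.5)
Your proposal is correct and follows essentially the same route as the paper. You transport the inner data via Lemma~\ref{lem:outer-parameter-dependence} to obtain an approximate fixed point, control its defect with Lemma~\ref{lem:linear-parameter-dependence}, and close with the uniform contraction before upgrading to $C^1$ by global $W^{2,q}$ estimates with $q>N$. The only minor difference is that you recover the multipliers by testing \eqref{eqpert} against $Z_i^l$ (the invertible system \eqref{eq11}), whereas the paper first obtains annular $C^1$ convergence from Lemma~\ref{lem9} and then reads the multipliers off Lemma~\ref{lem:linear-parameter-dependence} applied to $\mathcal B_{\epsilon,\mathbf t_m}(\varphi_m,\psi_m)$; both work.
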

\begin{proof}
Fix a compact set $K\Subset O_\delta$ and let
$\mathbf t_m\to\mathbf t$ in $K$. Write
\[
(\varphi_m,\psi_m):=
(\varphi(\mathbf t_m),\psi(\mathbf t_m)),
\qquad
(\varphi,\psi):=
(\varphi(\mathbf t),\psi(\mathbf t)).
\]
For each configuration $\mathbf t'\in K$, let $S_{\mathbf t'}$,
$\mathcal B_{\epsilon,\mathbf t'}$, and $T_{\epsilon,\mathbf t'}$ denote the
operators constructed above, and set
\[
\begin{aligned}
d_{\mathbf t'}\big((\phi_1,\phi_2),(\bar\phi_1,\bar\phi_2)\big)
:=\;&\|\phi_1-\bar\phi_1\|_{L^\infty(P_1(\mathbf t'))}\\
&+\|\phi_2-\bar\phi_2\|_{L^\infty(Q_1(\mathbf t'))}.
\end{aligned}
\]
The contraction estimate in Lemma~\ref{lem13} is uniform on $K$: there is
$\kappa_K<1$ such that every $T_{\epsilon,\mathbf t'}$, $\mathbf t'\in K$,
has contraction constant at most $\kappa_K$ in $d_{\mathbf t'}$.

Set
\[
b:=\mathcal B_{\epsilon,\mathbf t}(\varphi,\psi).
\]
By \eqref{eq:strict-self-map}, both components of $b$ have uniform norm at
most $\frac12\epsilon^{\gamma_0}$, and the fixed-point identity is
$(\varphi,\psi)=S_{\mathbf t}(b)$. Choose $1/2<\rho<\rho'<1$. By
Lemma~\ref{lem:outer-parameter-dependence}, for all sufficiently large $m$
there are transported data
\[
b_m^0:=J_{\mathbf t_m}b\in\Lambda_\epsilon^{\rho'}(\mathbf t_m)
\]
such that
\begin{equation}\label{eq:transported-outer-solution}
(\varphi_m^0,\psi_m^0):=S_{\mathbf t_m}(b_m^0)
\longrightarrow(\varphi,\psi)
\quad\text{uniformly in }\Omega_\epsilon,
\end{equation}
and, after identifying the moving annuli by $\Phi_{\mathbf t_m}$, the
convergence is in $C^1$ on the annuli supporting
$\nabla\chi_{i,\mathbf t}$ and $\Delta\chi_{i,\mathbf t}$.

Define
\[
\widehat b_m:=
\mathcal B_{\epsilon,\mathbf t_m}(\varphi_m^0,\psi_m^0).
\]
The smooth cutoffs satisfy
$\chi_{i,\mathbf t_m}\to\chi_{i,\mathbf t}$ in $C^2$ for fixed $\epsilon$.
On the cutoff annuli, pullback by $\Phi_{\mathbf t_m}$ gives convergence of
$(\varphi_m^0,\psi_m^0)$ and their spatial gradients in $C^1$ on fixed
annuli. Since $\Phi_{\mathbf t_m}\to\operatorname{Id}$ in $C^2$ and agrees
there with the corresponding translations, a change of variables gives
convergence in $L^q(\Omega_\epsilon)$ of the terms containing
$\nabla\chi_i$ and $\Delta\chi_i$. The remaining terms in the localized
sources converge in $L^q$ by the global uniform convergence in
\eqref{eq:transported-outer-solution}. Hence the complete source terms in
\eqref{eq:B-map} converge in $L^q(\Omega_\epsilon)$ for every finite $q$.
Lemma~\ref{lem:linear-parameter-dependence} therefore gives
\begin{equation}\label{eq:B-transport-consistency}
\widehat b_m\longrightarrow b
\quad\text{in }C^1(\overline{\Omega_\epsilon})^2.
\end{equation}
The construction in Lemma~\ref{lem:outer-parameter-dependence} also gives
$b_m^0\to b$ uniformly. Since $S_{\mathbf t_m}$ preserves the prescribed
first component on $P_1(\mathbf t_m)$ and the prescribed second component on
$Q_1(\mathbf t_m)$,
\begin{align*}
&d_{\mathbf t_m}\bigl(
T_{\epsilon,\mathbf t_m}(\varphi_m^0,\psi_m^0),
(\varphi_m^0,\psi_m^0)\bigr)\\
&\qquad=
\|\widehat b_{m,1}-b_{m,1}^0\|_{L^\infty(P_1(\mathbf t_m))}
+\|\widehat b_{m,2}-b_{m,2}^0\|_{L^\infty(Q_1(\mathbf t_m))}
\longrightarrow0.
\end{align*}
Thus $(\varphi_m^0,\psi_m^0)$ is an approximate fixed point for the
contraction associated with $\mathbf t_m$. Since
\[
(\varphi_m,\psi_m)
=T_{\epsilon,\mathbf t_m}(\varphi_m,\psi_m),
\]
we obtain
\[
\begin{aligned}
&d_{\mathbf t_m}\bigl((\varphi_m,\psi_m),(\varphi_m^0,\psi_m^0)\bigr)\\
&\quad\le d_{\mathbf t_m}\bigl(
T_{\epsilon,\mathbf t_m}(\varphi_m,\psi_m),
T_{\epsilon,\mathbf t_m}(\varphi_m^0,\psi_m^0)\bigr)\\
&\qquad\quad+d_{\mathbf t_m}\bigl(
T_{\epsilon,\mathbf t_m}(\varphi_m^0,\psi_m^0),
(\varphi_m^0,\psi_m^0)\bigr)\\
&\quad\le\kappa_K
 d_{\mathbf t_m}\bigl((\varphi_m,\psi_m),(\varphi_m^0,\psi_m^0)\bigr)+o(1).
\end{aligned}
\]
Hence
\begin{equation}\label{eq:fixed-point-inner-parameter-continuity}
d_{\mathbf t_m}\bigl((\varphi_m,\psi_m),(\varphi_m^0,\psi_m^0)\bigr)
\longrightarrow0.
\end{equation}
The uniform stability estimate of Lemma~\ref{lem9}, followed by
\eqref{eq:transported-outer-solution}, now yields
\[
(\varphi_m,\psi_m)\longrightarrow(\varphi,\psi)
\quad\text{uniformly in }\Omega_\epsilon.
\]
Its gradient part also gives $C^1$ convergence on the cutoff annuli.

Finally set
\[
b_m:=\mathcal B_{\epsilon,\mathbf t_m}(\varphi_m,\psi_m).
\]
The preceding uniform and annular $C^1$ convergence implies convergence of
the corresponding localized sources in $L^q$ for every finite $q$.
Lemma~\ref{lem:linear-parameter-dependence} gives
$b_m\to b$ in $C^1$ and, at the same time, convergence of the multiplier
vectors appearing in these projected linear problems. These are exactly the
multipliers in \eqref{eqpert}.

Subtracting the two global projected systems corresponding to
$\mathbf t_m$ and $\mathbf t$, the projected bubbles and kernel functions
converge smoothly for fixed $\epsilon$, the multiplier terms converge by the
preceding paragraph, and the nonlinear terms converge in $L^q$ for every
finite $q$. For the competitive terms one uses only the continuity, on
bounded sets, of
\[
(r,s)\longmapsto |s|^{2^*/2}|r|^{2^*/2-2}r;
\]
no differentiation at $r=0$ is involved. Standard Dirichlet $W^{2,q}$
estimates with $q>N$ yield
\[
(\varphi_m,\psi_m)\longrightarrow(\varphi,\psi)
\quad\text{in }C^1(\overline{\Omega_\epsilon})^2.
\]
This proves the proposition.
\end{proof}

At this stage, the first step of the reduction is complete: the fixed-point
theorem yields a solution of system~\eqref{eqpert}. The remaining task is to
choose the geometric parameters so that all multipliers associated with the
approximate kernel vanish.

\section{The finite-dimensional reduced problem}\label{sec:5}

The second step is to choose the parameter quadruple
$(\lambda_1,\xi_1,\lambda_2,\xi_2)$ so that $c_j=d_j=0$ in \eqref{eqpert} for
$j=0,1,\ldots,N$. This is the objective of the present section.

We first prove a sufficient condition for the vanishing of all multipliers \(c_j,d_j\), \(j=0,1,\ldots,N\).

Set $\partial_j u := \frac{\partial u}{\partial x_j}$. Let
 $\chi\in C_c^\infty(\mathbb R^N)$ satisfy
\[
0\le\chi\le1,\qquad \chi=1\ \text{in }B_1(0),\qquad
\chi=0\ \text{in }\mathbb R^N\setminus B_2(0).
\]
 Choose $\alpha_{\mathrm{cut}}$ such that
\[
\max\left\{
\frac{N-2}{(N+4)(N-4)},
\frac{\gamma_1-\alpha_1}{\theta_1}
\right\}<\alpha_{\mathrm{cut}}<\frac{1}{N-4},
\]
and set
\[
\bar\chi(y):=\chi(\epsilon^{\alpha_{\mathrm{cut}}} y),\qquad
\bar Z_i^j=\bar\chi(\cdot-\tilde\xi_i)Y_i^j,
\quad i=1,2,\quad j=0,1,\ldots,N.
\]
This interval is nonempty. Indeed,
\[
\frac1{N-4}-\frac{\gamma_1-\alpha_1}{\theta_1}
=
\frac{1-\vartheta+(N-4)\theta'}{(N-4)\theta_1}>0,
\]
and the other lower endpoint is plainly smaller than $1/(N-4)$.

First, we have the following result.

\medskip

\begin{Lem}\label{lem:multiplier-vanishing}
Suppose that $\lambda_1,\lambda_2,\tilde\xi_1$ and $\tilde\xi_2$ satisfy
\begin{equation}\label{eq:scale-condition-first}
\begin{split}
\int_{\Omega_\epsilon}\Big(-\Delta u_{1,\epsilon}-\epsilon^{\frac{N-2}{N-4}} u_{1,\epsilon} -  \lvert u_{1,\epsilon}\rvert^{2^{*}-2}u_{1,\epsilon}-\beta \lvert u_{2,\epsilon}\rvert^{\frac{2^{*}}{2}}\lvert u_{1,\epsilon}\rvert^{\frac{2^{*}}{2}-2}u_{1,\epsilon}\Big)\bar Z_{1}^0=0,
\end{split}
\end{equation}
\begin{equation}\label{eq:scale-condition-second}
\begin{split}
\int_{\Omega_\epsilon}\Big(-\Delta u_{2,\epsilon}-\epsilon^{\frac{N-2}{N-4}} u_{2,\epsilon} -  \lvert u_{2,\epsilon}\rvert^{2^{*}-2}u_{2,\epsilon}-\beta \lvert u_{1,\epsilon}\rvert^{\frac{2^{*}}{2}}\lvert u_{2,\epsilon}\rvert^{\frac{2^{*}}{2}-2}u_{2,\epsilon}\Big) \bar Z_{2}^0=0,
\end{split}
\end{equation}
and for some $\rho>0$,
\begin{equation}\label{eq:translation-condition-first}
\begin{aligned}
\int_{B(\tilde\xi_1,\epsilon^{-\frac{1}{N-4}}\rho)}
\Big(&-\Delta u_{1,\epsilon}
-\epsilon^{\frac{N-2}{N-4}}u_{1,\epsilon}
-\lvert u_{1,\epsilon}\rvert^{2^{*}-2}u_{1,\epsilon} \\
&-
\beta \lvert u_{2,\epsilon}\rvert^{\frac{2^{*}}{2}}
\lvert u_{1,\epsilon}\rvert^{\frac{2^{*}}{2}-2}u_{1,\epsilon}\Big)
\partial_j u_{1,\epsilon}=0,
\quad j=1,\ldots,N,
\end{aligned}
\end{equation}
\begin{equation}\label{eq:translation-condition-second}
\begin{aligned}
\int_{B(\tilde\xi_2,\epsilon^{-\frac{1}{N-4}}\rho)}
\Big(&-\Delta u_{2,\epsilon}
-\epsilon^{\frac{N-2}{N-4}}u_{2,\epsilon}
-\lvert u_{2,\epsilon}\rvert^{2^{*}-2}u_{2,\epsilon} \\
&-
\beta \lvert u_{1,\epsilon}\rvert^{\frac{2^{*}}{2}}
\lvert u_{2,\epsilon}\rvert^{\frac{2^{*}}{2}-2}u_{2,\epsilon}\Big)
\partial_j u_{2,\epsilon}=0,
\quad j=1,\ldots,N.
\end{aligned}
\end{equation}
Then $c_j=d_j=0$ for $j=0,1,\ldots,N$.
\end{Lem}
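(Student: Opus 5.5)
Since $(\varphi,\psi)$ solves the projected system \eqref{eqpert} and $u_{i,\epsilon}=V_i+\text{correction}$, the integrands in \eqref{eq:scale-condition-first}--\eqref{eq:translation-condition-second} are exactly the multiplier terms:
\[
-\Delta u_{1,\epsilon}-\epsilon^{\frac{N-2}{N-4}}u_{1,\epsilon}-|u_{1,\epsilon}|^{2^*-2}u_{1,\epsilon}-\beta|u_{2,\epsilon}|^{p}|u_{1,\epsilon}|^{p-2}u_{1,\epsilon}
=\sum_{k=0}^Nc_kU_1^{2^*-2}Y_{1,k},
\]
and similarly for the second component with the $d_k$. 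Indeed, $-\Delta V_1=U_1^{2^*-1}$, and $l_1$, $R_1$ reassemble the full nonlinearity. The right-hand side is supported in $B_{2R_0}(\tilde\xi_1)\subset P_1$, which lies inside both $\{\bar\chi(\cdot-\tilde\xi_1)=1\}$ (since $\epsilon^{-\alpha_{\mathrm{cut}}}\to\infty$) and $B(\tilde\xi_1,\epsilon^{-1/(N-4)}\rho)$.

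The hypotheses therefore reduce to a homogeneous linear system $\sum_k c_k M_{kl}=0$, $l=0,\dots,N$, with
\[
M_{k0}=\int U_1^{2^*-2}Y_{1,k}Y_1^0,
\qquad
M_{kj}=\int U_1^{2^*-2}Y_{1,k}\,\partial_ju_{1,\epsilon}\quad (j\ge1).
\]
On the support of $Y_{1,k}$ we have $\bar Z_1^0=Y_1^0$. For the translation columns we write $\partial_ju_{1,\epsilon}=-Y_1^j+\partial_j(V_1-U_1)+\partial_j\varphi$.

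The plan is to show that $M$ is a small perturbation of the positive definite Gram matrix $G=\bigl(\int\chi_0U_{\lambda_1,0}^{2^*-2}Y_kY_l\bigr)$ from Lemma~\ref{lem1}, up to the sign flip on the translation columns, so that $M$ is invertible and $c=0$. The error terms are as follows.
\begin{itemize}
\item $\partial_j(V_1-U_1)$ is $O(\epsilon^{(N-1)/(N-4)})$ on $B_{2R_0}(\tilde\xi_1)$, by the projection estimates of Appendix~A after scaling.
\item $\partial_j\varphi$ is small on $B_{2R_0}(\tilde\xi_1)$. To see this, use interior elliptic estimates applied to \eqref{eqpert} on $B_{2R_0+1}(\tilde\xi_1)$. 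There the coupling term vanishes by the dead core (Remark~\ref{remdead}). The right-hand side is controlled by Lemmas~\ref{lem10}, \ref{lem11}, and the multipliers $|c_k|$ themselves, and $\|\varphi\|_\infty\le\epsilon^{\gamma_0}$.
\end{itemize}
This gives $|\nabla\varphi|\le C(\epsilon^{\gamma_0}+\epsilon^{\gamma_1}+\max|c_k|)$ near the bubble. The term involving $\max|c_k|$ is linear in $c$ with a small coefficient only if that constant is small, which is the delicate point. Alternatively, one can avoid it entirely: the estimates for $\nabla\varphi$ use only the already bounded quantities from Proposition~\ref{prop:linear-first}, so $\nabla\varphi=o(1)$ uniformly. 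Then $M=G'+o(1)$ with $G'$ invertible, uniformly in $\mathbf t\in O_\delta$ and $\lambda_1\in[\delta,\delta^{-1}]$. The second component is identical and gives $d=0$.

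The main obstacle is the uniform $C^1$ smallness of $\varphi$ on the fixed ball $B_{2R_0}(\tilde\xi_1)$, since Lemma~\ref{lem11} gives gradient bounds only on annuli. The first thing to try is the local $W^{2,q}$ estimate on unit balls inside $P_1$. This requires $L^\infty$ bounds on every source term, which hold because the dead core kills the non-Lipschitz coupling there, and the multipliers are bounded by Proposition~\ref{prop:linear-first} in terms of $r_\epsilon^4\epsilon^{\gamma_1}+\dots\to0$.
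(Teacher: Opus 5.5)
Your proposal is correct and follows the paper's argument. The integrands are exactly $\sum_k c_kU_1^{2^*-2}Y_{1,k}$, the tested homogeneous system has coefficient matrix converging to $D\mathcal G$ with $D=\operatorname{diag}(1,-1,\ldots,-1)$, and positive definiteness of the Gram matrix forces $c=0$ (and likewise $d=0$). Your justification of the $C^1$ smallness of $\varphi$ on $B_{2R_0}(\tilde\xi_1)$ is a useful supplement to the paper's brief appeal to Lemma~\ref{lem8}, whose gradient bounds hold only outside $B_{r_\epsilon+1}(\tilde\xi_1)$; it combines local elliptic estimates, the dead core, and $|c_k|\le C\epsilon^{\gamma_0+\sigma_0}$ (from Proposition~\ref{prop:linear-first} and Lemma~\ref{lem13}), so the point you flag as delicate is in fact harmless.
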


\begin{proof}
We only discuss the coefficients \(c_\ell\); the proof for \(d_\ell\) is the
same. Testing the projected equation against \(\bar Z_1^0\) and against
\(\partial_j u_{1,\epsilon}\) in the ball
\(B(\tilde\xi_1,\epsilon^{-1/(N-4)}\rho)\), the assumptions
\eqref{eq:scale-condition-first} and \eqref{eq:translation-condition-first} give a homogeneous linear system for
\((c_0,\ldots,c_N)\).  After translating to $\tilde\xi_1$ and using Lemma~\ref{lem8}, the scale
test converges to $Y_0$, whereas the test $\partial_j u_{1,\epsilon}$
converges to $-Y_j$. Hence the coefficient matrix converges to $D\mathcal G$,
where
\[
D=\operatorname{diag}(1,-1,\ldots,-1),\qquad
\mathcal G_{\ell k}=\int_{\mathbb R^N}
\chi_0U_{\lambda_1,0}^{2^*-2}Y_\ell Y_k,
\quad 0\le \ell,k\le N.
\]
The Gram matrix $\mathcal G$ is positive definite, and therefore
$D\mathcal G$ is nonsingular. Hence the coefficient matrix is nonsingular
for $\epsilon$ small, and the homogeneous system has only the zero solution. Therefore
\(c_0=\cdots=c_N=0\). The same argument yields \(d_0=\cdots=d_N=0\).
\end{proof}

\medskip

The proof of the main result reduces to solving \eqref{eq:scale-condition-first}, \eqref{eq:translation-condition-first}, \eqref{eq:scale-condition-second}, and \eqref{eq:translation-condition-second}. The key step is to identify the leading-order contributions on the left-hand side of each equation.

\begin{Lem}\label{lem:scale-expansion}
Uniformly on compact subsets of $O_\delta$, we have
\begin{equation}\label{eq:scale-expansion-first}\begin{split}
&\int_{\Omega_\epsilon}\Big(-\Delta u_{1,\epsilon} -\epsilon^{\frac{N-2}{N-4}} u_{1,\epsilon}-  \lvert u_{1,\epsilon}\rvert^{2^{*}-2}u_{1,\epsilon}-\beta \lvert u_{2,\epsilon}\rvert^{\frac{2^{*}}{2}}\lvert u_{1,\epsilon}\rvert^{\frac{2^{*}}{2}-2}u_{1,\epsilon}\Big)\bar Z_{1}^0\\
\displaystyle &=\frac{N-2}{2}C_N\lambda_1^{N-3}\epsilon^{\frac{N-2}{N-4}}\varphi_\Omega(\xi_1)\int_{\mathbb R^N} U_{1,0}^{2^*-1}\,dx-\lambda_1\epsilon^{\frac{N-2}{N-4}}\int_{\mathbb R^N} U^2_{1,0}(x)dx+o\Big(\epsilon^{\frac{N-2}{N-4}}\Big)
\end{split}
\end{equation}
and
\begin{equation}\label{eq:scale-expansion-second}
\begin{split}
&\int_{\Omega_\epsilon}\Big(-\Delta u_{2,\epsilon} -\epsilon^{\frac{N-2}{N-4}} u_{2,\epsilon} -  \lvert u_{2,\epsilon}\rvert^{2^{*}-2}u_{2,\epsilon}-\beta \lvert u_{1,\epsilon}\rvert^{\frac{2^{*}}{2}}\lvert u_{2,\epsilon}\rvert^{\frac{2^{*}}{2}-2}u_{2,\epsilon}\Big)\bar Z_{2}^0\\
\displaystyle &=\frac{N-2}{2}C_N\lambda_2^{N-3}\epsilon^{\frac{N-2}{N-4}}\varphi_\Omega(\xi_2)\int_{\mathbb R^N} U_{1,0}^{2^*-1}\,dx-\lambda_2\epsilon^{\frac{N-2}{N-4}}\int_{\mathbb R^N} U^2_{1,0}(x)dx+o\Big(\epsilon^{\frac{N-2}{N-4}}\Big),
\end{split}
\end{equation}
where $C_N=[N(N-2)]^{(N-2)/4}$ is the normalization constant in the
standard positive bubble $U_{\lambda,\xi}$.
\end{Lem}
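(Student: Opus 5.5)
The plan is to reduce the whole left-hand side of \eqref{eq:scale-expansion-first} to the test of the approximate profile $V_1$ against $Y_1^0$ on the bubble scale. The first observation is that $\bar Z_1^0$ is supported in $B_{2\epsilon^{-\alpha_{\mathrm{cut}}}}(\tilde\xi_1)$. Since $\alpha_{\mathrm{cut}}<1/(N-4)$, this ball lies well inside the region where the dead core acts. It is not contained in $P_2$, but it has radius $o(\epsilon^{-1/(N-4)})$, so $u_{2,\epsilon}$ is of size $O(\epsilon^{(N-2)/(N-4)})$ there by Lemma~\ref{lem7}. Inside $B_{R_{\mathrm{dc},\epsilon}(\varrho)}(\tilde\xi_1)$, Lemma~\ref{deadout} gives $u_{2,\epsilon}=0$ exactly. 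On the remaining annulus, the coupling term is bounded by $|u_2|^p|u_1|^{p-1}\lesssim \epsilon^{\gamma_1 p}\rho_1^{-2}$. I will check that its integral against $|Y_1^0|\lesssim\rho_1^{2-N}$ is $o(\epsilon^{\gamma_1})$; the lower bound $\alpha_{\mathrm{cut}}>\frac{N-2}{(N+4)(N-4)}$ is presumably what makes this work. I then write $u_{1,\epsilon}=V_1+\varphi$ and expand the residual as
\[
-\Delta u_1-\epsilon^{\gamma_1}u_1-|u_1|^{2^*-2}u_1
=-\bigl[-\Delta\varphi-\epsilon^{\gamma_1}\varphi-(2^*-1)V_1^{2^*-2}\varphi\bigr]_{\perp}-l_1-R_1(\varphi)+(\text{linear part in }\varphi),
\]
using $-\Delta V_1=U_1^{2^*-1}$. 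The terms then split into three groups.

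\emph{Main term.} This is $-\int l_1\bar Z_1^0=-\int\bigl(\epsilon^{\gamma_1}V_1+V_1^{2^*-1}-U_1^{2^*-1}\bigr)\bar\chi Y_1^0$. I substitute the projection expansion $V_1=U_1-C_N\lambda_1^{(N-2)/2}\epsilon^{\gamma_1}H_\Omega(s_\epsilon y,\xi_1)+O(\epsilon^{N/(N-4)})$ and use $H_\Omega(s_\epsilon y,\xi_1)=\varphi_\Omega(\xi_1)+O(s_\epsilon|y-\tilde\xi_1|)$. The mean value theorem gives
\[
V_1^{2^*-1}-U_1^{2^*-1}\approx -(2^*-1)U_1^{2^*-2}C_N\lambda_1^{\frac{N-2}{2}}\epsilon^{\gamma_1}\varphi_\Omega(\xi_1).
\]
The identity $(2^*-1)\int U^{2^*-2}\partial_\lambda U=\partial_\lambda\int U^{2^*-1}=\frac{N-2}{2}\lambda^{(N-4)/2}\int U_{1,0}^{2^*-1}$ produces the Robin term, with the power of $\lambda_1$ fixed by the scaling $U_{\lambda,0}(x)=\lambda^{-(N-2)/2}U_{1,0}(x/\lambda)$. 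For the mass term, $\int U_1\partial_\lambda U_1=\frac12\partial_\lambda\int U_\lambda^2=\lambda\int U_{1,0}^2$, and this yields the second term. The cut-off $\bar\chi$ and the error terms have to be shown to be $o(\epsilon^{\gamma_1})$. This works because $N\ge5$ makes $U_1Y_1^0\sim\rho_1^{4-2N}$ integrable, and the $O(s_\epsilon|y-\tilde\xi_1|)$ correction integrated against $U^{2^*-2}Y^0\sim\rho_1^{-N-2}$ is $o(1)$. I will track the signs so that they match the stated formula.

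\emph{Correction terms.} Integrating by parts, the linear operator in $\varphi$ tested against $\bar Z_1^0$ equals $\int\varphi\,[-\Delta\bar Z_1^0-(2^*-1)V_1^{2^*-2}\bar Z_1^0-\epsilon^{\gamma_1}\bar Z_1^0]$. Since $-\Delta Y_1^0=(2^*-1)U_1^{2^*-2}Y_1^0$, the bracket reduces to $(U_1^{2^*-2}-V_1^{2^*-2})$-terms, the cut-off commutator terms $O(\epsilon^{2\alpha_{\mathrm{cut}}}\rho_1^{2-N})$ on the annulus $|y-\tilde\xi_1|\sim\epsilon^{-\alpha_{\mathrm{cut}}}$, and the mass term. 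On $P_2$ I bound $\varphi$ by $\epsilon^{\kappa_0}$ using Lemma~\ref{lem11}. On the cut-off annulus, which lies outside $P_2$, I use the decay $\epsilon^{\alpha_1}\rho_1^{-\theta_1}$ of Lemma~\ref{lem8}; the condition $\alpha_{\mathrm{cut}}>(\gamma_1-\alpha_1)/\theta_1$ is exactly what makes this contribution $o(\epsilon^{\gamma_1})$. The multiplier term $\sum_j c_j\int U_1^{2^*-2}Y_{1,j}\bar Z_1^0$ has to be handled by the orthogonality built into \eqref{eqpert}: testing \eqref{eqpert} instead of the full residual removes it. Concretely, I will write the left-hand side as $\sum_j c_j\int U_1^{2^*-2}Y_{1,j}\bar Z_1^0$ plus the terms above. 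Then $c_j=O(\epsilon^{\gamma_1})$ by the bound on $c_j$ in Proposition~\ref{prop:linear-first}, and the remaining matrix is nearly diagonal. This step is the delicate one. I expect that the $c_j$-contribution is not small, and that the correct reading of the lemma is that the left-hand side equals this expansion; so I will instead compute the left-hand side directly, as the full residual $-\Delta u_1-\dots$, without passing through $c_j$.

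\emph{Nonlinear remainder.} This is $R_1(\varphi)=O(\epsilon^{m_N\kappa_0})$ on $P_2$, and I need $m_N\kappa_0>\gamma_1$. This is guaranteed by \eqref{gamma-range} when $N\ge6$ ($2\gamma_0>\gamma_1$ up to $\delta_*$). For $N=5$ it relies on the tested bubble weight. Outside $P_2$ I control it by Lemma~\ref{lem8}. I expect the main obstacle to be the bookkeeping that the $\varphi$-dependent terms are genuinely $o(\epsilon^{\gamma_1})$ in the transition region between $P_2$ and the cut-off scale $\epsilon^{-\alpha_{\mathrm{cut}}}$. There the dead-core, the Lemma~\ref{lem8} barrier and the $\bar\chi$ cut-off all interact, and the exponent window chosen for $\alpha_{\mathrm{cut}}$ must be shown to suffice. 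The second component is handled identically.
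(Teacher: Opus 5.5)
Your route is the paper's. You compute the full residual tested against $\bar Z_1^0$ directly rather than through the $c_j$, and read off the Robin and mass terms from $-\int l_1\bar Z_1^0$. You then move the linear part onto $\bar Z_1^0$, so that only the $(U_1^{2^*-2}-V_1^{2^*-2})$-term, the mass term and the cut-off commutator remain; the commutator is $o(\epsilon^{\gamma_1})$ precisely because $\alpha_{\mathrm{cut}}>(\gamma_1-\alpha_1)/\theta_1$. Your coupling bound is also fine, and needs no dead core: $|u_2|\lesssim\epsilon^{\gamma_1}$ on $\operatorname{supp}\bar\chi(\cdot-\tilde\xi_1)$ gives $O(\epsilon^{p\gamma_1}|\log\epsilon|)$.

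\textbf{Where it breaks: the nonlinear remainder.} You bound $R_1(\varphi)$ by the flat estimate $\epsilon^{m_N\kappa_0}$ of Lemma~\ref{lem11} on $P_2$, say that $m_N\kappa_0>\gamma_1$ suffices, and attribute this to $2\gamma_0>\gamma_1$ for $N\ge6$. This fails for two reasons.
\begin{itemize}
\item Integrating a flat bound against $|\bar Z_1^0|\lesssim\rho_1^{2-N}$ (with $\rho_1=1+|y-\tilde\xi_1|$) over $P_2$ costs $\int_{P_2}\rho_1^{2-N}\sim r_\epsilon^4$. So you actually need $m_N\kappa_0-4/B_\tau>\gamma_1$, whereas Lemma~\ref{lem:exponent-choice} only gives $m_N\kappa_0-4/B_\tau>\gamma_0$, and $\gamma_0<\gamma_1$.
\item For $N\ge7$ one has $m_N=2^*-1<2$, so $2\gamma_0>\gamma_1$ is irrelevant there.
\end{itemize}
At zero loss,
\[
m_N\kappa_0-\frac{4}{B_\tau}-\gamma_1=\frac{-N^2+10N-8-\tau(N-2)(N-6)}{2(N-2)(N-4)(2+\tau)}\qquad(N\ge6).
\]
This is negative for every $\tau\in\mathcal I_N$ when $N\ge8$, and for $\tau>13/5$ when $N=7$. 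For $N=5,6$, where $m_N=2$, the margin is $1/(2+\tau)>0$. So the case you flagged as delicate ($N=5$) is in fact the harmless one.

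\textbf{The missing idea, used by the paper for $I_2$.} On $\operatorname{supp}\bar\chi(\cdot-\tilde\xi_1)$ one has $V_1=U_1(1+o(1))$ and $|\varphi|=o(U_1)$:
\begin{itemize}
\item on $P_1$ this follows from $|\varphi|\le\epsilon^{\gamma_0}$ and $\gamma_0>(N-2)/B_\tau$;
\item outside $P_1$ it follows from the barrier $\epsilon^{\alpha_1}\rho_1^{-\theta_1}$ of Lemma~\ref{lem8} together with $\alpha_{\mathrm{cut}}<1/(N-4)$.
\end{itemize}
The segment from $V_1$ to $V_1+\varphi$ therefore stays comparable to $U_1$. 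Taylor's formula then gives the quadratic bound $|R_1(\varphi)|\le CU_1^{2^*-3}\varphi^2$, which is much stronger than $|\varphi|^{2^*-1}$ when $N\ge7$.

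Since $U_1^{2^*-3}|Y_1^0|\lesssim\rho_1^{-4}$ in every dimension, inserting the pointwise bounds (rather than a flat bound on $P_2$) gives
\[
\int_{\Omega_\epsilon}|R_1(\varphi)|\,|\bar Z_1^0|
\lesssim\epsilon^{2\gamma_0}r_\epsilon^{N-4}
+\epsilon^{2\alpha_1}r_\epsilon^{-(4-2\vartheta)}+o(\epsilon^{\gamma_1}).
\]
Both terms are $o(\epsilon^{\gamma_1})$: at zero loss $2\kappa_0-\gamma_1=\frac{N}{(N-4)(2+\tau)}>\frac{1}{2+\tau}=\frac{N-4}{B_\tau}$, and $2\alpha_1=\gamma_1+2\theta'$.

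In your $(U_1^{2^*-2}-V_1^{2^*-2})\varphi$ term, likewise use the barrier of Lemma~\ref{lem8} on $P_2\setminus P_1$ rather than $\epsilon^{\kappa_0}$ on all of $P_2$. A flat bound there costs a factor $r_\epsilon^{2(N-4)}$.
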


\begin{proof}
We retain the coupling contribution and estimate it separately. Using
$-\Delta V_1=U_1^{2^*-1}$, we rewrite the left-hand side of \eqref{eq:scale-expansion-first} as
 \begin{align*}
&\int_{\Omega_\epsilon}\Big(-\Delta u_{1,\epsilon}
-\epsilon^{\frac{N-2}{N-4}}u_{1,\epsilon}
-\lvert u_{1,\epsilon}\rvert^{2^{*}-2}u_{1,\epsilon}
-\beta\lvert u_{2,\epsilon}\rvert^{\frac{2^{*}}{2}}
 \lvert u_{1,\epsilon}\rvert^{\frac{2^{*}}{2}-2}u_{1,\epsilon}\Big)\bar Z_{1}^{0}\\
={}&\int_{\Omega_\epsilon}\Big(-\Delta(V_1+\varphi)
-\epsilon^{\frac{N-2}{N-4}}u_{1,\epsilon}
-\lvert u_{1,\epsilon}\rvert^{2^{*}-2}u_{1,\epsilon}
-\beta |u_{2,\epsilon}|^p|u_{1,\epsilon}|^{p-2}u_{1,\epsilon}\Big)\bar Z_{1}^{0}\\
={}&\int_{\Omega_\epsilon}\Big(U_1^{2^*-1}-(V_1+\varphi)^{2^*-1}\Big)\bar Z_1^0
+\int_{\Omega_\epsilon}\varphi(-\Delta\bar Z_1^0)
-\epsilon^{\frac{N-2}{N-4}}\int_{\Omega_\epsilon}u_{1,\epsilon}\bar Z_1^0\\
&-\beta\int_{\Omega_\epsilon}|u_{2,\epsilon}|^p|u_{1,\epsilon}|^{p-2}u_{1,\epsilon}\bar Z_1^0\\
={}& I_1+I_2+I_3+I_4+I_5,
\end{align*}
where
 \begin{align*}
I_1&:=\int_{\Omega_\epsilon}(U_1^{2^*-1}-V_1^{2^*-1})\bar Z_1^0,\\
I_2&:=-\int_{\Omega_\epsilon}\Big((V_1+\varphi)^{2^*-1}-V_1^{2^*-1}
-(2^*-1)U_1^{2^*-2}\varphi\Big)\bar Z_1^0,\\
I_3&:=\int_{\Omega_\epsilon}\varphi\Big(-\Delta\bar Z_1^0
-(2^*-1)U_1^{2^*-2}\bar Z_1^0\Big),\\
I_4&:=-\beta\int_{\Omega_\epsilon}\lvert u_{2,\epsilon}\rvert^{\frac{2^*}{2}}
\lvert u_{1,\epsilon}\rvert^{\frac{2^*}{2}-2}u_{1,\epsilon}\bar Z_1^0,\\
I_5&:=-\epsilon^{\frac{N-2}{N-4}}\int_{\Omega_\epsilon}u_{1,\epsilon}\bar Z_1^0.
\end{align*}

Put $R_{\mathrm{cut},\epsilon}=\epsilon^{-\alpha_{\mathrm{cut}}}$.  The cut-off can be removed from the
leading parts of $I_1$ and $I_5$ at a lower order.  Indeed, the maximum
principle and the projection expansion give $0\le V_1\le U_1$ and
$0\le U_1-V_1\le C\epsilon^{\gamma_1}$.  Since
$|Y_1^0(y)|\le C(1+|y-\tilde\xi_1|)^{2-N}$,
\begin{align}\label{eq:cutoff-tail-I1}
&\left|\int_{|y-\tilde\xi_1|\ge R_{\mathrm{cut},\epsilon}}
(U_1^{2^*-1}-V_1^{2^*-1})Y_1^0\,dy\right|\\
&\qquad\le C\epsilon^{\gamma_1}
\int_{R_{\mathrm{cut},\epsilon}}^{\infty}r^{-4}r^{2-N}r^{N-1}\,dr
\le C\epsilon^{\gamma_1+2\alpha_{\mathrm{cut}}}
=o(\epsilon^{\gamma_1}).
\end{align}
Similarly,
\begin{align}\label{eq:cutoff-tail-I5}
\epsilon^{\gamma_1}
\left|\int_{|y-\tilde\xi_1|\ge R_{\mathrm{cut},\epsilon}}V_1Y_1^0\,dy\right|
&\le C\epsilon^{\gamma_1}
\int_{R_{\mathrm{cut},\epsilon}}^{\infty}r^{4-2N}r^{N-1}\,dr\\
&\le C\epsilon^{\gamma_1+\alpha_{\mathrm{cut}}(N-4)}
=o(\epsilon^{\gamma_1}).
\end{align}
Thus replacing $\bar\chi(\cdot-\tilde\xi_1)$ by $1$ in these two leading
integrals changes them only by $o(\epsilon^{\gamma_1})$.

For $I_1$, the scaling identities for the projections $P$ and $P_\epsilon$ give
\begin{align*}
I_1
&=\int_{\Omega_\epsilon}\big(U_1^{2^*-1}(y)-V_1^{2^*-1}(y)\big)\bar Z_1^0(y)\,dy\\
&=\epsilon^{\frac{1}{N-4}}\int_{\Omega}\Big(
U_{\epsilon^{\frac{1}{N-4}}\lambda_1,\xi_1}^{2^*-1}
-PU_{\epsilon^{\frac{1}{N-4}}\lambda_1,\xi_1}^{2^*-1}\Big)
\bar\chi\big(\epsilon^{-\frac{1}{N-4}}(x-\xi_1)\big)
Y^0_{\epsilon^{\frac{1}{N-4}}\lambda_1,\xi_1}(x)\,dx\\
&=\epsilon^{\frac{1}{N-4}}(2^*-1)
\int_{\Omega}U_{\epsilon^{\frac{1}{N-4}}\lambda_1,\xi_1}^{2^*-2}
R_{\epsilon^{\frac{1}{N-4}}\lambda_1,\xi_1}
Y^0_{\epsilon^{\frac{1}{N-4}}\lambda_1,\xi_1}\,dx
+o\left(\epsilon^{\frac{N-2}{N-4}}\right)\\
&=C_N\lambda_1^{\frac{N-2}{2}}
\epsilon^{\frac{1}{N-4}}\epsilon^{\frac{N-2}{2(N-4)}}(2^*-1)
\varphi_\Omega(\xi_1)
\int_{\Omega}U_{\epsilon^{\frac{1}{N-4}}\lambda_1,\xi_1}^{2^*-2}
Y^0_{\epsilon^{\frac{1}{N-4}}\lambda_1,\xi_1}\,dx
+o\left(\epsilon^{\frac{N-2}{N-4}}\right)\\
&=C_N\lambda_1^{\frac{N-2}{2}}
\epsilon^{\frac{1}{N-4}}\epsilon^{\frac{N-2}{2(N-4)}}
\varphi_\Omega(\xi_1)
\partial_{\epsilon^{\frac{1}{N-4}}\lambda_1}
\left(\int_{\Omega}U_{\epsilon^{\frac{1}{N-4}}\lambda_1,\xi_1}^{2^*-1}(x)\,dx\right)
+o\left(\epsilon^{\frac{N-2}{N-4}}\right)\\
&=\frac{N-2}{2}C_N\lambda_1^{N-3}
\epsilon^{\frac{N-2}{N-4}}\varphi_\Omega(\xi_1)A_0
+o\left(\epsilon^{\frac{N-2}{N-4}}\right).
\end{align*}
where
\[
A_0=\int_{\mathbb R^N}U_{1,0}^{2^*-1}\,dx,\qquad
R_{\epsilon^{\frac{1}{N-4}}\lambda_1,\xi_1}
=U_{\epsilon^{\frac{1}{N-4}}\lambda_1,\xi_1}
-PU_{\epsilon^{\frac{1}{N-4}}\lambda_1,\xi_1},
\]
and
\[
R_{\lambda,\xi}(x)=\lambda^{\frac{N-2}{2}}C_NH_\Omega(x,\xi)
+O\left(\lambda^{\frac{N+2}{2}}\right)
\qquad\text{as }\lambda\to0.
\]
\medskip

 On the support of $\bar\chi(\cdot-\tilde\xi_1)$, Lemma~\ref{lem8}
and the choice of $\alpha_{\mathrm{cut}}$ give
\[
|\varphi|=o(1)U_1,\qquad V_1=U_1(1+o(1))
\]
uniformly. Hence, for small $\epsilon$, one has $|\varphi|\le V_1/2$, and
the segment joining $V_1$ and $V_1+\varphi$ stays between fixed positive
multiples of $U_1$. Taylor's formula for
$F(s)=|s|^{2^*-2}s$ on this positive segment gives
\[
|F(V_1+\varphi)-F(V_1)-(2^*-1)V_1^{2^*-2}\varphi|
\le C U_1^{2^*-3}|\varphi|^2.
\]
Since $V_1$ and $U_1$ are comparable there, the mean value theorem also gives
\[
|V_1^{2^*-2}-U_1^{2^*-2}|
\le C U_1^{2^*-3}|V_1-U_1|.
\]
Consequently,
 \[
\begin{split}
|I_2|
&\le C\int_{|y-\tilde\xi_1|\le2\epsilon^{-\alpha_{\mathrm{cut}}}}
U_1^{2^*-3}|\varphi|^2|\bar Z_1^0|\\
&\quad+C\int_{|y-\tilde\xi_1|\le2\epsilon^{-\alpha_{\mathrm{cut}}}}
U_1^{2^*-3}|U_1-V_1|\,|\varphi|\,|\bar Z_1^0|\\
&=o\left(\epsilon^{\frac{N-2}{N-4}}\right).
\end{split}
\]
 The last estimate follows from Lemma~\ref{lem8}, the projection expansion in
Lemma~\ref{lemRj0}, and the same radial integrations used in the following
estimates.

\medskip

Since
 \[
-\Delta\bar Z_1^0-(2^*-1)U_1^{2^*-2}\bar Z_1^0
=-2\nabla\bar\chi(\cdot-\tilde\xi_1)\cdot\nabla Y_1^0
-Y_1^0\Delta\bar\chi(\cdot-\tilde\xi_1),
\]
 this term is supported in the annulus
\(
\epsilon^{-\alpha_{\mathrm{cut}}}\le |y-\tilde\xi_1|\le2\epsilon^{-\alpha_{\mathrm{cut}}}
\).
On that annulus,
\[
\left|-\Delta\bar Z_1^0-(2^*-1)U_1^{2^*-2}\bar Z_1^0\right|
\le C(1+|y-\tilde\xi_1|)^{-N}.
\]
Since \(\alpha_{\mathrm{cut}}<1/(N-4)\), the annulus remains at distance comparable to
\(\epsilon^{-1/(N-4)}\) from \(\tilde\xi_2\).  Lemma~\ref{lem8} therefore gives
\begin{equation}\label{eq:scale-error-bound}
\begin{split}
|I_3|
&\le C\epsilon^{\alpha_1+\alpha_{\mathrm{cut}}\theta_1}
+C\epsilon^{\tilde\alpha_1+\frac{\tilde\theta_1}{N-4}}\\
&=o\left(\epsilon^{\frac{N-2}{N-4}}\right),
\end{split}
\end{equation}
by the choice of \(\alpha_{\mathrm{cut}}\) and the auxiliary parameters.
\medskip

From Lemmas~\ref{lem7} and~\ref{lemB1}, we also have
 \begin{equation*}
\begin{split}
| I_4|&=\Big|-\beta \int_{\Omega_\epsilon}\lvert u_{2,\epsilon}\rvert^{\frac{2^{*}}{2}}\lvert u_{1,\epsilon}\rvert^{\frac{2^{*}}{2}-2}u_{1,\epsilon} \bar Z_{1}^0 \Big|\\
\displaystyle \leq& C\int_{\Omega_\epsilon}\Big(\frac{1}{(1+\lvert y-\tilde\xi_1\rvert)^{N-2}}\Big)^{\frac{2^*}{2}-1}\Big( \frac{1}{(1+\lvert y-\tilde\xi_2\rvert)^{N-2}}\Big)^{\frac{2^*}{2}}\frac{1}{(1+\lvert y-\tilde\xi_1\rvert)^{N-2}}\\
\displaystyle \leq& C\int_{\Omega_\epsilon}\Big(\frac{1}{(1+\lvert y-\tilde\xi_1\rvert)^{N-2}}\Big)^{\frac{2^*}{2}}\Big( \frac{1}{(1+\lvert y-\tilde\xi_2\rvert)^{N-2}}\Big)^{\frac{2^*}{2}}\\
\displaystyle \leq& C \frac{1}{\lvert \tilde\xi_1-\tilde\xi_2\rvert^{N-\theta_{\mathrm{st}}}}\int_{\Omega_\epsilon}\Big(\frac{1}{(1+\lvert y-\tilde\xi_1\rvert)^{N+\theta_{\mathrm{st}}}}+\frac{1}{(1+\lvert y-\tilde\xi_2\rvert)^{N+\theta_{\mathrm{st}}}}\Big)\\
\displaystyle =&o\Big(\epsilon^{\frac{N-2}{N-4}}\Big),
\qquad 0<\theta_{\mathrm{st}}<2.
\end{split}
\end{equation*}

\medskip
Finally, we estimate
 \begin{equation*}
\begin{split}
\displaystyle I_5&=-\epsilon^{\frac{N-2}{N-4}} \int_{\Omega_\epsilon}u_{1,\epsilon}\bar Z_{1}^0
=-\epsilon^{\frac{N-2}{N-4}} \int_{\Omega_\epsilon}(V_1+\varphi)\bar Z_{1}^0=-\epsilon^{\frac{N-2}{N-4}} \int_{\Omega_\epsilon}V_1\bar Z_{1}^0-\epsilon^{\frac{N-2}{N-4}} \int_{\Omega_\epsilon}\varphi \bar Z_{1}^0\\
\displaystyle &=-\epsilon^{\frac{N-2}{N-4}} \int_{\Omega_\epsilon}\epsilon^{\frac{N-2}{2(N-4)}}P U_{\epsilon^{\frac{1}{N-4}}\lambda_1,\xi_1}(\epsilon^{\frac{1}{N-4}}y)\epsilon^{\frac{N}{2(N-4)}} Y^0_{\epsilon^{\frac{1}{N-4}}\lambda_1,\xi_1}(\epsilon^{\frac{1}{N-4}}y)dy-\epsilon^{\frac{N-2}{N-4}} \int_{\Omega_\epsilon}\varphi \bar Z_{1}^0\\
\displaystyle &=-\epsilon^{\frac{N-3}{N-4}}\int_\Omega P U_{\epsilon^{\frac{1}{N-4}}\lambda_1,\xi_1}(x) Y^0_{\epsilon^{\frac{1}{N-4}}\lambda_1,\xi_1}(x)dx-\epsilon^{\frac{N-2}{N-4}} \int_{\Omega_\epsilon}\varphi\bar  Z_{1}^0\\
\displaystyle &=-\epsilon^{\frac{N-3}{N-4}}\int_\Omega U_{\epsilon^{\frac{1}{N-4}}\lambda_1,\xi_1}(x) Y^0_{\epsilon^{\frac{1}{N-4}}\lambda_1,\xi_1}(x)dx+o(\epsilon^{\frac{N-2}{N-4}})-\epsilon^{\frac{N-2}{N-4}} \int_{\Omega_\epsilon}\varphi \bar Z_{1}^0\\
\displaystyle &=-\lambda_1\epsilon^{\frac{N-3}{N-4}}\epsilon^{\frac{1}{N-4}}\int_{\mathbb R^N} U^2_{1,0}(x)dx+o(\epsilon^{\frac{N-2}{N-4}})-\epsilon^{\frac{N-2}{N-4}} \int_{\Omega_\epsilon}\varphi\bar  Z_{1}^0\\
\displaystyle &=-\lambda_1\epsilon^{\frac{N-2}{N-4}}\int_{\mathbb R^N} U^2_{1,0}(x)dx+o(\epsilon^{\frac{N-2}{N-4}})-\epsilon^{\frac{N-2}{N-4}} \int_{\Omega_\epsilon}\varphi\bar  Z_{1}^0
\end{split}
\end{equation*}

Moreover,
 \begin{equation*}
\begin{split}
\displaystyle &\Big|-\epsilon^{\frac{N-2}{N-4}} \int_{\Omega_\epsilon}\varphi \bar Z_{1}^0\Big|\\
\displaystyle &{\le C\epsilon^{\frac{N-2}{N-4}}\int_{\Omega_\epsilon\cap\{|y-\tilde\xi_1|\le2\epsilon^{-\alpha_{\mathrm{cut}}}\}}
\left(\frac{\epsilon^{\alpha_1}}{(1+|y-\tilde\xi_1|)^{\theta_1}}+\frac{\epsilon^{\tilde\alpha_1}}{(1+|y-\tilde\xi_2|)^{\tilde\theta_1}}\right)
\frac{dy}{(1+|y-\tilde\xi_1|)^{N-2}}}\\
&{=o\Big(\epsilon^{\frac{N-2}{N-4}}\Big).}
\end{split}
\end{equation*}

\medskip

This proves \eqref{eq:scale-expansion-first}; the same argument gives
\eqref{eq:scale-expansion-second}.

\end{proof}

\medskip
For $\mathbf t=(\lambda_1,\xi_1,\lambda_2,\xi_2)\in O_\delta$, set
\[
d_{\mathrm{sep}}(\mathbf t):=\min\left\{
\operatorname{dist}(\xi_1,\partial\Omega),
\operatorname{dist}(\xi_2,\partial\Omega),
\frac{1}{2}|\xi_1-\xi_2|\right\}.
\]
\begin{Lem}\label{lem:translation-expansion}
Let $K\Subset O_\delta$ and fix
$0<\rho<\inf_{\mathbf t\in K}d_{\mathrm{sep}}(\mathbf t)$. For $j=1,\ldots,N$, uniformly for $\mathbf t\in K$,
\begin{equation}\label{eq510}
\begin{split}
&\int_{B(\tilde\xi_1,\epsilon^{-\frac{1}{N-4}}\rho)}
\Big(-\Delta u_{1,\epsilon}-\epsilon^{\frac{N-2}{N-4}}u_{1,\epsilon}
-|u_{1,\epsilon}|^{2^*-2}u_{1,\epsilon}\\
&\hspace{40mm}
-\beta |u_{2,\epsilon}|^p|u_{1,\epsilon}|^{p-2}u_{1,\epsilon}\Big)
\partial_j u_{1,\epsilon}\\
&=-\frac{1}{2} C_NA_0\lambda_1^{N-2}
\epsilon^{\frac{N-1}{N-4}}\partial_j\varphi_\Omega(\xi_1)
+o\left(\epsilon^{\frac{N-1}{N-4}}\right),
\end{split}
\end{equation}
and
\begin{equation}\label{eq511}
\begin{split}
&\int_{B(\tilde\xi_2,\epsilon^{-\frac{1}{N-4}}\rho)}
\Big(-\Delta u_{2,\epsilon}-\epsilon^{\frac{N-2}{N-4}}u_{2,\epsilon}
-|u_{2,\epsilon}|^{2^*-2}u_{2,\epsilon}\\
&\hspace{40mm}
-\beta |u_{1,\epsilon}|^p|u_{2,\epsilon}|^{p-2}u_{2,\epsilon}\Big)
\partial_j u_{2,\epsilon}\\
&=-\frac{1}{2} C_NA_0\lambda_2^{N-2}
\epsilon^{\frac{N-1}{N-4}}\partial_j\varphi_\Omega(\xi_2)
+o\left(\epsilon^{\frac{N-1}{N-4}}\right),
\end{split}
\end{equation}
where
\[
A_0=\int_{\mathbb R^N}U_{1,0}^{2^*-1}
=(N-2)C_N\omega_{N-1}.
\]
\end{Lem}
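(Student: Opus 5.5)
We would prove \eqref{eq510} through a local Pohozaev (translation) identity on the ball $B:=B(\tilde\xi_1,\epsilon^{-\frac1{N-4}}\rho)$; \eqref{eq511} then follows by exchanging the roles of the components. The choice $\rho<d_{\mathrm{sep}}(\mathbf t)$ keeps $B$ inside $\Omega_\epsilon$ at distance comparable to $\epsilon^{-1/(N-4)}$ from $\partial\Omega_\epsilon$ and from $\tilde\xi_2$. The first step handles the coupling term. Since $p>1$, $|u_2|^p|u_1|^{p-2}u_1\,\partial_ju_1$ is integrable. By Lemma~\ref{lem7} its integral over $B$ is bounded by $C\int\rho_1^{-(N-2)(p-1)-(N-1)}\rho_2^{-N}$. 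Splitting at $|y-\tilde\xi_2|=\frac12|\tilde\xi_1-\tilde\xi_2|$, this is $O(\epsilon^{\frac{N}{N-4}})+O(\epsilon^{\frac{N-1}{N-4}}\cdot\epsilon^{\frac{1}{N-4}}\log)$, which is $o(\epsilon^{\frac{N-1}{N-4}})$. Alternatively, Lemma~\ref{deadout} shows that $u_2=0$ near $\tilde\xi_1$, and on the remaining region the decay of $u_1$ gives the smallness.

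The second step removes the remaining terms by the divergence structure. The mass and self-interaction terms are exact derivatives:
\[
\epsilon^{\frac{N-2}{N-4}}u_1\partial_ju_1=\tfrac12\epsilon^{\frac{N-2}{N-4}}\partial_j(u_1^2),\qquad |u_1|^{2^*-2}u_1\partial_ju_1=\tfrac1{2^*}\partial_j|u_1|^{2^*}.
\]
The Laplacian term also has divergence form, $-\Delta u\,\partial_ju=-\operatorname{div}(\nabla u\,\partial_ju)+\tfrac12\partial_j|\nabla u|^2$, and the $C^{1,\alpha}$ regularity after Lemma~\ref{lem12} justifies these manipulations. The whole integral therefore equals a boundary integral over $\partial B$ plus the coupling term. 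On $\partial B$ the radius is $|y-\tilde\xi_1|\sim\epsilon^{-1/(N-4)}$. There Lemma~\ref{lem8} and Lemma~\ref{lemB1} give $u_1=V_1+O(\epsilon^{\alpha_1+\theta_1/(N-4)}+\epsilon^{\tilde\alpha_1+\tilde\theta_1/(N-4)})$, with the same bound for gradients. By \eqref{eq:deadcore-exponent-margins} and the conditions of Lemma~\ref{lem:exponent-choice}, these errors are $o$ of the size of $V_1$ and $\nabla V_1$ on $\partial B$, which is of order $\epsilon^{\frac{N-2}{N-4}}$. Since $|\partial B|\sim\epsilon^{-\frac{N-1}{N-4}}$, the quadratic boundary terms are of order $\epsilon^{\frac{2(N-2)-(N-1)}{N-4}}=\epsilon^{\frac{N-3}{N-4}}$. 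This is smaller than the target $\epsilon^{\frac{N-1}{N-4}}$ by the factor $\epsilon^{-\frac{2}{N-4}}$, so the leading term must be extracted carefully. The $|u_1|^{2^*}$ and $\epsilon u_1^2$ boundary terms are negligible: the first is $\epsilon^{\frac{2^*(N-2)-(N-1)}{N-4}}$, and the second is $\epsilon^{\frac{N-2}{N-4}}\epsilon^{\frac{2(N-2)-(N-1)}{N-4}}$, which is of the right order. The second must be checked, and we expect it to vanish at leading order by the symmetry of $G_\Omega(\cdot,\xi_1)^2$ on a sphere after rescaling, up to $O(\rho)$-independent contributions of lower order.

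The third and main step passes to original variables. Writing $V_1(y)=\epsilon^{\frac{N-2}{2(N-4)}}PU_{s_\epsilon\lambda_1,\xi_1}(s_\epsilon y)$ and using $PU_{\mu,\xi}=C_N\mu^{\frac{N-2}2}G_\Omega(\cdot,\xi)+o(\mu^{\frac{N-2}{2}})$ in $C^1$ away from $\xi$, the boundary term becomes
\[
C_N^2\lambda_1^{N-2}\epsilon^{\frac{N-2}{N-4}}s_\epsilon^{\,\cdot}\int_{\partial B_\rho(\xi_1)}\Big(-\partial_\nu G\,\partial_jG+\tfrac12|\nabla G|^2\nu_j\Big)
\]
with the appropriate power of $s_\epsilon$, where $G=G_\Omega(\cdot,\xi_1)$. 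The classical identity gives that this boundary integral equals $c_N\,\partial_j\varphi_\Omega(\xi_1)$, via the Pohozaev/Green identity with $G=\Gamma_{\xi_1}-H(\cdot,\xi_1)$ and the relation $\nabla_x H(x,\xi)|_{x=\xi}=\frac12\nabla\varphi_\Omega(\xi)$. Tracking the constants, with $A_0=(N-2)C_N\omega_{N-1}$, should give $-\frac12C_NA_0\lambda_1^{N-2}\epsilon^{\frac{N-1}{N-4}}\partial_j\varphi_\Omega(\xi_1)$. Thus the naive order $\epsilon^{\frac{N-3}{N-4}}$ actually combines with the scaling to $\epsilon^{\frac{N-1}{N-4}}$. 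The main obstacle is this bookkeeping: the powers of $s_\epsilon$ must be tracked exactly, the $\epsilon u_1^2$ boundary term must be controlled, and the perturbation errors from Lemma~\ref{lem8} must be shown to be $o(\epsilon^{\frac{N-1}{N-4}})$ after multiplication by $|\nabla V_1|$ and integration over $\partial B$. This last point relies on the margins fixed in Lemma~\ref{lem:exponent-choice}, and we would verify it first. Uniformity over $\mathbf t\in K$ follows because all the estimates are uniform on $O_\delta$.
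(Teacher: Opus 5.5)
Your overall route is the paper's: a local Pohozaev identity on $B=B(\tilde\xi_1,\rho/s_\epsilon)$, the $C^1$ Green expansion of $V_1$ on $\partial B$, the flux identity for $G_\Omega(\cdot,\xi_1)$, and a separate bound for the coupling term. The scale bookkeeping is wrong, however, and once it is corrected, the step you defer does not close with the tools you cite.

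On $\partial B$ one has $V_1\sim s_\epsilon^{N-2}$, but $|\nabla V_1|\sim s_\epsilon^{N-1}$, because each $y$-derivative of $G_\Omega(s_\epsilon y,\xi_1)$ brings a factor $s_\epsilon$. The gradient boundary terms are therefore directly of order $s_\epsilon^{2(N-1)}\cdot s_\epsilon^{1-N}=s_\epsilon^{N-1}=\epsilon^{\frac{N-1}{N-4}}$. There is no ``naive order'' $\epsilon^{\frac{N-3}{N-4}}$ that has to recombine with the scaling; that quantity would in any case be larger than the target, not smaller. The mass boundary term is $O(s_\epsilon^{2N-5})=o(s_\epsilon^{N-1})$, so it needs no cancellation. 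Your symmetry argument would fail anyway, since $G_\Omega(\cdot,\xi_1)$ is not radial about $\xi_1$. The critical boundary term is $O(s_\epsilon^{N+1})$.

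The genuine gap is the control of the correction in the gradient terms. Cross terms such as $\int_{\partial B}(\nabla V_1\cdot\nabla\varphi)\,\nu_j$ are $o(s_\epsilon^{N-1})$ only if $|\nabla\varphi|=o(s_\epsilon^{N-1})$ on $\partial B$. The gradient estimate in Lemma~\ref{lem8} comes from interior estimates on unit balls. On $\partial B$ its first term is only $\epsilon^{\alpha_1+\theta_1/(N-4)}=s_\epsilon^{N-1-\vartheta+(N-4)\theta'}$. The exponents are fixed with $0<\theta'<\vartheta$, so for $N=5$ this is strictly larger than $s_\epsilon^{N-1}$. For $N\ge6$, nothing in Lemma~\ref{lem:exponent-choice} forces $(N-4)\theta'>\vartheta$. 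So ``the same bound for gradients'' plus the exponent margins does not give what you need.

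The missing idea is a rescaled interior estimate, which is what the paper uses:
\begin{itemize}
\item Use Lemma~\ref{lem8} only for the sup bound $|\varphi|\le Cs_\epsilon^{N-2+\sigma}$ on the annulus $\{\rho/(2s_\epsilon)\le|y-\tilde\xi_1|\le 3\rho/(2s_\epsilon)\}$; this bound does follow from the margins.
\item Bound $|\Delta\varphi|\le C(s_\epsilon^{N-2}|\varphi|+s_\epsilon^{2N-4}+s_\epsilon^{N+2})$ there, using the exterior equation and Lemmas~\ref{lem7} and~\ref{lem8}.
\item Apply the gradient estimate on balls of radius $c/s_\epsilon$ around points of $\partial B$. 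This gains a factor $s_\epsilon$: $|\nabla\varphi|\le C(s_\epsilon\sup|\varphi|+s_\epsilon^{-1}\sup|\Delta\varphi|)\le Cs_\epsilon^{N-1+\sigma}$.
\end{itemize}

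Two smaller points on the coupling term. First, Lemma~\ref{lem7} bounds $|u_1|$ but not $|\nabla u_1|$, so your first estimate is not justified as stated. Your alternative is the right one: the coupling vanishes on $P_2$ by Lemma~\ref{deadout}, and the gradient bound of Lemma~\ref{lem8} applies outside $B_{r_\epsilon+1}(\tilde\xi_1)$. Second, since $\rho<d_{\mathrm{sep}}$, one has $1+|y-\tilde\xi_2|\ge c\,s_\epsilon^{-1}$ on all of $B$, so no splitting near $\tilde\xi_2$ is needed.
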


\begin{proof}
We prove \eqref{eq510}. Put
\[
B_{\rho,\epsilon}:=B(\tilde\xi_1,\rho/s_\epsilon).
\]
Integration by parts gives the local Pohozaev identity
\begin{equation}\label{eq:15}
\begin{split}
&\int_{B_{\rho,\epsilon}}(-\Delta u_{1,\epsilon}-s_\epsilon^{N-2}u_{1,\epsilon}
-|u_{1,\epsilon}|^{2^*-2}u_{1,\epsilon})\partial_j u_{1,\epsilon}\\
&=\int_{\partial B_{\rho,\epsilon}}\left(
-\partial_\nu u_{1,\epsilon}\,\partial_j u_{1,\epsilon}
+\frac{1}{2}|\nabla u_{1,\epsilon}|^2\nu_j
-\frac{s_\epsilon^{N-2}}{2}u_{1,\epsilon}^2\nu_j
-\frac{1}{2^*}|u_{1,\epsilon}|^{2^*}\nu_j\right).
\end{split}
\end{equation}
 On this boundary, the scaling identities and Lemma~\ref{lemRj0} yield
\begin{equation}\label{eq:17}
\begin{split}
V_1(y)&=C_N\lambda_1^{\frac{N-2}{2}}s_\epsilon^{N-2}
G_\Omega(s_\epsilon y,\xi_1)+o(s_\epsilon^{N-2}),\\
\nabla_yV_1(y)&=C_N\lambda_1^{\frac{N-2}{2}}s_\epsilon^{N-1}
\nabla_xG_\Omega(s_\epsilon y,\xi_1)+o(s_\epsilon^{N-1}),
\end{split}
\end{equation}
uniformly in $C^1$ for fixed $\rho$.

 We next justify that the correction does not contribute to the leading
boundary flux.  On a fixed rescaled annulus
\[
\frac{\rho}{2s_\epsilon}\le |y-\tilde\xi_1|\le\frac{3\rho}{2s_\epsilon},
\]
 the two distances $1+|y-\tilde\xi_i|$ are comparable to $s_\epsilon^{-1}$.  Hence
Lemma~\ref{lem8} gives
\[
|\varphi(y)|\le Cs_\epsilon^{N-2+\sigma}
\]
 for some $\sigma>0$ independent of the parameters in compact subsets of the
admissible set.  To estimate the gradient, apply the interior estimate on a
ball of radius $c/s_\epsilon$ contained in this annulus.  The equation for $\varphi$
and Lemmas~\ref{lem7} and~\ref{lem8} give there
\[
|\Delta\varphi|
\le C\big(s_\epsilon^{N-2}|\varphi|+s_\epsilon^{2N-4}+s_\epsilon^{N+2}\big).
\]
After decreasing $\sigma$ if necessary, the scaled interior estimate yields
\[
|\nabla\varphi(y)|\le Cs_\epsilon^{N-1+\sigma}.
\]
Since $|\partial B_{\rho,\epsilon}|=O(s_\epsilon^{-(N-1)})$, it follows that
\begin{equation}\label{eq:16}
\int_{\partial B_{\rho,\epsilon}}\big(|\nabla\varphi|^2+|\varphi|^{2^*}\big)
=o(s_\epsilon^{N-1}).
\end{equation}
 Together with \eqref{eq:17}, this shows that all boundary terms containing
$\varphi$ are $o(s_\epsilon^{N-1})$.  The critical boundary term containing only
$V_1$ is $O(s_\epsilon^{N+1})$, and the corresponding mass term is $O(s_\epsilon^{2N-5})$;
both are $o(s_\epsilon^{N-1})$.
For the unnormalized Green function used here,
\begin{equation}\label{eq:green-flux}
\int_{\partial B_\rho(\xi)}
\left(-\partial_\nu G_\Omega\,\partial_jG_\Omega
+\frac{1}{2}|\nabla G_\Omega|^2\nu_j\right)
=-\frac{(N-2)\omega_{N-1}}{2}\partial_j\varphi_\Omega(\xi).
\end{equation}
 Indeed, write $G_\Omega(x,\xi)=|x-\xi|^{2-N}-H_\Omega(x,\xi)$ and let the
radius tend to zero; the pure singular term has zero integral, while the
cross term gives $-(N-2)\omega_{N-1}\partial_{x_j}H_\Omega(\xi,\xi)$.
Symmetry of $H_\Omega$ gives
$2\partial_{x_j}H_\Omega(\xi,\xi)=\partial_j\varphi_\Omega(\xi)$.
Combining \eqref{eq:15}--\eqref{eq:green-flux} therefore gives
\[
\int_{B_{\rho,\epsilon}}(-\Delta u_{1,\epsilon}-s_\epsilon^{N-2}u_{1,\epsilon}
-|u_{1,\epsilon}|^{2^*-2}u_{1,\epsilon})\partial_j u_{1,\epsilon}
=-\frac{1}{2}C_NA_0\lambda_1^{N-2}s_\epsilon^{N-1}
\partial_j\varphi_\Omega(\xi_1)+o(s_\epsilon^{N-1}).
\]
Finally, Lemma~\ref{lem7}, the gradient estimate in Lemma~\ref{lem8}, and a
split into neighborhoods of the two centers give
\[
\int_{B_{\rho,\epsilon}}|u_{2,\epsilon}|^p|u_{1,\epsilon}|^{p-1}
|\partial_j u_{1,\epsilon}|
\le Cs_\epsilon^N\big(1+|\log s_\epsilon|\big)=o(s_\epsilon^{N-1}).
\]
This proves \eqref{eq510}. The proof of \eqref{eq511} is identical.
\end{proof}

\medskip

\begin{proof}[Proof of Theorems~\ref{thm:rescaled-main} and~\ref{thm:main}]
Assume that the Robin function \(\varphi_\Omega\) has two distinct
critical points \(\bar\xi_1,\bar\xi_2\in\Omega\), each satisfying the local
degree condition in Definition~\ref{defstable}.  Set
\[
\gamma_{\mathrm{tr}}:=\frac{N-1}{N-4}.
\]
Recall that $\gamma_1=(N-2)/(N-4)$, and denote by \(M_i^0\) the left-hand sides of
\eqref{eq:scale-condition-first} and \eqref{eq:scale-condition-second}, and by
\(M_i^j\) the left-hand sides of \eqref{eq:translation-condition-first} and
\eqref{eq:translation-condition-second}. On each compact parameter domain used
below, we fix $\rho>0$ below the uniform lower bound in
Lemma~\ref{lem:translation-expansion}. We introduce the normalized reduced map
 \[
\mathcal F_\epsilon(\lambda_1,\xi_1,\lambda_2,\xi_2)
:=
\begin{pmatrix}
\epsilon^{-\gamma_1}\lambda_1^{-1}M_1^0\\[1mm]
(-\frac{1}{2} C_NA_0\epsilon^{\gamma_{\mathrm{tr}}}\lambda_1^{N-2})^{-1}(M_1^1,\ldots,M_1^N)\\[1mm]
\epsilon^{-\gamma_1}\lambda_2^{-1}M_2^0\\[1mm]
(-\frac{1}{2} C_NA_0\epsilon^{\gamma_{\mathrm{tr}}}\lambda_2^{N-2})^{-1}(M_2^1,\ldots,M_2^N)
\end{pmatrix}.
\]
By Proposition~\ref{prop:parameter-continuity}, the correction and its
first spatial derivatives depend continuously on the parameters for each
fixed sufficiently small $\epsilon$. Hence all components of
$\mathcal F_\epsilon$ are continuous. By
Lemmas~\ref{lem:scale-expansion} and~\ref{lem:translation-expansion},
uniformly on compact subsets of the admissible parameter set, we have the
asymptotic expansion
\[
\mathcal F_\epsilon=\mathcal F_0+o(1),
\]
where the limiting map $\mathcal F_0$ is explicitly given by
\[
\mathcal F_0(\lambda_1,\xi_1,\lambda_2,\xi_2)
:=
\begin{pmatrix}
A\lambda_1^{N-4}\varphi_\Omega(\xi_1) - B \\[1mm]
\nabla\varphi_\Omega(\xi_1) \\[1mm]
A\lambda_2^{N-4}\varphi_\Omega(\xi_2) - B \\[1mm]
\nabla\varphi_\Omega(\xi_2)
\end{pmatrix},
\]
and the constants are defined as
\[
A:=\frac{N-2}{2}C_N\int_{\mathbb R^N}U_{1,0}^{2^*-1},\qquad
B:=\int_{\mathbb R^N}U_{1,0}^2.
\]
Let
\[
\lambda_i^0=\left(\frac{B}{A\varphi_\Omega(\bar\xi_i)}\right)^{1/(N-4)},
\qquad i=1,2.
\]
Choose $0<\delta<1$ and disjoint neighborhoods
$\mathcal U_i\Subset\Omega$ of $\bar\xi_i$ so small that points in their
closures remain separated from each other and from $\partial\Omega$. The
scale intervals below are also chosen so that the resulting parameter
domains have compact closure in $O_\delta$. By the local degree condition,
choose bounded open sets
$D_i$ with
\[
\bar\xi_i\in D_i\Subset\mathcal U_i,\qquad
0\notin\nabla\varphi_\Omega(\partial D_i),
\qquad
\deg(\nabla\varphi_\Omega,D_i,0)\ne0.
\]
For $\xi\in\overline{D_i}$ set
\[
\lambda_i^*(\xi)=
\left(\frac{B}{A\varphi_\Omega(\xi)}\right)^{1/(N-4)}.
\]
Choose a bounded open interval $J_i\Subset(0,+\infty)$ whose interior
contains $\lambda_i^*(\overline{D_i})$, and put
$\mathcal D=J_1\times D_1\times J_2\times D_2$. Then
$0\notin\mathcal F_0(\partial\mathcal D)$. To compute the degree, homotope the
$i$-th scale component through
\[
A\lambda_i^{N-4}
\big((1-t)\varphi_\Omega(\xi_i)+t\varphi_\Omega(\bar\xi_i)\big)-B,
\qquad 0\le t\le1,
\]
while leaving the translation component $\nabla\varphi_\Omega(\xi_i)$
unchanged. If the translation component vanishes, the positive scale root
along the homotopy lies between $\lambda_i^*(\xi_i)$ and
$\lambda_i^*(\bar\xi_i)=\lambda_i^0$, both of which lie in
$\operatorname{int}J_i$. Thus no zero reaches the boundary during the
homotopy. Hence
\[
\begin{split}
\deg(\mathcal F_0,\mathcal D,0)
={}&\deg(F_1,J_1,0)\deg(\nabla\varphi_\Omega,D_1,0)\\
&\times\deg(F_2,J_2,0)\deg(\nabla\varphi_\Omega,D_2,0)\ne0,
\end{split}
\]
where $F_i(\lambda)=A\lambda^{N-4}\varphi_\Omega(\bar\xi_i)-B$ and
$\deg(F_i,J_i,0)=1$.
The uniform convergence $\mathcal F_\epsilon\to\mathcal F_0$ implies
$0\notin\mathcal F_\epsilon(\partial\mathcal D)$ for small $\epsilon$, and
therefore
\[
\deg(\mathcal F_\epsilon,\mathcal D,0)
=
\deg(\mathcal F_0,\mathcal D,0)\ne0.
\]
Thus $\mathcal F_\epsilon$ has a zero in $\mathcal D$.

To obtain convergence to the prescribed points, for each $k$ choose disjoint
neighborhoods $\mathcal U_{1,k}$ and $\mathcal U_{2,k}$ with
$\mathcal U_{i,k}\Subset B_{1/k}(\bar\xi_i)$, together with degree domains
$D_{i,k}\Subset\mathcal U_{i,k}$. Choose bounded open intervals $J_{i,k}$
containing $\lambda_i^*(\overline{D_{i,k}})$ and satisfying
$\operatorname{diam}(J_{i,k})\to0$. Then choose a decreasing sequence
$\epsilon_k\downarrow0$ so that the degree construction is valid on
$J_{1,k}\times D_{1,k}\times J_{2,k}\times D_{2,k}$ whenever
$0<\epsilon<\epsilon_k$. For
$\epsilon_{k+1}\le\epsilon<\epsilon_k$, select a zero in this parameter
domain. It follows that
$\xi_{i,\epsilon}\to\bar\xi_i$ and
$\lambda_{i,\epsilon}\to\lambda_i^0$. By
Lemma~\ref{lem:multiplier-vanishing}, all Lagrange multipliers \(c_j,d_j\) vanish, so the
projected solution obtained in Section~4 is a genuine solution of
\eqref{eq1}. Rescaling gives the solution of the original problem
\eqref{eq0}.

\medskip
\noindent\textbf{Nonnegativity.}
At this point all multipliers vanish, so the constructed pair satisfies the
original system weakly in $\Omega_\epsilon$. We first note that the inner data
are nonnegative. Indeed, uniformly on $P_1$ and $Q_1$,
\[
V_i=U_i+O\!\left(\epsilon^{\frac{N-2}{N-4}}\right),
\qquad
U_i\ge c r_\epsilon^{2-N},
\qquad i=1,2.
\]
Moreover,
\[
\epsilon^{\frac{N-2}{N-4}}=o(r_\epsilon^{2-N}),
\qquad
\epsilon^{\gamma_0}=o(r_\epsilon^{2-N})
\]
by Lemma~\ref{lem:exponent-choice}. Since the fixed-point traces are bounded
by $\epsilon^{\gamma_0}$, it follows that
\[
u_{1,\epsilon}\ge0\quad\text{in }P_1,
\qquad
u_{2,\epsilon}\ge0\quad\text{in }Q_1
\]
for sufficiently small $\epsilon$.

For brevity, write $u_i=u_{i,\epsilon}$ and set
$u_i^-:=\max\{-u_i,0\}$. Then
$u_1^-\in H_0^1(\Omega_\epsilon\setminus P_1)$ and
$u_2^-\in H_0^1(\Omega_\epsilon\setminus Q_1)$ after extension by zero.
Testing the first equation with $-u_1^-$ gives
\[
\begin{aligned}
\int_{\Omega_\epsilon}|\nabla u_1^-|^2
={}&
\epsilon^{\frac{N-2}{N-4}}
\int_{\Omega_\epsilon}(u_1^-)^2
+
\int_{\Omega_\epsilon}(u_1^-)^{2^*}\\
&+\beta\int_{\Omega_\epsilon}|u_2|^p(u_1^-)^p.
\end{aligned}
\]
The last term is nonpositive because $\beta<0$, and therefore
\[
\int_{\Omega_\epsilon}|\nabla u_1^-|^2
\le
\epsilon^{\frac{N-2}{N-4}}
\int_{\Omega_\epsilon}(u_1^-)^2
+
\int_{\Omega_\epsilon}(u_1^-)^{2^*}.
\]
Sobolev's and H\"older's inequalities give
\[
\begin{aligned}
\mathbf C_s\|u_1^-\|_{L^{2^*}}^2
&\le
\left(
\epsilon^{\frac{N-2}{N-4}}|\Omega_\epsilon|^{2/N}
+
\|u_1\|_{L^{2^*}(\Omega_\epsilon\setminus P_1)}^{2^*-2}
\right)
\|u_1^-\|_{L^{2^*}}^2.
\end{aligned}
\]
The first coefficient is $O(\epsilon)$. By Lemma~\ref{lem7},
\[
\int_{\Omega_\epsilon\setminus P_1}|u_1|^{2^*}
\le C\int_{r_\epsilon}^{\infty}r^{-2N}r^{N-1}\,dr
\le Cr_\epsilon^{-N},
\]
and hence
\[
\|u_1\|_{L^{2^*}(\Omega_\epsilon\setminus P_1)}^{2^*-2}
\le Cr_\epsilon^{-2}=o(1).
\]
Thus the coefficient on the right is strictly smaller than
$\mathbf C_s$ for sufficiently small $\epsilon$, and $u_1^-=0$. The same
argument gives $u_2^-=0$. Therefore
\[
u_{1,\epsilon},u_{2,\epsilon}\ge0
\quad\text{in }\Omega_\epsilon.
\]
The argument above gives a nonnegative solution of the rescaled problem. It
remains to justify the stronger estimate \eqref{eq:strong-correction}. Let
$\varrho_0$ be as in Lemma~\ref{deadout}. After all the auxiliary exponents
have been fixed, choose
$0<\varrho<\min\{\varrho_\tau,\varrho_0\}$; it will be decreased once more
below if necessary. Set
\[
\gamma_{\mathrm{bub}}:=\frac{N-2}{2(N-4)}.
\]
We discuss $\phi_\epsilon$; the argument for $\psi_\epsilon$ is identical.
The fixed point has the form $S(b)$ with
$b\in\Lambda_\epsilon\cap H_0^1(\Omega_\epsilon)^2$. Hence, by
\eqref{eq:S-preserves-inner-data},
$|\phi_\epsilon|\le\epsilon^{\gamma_0}$ on $P_1$. On
\[
\Omega_\epsilon\setminus
\left(P_1\cup
B_{R_{\mathrm{dc},\epsilon}(\varrho)}(\tilde\xi_2)\right),
\]
Lemma~\ref{lem8} gives
\[
\begin{aligned}
|\phi_\epsilon|
&\le C\epsilon^{\alpha_1}r_\epsilon^{-\theta_1}
+C\epsilon^{\tilde\alpha_1}
R_{\mathrm{dc},\epsilon}(\varrho)^{-\tilde\theta_1}\\
&=C\epsilon^{\kappa_0}
+C\epsilon^{\tilde\alpha_1+
\frac{1-\varrho}{N-2}\tilde\theta_1}.
\end{aligned}
\]
Finally, Lemma~\ref{deadout} gives $u_{1,\epsilon}=0$ in
$B_{R_{\mathrm{dc},\epsilon}(\varrho)}(\tilde\xi_2)$, so there
$\phi_\epsilon=-V_1$. Since
$R_{\mathrm{dc},\epsilon}(\varrho)=o(|\tilde\xi_1-\tilde\xi_2|)$ and
$0\le V_1\le U_1$,
\[
|\phi_\epsilon|\le C|\tilde\xi_1-\tilde\xi_2|^{2-N}
\le C\epsilon^{\frac{N-2}{N-4}}.
\]
By the definitions of $\kappa_0$ and $\gamma_0$,
\[
\gamma_0>\kappa_0>\gamma_{\mathrm{bub}}.
\]
Moreover, the losses were fixed sufficiently small in
Lemma~\ref{lem:exponent-choice} so that
\[
\tilde\alpha_1+\frac{\tilde\theta_1}{N-2}
>\gamma_{\mathrm{bub}},
\qquad
\frac{N-2}{N-4}=2\gamma_{\mathrm{bub}}.
\]
Decrease the already fixed $\varrho$, if necessary, so that
\[
\tilde\alpha_1+
\frac{1-\varrho}{N-2}\tilde\theta_1>\gamma_{\mathrm{bub}}.
\]
Taking $\sigma$ smaller than the minimum of the resulting four positive gaps
proves \eqref{eq:strong-correction}.

Under the rescaling in \eqref{eq1},
\[
\bar\phi_\epsilon(x)=
\epsilon^{-\gamma_{\mathrm{bub}}}\phi_\epsilon
\left(\epsilon^{-\frac1{N-4}}x\right),
\qquad
\bar\psi_\epsilon(x)=
\epsilon^{-\gamma_{\mathrm{bub}}}\psi_\epsilon
\left(\epsilon^{-\frac1{N-4}}x\right).
\]
Therefore
$\|\bar\phi_\epsilon\|_\infty+
\|\bar\psi_\epsilon\|_\infty\le C\epsilon^\sigma\to0$,
which proves the assertion in the original variables.

This completes the proof of Theorems~\ref{thm:rescaled-main} and~\ref{thm:main}.
\end{proof}

\medskip
\begin{proof}[Proof of Theorem~\ref{thm:dead-core}]
The solution constructed in Theorem~\ref{thm:rescaled-main} is obtained
through the same outer minimization operator \(S\). Hence
Lemma~\ref{deadout} applies to the outer minimizer associated with the
fixed-point inner trace and gives the two dead-core inclusions stated in
Theorem~\ref{thm:dead-core}.
\end{proof}

\appendix
\section{Auxiliary tools}\label{app:auxiliary-tools}

\begin{proof}[Proof of Lemma~\ref{lemdead}]
Set
\[
q_{\mathrm{dc}}:=\frac{2}{N-2}\in(0,1),\qquad
m_{\mathrm{dc}}:=\frac{2}{1-q_{\mathrm{dc}}}
=\frac{2(N-2)}{N-4}.
\]
On the cone of nonnegative functions in $C([1,3/2])$, consider the
order-preserving operator
\[
(\mathcal Kv)(r)
=b\int_1^r t^{1-N}\int_1^t s^{N-1}v(s)^{q_{\mathrm{dc}}}\,ds\,dt.
\]
Since $m_{\mathrm{dc}}q_{\mathrm{dc}}+2=m_{\mathrm{dc}}$ and $1\le s,t\le 3/2$, there exist constants
$0<C_-\le C_+$, depending only on $N$ and $b$, such that
\[
C_-A^{q_{\mathrm{dc}}}(r-1)^{m_{\mathrm{dc}}}
\le \mathcal K\big(A(\,\cdot\,-1)^{m_{\mathrm{dc}}}\big)(r)
\le C_+A^{q_{\mathrm{dc}}}(r-1)^{m_{\mathrm{dc}}},
\qquad 1\le r\le 3/2.
\]
Choose $A_->0$ sufficiently small and $A_+>A_-$ sufficiently large so that
\[
\underline w(r):=A_-(r-1)^{m_{\mathrm{dc}}}\le\mathcal K\underline w(r),
\qquad
\mathcal K\overline w(r)\le\overline w(r):=A_+(r-1)^{m_{\mathrm{dc}}}.
\]
Define $w_0=\underline w$ and $w_{\ell+1}=\mathcal Kw_\ell$ for $\ell\ge0$. Monotonicity gives
\[
\underline w\le w_\ell\le w_{\ell+1}\le\overline w
\qquad\text{on }[1,3/2].
\]
Hence $w_\ell$ converges pointwise to a function $w$ between the two barriers.
Dominated convergence in the integral formula yields $w=\mathcal Kw$. In
particular,
\[
A_-(r-1)^{m_{\mathrm{dc}}}\le w(r)\le A_+(r-1)^{m_{\mathrm{dc}}},
\qquad 1\le r\le 3/2,
\]
so $w(r)>0$ for $r>1$. Differentiating the integral equation gives
\[
w'(r)=br^{1-N}\int_1^r s^{N-1}w(s)^{q_{\mathrm{dc}}}\,ds>0
\qquad (r>1).
\]
Extend $w$ by zero on $[0,1]$. Since $m_{\mathrm{dc}}>1$, the preceding bounds and the
integral formula give $w(1)=w'(1)=0$. Thus the radial extension belongs to
$C^1(B_{3/2}(0))$ and to $H^1(B_{3/2}(0))$, and it satisfies
$-\Delta w+bw^{q_{\mathrm{dc}}}=0$ weakly across the interface $r=1$. Set $c_b:=w(3/2)>0$.
Then $w-c_b\in H_0^1(B_{3/2}(0))$ in the trace sense.

Let $w_1,w_2$ be two nonnegative weak solutions with boundary value $c_b$.
Because $w_1-w_2\in H_0^1(B_{3/2}(0))$, testing the difference of the equations with
$w_1-w_2$ yields
\[
\int_{B_{3/2}(0)}|\nabla(w_1-w_2)|^2
+b\int_{B_{3/2}(0)}(w_1^{q_{\mathrm{dc}}}-w_2^{q_{\mathrm{dc}}})(w_1-w_2)=0.
\]
Both terms are nonnegative, and hence $w_1=w_2$. Rotational invariance and
uniqueness imply that the solution is radial.
\end{proof}

\medskip

The following elementary product estimate is taken from \cite{WY10}.
\begin{Lem}\label{lemB1}
Let $a,b>0$ and let $x_0,y_0\in\mathbb R^N$ be distinct. For every
$0<\sigma\le\min\{a,b\}$, there exists a constant $C>0$ such that
\[
\frac{1}{(1+|z-x_0|)^a(1+|z-y_0|)^b}
\le \frac{C}{|x_0-y_0|^\sigma}
\left(
\frac{1}{(1+|z-x_0|)^{a+b-\sigma}}
+\frac{1}{(1+|z-y_0|)^{a+b-\sigma}}
\right)
\]
for every $z\in\mathbb R^N$.
\end{Lem}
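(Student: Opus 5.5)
We prove the estimate by a two-case reduction to the one-dimensional weights $|z-x_0|$ and $|z-y_0|$. Put $d:=|x_0-y_0|>0$, $\rho_1:=1+|z-x_0|$ and $\rho_2:=1+|z-y_0|$. We split $\mathbb R^N$ into the two regions
\[
\mathcal R_1:=\{z:|z-x_0|\le|z-y_0|\},\qquad
\mathcal R_2:=\{z:|z-x_0|>|z-y_0|\}.
\]
By the symmetry $(x_0,a)\leftrightarrow(y_0,b)$, it suffices to treat $\mathcal R_1$ and show
\[
\rho_1^{-a}\rho_2^{-b}\le C d^{-\sigma}\rho_1^{-(a+b-\sigma)} .
\]

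On $\mathcal R_1$ the triangle inequality gives $d\le|z-x_0|+|z-y_0|\le2|z-y_0|$, hence $\rho_2\ge\max\{1,d/2\}$. Moreover $\rho_2\ge\rho_1$. Write
\[
\rho_2^{-b}=\rho_2^{-\sigma}\rho_2^{-(b-\sigma)},
\]
which uses $b-\sigma\ge0$ since $\sigma\le b$. For the first factor we want $\rho_2^{-\sigma}\le Cd^{-\sigma}$. This holds with $C=2^\sigma$ because $\rho_2\ge d/2$. For the second factor, $\rho_2^{-(b-\sigma)}\le\rho_1^{-(b-\sigma)}$ because $\rho_2\ge\rho_1$. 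Multiplying gives
\[
\rho_1^{-a}\rho_2^{-b}\le 2^\sigma d^{-\sigma}\rho_1^{-(a+b-\sigma)},
\]
which is the first term on the right-hand side of the claim. On $\mathcal R_2$ the same argument with the roles of $(x_0,a)$ and $(y_0,b)$ exchanged gives the second term, now using $\sigma\le a$.

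The only delicate point is that $d$ may be small, for instance $d<1$. Then $d^{-\sigma}$ is large, so the bound $\rho_2^{-\sigma}\le 1\le d^{-\sigma}$ is trivially true anyway. In the large-$d$ regime the bound $\rho_2\ge d/2$ does the work. So the constant $C=2^\sigma$ is uniform in $z$, in $x_0,y_0$ and in $d$. No integrability is involved, and the hypothesis $\sigma\le\min\{a,b\}$ enters only through the nonnegativity of the exponent $b-\sigma$ (respectively $a-\sigma$). There is no real obstacle: the step needing care is the correct orientation of the inequality $\rho_2\ge\rho_1$ on $\mathcal R_1$, which is exactly what allows the leftover power of $\rho_2$ to be transferred onto $\rho_1$.
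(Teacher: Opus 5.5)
Your proof is correct. On the half-space $\{|z-x_0|\le|z-y_0|\}$ you obtain two facts at once: $\rho_2\ge\rho_1$ and $\rho_2\ge 1+d/2>d/2$. Splitting $\rho_2^{-b}=\rho_2^{-\sigma}\rho_2^{-(b-\sigma)}$ then gives the bound with $C=2^\sigma$. Your paragraph on small $d$ is unnecessary, since $\rho_2>d/2$ holds for every $d>0$.

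The paper gives no proof of its own and simply refers to Wei--Yan. The argument there uses a different decomposition. It takes the balls of radius $|x_0-y_0|/2$ about the two centres and their complement, and splits the complement further according to whether $|z-x_0|$ exceeds a fixed multiple of $|x_0-y_0|$. In the bounded range both weights are comparable to $1+|x_0-y_0|$; in the far range $|z-y_0|$ is comparable to $|z-x_0|$.

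Your bisector decomposition replaces these several cases with two symmetric ones. It avoids any comparison of $|z-x_0|$ with $|x_0-y_0|$ on an annulus. It also produces an explicit constant that is independent of $x_0$, $y_0$ and $z$. That uniformity is what the paper actually uses. The lemma is applied with $x_0,y_0$ equal to $\tilde\xi_1,\tilde\xi_2$, which move with $\epsilon$, for instance in the bound for $I_4$ in the proof of Lemma~\ref{lem:scale-expansion}. The statement as written only asserts some constant for fixed points. The cited argument also gives a uniform constant, but there it has to be tracked through the case distinctions, whereas yours makes it immediate.
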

\medskip

An argument analogous to that used in the appendix of \cite{WY10} gives the
following estimate.
\begin{Lem}\label{lemB2}
For every $\sigma>0$ with $\sigma\ne N-2$, there is a constant $C>0$,
independent of $\epsilon$, such that
\[
\int_{\Omega_\epsilon}\frac{1}{|y-z|^{N-2}}
\frac{dz}{(1+|z|)^{2+\sigma}}
\le \frac{C}{(1+|y|)^{\min\{\sigma,N-2\}}}
\]
for every $y\in\mathbb R^N$.
\end{Lem}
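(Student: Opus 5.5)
The plan is to prove the bound for the larger domain $\mathbb R^N$ in place of $\Omega_\epsilon$. The integrand is nonnegative, so this only enlarges the left-hand side, and it automatically makes the constant independent of $\epsilon$. The integral over $\mathbb R^N$ converges: near $z=y$ the kernel $|y-z|^{2-N}$ is locally integrable, and at infinity the integrand behaves like $|z|^{-N-\sigma}$, which is integrable since $\sigma>0$. I would then split according to the size of $|y|$.

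\emph{Bounded $y$.} For $|y|\le 2$ it suffices to show that the integral is bounded by a constant, since $(1+|y|)^{-\min\{\sigma,N-2\}}$ is bounded below there.
\begin{itemize}
\item On $\{|z-y|\le 1\}$, bound $(1+|z|)^{-2-\sigma}\le 1$ and use $\int_{B_1(y)}|y-z|^{2-N}\,dz\le C$.
\item On $\{|z-y|\ge1\}$, bound $|y-z|^{2-N}\le C(1+|z|)^{2-N}$, which holds because $|y|\le2$. The remaining integral $\int(1+|z|)^{-N-\sigma}\,dz$ is finite.
\end{itemize}

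\emph{Large $y$.} For $|y|\ge2$ I would decompose $\mathbb R^N$ into three regions.
\begin{itemize}
\item $A_1=\{|z-y|\le |y|/2\}$. Here $|z|\ge|y|/2$, so $(1+|z|)^{-2-\sigma}\le C|y|^{-2-\sigma}$. Since $\int_{B_{|y|/2}(y)}|y-z|^{2-N}\,dz=C|y|^2$, this region contributes at most $C|y|^{-\sigma}$.
\item $A_2=\{|z|\le|y|/2\}$. Here $|z-y|\ge|y|/2$, so this region contributes at most $C|y|^{2-N}\int_{|z|\le|y|/2}(1+|z|)^{-2-\sigma}\,dz$. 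The remaining integral is bounded by $C|y|^{N-2-\sigma}$ if $\sigma<N-2$ and by a constant if $\sigma>N-2$. The excluded case $\sigma=N-2$ is exactly where a logarithm would appear. The contribution is therefore $C|y|^{-\sigma}$ or $C|y|^{2-N}$, that is, $C|y|^{-\min\{\sigma,N-2\}}$.
\item $A_3$, the complement of $A_1\cup A_2$. Here $|z|\ge|y|/2$ and $|z-y|\ge|y|/2$. From $|z|\le|z-y|+|y|\le3|z-y|$ we get $|y-z|^{2-N}\le C|z|^{2-N}$, so the integrand is at most $C|z|^{-N-\sigma}$. Integrating over $\{|z|\ge|y|/2\}$ gives $C|y|^{-\sigma}$.
\end{itemize}

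Summing the three contributions and using $|y|\sim1+|y|$ for $|y|\ge2$ gives $C(1+|y|)^{-\min\{\sigma,N-2\}}$. The only delicate point is region $A_2$, where the dichotomy $\sigma<N-2$ versus $\sigma>N-2$ produces the minimum in the exponent and explains why $\sigma=N-2$ must be excluded. Everything else is routine radial integration.
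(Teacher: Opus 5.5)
Your proposal is correct and follows essentially the same route as the argument from the appendix of \cite{WY10} that the paper cites without reproducing. That argument enlarges the domain to $\mathbb R^N$, treats bounded $y$ directly, and for large $y$ splits into $B_{|y|/2}(y)$, $B_{|y|/2}(0)$ and the complement. Your case split $\sigma<N-2$ versus $\sigma>N-2$ on the ball around the origin is exactly what extends the original $0<\sigma<N-2$ statement to the exponent $\min\{\sigma,N-2\}$.
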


\medskip
We next record the standard projection expansions used in
Section~\ref{sec:5}; see \cite{Musso02} for their derivation. Put
$\widehat\lambda_\epsilon:=s_\epsilon\lambda=\epsilon^{1/(N-4)}\lambda$.

\begin{Lem}\label{lemRj0}
As $\epsilon\to0$,
\begin{align*}
PU_{\widehat\lambda_\epsilon,\xi}(x)
&=U_{\widehat\lambda_\epsilon,\xi}(x)
-C_N\widehat\lambda_\epsilon^{\frac{N-2}{2}}H(x,\xi)
+o\left(\widehat\lambda_\epsilon^{\frac{N-2}{2}}\right),\\
PU_{\widehat\lambda_\epsilon,\xi}(x)
&=C_N\widehat\lambda_\epsilon^{\frac{N-2}{2}}G(x,\xi)
+o\left(\widehat\lambda_\epsilon^{\frac{N-2}{2}}\right).
\end{align*}
The expansions are uniform for $\lambda$ in compact subsets of
$(0,+\infty)$, for $\xi$ in compact subsets of $\Omega$, and for $x$ in
compact subsets of $\Omega\setminus\{\xi\}$.
\end{Lem}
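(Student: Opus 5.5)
The statement is Lemma~\ref{lemRj0}, the classical projection expansion. I will prove it directly from the Green representation and the maximum principle, not by citing \cite{Musso02}.

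\emph{Set-up.} Write $\widehat\lambda=\widehat\lambda_\epsilon$ and let
\[
R:=U_{\widehat\lambda,\xi}-PU_{\widehat\lambda,\xi}.
\]
By construction $R$ is harmonic in $\Omega$. On $\partial\Omega$ it equals $U_{\widehat\lambda,\xi}$. For $x\in\partial\Omega$ and $\xi$ in a compact $K\Subset\Omega$ we have $|x-\xi|\ge d_K>0$, so a Taylor expansion gives
\[
U_{\widehat\lambda,\xi}(x)=C_N\widehat\lambda^{\frac{N-2}{2}}|x-\xi|^{2-N}\Bigl(1+\widehat\lambda^2|x-\xi|^{-2}\Bigr)^{-\frac{N-2}{2}}
=C_N\widehat\lambda^{\frac{N-2}{2}}|x-\xi|^{2-N}+O\bigl(\widehat\lambda^{\frac{N+2}{2}}\bigr),
\]
uniformly for $x\in\partial\Omega$, $\xi\in K$, and $\lambda$ in a compact subset of $(0,\infty)$.

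\emph{First expansion.} The function $C_N\widehat\lambda^{\frac{N-2}{2}}H(\cdot,\xi)$ is harmonic in $\Omega$ with boundary values $C_N\widehat\lambda^{\frac{N-2}{2}}|x-\xi|^{2-N}$. The difference
\[
R-C_N\widehat\lambda^{\frac{N-2}{2}}H(\cdot,\xi)
\]
is therefore harmonic, with boundary values $O(\widehat\lambda^{(N+2)/2})$ uniformly. The maximum principle then gives
\[
R=C_N\widehat\lambda^{\frac{N-2}{2}}H(\cdot,\xi)+O\bigl(\widehat\lambda^{\frac{N+2}{2}}\bigr)
\]
uniformly in all of $\overline\Omega$. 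This is stronger than the $o(\widehat\lambda^{(N-2)/2})$ claimed, and it holds even without excluding a neighborhood of $\xi$. Uniformity in $\lambda$ and $\xi$ comes from the constants in the boundary Taylor expansion.

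\emph{Second expansion.} Restrict to $x$ in a compact subset of $\Omega\setminus\{\xi\}$, uniformly for $\xi\in K$. Concretely, take pairs $(x,\xi)$ ranging over a compact subset of $\{(x,\xi)\in\Omega\times\Omega: x\ne\xi\}$. There $|x-\xi|\ge c>0$, and the same Taylor expansion gives
\[
U_{\widehat\lambda,\xi}(x)=C_N\widehat\lambda^{\frac{N-2}{2}}|x-\xi|^{2-N}+O\bigl(\widehat\lambda^{\frac{N+2}{2}}\bigr).
\]
Subtract the first expansion and use $G=\Gamma_x-H$ to obtain
\[
PU_{\widehat\lambda,\xi}(x)=C_N\widehat\lambda^{\frac{N-2}{2}}G(x,\xi)+O\bigl(\widehat\lambda^{\frac{N+2}{2}}\bigr).
\]

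\emph{Where care is needed.} No step is a real obstacle. The only point requiring attention is the uniformity bookkeeping: the boundary Taylor expansion must have constants depending only on $\mathrm{dist}(K,\partial\Omega)$ and the $\lambda$-interval, and the maximum principle must be applied to a function that is continuous up to the boundary. Continuity up to the boundary holds because $\Omega$ is smooth and the boundary data are smooth.

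The paper also uses $C^1$ versions of these expansions, in \eqref{eq:17}. If I wanted to cover those here, I would apply interior gradient estimates for harmonic functions to $R-C_N\widehat\lambda^{(N-2)/2}H$ on compact subsets, together with direct differentiation of the explicit bubble away from $\xi$. This does not change the plan.
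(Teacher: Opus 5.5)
Your argument is correct, and it is the classical derivation that the paper relies on: the paper gives no proof of its own and only refers to Musso--Pistoia, where the expansion is obtained exactly as you do, by showing that $U_{\widehat\lambda,\xi}-PU_{\widehat\lambda,\xi}-C_N\widehat\lambda^{\frac{N-2}{2}}H(\cdot,\xi)$ is harmonic with boundary values $O(\widehat\lambda^{\frac{N+2}{2}})$ and then applying the maximum principle. Your version also gives the remainder $O(\widehat\lambda^{\frac{N+2}{2}})$ uniformly on all of $\overline\Omega$, which is what the paper actually uses in Lemmas~\ref{lem8} and~\ref{lem10}: there the expansion is applied at points of $P_2$, close to $\xi$, with error $O(\epsilon^{\frac{N}{N-4}})$, and the lemma as stated (an $o$-remainder on compact subsets of $\Omega\setminus\{\xi\}$) does not literally cover that use.
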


A direct computation gives
\begin{align*}
Y_{\lambda,y}^{0}(x)
&=C_N\frac{N-2}{2}\lambda^{\frac{N-4}{2}}
\frac{|x-y|^2-\lambda^2}{(\lambda^2+|x-y|^2)^{N/2}},\\
Y_{\lambda,y}^{j}(x)
&=C_N(N-2)\lambda^{\frac{N-2}{2}}
\frac{x_j-y_j}{(\lambda^2+|x-y|^2)^{N/2}},
\qquad j=1,\ldots,N.
\end{align*}

\begin{Lem}\label{lem:projected-kernel-expansions}
Under the same uniformity assumptions as in Lemma~\ref{lemRj0},
\begin{align*}
P Y_{\widehat\lambda_\epsilon,\xi}^{j}(x)
&=C_N\widehat\lambda_\epsilon^{\frac{N-2}{2}}
\partial_{\xi_j}G(x,\xi)
+o\left(\widehat\lambda_\epsilon^{\frac{N-2}{2}}\right),
&&j=1,\ldots,N,\\
P Y_{\widehat\lambda_\epsilon,\xi}^{0}(x)
&=C_N\frac{N-2}{2}\widehat\lambda_\epsilon^{\frac{N-4}{2}}G(x,\xi)
+o\left(\widehat\lambda_\epsilon^{\frac{N-4}{2}}\right).
\end{align*}
\end{Lem}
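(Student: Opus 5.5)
The statement to prove is Lemma~\ref{lem:projected-kernel-expansions}: the expansions of $PY^j_{\widehat\lambda_\epsilon,\xi}$ and $PY^0_{\widehat\lambda_\epsilon,\xi}$ away from $\xi$. The plan is to obtain them from Lemma~\ref{lemRj0} and its companion expansion for the regular part, by differentiating with respect to the parameters. Throughout, $x$ ranges over a compact subset of $\Omega\setminus\{\xi\}$.

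\textbf{Step 1: reduce to the regular part.} Since $Y^j_{\lambda,\xi}=\partial_{\xi_j}U_{\lambda,\xi}$ and $Y^0=\partial_\lambda U$, and the Dirichlet projection commutes with parameter derivatives, we have $PY^j=\partial_{\xi_j}PU$ and $PY^0=\partial_\lambda PU$. Write $PU_{\lambda,\xi}=U_{\lambda,\xi}-\varphi_{\lambda,\xi}$, where $\varphi_{\lambda,\xi}$ is harmonic in $\Omega$ and equals $U_{\lambda,\xi}$ on $\partial\Omega$. Then $\partial\varphi$ is harmonic with boundary value $\partial U$, so everything reduces to boundary expansions plus the maximum principle.

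\textbf{Step 2: the translation kernels.} On $\partial\Omega$, and more generally for $|x-\xi|\ge c>0$, write
\[
U_{\lambda,\xi}(x)=C_N\lambda^{\frac{N-2}{2}}|x-\xi|^{2-N}\bigl(1+\lambda^2|x-\xi|^{-2}\bigr)^{-\frac{N-2}{2}}.
\]
This gives, uniformly in $C^1$ in $\xi$,
\[
U_{\lambda,\xi}=C_N\lambda^{\frac{N-2}{2}}\Gamma_\xi+O\bigl(\lambda^{\frac{N+2}{2}}\bigr),\qquad
\partial_{\xi_j}U_{\lambda,\xi}=C_N\lambda^{\frac{N-2}{2}}\partial_{\xi_j}\Gamma_\xi+O\bigl(\lambda^{\frac{N+2}{2}}\bigr).
\]
The maximum principle then gives
\[
\partial_{\xi_j}\varphi_{\lambda,\xi}=C_N\lambda^{\frac{N-2}{2}}\partial_{\xi_j}H(\cdot,\xi)+O\bigl(\lambda^{\frac{N+2}{2}}\bigr)
\]
uniformly in $\Omega$. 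Subtracting from the interior expansion of $\partial_{\xi_j}U$ at $x\ne\xi$ yields
\[
PY^j=C_N\lambda^{\frac{N-2}{2}}\partial_{\xi_j}\bigl(\Gamma_\xi-H(\cdot,\xi)\bigr)+O\bigl(\lambda^{\frac{N+2}{2}}\bigr)=C_N\lambda^{\frac{N-2}{2}}\partial_{\xi_j}G(x,\xi)+o\bigl(\lambda^{\frac{N-2}{2}}\bigr).
\]

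\textbf{Step 3: the scale kernel.} Differentiate the same closed form in $\lambda$:
\[
\partial_\lambda U=C_N\tfrac{N-2}{2}\lambda^{\frac{N-4}{2}}\Gamma_\xi+O\bigl(\lambda^{\frac{N}{2}}\bigr)
\]
uniformly for $|x-\xi|\ge c$. This is consistent with the explicit formula for $Y^0$, since $\frac{|x-\xi|^2-\lambda^2}{(\lambda^2+|x-\xi|^2)^{N/2}}=|x-\xi|^{2-N}+O(\lambda^2)$. The maximum principle gives
\[
\partial_\lambda\varphi=C_N\tfrac{N-2}{2}\lambda^{\frac{N-4}{2}}H(\cdot,\xi)+O\bigl(\lambda^{\frac N2}\bigr),
\]
and subtracting gives the stated expansion for $PY^0$ with the $G(x,\xi)$ coefficient. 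Finally, set $\lambda=\widehat\lambda_\epsilon$. The uniformity in $\lambda$ on compact sets and in $\xi$ on compact subsets of $\Omega$ follows because all the $O(\cdot)$ constants depend only on a lower bound for $\operatorname{dist}(\xi,\partial\Omega)$ and for $|x-\xi|$.

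\textbf{Main obstacle.} The only delicate point is uniformity of the error terms in $C^1$ with respect to the parameter $\xi$ on $\partial\Omega$, because the maximum principle needs sup-norm control of the differentiated boundary data. This is handled by the explicit closed forms of $Y^j$ and $Y^0$ displayed just before the statement, together with $\operatorname{dist}(\xi,\partial\Omega)\ge c$. No delicate interior analysis near $\xi$ is needed, since $x$ stays away from $\xi$.
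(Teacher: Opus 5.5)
Your proof is correct. Writing $PY=Y-h$, with $h$ the harmonic extension of the boundary trace of $Y$, expanding the explicit kernels on $\{|x-\xi|\ge c\}$, and applying the maximum principle is the standard derivation behind these expansions; the paper does not prove them itself and refers to \cite{Musso02}. Your version also gives the sharper remainders $O\bigl(\widehat\lambda_\epsilon^{\frac{N+2}{2}}\bigr)$ and $O\bigl(\widehat\lambda_\epsilon^{\frac{N}{2}}\bigr)$. The only input you use implicitly is the symmetry $H(x,\xi)=H(\xi,x)$, which is needed to match the paper's convention for $H$ and is standard.
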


Set
\[
D^j_{\widehat\lambda_\epsilon,\xi}:=Y^j_{\widehat\lambda_\epsilon,\xi}
-PY^j_{\widehat\lambda_\epsilon,\xi},
\qquad j=0,1,\ldots,N.
\]

\begin{Lem}\label{lemD}
Under the same uniformity assumptions,
\begin{align*}
D^j_{\widehat\lambda_\epsilon,\xi}(x)
&=C_N\widehat\lambda_\epsilon^{\frac{N-2}{2}}
\partial_{\xi_j}H(x,\xi)
+o\left(\widehat\lambda_\epsilon^{\frac{N-2}{2}}\right),
&&j=1,\ldots,N,\\
D^0_{\widehat\lambda_\epsilon,\xi}(x)
&=C_N\frac{N-2}{2}\widehat\lambda_\epsilon^{\frac{N-4}{2}}H(x,\xi)
+o\left(\widehat\lambda_\epsilon^{\frac{N-4}{2}}\right).
\end{align*}
\end{Lem}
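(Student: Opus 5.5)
This is the standard expansion of the Dirichlet projection of the scale derivative, and I would prove it exactly as Lemma~\ref{lemRj0} is proved, i.e.\ via harmonic functions and the maximum principle. By definition, $D^j_{\widehat\lambda_\epsilon,\xi}=Y^j_{\widehat\lambda_\epsilon,\xi}-PY^j_{\widehat\lambda_\epsilon,\xi}$ is harmonic in $\Omega$. Its boundary values equal those of $Y^j_{\widehat\lambda_\epsilon,\xi}$ on $\partial\Omega$. The comparison functions $C_N\widehat\lambda_\epsilon^{\frac{N-2}{2}}\partial_{\xi_j}H(\cdot,\xi)$ and $C_N\frac{N-2}{2}\widehat\lambda_\epsilon^{\frac{N-4}{2}}H(\cdot,\xi)$ are also harmonic in $\Omega$. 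On $\partial\Omega$ they equal $C_N\widehat\lambda_\epsilon^{\frac{N-2}{2}}\partial_{\xi_j}|x-\xi|^{2-N}$ and $C_N\frac{N-2}{2}\widehat\lambda_\epsilon^{\frac{N-4}{2}}|x-\xi|^{2-N}$, because $H(x,\xi)=\Gamma_x(\xi)$ for $x\in\partial\Omega$ and this identity may be differentiated in $\xi$. The maximum principle therefore reduces the claim to a uniform estimate on $\partial\Omega$ of the difference between $Y^j$ and these leading terms.

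For the boundary estimate I would use the explicit formulas recorded just before Lemma~\ref{lem:projected-kernel-expansions}. For $x\in\partial\Omega$ one has $|x-\xi|\ge d>0$ uniformly for $\xi$ in a compact set $K\Subset\Omega$. Expanding $(\widehat\lambda_\epsilon^2+|x-\xi|^2)^{-N/2}=|x-\xi|^{-N}(1+O(\widehat\lambda_\epsilon^2))$ then gives
\[
Y^j_{\widehat\lambda_\epsilon,\xi}(x)=C_N(N-2)\widehat\lambda_\epsilon^{\frac{N-2}{2}}\frac{x_j-\xi_j}{|x-\xi|^N}+O(\widehat\lambda_\epsilon^{\frac{N+2}{2}}),
\]
and the main term is exactly $C_N\widehat\lambda_\epsilon^{\frac{N-2}{2}}\partial_{\xi_j}|x-\xi|^{2-N}$. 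Likewise,
\[
Y^0_{\widehat\lambda_\epsilon,\xi}(x)=C_N\frac{N-2}{2}\widehat\lambda_\epsilon^{\frac{N-4}{2}}|x-\xi|^{2-N}+O(\widehat\lambda_\epsilon^{\frac{N}{2}}).
\]
The maximum principle applied to the harmonic differences then gives the asserted expansions, in fact with sup-norm remainders $O(\widehat\lambda_\epsilon^{\frac{N+2}{2}})$ and $O(\widehat\lambda_\epsilon^{\frac N2})$ on all of $\overline\Omega$. These are stronger than the stated $o(\cdot)$ terms. They are also uniform for $\lambda$ in compacts of $(0,\infty)$ and $\xi\in K$, since the constants depend only on $d$ and the bounds on $\lambda$.

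The only point needing care is the uniformity: the $O(\widehat\lambda_\epsilon^2)$ factor in the expansion must be controlled uniformly in $x\in\partial\Omega$ and $\xi\in K$. This holds because $|x-\xi|\ge d$ there. The justification that one may differentiate the boundary identity $H(x,\xi)=|x-\xi|^{2-N}$ in $\xi$ follows from the smoothness of $H$ in $\xi\in\Omega$ for fixed boundary data. No step here is a genuine obstacle; the argument is the routine one from \cite{Musso02}.
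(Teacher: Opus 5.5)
Your proof is correct and is the standard argument: the paper gives no proof of its own and simply defers to \cite{Musso02}, where these expansions are derived the same way, from the harmonicity of $D^j_{\widehat\lambda_\epsilon,\xi}$, the boundary expansion of $Y^j_{\widehat\lambda_\epsilon,\xi}$ away from $\xi$, and the maximum principle. The one step you flag, that $x\mapsto\partial_{\xi_j}H(x,\xi)$ is the harmonic extension of $\partial_{\xi_j}|x-\xi|^{2-N}$, follows from the maximum principle applied to difference quotients in $\xi$. Your argument also gives the stronger remainders $O(\widehat\lambda_\epsilon^{\frac{N+2}{2}})$ and $O(\widehat\lambda_\epsilon^{\frac N2})$, uniformly on all of $\overline\Omega$.
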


\subsection*{Simultaneous choice of the auxiliary exponents}

The following lemma verifies that the losses in
\eqref{eq:auxiliary-exponents} can be chosen simultaneously.

\begin{Lem}\label{lem:exponent-choice}
Let $N\ge5$ and $\tau\in\mathcal I_N$. The numbers
$\vartheta,\theta',\tilde\theta,\bar\theta$ and $\delta_*$ can be chosen as in
\eqref{eq:auxiliary-exponents} so that, for some $\sigma_0>0$,
\begin{equation}\label{eq:basic-exponent-compatibility}
\gamma_0<\gamma_1<2\gamma_0,
\qquad
\gamma_0>\frac{N-2}{B_\tau},
\qquad
(N-4)\tilde\theta+\bar\theta<2,
\end{equation}
\begin{align}
\alpha_i-\tilde\alpha_i
&<\frac{2\tilde\theta_i+2-\theta_i}{B_\tau},
&
\alpha_i&<\frac{N-\theta_i}{N-4},
\qquad i=1,2,\label{eq:all-barrier-margins}\\
\alpha_i-\frac{N-2-\theta_i}{N-2}&>0,
&
\tilde\alpha_i+\frac{\tilde\theta_i}{N-4}&>1,
\qquad i=1,2,\label{eq:all-deadcore-margins}
\end{align}
and
\begin{equation}\label{eq:self-map-exponent-margins}
\min\left\{
\alpha_1+\frac{2\theta_1-2}{B_\tau},
\ \tilde\alpha_1+\frac{\tilde\theta_1}{N-4}-\frac2{B_\tau},
\ \gamma_1-\frac4{B_\tau},
\ m_N\kappa_0-\frac4{B_\tau}
\right\}
\ge\gamma_0+\sigma_0.
\end{equation}
Moreover,
\begin{equation}\label{eq:contraction-exponent-margins}
\kappa_0-\frac4{B_\tau}>0,
\qquad
(2^*-2)\kappa_0-\frac4{B_\tau}>0,
\end{equation}
and
\begin{equation}\label{eq:final-correction-exponent-margin}
\tilde\alpha_1+\frac{\tilde\theta_1}{N-2}
>\frac{N-2}{2(N-4)}.
\end{equation}
The upper bound on $\tau$ in \eqref{tau-range} is used only in the first
inequality of \eqref{eq:all-barrier-margins}.
\end{Lem}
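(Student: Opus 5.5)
The plan is a perturbation argument from the loss-free endpoint. All quantities in the lemma are affine functions of the five losses $\vartheta,\theta',\tilde\theta,\bar\theta,\delta_*$, and $N,\tau$ are fixed. So it suffices to verify every required inequality \emph{strictly} at the formal limit
\[
\vartheta=\theta'=\tilde\theta=\bar\theta=\delta_*=0 .
\]
Once this is done, strictness and continuity give a neighbourhood of the origin in which all inequalities still hold with a common margin $\sigma_0>0$. Inside it we choose the losses in the order prescribed before \eqref{eq:auxiliary-exponents}: first $\vartheta$, then $0<\theta'<\vartheta$ together with $\tilde\theta,\bar\theta$, and finally $\delta_*$. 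The nesting $\theta'<\vartheta$ causes no trouble because both tend to zero.

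At the endpoint we have
\[
\alpha_i=\frac{N-2}{2(N-4)},\quad \tilde\alpha_i=\frac2{N-4},\quad \theta_i=\frac N2,\quad \tilde\theta_i=N-2,\quad \gamma_0=\kappa_0=\alpha_1+\frac{N}{2B_\tau},\quad \gamma_1=2\alpha_1 .
\]
Every inequality then reduces to a polynomial inequality in $(N,\tau)$ after multiplication by $2B_\tau=2(2+\tau)(N-4)$. We go through them one at a time.

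\begin{itemize}
\item $\gamma_0<\gamma_1$ becomes $N<(N-2)(2+\tau)$. The inequality $\gamma_1<2\gamma_0$ is immediate because $\kappa_0>\alpha_1$.
\item $\gamma_0>(N-2)/B_\tau$ becomes $(N-2)(2+\tau)>N-4$. The condition $(N-4)\tilde\theta+\bar\theta<2$ is trivial at zero.
\item The second inequality of \eqref{eq:all-barrier-margins} and both inequalities of \eqref{eq:all-deadcore-margins} were already reduced in the text to the positive margins $(1+\vartheta)/(N-4)$, $2(N-3)/((N-4)(N-2))$ and $4/(N-4)$.
\item The first inequality of \eqref{eq:all-barrier-margins} is the computation given after \eqref{eq:mixed-barrier-condition}. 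For $N=5,6$ it is automatic, and for $N\ge7$ it is exactly $\tau<(N+8)/(N-6)$. This is the only place where the upper end of $\mathcal I_N$ enters.
\end{itemize}

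For \eqref{eq:self-map-exponent-margins}, each entry minus $\kappa_0$ is computed separately.

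\begin{itemize}
\item The first entry gives $(N-2-N/2)/B_\tau>0$.
\item The second entry gives $\bigl[(N-2)\tau+N-8\bigr]/(2B_\tau)$. This requires $\tau>(8-N)/(N-2)$, which follows from $\tau\in\mathcal I_N$ in each case $N=5$, $N=6$, $N\ge7$.
\item The third entry requires $(N-2)(2+\tau)>N+8$. Since $\tau>4$ for $N=5$ and $\tau>2$ for $N=6$, this holds in those cases. For $N\ge7$ it holds because $4/(N-4)\ge(12-N)/(N-2)$.
\item The fourth entry, and both inequalities of \eqref{eq:contraction-exponent-margins}, reduce to $\kappa_0B_\tau>4$ and $\kappa_0B_\tau>N-2$. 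These follow from
\[
\kappa_0B_\tau=\tfrac{(N-2)(2+\tau)+N}{2}.
\]
For $N=5$ the relevant factor is $m_N-1=4/3$ instead of $1$.
\end{itemize}

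Finally, \eqref{eq:final-correction-exponent-margin} at the endpoint reads
\[
\frac2{N-4}+1>\frac{N-2}{2(N-4)},
\]
which is true.

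The main obstacle, in my view, is not any single inequality but the bookkeeping needed to confirm two things. First, no required inequality degenerates to an \emph{equality} at the endpoint, since that would defeat the continuity argument. Second, the lower bounds in $\mathcal I_N$ really do cover the worst of these polynomial conditions in the low dimensions $N=5,6,7$. I would therefore first tabulate the endpoint margins as explicit rational functions of $\tau$ for $N=5$, $N=6$ and general $N\ge7$, and check their positivity on $\mathcal I_N$. Then I would take $\sigma_0$ to be half of the smallest endpoint margin, and shrink the losses until the linear perturbation of every quantity is below $\sigma_0$.
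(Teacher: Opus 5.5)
Your proposal is correct and is essentially the paper's own argument: verify every inequality strictly at the zero-loss endpoint, choose the losses and then $\delta_*$ small by continuity, and take $\sigma_0$ below the least remaining gap. Two arithmetic slips do not affect the conclusion. First, the second self-map gap at zero loss is $\frac{(N+2)\tau+N}{2B_\tau}$, which is positive for every $\tau>0$; you wrote $\frac{(N-2)\tau+N-8}{2B_\tau}$, which is smaller, so your conclusion survives. Second, $m_5=\min\{2,7/3\}=2$, so $m_5-1=1$; the factor $4/3$ is $2^*-2$ and enters only the second contraction inequality $(2^*-2)\kappa_0>4/B_\tau$, and both cases are covered by $\kappa_0B_\tau>\max\{4,N-2\}$.
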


The two inequalities in \eqref{eq:all-barrier-margins} are used in
Lemma~\ref{lem8}; \eqref{eq:all-deadcore-margins} is used in
Lemma~\ref{deadout}; \eqref{eq:self-map-exponent-margins} and
\eqref{eq:contraction-exponent-margins} are used in Lemma~\ref{lem13}; and
\eqref{eq:final-correction-exponent-margin} is used in the proof of
Theorem~\ref{thm:rescaled-main}.

\begin{proof}
First set the four losses and $\delta_*$ equal to zero. Then, for
$i=1,2$,
\[
\alpha_i=\frac{N-2}{2(N-4)},\qquad
\tilde\alpha_i=\frac2{N-4},\qquad
\theta_i=\frac N2,\qquad
\tilde\theta_i=N-2,
\]
and
\[
\kappa_0=\frac{N-2}{2(N-4)}+\frac{N}{2B_\tau}.
\]
The three nontrivial gaps in \eqref{eq:basic-exponent-compatibility} are
\[
\gamma_1-\kappa_0
=\frac{(N-2)\tau+N-4}{2(N-4)(2+\tau)},\qquad
2\kappa_0-\gamma_1
=\frac{N}{(N-4)(2+\tau)},
\]
\[
\kappa_0-\frac{N-2}{B_\tau}
=\frac{N+\tau(N-2)}{2(N-4)(2+\tau)},
\]
and hence are positive.

After subtracting $\kappa_0$ from the four entries in the minimum in
\eqref{eq:self-map-exponent-margins}, their zero-loss gaps are
\begin{align*}
\alpha_1+\frac{2\theta_1-2}{B_\tau}-\kappa_0
&=\frac1{2(2+\tau)},\\
\tilde\alpha_1+\frac{\tilde\theta_1}{N-4}
-\frac2{B_\tau}-\kappa_0
&=\frac{N\tau+N+2\tau}{2(N-4)(2+\tau)},\\
\gamma_1-\frac4{B_\tau}-\kappa_0
&=\frac{(N-2)\tau+N-12}{2(N-4)(2+\tau)},\\
 m_N\kappa_0-\frac4{B_\tau}-\kappa_0
&=
\begin{cases}
\displaystyle
\frac{(N-2)\tau+3N-12}{2(N-4)(2+\tau)},&N=5,6,\\[2mm]
\displaystyle
\frac{2\bigl(N+\tau(N-2)\bigr)}
{(N-2)(2+\tau)(N-4)},&N\ge7.
\end{cases}
\end{align*}
They are positive precisely by the lower bound on $\tau$. These last two
cases also give \eqref{eq:contraction-exponent-margins}: when $N=5,6$ one
has $2^*-2\ge1$, while for $N\ge7$ one has $2^*-2<1$.

The remaining zero-loss margins are
\[
\frac{N-\theta_i}{N-4}-\alpha_i=\frac1{N-4},\qquad
\alpha_i-\frac{N-2-\theta_i}{N-2}
=\frac{2(N-3)}{(N-4)(N-2)},
\]
\[
\tilde\alpha_i+\frac{\tilde\theta_i}{N-4}-1
=\frac4{N-4},\qquad
\tilde\alpha_1+\frac{\tilde\theta_1}{N-2}
-\frac{N-2}{2(N-4)}
=\frac{N-2}{2(N-4)}.
\]
The first inequality in \eqref{eq:all-barrier-margins} is automatic at zero
loss for $N=5,6$; for $N\ge7$ it reduces to
$\tau<(N+8)/(N-6)$. Also,
$(N-4)\tilde\theta+\bar\theta<2$ is strict at zero loss.

Thus every required inequality is strict at the zero-loss choice. By
continuity, choose
$\vartheta,\theta',\tilde\theta,\bar\theta>0$ sufficiently small, with
$0<\theta'<\vartheta$, so that all these inequalities remain strict. Next
choose $\delta_*>0$ sufficiently small so that
$\gamma_0<\gamma_1<2\gamma_0$ and all four entries in
\eqref{eq:self-map-exponent-margins} remain larger than $\gamma_0$. Finally,
choose $\sigma_0>0$ smaller than the least of the resulting four gaps.
\end{proof}

\section{Parameter dependence of the auxiliary operators}

The continuity statement in Proposition~\ref{prop:parameter-continuity}
requires two facts. First, the moment subspaces in the projected linear
problems vary with the parameters, so the equations and the moment conditions
must be combined into an operator on fixed Banach spaces. Second, the inner
balls in the exterior problem move with the centers, so prescribed inner data
must be transported before the corresponding minimizers can be compared. All
statements below are for each fixed sufficiently small $\epsilon>0$.

Let
\[
\mathbf t=(\lambda_1,\xi_1,\lambda_2,\xi_2)\in O_\delta.
\]
We write $L_i(\mathbf t)$ for the operators in
Propositions~\ref{prop:linear-first} and~\ref{prop:linear-second}, and
$S_{\mathbf t}$ for the outer operator in Definition~\ref{defS}. We use
$\Lambda_\epsilon(\mathbf t)$ when the parameter dependence of the
admissible class must be displayed. Functions
supported in $P_2(\mathbf t)$ or $Q_2(\mathbf t)$ are extended by zero to
$\Omega_\epsilon$ when two parameter values are compared. For
$h\in L^\infty(\Omega_\epsilon)$, let
\[
\mathbf a_1(\mathbf t;h):=(c_0,\ldots,c_N),
\qquad
\mathbf a_2(\mathbf t;h):=(d_0,\ldots,d_N)
\]
denote the multiplier vectors in the two linear problems.

\begin{Lem}\label{lem:linear-parameter-dependence}
Fix $\epsilon>0$ sufficiently small, a compact set $K\Subset O_\delta$, and
$q>N$. Let $\mathbf t_m\to\mathbf t$ in $K$. If $f_m\to f$ in
$L^q(\Omega_\epsilon)$ and $(\phi_m,\mathbf a_m)$ is the solution and
multiplier vector of the $i$th projected linear problem obtained by replacing
the localized right-hand side in \eqref{eq:linear-first} or
\eqref{eq:linear-second} by $f_m$, with parameter $\mathbf t_m$, then
\begin{equation}\label{eq:linear-sequential-parameter}
\phi_m\longrightarrow\phi
\quad\text{in }W^{2,q}(\Omega_\epsilon),
\qquad
\mathbf a_m\longrightarrow\mathbf a,
\end{equation}
where $(\phi,\mathbf a)$ is the corresponding pair for parameter $\mathbf t$
and source $f$.

In particular, for the original operators with sources cut off to the moving
balls,
\begin{equation}\label{eq:linear-parameter-operator-norm}
\sup_{\|h\|_{L^\infty}\le1}
\left(
\|L_i(\mathbf t_m)h-L_i(\mathbf t)h\|_{C^1(\overline{\Omega_\epsilon})}
+|\mathbf a_i(\mathbf t_m;h)-\mathbf a_i(\mathbf t;h)|
\right)\longrightarrow0.
\end{equation}
\end{Lem}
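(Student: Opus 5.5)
The plan is to reduce the moving-constraint problem to a fixed one by transporting parameters through translations and a single change of variables, and then to invoke the uniform invertibility already proved in Proposition~\ref{prop:linear-first}. Since $\epsilon$ is fixed and $\mathbf t_m\to\mathbf t$, the data $V_i(\mathbf t_m)$, $U_i(\mathbf t_m)^{2^*-2}Y_{i,j}(\mathbf t_m)$ and the projected kernels $Z_i^j(\mathbf t_m)$ converge smoothly on $\overline{\Omega_\epsilon}$ to their values at $\mathbf t$; the cut-off $\chi_0$ is smooth and the bubbles are smooth at fixed $\epsilon$. I would write the $i$th problem as a linear operator
\[
\mathcal L_{\mathbf t}:(\phi,\mathbf a)\longmapsto\Big(-\Delta\phi-\epsilon^{\frac{N-2}{N-4}}\phi-(2^*-1)V_i^{2^*-2}\phi-\sum_j a_jW_{i,j},\ (\ell_{i,j}(\phi))_{j}\Big)
\]
from $X:=(W^{2,q}\cap W_0^{1,q}(\Omega_\epsilon))\times\mathbb R^{N+1}$ to $Y:=L^q(\Omega_\epsilon)\times\mathbb R^{N+1}$. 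These spaces do not depend on $\mathbf t$, and $\mathbf t\mapsto\mathcal L_{\mathbf t}$ is continuous in operator norm, because every coefficient converges uniformly.

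The first step is to show that $\mathcal L_{\mathbf t}$ is an isomorphism $X\to Y$. It is a compact perturbation of $(\phi,\mathbf a)\mapsto(-\Delta\phi,\mathbf a)$, which is an isomorphism for $q>N$ by Dirichlet $W^{2,q}$ theory; hence it is Fredholm of index zero. Its kernel is trivial: a kernel element solves \eqref{eq:linear-first} with $h=0$, and the uniqueness in Proposition~\ref{prop:linear-first} gives $\phi=0$. The multipliers then vanish because the $W_{i,j}$ are linearly independent, which follows from the positive definite Gram matrix. Nonzero moment data are handled by subtracting a combination of the $Y_{i,j}$, which belong to $W^{2,q}$ and have invertible moment matrix. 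Once $\mathcal L_{\mathbf t}$ is an isomorphism, continuity of inversion in the operator norm gives $\mathcal L_{\mathbf t_m}^{-1}\to\mathcal L_{\mathbf t}^{-1}$. Applying this to $(f_m,0)\to(f,0)$ in $Y$ yields \eqref{eq:linear-sequential-parameter}.

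For \eqref{eq:linear-parameter-operator-norm}, the source $\mathbf 1_{P_2(\mathbf t_m)}h$ differs from $\mathbf 1_{P_2(\mathbf t)}h$ only on the symmetric difference of two balls. That set has measure $o(1)$, so
\[
\|\mathbf 1_{P_2(\mathbf t_m)}h-\mathbf 1_{P_2(\mathbf t)}h\|_{L^q}\le|P_2(\mathbf t_m)\triangle P_2(\mathbf t)|^{1/q}\to0
\]
uniformly for $\|h\|_\infty\le1$. Combining this with the operator-norm convergence of the inverses, and with the Morrey embedding $W^{2,q}\hookrightarrow C^1(\overline{\Omega_\epsilon})$ for $q>N$, gives the uniform statement. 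The supremum is harmless because everything is linear with bounds uniform on the unit ball.

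The main obstacle is the Fredholm and invertibility step on $W^{2,q}$. Proposition~\ref{prop:linear-first} was stated for $L^\infty$ sources, so one must check that injectivity on $X$ follows from it; this works because $L^q$ solutions with $q>N$ are bounded, and then the $L^\infty$ uniqueness applies. I expect to need care in showing the moment map is surjective onto $\mathbb R^{N+1}$ within $W^{2,q}\cap W_0^{1,q}$. The supports of the $Y_{i,j}$ lie in $P_1\Subset\Omega_\epsilon$, so this should follow directly.
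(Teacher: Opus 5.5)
Your proposal is correct and follows the paper's proof essentially step for step. Both use the same fixed-space operator on $(W^{2,q}\cap W_0^{1,q}(\Omega_\epsilon))\times\mathbb R^{N+1}$, Fredholm of index zero as a compact perturbation of $(\phi,\mathbf a)\mapsto(-\Delta\phi,\mathbf a)$, with injectivity from the uniqueness in Proposition~\ref{prop:linear-first}. Both then apply operator-norm continuity of inversion to $(f_m,0)$, and handle the cut-off sources by the symmetric-difference bound combined with $W^{2,q}\hookrightarrow C^1(\overline{\Omega_\epsilon})$. Two remarks: the transport by translations and a change of variables announced in your first sentence is never actually used, and the separate surjectivity check for the moment map is redundant, since index zero plus trivial kernel already gives invertibility.
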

\begin{proof}
Set
\[
X_q:=W^{2,q}(\Omega_\epsilon)\cap W_0^{1,q}(\Omega_\epsilon).
\]
For $i=1,2$, combine the equation, multipliers, and moment conditions in the
fixed-space operator
\begin{equation}\label{eq:mixed-linear-operator}
\begin{aligned}
\mathcal L_{i,q}(\mathbf t):X_q\times\mathbb R^{N+1}
&\longrightarrow L^q(\Omega_\epsilon)\times\mathbb R^{N+1},\\
(\phi,(a_j))&\longmapsto
\left(
\begin{aligned}
&-\Delta\phi-\epsilon^{\frac{N-2}{N-4}}\phi
-(2^*-1)V_i(\mathbf t)^{2^*-2}\phi\\
&\quad-\sum_{j=0}^Na_j
U_i(\mathbf t)^{2^*-2}Y_{i,j}(\mathbf t),
\end{aligned}
\ (\ell_{i,j}(\mathbf t)(\phi))_{j=0}^N
\right).
\end{aligned}
\end{equation}
It is a Fredholm operator of index zero, since it is a compact
perturbation of the isomorphism
$(\phi,(a_j))\mapsto(-\Delta\phi,(a_j))$. Its kernel is trivial: the last $N+1$ components put $\phi$ in
the moving moment space, and uniqueness of the homogeneous projected problem
then gives $\phi=0$ and $a_j=0$. Thus
$\mathcal L_{i,q}(\mathbf t)$ is an isomorphism.

For fixed $\epsilon$, the projected bubbles, localized kernel functions, and
moment functionals depend continuously on $\mathbf t$ in the operator norms
in \eqref{eq:mixed-linear-operator}. Hence
\[
\mathcal L_{i,q}(\mathbf t_m)\longrightarrow
\mathcal L_{i,q}(\mathbf t)
\]
in operator norm. Continuity of inversion gives
\begin{equation}\label{eq:inverse-linear-parameter}
\|\mathcal L_{i,q}(\mathbf t_m)^{-1}
-\mathcal L_{i,q}(\mathbf t)^{-1}\|\longrightarrow0.
\end{equation}
Applying these inverses to $(f_m,0)$ proves
\eqref{eq:linear-sequential-parameter}.

For the second assertion, put
\[
B_1^{\rm src}(\mathbf t):=P_2(\mathbf t),
\qquad
B_2^{\rm src}(\mathbf t):=Q_2(\mathbf t).
\]
Since the radii are fixed when $\epsilon$ is fixed and the centers vary
continuously,
\[
|B_i^{\rm src}(\mathbf t_m)\triangle
B_i^{\rm src}(\mathbf t)|\longrightarrow0.
\]
Therefore
\begin{equation}\label{eq:moving-source-continuity}
\sup_{\|h\|_{L^\infty}\le1}
\|\mathbf1_{B_i^{\rm src}(\mathbf t_m)}h
-\mathbf1_{B_i^{\rm src}(\mathbf t)}h\|_{L^q}
\le |B_i^{\rm src}(\mathbf t_m)\triangle
B_i^{\rm src}(\mathbf t)|^{1/q}\longrightarrow0.
\end{equation}
Combining \eqref{eq:inverse-linear-parameter} and
\eqref{eq:moving-source-continuity}, and using
$X_q\hookrightarrow C^1(\overline{\Omega_\epsilon})$, proves
\eqref{eq:linear-parameter-operator-norm}.

The estimate \eqref{eq:moving-source-continuity} is the reason no restriction
to sources supported in the intersection of two nearby moving balls is
needed here. Characteristic functions do not converge in $L^\infty$, but they
do converge as operators from $L^\infty$ to $L^q$ for fixed $\epsilon$. This
argument is used only for parameter continuity at fixed $\epsilon$; no
uniform $W^{2,q}$ estimate as $\epsilon\to0$ is asserted. The smooth cutoffs
$\chi_i(\mathbf t)$ do not enter the inverse
$\mathcal L_{i,q}(\mathbf t)^{-1}$; they occur later in the localized source,
where their $C^2$ dependence on $\mathbf t$ is used directly.
\end{proof}

\medskip

\begin{Lem}\label{lem:outer-parameter-dependence}
Fix $\epsilon>0$ sufficiently small, a compact set $K\Subset O_\delta$, a
reference configuration $\mathbf t^0\in K$, and numbers
$0<\rho<\rho'<1$. For $\mathbf t\in K$, set
\[
\Lambda_\epsilon^\rho(\mathbf t):=
\left\{(g_1,g_2)\in\Lambda_\epsilon(\mathbf t):
\|g_1\|_{L^\infty},\|g_2\|_{L^\infty}
\le\rho\epsilon^{\gamma_0}\right\}.
\]
There are a neighborhood $\mathcal U$ of $\mathbf t^0$, $C^2$
diffeomorphisms
$\Phi_{\mathbf t}:\Omega_\epsilon\to\Omega_\epsilon$, and comparison maps
\[
J_{\mathbf t}:\Lambda_\epsilon^\rho(\mathbf t^0)
\longrightarrow\Lambda_\epsilon^{\rho'}(\mathbf t),
\qquad \mathbf t\in\mathcal U,
\]
such that $\Phi_{\mathbf t^0}=\operatorname{Id}$,
$J_{\mathbf t^0}=\operatorname{Id}$, and the following holds. If
$\mathbf t_m\to\mathbf t^0$ and $g\in
\Lambda_\epsilon^\rho(\mathbf t^0)$, then
\begin{equation}\label{eq:transported-data-convergence}
J_{\mathbf t_m}g\longrightarrow g
\quad\text{uniformly in }\Omega_\epsilon,
\end{equation}
and
\begin{equation}\label{eq:outer-parameter-uniform}
\|S_{\mathbf t_m}(J_{\mathbf t_m}g)-S_{\mathbf t^0}(g)\|_{L^\infty(\Omega_\epsilon)^2}
\longrightarrow0.
\end{equation}
Moreover, after pullback by $\Phi_{\mathbf t_m}$, the first components
converge in $C^1$ on every compact subset of
$\Omega_\epsilon\setminus\overline{P_1(\mathbf t^0)}$, and the second
components converge in $C^1$ on every compact subset of
$\Omega_\epsilon\setminus\overline{Q_1(\mathbf t^0)}$.
\end{Lem}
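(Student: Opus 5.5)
The plan is to construct $\Phi_{\mathbf t}$ and $J_{\mathbf t}$ by a fixed-$\epsilon$ transport of the moving balls, and then prove the convergence of exterior minimizers by compactness combined with the uniqueness in Lemma~\ref{lem:outer-unique}.

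\emph{Construction of the transport maps.} Fix a smooth cutoff $\eta_0$ equal to $1$ on $B_{r_\epsilon^2+1}(\tilde\xi_i^0)$ and supported in a slightly larger ball; by the scale hierarchy \eqref{scale-hierarchy} these balls are disjoint and compactly contained in $\Omega_\epsilon$. Set
\[
\Phi_{\mathbf t}(y):=y+\eta_0(y-\tilde\xi_1^0)(\tilde\xi_1-\tilde\xi_1^0)+\eta_0(y-\tilde\xi_2^0)(\tilde\xi_2-\tilde\xi_2^0).
\]
For $\mathbf t$ close to $\mathbf t^0$ this is a $C^2$ diffeomorphism of $\Omega_\epsilon$. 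It coincides with the translations near $P_2(\mathbf t^0)$ and $Q_2(\mathbf t^0)$, it is the identity near $\partial\Omega_\epsilon$, and $\Phi_{\mathbf t}\to\operatorname{Id}$ in $C^2$.

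For $g\in\Lambda^\rho_\epsilon(\mathbf t^0)$ we first form $g\circ\Phi_{\mathbf t}^{-1}$. The moment conditions at $\mathbf t$ then fail only by $o(1)\|g\|_\infty$, because $U_i(\mathbf t)^{2^*-2}Y_{i,j}(\mathbf t)$ is the translate of the $\mathbf t^0$ weight up to the change $\lambda_i^0\to\lambda_i$. We restore the moment conditions by subtracting a combination $\sum_j a_j Y_{i,j}(\mathbf t)$. The coefficients are obtained by inverting the uniformly positive definite Gram matrix used in the density argument after Lemma~\ref{lem:outer-unique}, so $a_j\to0$. This defines $J_{\mathbf t}g$. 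Since $\rho<\rho'$, the norm bound $\rho'\epsilon^{\gamma_0}$ holds for $\mathbf t$ near $\mathbf t^0$, and \eqref{eq:transported-data-convergence} follows because $g$ is uniformly continuous.

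\emph{Convergence of the exterior solutions.} By density and the Lipschitz bound \eqref{eq:S-Lipschitz}, whose constant is uniform on $K$, it suffices to treat energy-compatible $g$. We note that $J_{\mathbf t}$ maps $H_0^1$ data to $H_0^1$ data and is uniformly Lipschitz in $L^\infty$.

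Let $u^m:=S_{\mathbf t_m}(J_{\mathbf t_m}g)+(V_1(\mathbf t_m),V_2(\mathbf t_m))$, and pull back by $\Phi_{\mathbf t_m}$: $\hat u^m:=u^m\circ\Phi_{\mathbf t_m}$. The pulled-back functions have inner data on the fixed balls $P_1(\mathbf t^0)$ and $Q_1(\mathbf t^0)$, and these data converge to those of $g$. They solve exterior equations with operator $-\operatorname{div}(A_m\nabla\cdot)$, where $A_m\to I$ in $C^1$. Lemma~\ref{lem7} gives uniform $L^\infty$ bounds. Energy bounds follow by comparing $I$ with a competitor, namely the extension of the inner data by its minimizer structure. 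Local $W^{2,q}$ estimates away from the inner balls, which we may use since $\epsilon$ is fixed, together with equicontinuity near $\partial P_1$ and $\partial Q_1$ from barrier arguments as in Lemma~\ref{lem8}, give a subsequence converging uniformly on $\overline{\Omega_\epsilon}$ and in $C^1_{\mathrm{loc}}$ away from the inner balls.

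\emph{Identification of the limit.} The limit solves \eqref{eqout} at $\mathbf t^0$ with data $g$ and satisfies the decay bound of Lemma~\ref{lem7}. It remains to identify it with $S_{\mathbf t^0}(g)$. Here we do not pass to the limit in the minimality property. Instead we observe that the proof of Lemma~\ref{lem9} uses only the equation \eqref{eqout}, the bounds of Lemma~\ref{lem7}, and the comparison principle of Lemma~\ref{lem0}. It therefore shows that any two solutions with these bounds and the same inner data coincide. This gives convergence of the whole sequence, hence \eqref{eq:outer-parameter-uniform}, and composing back with $\Phi_{\mathbf t_m}^{-1}\to\operatorname{Id}$ in $C^2$ yields the stated $C^1$ convergence.

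\emph{Main obstacle.} The delicate step is uniform continuity up to $\partial P_1$ and $\partial Q_1$ at fixed $\epsilon$: we need uniform convergence on all of $\Omega_\epsilon$, not merely on compact subsets. I would handle it by comparing $\hat u^m$ with the harmonic lifting of the converging boundary data and applying the maximum principle in thin shells. Alternatively, one can apply the Lipschitz estimate \eqref{eq:S-Lipschitz} directly to the differences between pulled-back problems, absorbing the coefficient perturbation $A_m-I$ as a small source term.
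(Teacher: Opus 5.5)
Your proposal is correct. It differs from the paper's proof mainly in how you identify the limit of the pulled-back exterior solutions.

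\textbf{The paper's route.} The paper argues variationally. After reducing to smooth energy-compatible data, it pulls the affine minimization classes back to fixed exterior domains. It then uses the uniform coercivity of Lemma~\ref{lem3} and the convergence of the truncated functionals to obtain a limiting minimizer. A recovery sequence shows that the minimum values converge, Lemma~\ref{lem:outer-unique} identifies the limit, and convergence of the Dirichlet energies gives strong $H_0^1$ convergence. Uniform convergence up to the inner spheres then comes from boundary $W^{2,q}$ estimates, which are available because the data are smooth.

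\textbf{Your route.} You identify the limit through the equation, and your key observation is sound. The $L^\infty$ part of the proof of Lemma~\ref{lem9} uses only \eqref{eqout}, the decay from Lemma~\ref{lem7}, membership in the energy space, and Lemma~\ref{lem0}. So it gives uniqueness among decaying energy solutions with equal inner data, with no appeal to minimality. This spares you the recovery sequence and the passage of minimality to the limit. The cost is that you lose the strong $H^1$ convergence, which the statement does not require.

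\textbf{Suggested adjustments.}
\begin{itemize}
\item The obstacle you single out disappears if you reduce to \emph{smooth} energy-compatible data, as the paper does. The density construction before Definition~\ref{defS} already produces $C_c^\infty$ functions corrected by the smooth $Y_{i,j}$. With your $\Phi_{\mathbf t}$, the pulled-back operator is exactly the Laplacian near $P_2(\mathbf t^0)$ and $Q_2(\mathbf t^0)$. Standard boundary estimates therefore apply at $\partial P_1(\mathbf t^0)$ and $\partial Q_1(\mathbf t^0)$.
\item If you keep merely continuous data, equicontinuity near the inner spheres comes from the usual exterior-sphere barrier for bounded right-hand sides and equicontinuous Dirichlet data. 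It does not come from the decay barriers of Lemma~\ref{lem8}, which give decay rather than a modulus of continuity.
\item Your second alternative, absorbing $A_m-I$ into the argument of Lemma~\ref{lem9}, would need second-derivative control on the transition annuli. It is unnecessary.
\item The uniform energy bound is obtained most simply by using $(V_1(\mathbf t_m),V_2(\mathbf t_m))+J_{\mathbf t_m}g$ itself as the competitor, together with the uniform coercivity of Lemma~\ref{lem3}.
\end{itemize}
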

\begin{proof}
For $\mathbf t$ near $\mathbf t^0$, choose $\Phi_{\mathbf t}$ so that it maps
$P_1(\mathbf t^0)$ and $Q_1(\mathbf t^0)$ onto
$P_1(\mathbf t)$ and $Q_1(\mathbf t)$, agrees with the corresponding
translations on neighborhoods containing $P_2(\mathbf t^0)$ and
$Q_2(\mathbf t^0)$, is the identity outside slightly larger disjoint
neighborhoods, and depends continuously on $\mathbf t$ in $C^2$.
Transport the data by
\[
\widetilde g_{i,\mathbf t}(y)
:=g_i(\Phi_{\mathbf t}^{-1}(y)).
\]
This does not preserve the new moment conditions exactly. Let
\[
M_i(\mathbf t)_{j\ell}:=
\int_{\Omega_\epsilon}
U_i(\mathbf t)^{2^*-2}Y_{i,j}(\mathbf t)Y_{i,\ell}(\mathbf t),
\qquad 0\le j,\ell\le N.
\]
The matrices $M_i(\mathbf t)$ are positive definite and uniformly invertible
for $\mathbf t\in K$. Choose coefficients
$a_{i,\ell}(\mathbf t;g)$ so that
\[
(J_{\mathbf t}g)_i
:=\widetilde g_{i,\mathbf t}
-\sum_{\ell=0}^Na_{i,\ell}(\mathbf t;g)Y_{i,\ell}(\mathbf t)
\]
satisfies the new moment conditions. Since $g$ satisfies the reference
conditions and $\Phi_{\mathbf t}\to\operatorname{Id}$ in $C^2$,
\[
|a_{i,\ell}(\mathbf t;g)|
\le o_{\mathbf t\to\mathbf t^0}(1)\|g_i\|_{L^\infty}.
\]
Thus \eqref{eq:transported-data-convergence} holds, and the strict margin
$\rho<\rho'$ gives
$J_{\mathbf t}g\in\Lambda_\epsilon^{\rho'}(\mathbf t)$ for $\mathbf t$
close to $\mathbf t^0$.

Suppose first that $g$ is energy compatible and smooth. Let
\[
(u_{1,m},u_{2,m})=(V_{1,\mathbf t_m},V_{2,\mathbf t_m})
+S_{\mathbf t_m}(J_{\mathbf t_m}g)
\]
be the corresponding minimizers. Pullback by $\Phi_{\mathbf t_m}$ places the
affine minimization classes on fixed exterior domains. After subtracting
smooth lifts of the prescribed inner data, these classes have fixed zero
boundary conditions, and the lifts converge in $H^1$. The truncated
functionals converge on bounded sets, while the coercivity estimate in
Lemma~\ref{lem3} is uniform for parameters in $K$. Weak compactness and lower
semicontinuity therefore give a limiting minimizer. Conversely, transporting
the limiting minimizer and adding the small lift needed to match the new
inner data gives a recovery sequence, so the minimum values also converge.
Lemma~\ref{lem:outer-unique} identifies the weak limit with the minimizer at
$\mathbf t^0$; convergence of the Dirichlet energies then gives strong
$H_0^1$ convergence.

For fixed $\epsilon$, the truncated right-hand sides are uniformly bounded
for $\mathbf t\in K$. After pullback and subtraction of the same smooth
lifts, standard boundary $W^{2,q}$ estimates on the fixed exterior domains
give bounds independent of $m$ for every finite $q$. Compactness and the
preceding identification of the weak limit imply uniform convergence of the
full pairs and $C^1$ convergence on compact subsets of the exterior regions.
This proves \eqref{eq:outer-parameter-uniform} for smooth
energy-compatible data.

For general $g\in\Lambda_\epsilon^\rho(\mathbf t^0)$, choose smooth
energy-compatible $g^{(\ell)}\in\Lambda_\epsilon^\rho(\mathbf t^0)$
converging uniformly to $g$, using the density argument preceding
Definition~\ref{defS}. The construction of $J_{\mathbf t}$ is uniformly
continuous on bounded sets in the relevant inner norms. Lemma~\ref{lem9}
therefore makes the outer solutions corresponding to $g^{(\ell)}$ and $g$
uniformly close, uniformly for all sufficiently large $m$. Apply the
preceding paragraph for each fixed $\ell$, and then let $\ell\to\infty$.
The gradient estimate in Lemma~\ref{lem9} gives the same approximation on the
cutoff annuli, and local $W^{2,q}$ estimates give the stated convergence on
the remaining compact subsets.
\end{proof}

\medskip

\section*{Acknowledgments}
This research was supported by the National Natural Science Foundation of
China (Grant Nos. 12271539 and 12371107).

\end{document}